\newcommand{\FF}{\mathbb{F}}
\newcommand{\R}{\mathbb{R}}
\newcommand{\C}{\mathbb{C}}
\newcommand{\QQ}{\mathbb{Q}}
\newcommand{\Z}{\mathbb{Z}}
\newcommand{\A}{\mathcal{A}}
\newcommand{\B}{\mathcal{B}}
\newcommand{\CC}{\mathcal{C}}
\newcommand{\PP}{\mathbb{P}}
\newcommand{\U}{\mathcal{U}}
\renewcommand{\b}{\mathfrak{b}}
\renewcommand{\d}{\mathfrak{d}}
\newcommand{\p}{\mathfrak{p}}
\theoremstyle{plain}
\newtheorem{theorem}{Theorem}
\newtheorem{lemma}[theorem]{Lemma}
\newtheorem{prop}[theorem]{Proposition}
\theoremstyle{remark}
\newtheorem{remark}{Remark}
\newtheorem{definition}{Definition}
\numberwithin{equation}{section}
\newcommand{\eps}{\varepsilon}
\renewcommand{\Re}{\text{Re}}
\renewcommand{\Im}{\text{Im}}
\begin{document}
\title{The polynomials $X^2+(Y^2+1)^2$ and $X^{2} + (Y^3+Z^3)^2$ also capture their primes}
\author{Jori Merikoski}
\address{Mathematical Institute,
University of Oxford,
Andrew Wiles Building,
Radcliffe Observatory Quarter,
Woodstock Road,
Oxford,
OX2 6GG}
\email{jori.merikoski@maths.ox.ac.uk}
\subjclass[2020]{11N32 primary, 11N36 secondary}

\begin{abstract} 
We show that there are infinitely many primes of the form $X^2+(Y^2+1)^2$ and $X^2+(Y^3+Z^3)^2$. This extends the work of Friedlander and Iwaniec showing that there are infinitely many primes of the form $X^2+Y^4$. More precisely, Friedlander and Iwaniec obtained an asymptotic formula for the number of primes of this form. For the sequences $X^2+(Y^2+1)^2$ and $X^2+(Y^3+Z^3)^2$ we establish Type II information that is too narrow for an aysmptotic formula, but we can use Harman's sieve method to produce a lower bound of the correct order of magnitude for primes of form $X^2+(Y^2+1)^2$ and $X^2+(Y^3+Z^3)^2$. Estimating the Type II sums is reduced to a counting problem which is solved by using the Weil bound, where the arithmetic input is quite different from the work of Friedlander and Iwaniec for $X^2+Y^4$. We also show that there are infinitely many primes $p=X^2+Y^2$ where $Y$ is represented by an incomplete norm form of degree $k$ with $k-1$ variables. For this we require a Deligne-type bound for correlations of hyper-Kloosterman sums.  
\end{abstract}

\maketitle
\tableofcontents

\section{Introduction}
Friedlander and Iwaniec \cite{FI} famously showed that there are infinitely many primes of the form $a^2+b^4$. This result is striking as integers of this form are very sparse -- the number of integers of the form $a^2+b^4$ less than $X$ is of order $X^{3/4}$. Prior to this Fouvry and Iwaniec \cite{fouvryi} had shown that there are infinitely many primes of the form $a^2+p^2$ with $p$ also a prime. The result of \cite{FI} was extended to primes of the form $a^2+p^4$ with $p$ also a prime by Heath-Brown and Li \cite{hbli}. Pratt \cite{pratt} has shown that there are infinitely many primes of the form $a^2+b^2$ with $b$ missing any three fixed digits in its decimal expansion, which is also a very sparse sequence. 

Another key result concerning primes in sparse polynomial sets is the result of Heath-Brown of infinitely many primes of the form $a^3+2b^3$ \cite{hb}, which has been generalized to binary cubic forms by Heath-Brown and Moroz \cite{hbm} and to general incomplete norm forms by Maynard \cite{maynard}. Recently Li \cite{li} showed that there are infinitely many primes of the form $a^3+2b^3$ with $b$ relatively small.

The work of Friedlander and Iwaniec for primes of the form $a^2+b^4$ relies on the factorization over Gaussian integers $a^2+b^4= (b^2+ia)(b^2-ia)$ and the great regularity in the distribution of squares $b^2$. In particular, the argument fails to capture primes of the form $a^2+f(b)^2$ for non-homogeneous quadratic polynomials $f(b)$. Our first main result resolves this, with the caveat that instead of an asymptotic formula we get lower and upper bounds of correct order of magnitude for primes of this shape.
\begin{theorem} \label{maintheorem1}
There are infinitely many primes of the form $a^2+(b^2+1)^2$. More precisely, we have
\[
\sum_{p \leq X} \sum_{p=a^2+(b^2+1)^2}1 \asymp \frac{X^{3/4}}{\log X}.
\]
\end{theorem}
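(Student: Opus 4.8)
The plan is to run Harman's sieve method on the sequence
\[
\mathcal{A}=(a_n)_{n\le X},\qquad a_n=\#\{(a,b)\in\N^2:\ a^2+(b^2+1)^2=n\},
\]
whose total mass is $|\mathcal{A}|=\sum_n a_n\asymp X^{3/4}$. Since any $n\le X$ with no prime factor below $X^{1/2}$ is prime or $1$, the quantity to be bounded, namely $\sum_{p\le X}a_p$, equals $S(\mathcal{A},X^{1/2})+O(X^{1/2})$. A direct computation of the local densities $\rho(q):=q^{-1}\#\{(a,b)\bmod q:\ a^2+(b^2+1)^2\equiv0\}$ gives $\rho(q)=2-2/q$ when $q\equiv1\bmod4$ and $\rho(q)=0$ when $q\equiv3\bmod4$, so that $\prod_{q<z}(1-\rho(q)/q)\asymp1/\log z$; consequently, once the Type~I estimate below is in hand, Selberg's upper bound sieve with a level just below $X^{1/2}$ already yields $\sum_{p\le X}a_p\ll X^{3/4}/\log X$. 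The remaining task is the matching lower bound, and for this I would feed into Harman's sieve the two standard inputs: a Type~I (linear) estimate and a Type~II (bilinear) estimate, comparing $\mathcal{A}$ against the natural model sequence $\mathcal{B}$ of density $\rho$.

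\textbf{Type~I.} First I would establish, for all bounded $\lambda_d$,
\[
\sum_{d\le X^{1/2-\eps}}\lambda_d\Big(|\mathcal{A}_d|-\rho(d)\tfrac{|\mathcal{A}|}{d}\Big)\ll_A\frac{X^{3/4}}{(\log X)^A}.
\]
This is elementary: for fixed $b\le X^{1/4}$ the number of $a$ with $a^2+(b^2+1)^2\le X$ and $d\mid a^2+(b^2+1)^2$ is $\rho_d(b)(X-(b^2+1)^2)^{1/2}/d+O\big(\rho_d(b)\big)$, where $\rho_d(b)$ counts square roots of $-(b^2+1)^2\bmod d$; summing the main term over $b$ (using quadratic reciprocity to express $\rho_d(b)$ through Jacobi symbols in $b^2+1$ and Weil's bound for the ensuing character sums) produces $\rho(d)|\mathcal{A}|/d$, while the error contributes at most $X^{1/4+\eps}$ per $d$, which is comfortably admissible up to $d\le X^{1/2-\eps}$.

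\textbf{Type~II --- the crux.} Here I must bound
\[
\sum_{\substack{mk\le X\\ M\le m<2M}}\alpha_m\beta_k\big(a_{mk}-b_{mk}\big)\ll_A\frac{X^{3/4}}{(\log X)^A}
\]
for $M$ in as large a range as possible; as the abstract warns, this range will be narrow, which is why only a lower bound and not an asymptotic can be extracted. The key arithmetic manoeuvre is the Gaussian factorisation $a^2+(b^2+1)^2=\big((b^2+1)+ia\big)\big((b^2+1)-ia\big)$: if $m\mid a^2+(b^2+1)^2$ with $\gcd(m,2a(b^2+1))=1$, then $-1$ is a square modulo $m$, and for a suitable square root $\mu$ of $-1\bmod m$ one has $a\equiv\pm\mu(b^2+1)\bmod m$. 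Writing $a=\pm\mu(b^2+1)+mt$ converts the bilinear sum into a sum over $m$, over the square roots $\mu$ of $-1\bmod m$, and over $b$ (ranging up to $X^{1/4}$) and $t$, of $\beta_k$ evaluated at the explicit quadratic form $k=\big((\pm\mu(b^2+1)+mt)^2+(b^2+1)^2\big)/m$ in $(b,t)$. Detecting the remaining constraints by additive characters and completing the $b$-sum modulo $m$ reduces the estimate --- this is the ``counting problem'' of the abstract --- to a square-root-cancellation bound for complete exponential sums in $b$ built from the inhomogeneous polynomial $b^2+1$ and the reciprocal $\bar\mu\bmod m$, i.e.\ to Kloosterman-type sums, for which the Weil bound suffices. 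It is precisely the presence of the shift $+1$, in place of FI's pure fourth power $b^4$, that turns these into genuine Kloosterman sums and makes the arithmetic here different from \cite{FI}.

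\textbf{Closing the sieve.} With Type~I valid up to level $X^{1/2-\eps}$ and the (narrow) Type~II estimate, Harman's sieve is run in the usual way: a Buchstab decomposition of $S(\mathcal{A},X^{1/2})$ separates off the ranges where Type~I or Type~II information applies directly, the troublesome ranges are dismantled by further Buchstab iterations until what remains can be compared term-by-term to the corresponding quantities for $\mathcal{B}$, and a (largely numerical) check shows that the surviving terms sum to a positive multiple of $|\mathcal{A}|/\log X$. The main obstacle is the Type~II estimate in two respects: obtaining square-root cancellation in the Kloosterman-type sums uniformly in the parameters $m$ and $\mu$, and verifying that the resulting admissible range of $M$, together with the Type~I level, is wide enough for the Buchstab bookkeeping to close with a positive constant. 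Everything else is routine by the standards of this circle of ideas.
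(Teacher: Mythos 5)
Your overall architecture (Harman's sieve driven by a Type I and a narrow Type II estimate, Gaussian factorisation, Weil bound) is the right one, but there are two genuine gaps. First, the Type I level. You claim level $X^{1/2-\eps}$ obtained by fixing $b$ and counting $a$ in progressions, and assert this is enough for the bookkeeping to close. It is not: with Type II information only for $N\in[X^{1/4+\eta},X^{1/3-\eta}]$ (equivalently $M\in[X^{2/3+\eta},X^{3/4-\eta}]$), the Buchstab/M\"obius decomposition of $S(\A,X^{\gamma})$ with $\gamma=1/12$ produces products of primes of size up to $X^{\gamma}$ that must be steered either below the Type I level or into the Type II window; starting from level $X^{1/2}$ and multiplying by primes $\le X^{1/12}$ one can never reach $[X^{2/3+\eta},X^{3/4-\eta}]$, and configurations such as $p_1p_2p_3$ with $p_i\approx X^{0.2}$ have no admissible subproduct at all. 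The paper needs, and proves, Type I level $X^{3/4-\eta}$ -- chosen precisely so that $[D X^{-\gamma},D]$ coincides with the $M$-side Type II range -- and this requires the large sieve inequality for roots of $\nu^2+1\equiv 0\ (d)$ (Lemma \ref{largesievelemma}, from \cite[Lemma 3.2]{FI}), not the elementary progression count, whose error term is already of size $X^{3/4}$ at level $X^{1/2}$.

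Second, the Type II sketch as written cannot work: after substituting $a=\pm\mu(b^2+1)+mt$ you propose to ``complete the $b$-sum modulo $m$,'' but the summand is $\beta_k$ evaluated at a quadratic form in $(b,t)$, and $\beta$ is an essentially arbitrary bounded coefficient; no completion or character-sum bound can extract cancellation from it directly. The indispensable missing step is a Cauchy--Schwarz (dispersion) to eliminate one set of coefficients before any exponential sums appear. The paper writes $m=|w|^2$, $n=|z|^2$, applies Cauchy--Schwarz in $w$, and is led to counting pairs $(b_1,b_2)$ with $b_1^2+1\equiv a(b_2^2+1)\ (|\Delta|)$, $\Delta=\Im(\overline{z_1}z_2)$; Poisson summation then reduces matters to point counts and complete exponential sums on the conic $x_1^2+1\equiv a(x_2^2+1)$, handled by the Hasse--Weil bound (not Kloosterman sums). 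One then still needs the reduction to coefficients satisfying the Siegel--Walfisz property with zero main term, the treatment of the diagonal $\Delta=0$ (which is where the constraint $N\gg X^{1/4+\eta}$ enters), and the evaluation of the main term $N_1(a;\Delta)$ -- none of which is routine enough to wave through, though it is the two omissions above that are fatal to the proposal as it stands.
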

The methods developed in this paper are quite flexible and work for Gaussian primes with one coordinate given by a polynomial sequence that is not too sparse compared to the degree. We are also able to show the following.
\begin{theorem} \label{maintheorem2}
There are infinitely many primes of the form $a^2+(c^3+d^3)^2$ with $c,d > 0$. More precisely, we have
\[
\sum_{p\leq X} \sum_{\substack{p=a^2+(c^3+d^3)^2 \\ c,d > 0}} 1\asymp \frac{X^{5/6}}{\log X}.
\]
\end{theorem}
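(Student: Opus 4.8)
\emph{Proof idea for Theorem~\ref{maintheorem2}.} The upper bound $\ll X^{5/6}/\log X$ is elementary. Writing $r(p)=\#\{(a,c,d):p=a^2+(c^3+d^3)^2,\ c,d>0\}$, for each fixed $m=c^3+d^3\le X^{1/2}$ an upper bound sieve gives $\#\{a:a^2+m^2\le X\text{ is prime}\}\ll X^{1/2}/\log X$, uniformly in $m$ up to a factor bounded on average over $c,d$; since there are only $\asymp X^{1/3}$ admissible values $m\le X^{1/2}$ and values with many representations as a sum of two positive cubes are negligible (a standard second moment bound), summing over $(c,d)$ yields $\sum_{p\le X}r(p)\ll X^{5/6}/\log X$. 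The substance is the matching lower bound.

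For the lower bound I would run Harman's sieve on the sequence $\mathcal{A}=(a_n)_{n\le X}$ with $a_n=\#\{(a,c,d):n=a^2+(c^3+d^3)^2,\ c,d>0\}$, compared with a model sequence $\mathcal{B}=(b_n)$ reproducing the distribution of $\mathcal{A}$ in residue classes, the normalisation being fixed by $\sum_n a_n=\#\{(a,c,d):a^2+(c^3+d^3)^2\le X\}\asymp X^{5/6}$. Harman's method reduces $\sum_{p\le X}a_p\gg X^{5/6}/\log X$ to two inputs: Type~I estimates $\sum_{q\le D}\bigl|\sum_{n\le X,\,q\mid n}(a_n-\kappa b_n)\bigr|\ll X^{5/6}(\log X)^{-A}$ with $D$ a power of $X$ as large as possible, and Type~II estimates $\sum_{k\sim K}\sum_{l\sim L}\alpha_k\beta_l(a_{kl}-\kappa b_{kl})\ll X^{5/6}(\log X)^{-A}$ for $KL\asymp X$ with $K$ in a range $[X^{\eta_1},X^{\eta_2}]$. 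Since the Type~II range will be narrow there is no asymptotic formula, but the Buchstab iteration of Harman's sieve, with the role-reversal that exploits even a short Type~II window, still produces a positive proportion of the conjectured main term.

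For Type~I I would fix $m=c^3+d^3$ and write $\#\{a:q\mid a^2+m^2,\ a^2\le X-m^2\}=q^{-1}\nu(q,m)\sqrt{X-m^2}+O(\nu(q,m))$ with $\nu(q,m)=\#\{a\bmod q:a^2\equiv-m^2\}$, which is read off from the splitting of $q$ in $\Z[i]$ (equivalently via Dirichlet characters), and then sum over $(c,d)$. The main term is of the predicted shape, while the error from the oscillation of $\nu(q,\cdot)$ along the cubic sequence is controlled by the equidistribution of $c^3+d^3$ in residue classes, which follows from the Weil bound for complete cubic sums, $\sum_{c\bmod p}e_p(tc^3)\ll p^{1/2}$, and hence $\sum_{c,d\bmod p}e_p(t(c^3+d^3))\ll p$; summing a geometric series over moduli gives Type~I information for $D$ a fixed power of $X$.

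The Type~II estimate is the heart of the matter and the main obstacle. Here I would factor $a^2+(c^3+d^3)^2=\bigl((c^3+d^3)+ai\bigr)\bigl((c^3+d^3)-ai\bigr)$ in $\Z[i]$ and split the Gaussian divisor along the factorisation $kl$, so that $a_{kl}$ becomes a weighted count of pairs of Gaussian integers $\zeta_1=u_1+iv_1$, $\zeta_2=u_2+iv_2$ with $N(\zeta_1)\sim K$, $N(\zeta_2)\sim L$ and $\Re(\zeta_1\zeta_2)=u_1u_2-v_1v_2=c^3+d^3$ for some $c,d>0$. Applying Cauchy--Schwarz in the variable $\zeta_1$ to remove the coefficients $\alpha$ and expanding the square reduces matters to counting solutions of the simultaneous system $u_1u_2-v_1v_2=c_1^3+d_1^3$, $u_1u_2'-v_1v_2'=c_2^3+d_2^3$ (with $\zeta_2'=u_2'+iv_2'$ the second copy of $\zeta_2$), where $(u_2,v_2),(u_2',v_2')$ range over discs of radius $\asymp L^{1/2}$ and $(u_1,v_1)$ over a disc of radius $\asymp K^{1/2}$. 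Off the degenerate locus $\Delta:=u_2v_2'-u_2'v_2=0$ the pair $(u_1,v_1)$ is pinned down by Cramer's rule, and what remains is to count how often the resulting $(u_1,v_1)$ is integral and lies in the disc; detecting the divisibility by $\Delta$ with additive characters turns the $c_1,d_1,c_2,d_2$-sum into products of complete cubic exponential sums $\sum_{c\bmod\Delta}e_{\Delta}(\alpha c^3)\ll_\eps\Delta^{1/2+\eps}$, which is the Weil input: Kloosterman/Gauss-type sums carrying the cubic phase of $c^3+d^3$, an arithmetic input quite different from the regularity of squares $b^2$ used by Friedlander and Iwaniec for $a^2+b^4$. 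The delicate points are (a) extracting this square-root cancellation uniformly in all parameters while controlling the degenerate locus $\zeta_2\parallel\zeta_2'$ and the small-$\Delta$ terms, and (b) checking that the admissible Type~II window $[X^{\eta_1},X^{\eta_2}]$, though too short for an asymptotic formula, overlaps the Type~I range widely enough for the Buchstab decomposition to close with a positive lower bound; the constraint $c,d>0$ and the rare multiple cube representations enter only through harmless dyadic splittings and the second moment bound already used for the upper bound.
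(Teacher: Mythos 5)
Your route is essentially the paper's: the Gaussian factorisation $a^2+(c^3+d^3)^2=((c^3+d^3)+ai)((c^3+d^3)-ai)$, Cauchy--Schwarz to remove one set of coefficients and double the other variable, reduction off the diagonal $\Delta=0$ to the congruence $c_1^3+d_1^3\equiv a(c_2^3+d_2^3)\pmod{|\Delta|}$ with $\Delta=\Im(\overline{z_1}z_2)$, completion by additive characters and the Weil bound for the resulting cubic exponential sums, and Harman's sieve to convert a narrow Type II window into a positive lower bound. Two load-bearing steps, however, are not secured by what you have written.

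First, your Type I error term $O(\nu(q,m))$, summed over $q\le D$ and over the $\asymp X^{1/3}$ pairs $(c,d)$, is of size $\asymp DX^{1/3}$ and therefore yields only level $D\ll X^{1/2-\eta}$; equidistribution of $c^3+d^3$ via Weil does not repair this, because the loss comes from the number of moduli, not from the cubic sequence. The paper needs $D\le X^{5/6-\eta}$, which it obtains by Poisson summation in $a$ followed by the large sieve inequality for roots of $\nu^2+1\equiv0\pmod d$ (Lemma \ref{largesievelemma}), averaging nontrivially over $d$ and $\nu$ simultaneously. This is not cosmetic: the Type II window is only $[X^{1/6+\eta},X^{2/9-\eta}]$, the Buchstab decomposition is organised around the Type I exponent $\alpha_2=5/6-\eta$, and the discarded integrals already sum to $0.87<0.9$; with level $X^{1/2}$ the discarded regions would be far too large and the lower bound would not close. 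Second, after detecting $\Delta\mid(\cdot)$ by additive characters, the zero frequency produces a genuine main term proportional to the point count $N_3(a;\Delta)=|\{\bm{x}\,(\Delta):x_1^3+x_2^3\equiv a(x_3^3+x_4^3)\}|$, which must be matched against the model sequence. The paper handles this by first reducing (via a Heath--Brown device and the fundamental lemma) to coefficients $\beta$ satisfying the Siegel--Walfisz property with main term zero, then using $N_3(a;p)=p^3+O(p^{5/2})$ to truncate the divisor expansion of $N_3(a;\Delta)$ at $d\le\log^{O(1)}X$, and finally appealing to equidistribution of $z_2$ modulo $dz_1$ in polar boxes. Your sketch treats only the oscillatory frequencies and is silent on this main-term comparison, which is precisely where the bilinear (Siegel--Walfisz) structure of the Type II coefficients is used.
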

\begin{remark}
The proof of Theorem \ref{maintheorem1} is easily generalized to primes of the form $a^2+f(b)^2$, where $f(b)=b^2+D$ for any integer $D \neq 0$, with $D=0$ corresponding to \cite{FI}. The proof of Theorem \ref{maintheorem2} as given below generalizes (with minor technical nuisances) to primes of the form $a^2+(f_1(c) + f_2(d))^2$, where $f_1$ and $f_2$ are polynomials of degree at most 3. It is possible to handle also primes of the form $a^2+g(c,d)^2$ for a cubic polynomial $g(c,d)$ which could involve mixed terms, but this would require additional work (replacing the Weil bound by the Deligne bound for two dimensional sums to get an analogue of Lemma \ref{expsum2Dprelimlemma}).
\end{remark}
Finally, analogous to Theorem \ref{maintheorem2} we have the following.
\begin{theorem} \label{maintheorem3}
Let $k$ be sufficiently large. Let $K/\QQ$ be a Galois extension of degree $k$ and fix a basis $\omega_1,\dots, \omega_k$ for the ring of integers $O_K$. Define the incomplete form
\[
N(b_1,\dots, b_{k-1}) := N_{K/\QQ}(b_1 \omega_1+\cdots + b_{k-1} \omega_{k-1}).
\] 
Then there are infinitely many primes of the form $p=a^2+N(b_1,\dots, b_{k-1})^2$ with $b_i \in [0,p^{1/(2k)}]$. More precisely, we have
\[
\sum_{p\leq X} \sum_{\substack{p=a^2+N(b_1,\dots, b_{k-1})^2 \\b_i \in [0,p^{1/(2k)}]}} 1 \asymp \frac{X^{1-1/(2k)}}{\log X}.
\]
\end{theorem}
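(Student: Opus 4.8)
The plan is to follow the proof of Theorem~\ref{maintheorem2} in structure, with the cubic $c^3+d^3$ replaced by the incomplete norm form $m=N(b_1,\dots,b_{k-1})$ and the elementary exponential sum estimates (Weil bound) replaced by Deligne-type bounds. Factoring $p=a^2+m^2=(m+ia)(m-ia)$ in $\Z[i]$, we are counting Gaussian primes $\nu=m+ia$ with $|\nu|^2=p\leq X$ whose real part $m$ lies in the thin set $\{N(\mathbf b):\mathbf b\in[0,X^{1/(2k)}]^{k-1}\}$; this set has $\asymp X^{1-1/(2k)}$ elements (with multiplicity), consistent with the predicted count after dividing by $\log X$. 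Writing $\mathcal A$ for the resulting multiset of norms and $\mathcal B$ for the comparison sequence of all integers $n\leq X$ carrying the natural weight, we feed $(\mathcal A,\mathcal B)$ into Harman's sieve. The lower bound $\gg X^{1-1/(2k)}/\log X$ then follows once we supply: (i) a Type~I estimate, namely $\sum_{d\leq X^{\theta_1}}|r_d(\mathcal A)|\ll X^{1-1/(2k)}(\log X)^{-A}$; and (ii) a Type~II estimate for bilinear forms $\sum_{\gamma,\delta}\alpha_\gamma\beta_\delta$ ranging over Gaussian factorizations $\nu=\gamma\delta$ with $|\gamma|^2\in[X^\alpha,X^\beta]$ --- after which one runs the Buchstab combinatorics and checks the resulting main term is positive; the matching upper bound is the trivial count of $\mathcal A$. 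The hypothesis ``$k$ sufficiently large'' is used precisely at the combinatorial step and in the exponential-sum step: as $k\to\infty$ the sequence $\mathcal A$ becomes essentially of full density, so that the numerical relations among $\theta_1,\alpha,\beta$ required by the sieve are comfortably satisfied, and the relevant $\ell$-adic sheaves (below) are non-degenerate.

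For the Type~I estimate one counts $\mathbf b,a$ with $a^2+N(\mathbf b)^2\equiv 0\pmod d$. Splitting $d$ into its Gaussian divisors reduces this, for each such $d\leq X^{\theta_1}$, to counting $\mathbf b$ with $N(\mathbf b)$ in prescribed residue classes to moduli of size $\leq d$; since $d$ is small compared with the box $X^{1/(2k)}$ one may complete the $\mathbf b$-sum and apply the (easy, non-degenerate) complete exponential sum bound for $N$, obtaining the claimed saving uniformly in $d$.

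The heart of the matter is the Type~II estimate. After Cauchy--Schwarz (opening one coefficient sequence) the problem reduces to bounding
\[
\sum_{\gamma,\gamma'}\alpha_\gamma\overline{\alpha_{\gamma'}}\ \#\bigl\{(\delta,\mathbf b,\mathbf b')\ :\ \Re(\gamma\delta)=N(\mathbf b),\ \Re(\gamma'\delta)=N(\mathbf b'),\ \delta\ \text{in the prescribed box}\bigr\}.
\]
The diagonal $\gamma=\pm\gamma'$ (and its elementary relatives) produces the expected main term. Off the diagonal the two linear relations pin down $\delta$ from $(\mathbf b,\mathbf b')$ up to a divisibility condition $\Delta\mid(\text{fixed linear combination of }N(\mathbf b),N(\mathbf b'))$ with $\Delta=\Im(\overline\gamma\gamma')\neq0$, so one is left to count pairs $\mathbf b,\mathbf b'\in[0,X^{1/(2k)}]^{k-1}$ with $N(\mathbf b)\equiv\lambda\,N(\mathbf b')\pmod\Delta$, $\lambda=\lambda(\gamma,\gamma')$. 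Detecting this congruence with additive characters modulo $\Delta$, reducing to prime moduli $\ell$ by multiplicativity, and using Poisson summation to pass from the short box to complete sums, the whole off-diagonal is controlled by the complete exponential sum $\sum_{\mathbf x\bmod\ell}e_\ell(tN(\mathbf x))$ and by correlations of two such sums (with $t$, and the twist $\lambda$, varying). Here the Galois hypothesis enters: modulo all but finitely many $\ell$ the form $N$ becomes, after a linear change of variables, a product of $k$ linear forms in general position in $k-1$ variables, and the $\ell$-adic sheaf attached to the above character sum is (a pullback of) a hyper-Kloosterman sheaf. Deligne's theory --- in the form of the Fu--Wan/Katz bounds for hyper-Kloosterman sums and their correlations --- then yields square-root cancellation $\ll_k\ell^{(k-1)/2}$, with the uniformity in $t,\lambda,\Delta$ needed to execute the sum over $\gamma,\gamma'$. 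This forces the off-diagonal below the main term as long as $\beta$ stays under an explicit threshold, which for $k$ large is compatible with the Type~I level $\theta_1$.

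The main obstacle is this last step. One must identify the correct hyper-Kloosterman sheaf attached to the truncated norm form $N$, control its rank and conductor and the finite set of degenerate primes $\ell$, and --- crucially --- prove the correlation bound, i.e.\ rule out that the sheaves governing $N(\mathbf b)\bmod\ell$ and $\lambda N(\mathbf b')\bmod\ell$ become geometrically isomorphic after the relevant pullbacks (which would create a secondary main term), all with enough uniformity to survive the outer sum over the two long Gaussian variables. A secondary, purely technical, nuisance is the uniform bookkeeping of the error terms --- the box constraints on $\delta$, the gcd restrictions, and the contribution of the bad primes --- and verifying that the admissible Type~II range one obtains, given the smallness of the box $X^{1/(2k)}$, can in fact be combined with the Type~I level inside Harman's sieve to give a positive lower bound.
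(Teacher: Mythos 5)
Your proposal follows essentially the same route as the paper: Harman's sieve comparing the weighted norm-form sequence against the sum-of-two-squares sequence, a Type I estimate at level $X^{1-1/(2k)-\eta}$, and a Type II estimate obtained via the Gaussian factorization, Cauchy--Schwarz, Poisson summation to the congruence $N(\mathbf b)\equiv a\,N(\mathbf b')\ (\Delta)$, and a Deligne-type bound proved by splitting the norm form into linear factors over an extension of $\FF_p$ and invoking known correlation bounds for hyper-Kloosterman sums, with $k$ large making the narrow Type II range $[X^{1/(2k)+\eta},X^{1/k-2/(k(k+1))-\eta}]$ suffice in the combinatorial step. The "main obstacle" you flag is exactly what the paper resolves (its Lemmas on the stratification of frequency space into bad hyperplanes and on the average count of frequencies lying in them), so the plan is sound and matches the paper's argument.
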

The assumption that $K/\QQ$ is Galois could be removed with more work. With a lot more precise numerical work it should be possible to give a non-trivial lower bound for all $k \geq 3$.  The result we prove actually approaches to an asymptotic formula as $k$ becomes large. The only reason for restricting to a norm form is that we need the fact that the number of representations
\[
b=N(b_1,\dots, b_{k-1})
\]
with $b_i \ll b^{1/k}$ is bounded by $\ll_\eps b^\eps $, which is not known for general forms.  For the diagonal form $b_1^k + \cdots +b_{k-1}^k$ we can obtain a result conditional to such a hypothesis. Working with a norm form also simplifies the bounding of the relevant exponential sums of Deligne type.

\subsection{Sketch of the arguments}

Let us focus on the case $a^2+(b^2+1)^2$ in this sketch, as the proofs of the other results are similar.  Using a sieve argument counting primes is reduced to estimating Type I and Type II sums of the form
\[
\sum_{d \leq D} \alpha(d) \sum_{\substack{n \sim X \\ n \equiv 0 \, (d)}} \sum_{n=a^2+(b^2+1)} 1 \quad \text{and} \quad \sum_{\substack{m \sim M \\ n \sim N}} \alpha(m) \beta(n) \sum_{mn=a^2+(b^2+1)} 1.
\]
with $MN=X$ with bounded coefficients $\alpha$ and $\beta$ with $\beta$ satisfying the Siegel Walfisz property (\ref{swproperty}). We can evaluate the Type I sums in the range $D \leq X^{3/4-\eta}$ by essentially the same argument as in \cite[Section 3]{FI}, applying the large sieve inequality for roots of quadratic congruences.

We will be able to handle the Type II sums in the range
\[
X^{1/4+\eta} \ll N \ll X^{1/3-\eta}
\] 
for any small $\eta >0$. This is in contrast to FI, where the range is essentially $X^{1/4+\eta} \ll N \ll X^{1/2-\eta}$. Our Type II information is not sufficient for an asymptotic formula, but using Harman's sieve method \cite{harman} we are nonetheless able to get a lower bound of the correct order of magnitude. In contrast to the work of Friedlander and Iwaniec \cite{fisieve}, our sieve argument is much more similar to the one in Heath-Brown's work \cite{hb}. 

In the situation of Theorem \ref{maintheorem2} we are able to handle Type I sums for $D \leq X^{5/6-\eta}$ and Type II sums in the range $X^{1/6+\eta} \ll N \ll X^{2/9-\eta}$.

For the Type II sums we first use a device of Heath-Brown to reduce to the special case where $\beta(n)$ satisfies the Siegel-Walfisz property with main term 0 (see  (\ref{swmain0})) -- this replaces the sieve argument of Friedlander and Iwaniec \cite{fisieve} in our work.  It then suffices to show that for $\beta$ satisfying (\ref{swmain0}) we have
\[
 \sum_{\substack{m \sim M \\ n \sim N}} \alpha(m) \beta(n)  \sum_{mn=a^2+(b^2+1)} 1 \ll_C X^{3/4} / \log^C X.
\]
The initial part of the argument is the same as in \cite[Sections 4 and 5]{FI}. Assuming for simplicity that $(m,n)=1$, we may write 
\[
m=|w|^2, \quad n=|z|^2, \quad \overline{w}z = b^2+1 + i a
\]
for Gaussian integers $w,z \in \Z[i]$, and denote $\alpha_w=\alpha(|w|^2)$ and $\beta_z=\beta(|z|^2)$. Then our Type II sums are essentially
\[
\sum_{|w|^2 \sim M} \alpha_w \sum_{|z|^2 \sim N} \beta_z \sum_{\substack{\Re(\overline{w}z) = b^2+1 }} 1.
\]
By applying Cauchy-Schwarz similarly to \cite[Sections 4-6]{FI}, the task is reduced to showing that for all $C > 0$ (for $\psi$ a fixed smooth function)
\[
 \sum_{z_1,z_2} \beta_{z_1} \overline{\beta_{z_2} } \sum_{\substack{b_1,b_2 \\ b_1^2+1 \equiv a (b_2^2+1) \, (\Delta)}} \psi(b_1/B_1) \psi(b_2/B_2) \ll_C X^{1/2} N \log^{-C} X , 
\]
for any $B_1,B_2 \ll X^{1/4}$, where for $z_j=r_j+is_j$
\[
\Delta := \Im(\overline{z_1} z_2) \quad \text{and} \quad a := z_2/z_1 = \frac{\overline{z_1 }z_2}{|z_1|^2} \equiv \frac{r_1r_2+s_1s_2}{r_1^2+s_1^2} \, (|\Delta|).
\]
Essentially the only difference to \cite[Section 6]{FI} here is that instead of the congruence $b_1^2 \equiv a b_2^2 \, (\Delta)$ we are counting solutions to 
\[
b_1^2+1 \equiv a (b_2^2+1) \, (\Delta).
\]
The contribution from the diagonal $\Delta=0$ is sufficiently small by trivial bounds provided that $N \gg X^{1/4+\eta}$.

For $\Delta \neq 0$ applying the Poisson summation formula to the sums over $b_1$ and $b_2$ we get a main term which is controlled by
\[
N(a;\Delta) := \frac{B_1 B_2}{\Delta^2} |\{x_1,x_2 \,(\Delta): x_1^2+1 \equiv a(x_2
^2+1) \,\, (\Delta)\}|,
\]
and an error term controlled by an average over the frequencies $(h_1,h_2) \neq (0,0)$ of the exponential sums
\[
S(a,h_1,h_2;\Delta) := \sum_{\substack{x_1,x_2 \, (\Delta) \\ x_1^2+1 \equiv a(x_2
^2+1) \, (\Delta)}} e_{|\Delta|}(h_1 x_1 + h_2 x_2).
\]
This sum runs over a non-singular curve (for the generic $a$), so that by the Weil bound we get (morally)
\[
S(a,h_1,h_2;\Delta) \ll |\Delta|^{1/2+\eps} \ll N^{1/2+\eps}.
\]
This bound is sufficient provided that $N \ll X^{1/3-\eta}$. For the analogous sum in \cite{FI}
\[
S_{\text{FI}}(a,h_1,h_2;\Delta) := \sum_{\substack{x_1,x_2 \, (\Delta) \\ x_1^2\equiv ax_2
^2 \, (\Delta)}} e_{|\Delta|}(h_1 x_1 + h_2 x_2)
\]
one morally gets $S_{\text{FI}}(a,h_1,h_2;\Delta)  \ll N^\eps$, which is why for $a^2+b^4$ one can handle the Type II sums up  to $N \ll X^{1/2-\eta}$. 

For the main term we have to evaluate $N(a;\Delta)$. Since we are counting points on a non-singular curve, this is a much easier problem than the corresponding one in \cite{FI} which is over a singular curve
\[
N_{\text{FI}}(a;\Delta):= |\{x_1,x_2 \,(\Delta): x_1^2 \equiv ax_2
^2 \,\, (\Delta)\}|.
\]
For simplicity assume that $\Delta$ is square-free. Writing
\[
N(a;p) := p + \eps_{p}(a), \quad \eps_d := \prod_{p| d} \eps_{p},
\]
we have by the Weil bound $\eps_p(a) \ll p^{1/2}$ (for generic $a$), which gives $\eps_d(a) \ll_\eps d^{1/2+\eps}$. By the Chinese remainder theorem
\[
N(a;\Delta) =  \prod_{p| d} N(a;p)  =   \prod_{p| d}(p + \eps_{p}(a)) = |\Delta| \sum_{\substack{d| \Delta }}  \frac{\eps_d(a)}{d},
\]
so that we find (ignoring the condition $(d,\Delta/d)=1$ for simplicity)
\[ 
B_1 B_2\sum_{z_1,z_2} \beta_{z_1} \overline{\beta_{z_2} } \frac{B_1 B_2}{\Delta^2} N(a;\Delta)  = B_1^2 B_2^2 \sum_{d \ll N} \sum_{\substack{z_1,z_2 \\ \Delta \equiv 0 \, (d)}} \beta_{z_1} \overline{\beta_{z_2}}\frac{1}{|\Delta|} \frac{\eps_d(a)}{d}.
\]
By using the bound $\eps_d(a) \ll_\eps d^{1/2+\eps}$ we can truncate the sum at $d\leq \log^{2C} X$, getting
\[
B_1^2 B_2^2\sum_{d \leq \log^{2C} X} \sum_{\substack{z_1,z_2 \\ \Delta \equiv 0 \, (d)}} \beta_{z_1} \overline{\beta_{z_2}} \frac{1}{|\Delta |}\frac{\eps_d(a)}{d} + O_C(X N \log^{-C} X),
\]
which can now be bounded by the same argument as in \cite[Section 16]{FI}, using the Siegel-Walfisz property (\ref{swmain0}). In contrast, Friedlander and Iwaniec \cite{FI} have a singular curve with
\[
N_{\text{FI}}(a;p) = p + \bigg(\frac{a}{p} \bigg) p,
\]
which produces the sum
\[
B_1^2 B_2^2\sum_{d \ll N} \sum_{\substack{z_1,z_2 \\ \Delta \equiv 0 \, (d)}} \beta_{z_1} \overline{\beta_{z_2}} \frac{1}{|\Delta|} \bigg(\frac{a}{\Delta} \bigg).
\]
This sum cannot be truncated and for large $d$ they have to show cancellation in the sums over $z_1$ and $z_2.$ Thus, for the main term having $(b^2+1)^2$ rather than $b^4$ turns out to be a friend rather than an enemy. Similar arguments apply for the sequence $a^2+(b^3+c^3)^2$

The article is structured as follows. In Section \ref{typeisection} we state and prove a Type I estimate (Proposition \ref{typeiprop}). Using this we prove a fundamental lemma of the sieve type result (Proposition \ref{flprop}). In Section \ref{typeiisection} we state our Type II information (Proposition \ref{typeiiprop}) and reduce it to a simpler case (Proposition \ref{typeiiproper}). In Section \ref{weilsection} we will state several corollaries of the Weil bound. After this preparation we give the proof of our Type II estimate in Section \ref{typeiiproofsection}. In Section \ref{sievesection} we apply Harman's sieve method with the gathered arithmetic information to prove Theorems \ref{maintheorem1} and \ref{maintheorem2}. In Sections \ref{setup3section}, \ref{ai3section}, and \ref{sieve3section} we give the proof of Theorem \ref{maintheorem3}, where we will assume that the reader is familiar with the techniques from the previous sections. The key estimates are Lemma \ref{deligneboundlemma} and Lemma \ref{sumoverhlemma}. The first gives essentially a square-root bound for the completed exponential sums unless the frequency parameters lie in some bad hyperplanes, as an application of bounds for correlations of hyper-Kloosterman sums \cite{polymath}. The second shows that on average over the frequency parameters are not too often in these bad hyperplanes.

\subsection{Notations}
For functions $f$ and $g$ with $g \geq 0$, we write $f \ll g$ or $f= O(g)$ if there is a constant $C$ such that $|f|  \leq C g.$ The notation $f \asymp g$ means $g \ll f \ll g.$ The constant may depend on some parameter, which is indicated in the subscript (e.g. $\ll_{\epsilon}$).
We write $f=o(g)$ if $f/g \to 0$ for large values of the variable. For summation variables we write $n \sim N$ meaning $N<n \leq 2N$, that is,
\[
\sum_{n \sim N} f_n = \sum_{N < n \leq 2N} f_n.
\] 

We let $C > 0$ denote a large constant. In particular, we write $f \ll_C g/ \log^C X$ meaning that this bound holds for any fixed $ C >0$.

For a statement $E$ we denote by $\bm{1}_E$ the characteristic function of that statement. We let $e(x):= e^{2 \pi i x}$ and $e_q(x):= e(x/q)$ for any integer $q \geq 1$.
We write $P^{+}(n)$ for the largest prime factor of $n$, with the convention that $P^{+}(1)=1$. We denote
\[
P(W) := \prod_{p < W} p \quad \text{and} \quad P(V,W):= \prod_{V \leq p < W} p.
\]
\subsection{Acknowledgements}
I am grateful to Kaisa Matom\"aki and James Maynard for helpful discussions and comments. I am also grateful for the anonymous referee for their careful reading of the manuscript, helpful suggestions, and spotting errors in a previous version of the manuscript. The author was supported by a grant from the Emil Aaltonen foundation. This project has
received funding from the European Research Council (ERC) under the European Union's Horizon 2020 research and innovation programme (grant agreement No 851318).

\section{The set-up for Theorems \ref{maintheorem1} and \ref{maintheorem2}}
Denote
\begin{equation} \label{omegadef}
\Omega(c,d):= 9 c^2d^2/(c^3+d^3).
\end{equation}
Then for large $Y$ we have
\[
 \sum_{\substack{c^3+d^3 \sim Y \\ c,d >0}} \frac{9c^2 d^2}{c^3+d^3} =(1+O(Y^{-\eta}))\sum_{\substack{c_1+d_1 \sim Y \\ c_1,d_1 >0}} \frac{1}{c_1+d_1} =(1+O(Y^{-\eta}))\sum_{n \sim Y} 1 =(1+O(Y^{-\eta}))Y,
\]
which follows from
\begin{equation}\label{omegaintegral}
\int_{0}^{Y^{1/3}} \int_0^{(Y-u^3)^{1/3}} \Omega(u,v) dvdu = Y \int_{0}^{1} \int_0^{(1-u^3)^{1/3}} \Omega(u,v) dvdu = Y,
\end{equation}
by using \eqref{omegaintegral} with $Y$ and $2Y$ and taking the difference. Heuristically $c_1^{-2/3}/3$ and $d_1^{-2/3}/3$ are the probabilities that $c_1$ and $d_1$ are perfect cubes, and $n=c_1+d_1$ is the number of representations as a sum. 
Also, set
\[
\rho_1(d):= |\{\nu \, (d): \, \nu^{2}+1 \equiv 0 \, (d)\}|, \quad \rho_2(d):= |\{\alpha,\beta \, (d): \, \alpha^3+\beta^3 \equiv 0 \, (d)\}| 
\]
and define the arithmetic factors (see Remark \ref{defexplanationremark})
\begin{equation} \label{kappadef}
\kappa_j := \sum_{c \geq 1} \frac{\mu(c) }{c^2} \bigg(\sum_{c \geq 1} \frac{\mu(c) \rho_j(c)}{c^{1+j}} \bigg)^{-1}.
\end{equation}
We define the normalized sequences
\[\begin{split}
a_n^{(1)} := \kappa_1 \sum_{\substack{n=a^2+(b^2+1)^2 \\ (a,b^2+1)=1 \\ b >0}} 2b \quad \text{and} \quad
a_n^{(2)} := \kappa_2 \sum_{\substack{n=a^2+(c^3+d^3)^2 \\ (a,c^3+d^3)=1 \\ c,d>0 }} \Omega(c,d).
\end{split}
\]
To prove Theorems \ref{maintheorem1} and \ref{maintheorem2} we need to show a lower and upper bounds for the sums along primes 
\[
\sum_{p \sim X} a^{(j)}_p \quad \text{for} \quad j \in \{1,2\} \]
and heuristically we expect that both should be asymptotic to
\[
\sum_{p \sim X} b_p \quad \text{for} \quad b_n := \sum_{\substack{n=a^2+b^2 \\ (a,b)=1 \\ b >0}} 1.
\]

We will use Harman's sieve method, where the basic idea is to try to compare the sum $\sum_{p \sim X} a^{(j)}_p $ to $\sum_{p \sim X} b_p $. This means that all of our arithmetic information (Type I and Type II sums) is given in  terms of a comparison, see Propositions \ref{typeiprop} and \ref{typeiiprop}.

\section{Type I information} \label{typeisection}
We have the following two propositions, the latter of which is a corollary to the first. We will prove both of these in this section.
\begin{prop}\emph{(Type I estimate).} \label{typeiprop}  Let $\eta,\eta' > 0$ and suppose that $\eta$ is sufficiently small in terms of $\eta'$ ($\eta=\eta'/100$ suffices). Let $D_1 \leq X^{3/4-\eta'}$ and $D_2 \leq X^{5/6-\eta'}$.  Then for any bounded coefficients $\alpha(d)$ we have
\[
\sum_{\substack{ d \sim D_j \\ n \sim X/d}} \alpha(d) (a^{(j)}_{dn} - b_{dn}) \ll_\eta X^{1-\eta}.
\]
\end{prop}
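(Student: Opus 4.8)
The plan is to follow the argument of \cite[Section 3]{FI} for $a^2+b^4$ as closely as possible, adapting it to the non-homogeneous quadratics $b^2+1$ and cubics $c^3+d^3$. Fix $j\in\{1,2\}$ and write the weight as $a^{(j)}_{dn}$. The key point is that both $a^{(j)}_n$ and $b_n$ are, up to the normalizing constants $\kappa_j$ (chosen precisely to make the densities match), weighted counts of representations $n=a^2+k^2$ with $(a,k)=1$, where $k$ ranges over the values of a fixed polynomial ($k=b^2+1$, or $k=c^3+d^3$ weighted by $\Omega(c,d)$) in the first case and over all positive integers in the comparison sequence $b_n$. So after inserting $n\equiv 0\ (d)$ and summing against $\alpha(d)$, the task is to show that the number of $a^2+k^2\sim X$ divisible by $d$, with $k$ of the prescribed polynomial shape, is asymptotically $(\text{density of such }k)\times(\text{number of all }a^2+k'^2\sim X\text{ divisible by }d)$, uniformly for $d\leq D_j$ and with a power-saving error term $X^{1-\eta}$ after the average over $d$.

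The first step is to parametrize solutions of $n\equiv 0\ (d)$, i.e.\ $a^2+k^2\equiv 0\ (d)$: for each $d$ the pairs $(a,k)\bmod d$ with $a^2+k^2\equiv 0\ (d)$ are governed by the roots $\nu\bmod d$ of $\nu^2+1\equiv 0\ (d)$ (on the coprime-to-$d$ part), so $a\equiv \nu k\ (d)$ for one of $\rho_1(d)$ choices of $\nu$. Fixing such a $\nu$, the inner sum becomes a count of $k$ (in a short interval, of the polynomial shape) in a fixed residue class modulo $d$, weighted by $2b$ resp.\ $\Omega(c,d)$; the analogous manipulation for $b_n$ gives the same sum with $k$ unrestricted. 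Thus everything reduces to equidistribution of $b^2+1$ (resp.\ $c^3+d^3$ with the weight $\Omega$) in arithmetic progressions to modulus $d$, \emph{on average over $d\leq D_j$ and over the roots $\nu$}. For the linear-in-$k$ problem $a\equiv \nu k$, this is exactly where the \emph{large sieve inequality for roots of quadratic congruences} enters: one opens the condition $a\equiv \nu k\ (d)$ by additive characters, and bounds the resulting bilinear form in $\nu/d$ (with $\nu^2\equiv -1$) by the Friedlander--Iwaniec large sieve (their Theorem 3 / the bound used in \cite[Section 3]{FI}). This controls the off-diagonal and gives the savings $X^{1-\eta}$ as long as $D_1\leq X^{3/4-\eta'}$; the exponent $3/4$ is dictated, as in \cite{FI}, by the length $\asymp X^{1/4}$ of the $b$-variable together with the range of the $a$-variable. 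For $j=2$ the variable $c^3+d^3\sim X^{1/2}$ runs over a two-dimensional set of size $\asymp X^{1/3}$, which relaxes the balance and pushes the admissible range to $D_2\leq X^{5/6-\eta'}$; the extra averaging over two cube-variables only helps.

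The main obstacle I expect is handling the diagonal/main terms so that the density constants truly cancel: one must show that the main term of the count of $k$ of polynomial shape in a progression mod $d$, summed over the $\rho_j(d)$ roots $\nu$ and divided by $d$, reproduces exactly the singular series encoded in $\kappa_j$ and $\rho_j$ via \eqref{kappadef}, leaving only an admissible error. This is a multiplicative/local-density bookkeeping exercise (a Chinese remainder theorem computation of $\rho_j(d)$ as a multiplicative function, plus the evaluation of $\sum \mu(c)\rho_j(c)/c^{1+j}$), but it has to be done carefully to guarantee the difference $a^{(j)}_{dn}-b_{dn}$ has main term \emph{zero}. A secondary technical point is the treatment of the coprimality conditions $(a,b^2+1)=1$ and $(a,c^3+d^3)=1$ and of small common factors of $d$ with $a$ or $k$; these are removed by a routine M\"obius splitting and contribute only to the lower-order terms absorbed into $X^{1-\eta}$. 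Beyond these, the short-interval/smooth-weight count of $b^2+1$ (trivial) and of $c^3+d^3$ with weight $\Omega(c,d)$ in progressions (using the identity \eqref{omegaintegral} so that the weighted count of cube-sums in $[0,Y]$ is exactly $Y$) is standard, and the large sieve input is quoted directly from \cite{FI}.
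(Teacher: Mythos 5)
Your proposal is correct and follows essentially the same route as the paper: parametrize $a^2+k^2\equiv 0\ (d)$ via the roots $\nu$ of $\nu^2+1\equiv 0\ (d)$, detect the progression $a\equiv\nu k\ (d)$ by Poisson summation/additive characters, cancel the main terms using the normalization $\kappa_j$ against $\rho_j(d)$, remove the coprimality conditions by M\"obius, and control the non-zero frequencies with the Friedlander--Iwaniec large sieve for roots of quadratic congruences, with the exponents $3/4$ and $5/6$ arising exactly as you describe. The only ingredient you gloss over is that for $j=2$ the $\ell^2$-norm arising after Cauchy--Schwarz requires the bound $\ll_\eps n^\eps$ for the number of representations of $n$ as a sum of two positive cubes (Lemma \ref{cubereplemma}); this is standard but needed for the "extra averaging only helps" claim to go through.
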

\begin{prop}\emph{(Fundamental lemma of the sieve).} \label{flprop} \label{fundamentallemma}Let $\eta,\eta' > 0$ and suppose that $\eta$ is sufficiently small in terms of $\eta'$. Let $W:=X^{1/(\log\log x)^2}$. Let $D_1 \leq X^{3/4-\eta'}$ and $D_2 \leq X^{5/6-\eta'}$. Then for any bounded coefficients $\alpha(d)$ we have for any $C > 0$
\[
\sum_{\substack{ d \sim D_j \\ n \sim X/d}} \alpha(d)\bm{1}_{(n,P(W))=1} (a^{(j)}_{dn} - b_{dn}) \ll_{\eta,C} X \log^{-C} X.
\]
where $\eta > 0$ depends on $\eta' >0.$ 
\end{prop}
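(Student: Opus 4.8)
The plan is to deduce the fundamental lemma from the Type I estimate (Proposition \ref{typeiprop}) by a standard combinatorial identity for the sifted indicator $\bm{1}_{(n,P(W))=1}$, exploiting the fact that $W = X^{1/(\log\log X)^2}$ is a very small sieving parameter. First I would apply a truncated form of the combinatorial sieve (Buchstab iteration / the Brun or Rosser-Iwaniec fundamental lemma), writing
\[
\bm{1}_{(n,P(W))=1} = \sum_{\substack{e \mid (n, P(W)) \\ e \leq z_0}} \mu(e) + (\text{error}),
\]
where $z_0$ is a parameter chosen as a suitable power of $X$ so that $z_0 D_j$ still stays below $X^{3/4-\eta}$ (resp. $X^{5/6-\eta}$) for an admissible smaller $\eta$; the standard fundamental lemma bounds guarantee that truncating the Mobius-inclusion-exclusion at level $z_0 = W^{s}$ with $s \to \infty$ (here $s \asymp (\log\log X)^2$) produces an error term of size $O(X \cdot e^{-s}) = O_C(X \log^{-C} X)$ once expressed against the density of the sequence. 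Concretely I would invoke Rosser's sieve (or the Brun sieve) upper and lower bound functions $\lambda^{\pm}_e$ supported on $e \mid P(W)$, $e \leq z_0$, sandwiching $\bm{1}_{(n,P(W))=1}$ between $\sum_e \lambda^{-}_e$ and $\sum_e \lambda^{+}_e$.

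Next I would substitute this into the sum in question. Writing $e \mid n$ means $n = e n'$, so the inner sum over $n \sim X/d$ with $e \mid n$ becomes a sum over $n' \sim X/(de)$ of $a^{(j)}_{den'} - b_{den'}$; absorbing $\mu(e)$ (or $\lambda^{\pm}_e$) and $\alpha(d)$ into a new bounded coefficient $\alpha'(de)$ supported on integers $d' = de \sim D_j'$ with $D_j' \leq z_0 D_j \leq X^{3/4-\eta}$ (resp. $X^{5/6-\eta}$), the whole expression is, up to a dyadic decomposition of the $d'$-range into $O(\log X)$ pieces, a bounded combination of Type I sums of exactly the form controlled by Proposition \ref{typeiprop}. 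Each such sum is $\ll_\eta X^{1-\eta}$, and there are only $O((\log X) \cdot z_0^{o(1)}) = X^{o(1)}$ of them, so their total contribution is still $\ll X^{1-\eta/2} \ll_C X \log^{-C} X$. Adding the sieve truncation error handled in the previous step completes the bound.

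The one genuine subtlety — and the step I expect to need the most care — is making the truncation error estimate for the non-negative reference sequence $b_n$ (and for $a^{(j)}_n$, which is nonnegative since $2b > 0$ and $\Omega(c,d) > 0$) genuinely rigorous: one must know that $b_n$ and $a_n^{(j)}$ are \emph{well-distributed in arithmetic progressions $n \equiv 0 \ (e)$ for $e \leq z_0$} with the expected density, which is precisely the content of applying Proposition \ref{typeiprop} itself with $\alpha$ replaced by indicator-type coefficients — so the whole argument is really just "Proposition \ref{typeiprop} plus the fundamental lemma of sieve theory applied twice (once for each bound function)". I would therefore set it up so that the only external input is the classical fundamental lemma (e.g. as in Friedlander-Iwaniec's \emph{Opera de Cribro}, or Halberstam-Richert) together with Proposition \ref{typeiprop}, being careful that the slight loss $X^{3/4-\eta'} \to X^{3/4-\eta}$ in the level of distribution (to accommodate the factor $z_0$) is exactly why the hypothesis demands $\eta$ small in terms of $\eta'$; choosing $\eta = \eta'/100$ and $z_0 = X^{\eta'/2}$, say, leaves ample room since $z_0 = W^{s}$ with $s = \tfrac{\eta'}{2}(\log\log X)^2 \to \infty$ forces the sieve error below any fixed power of $\log X$.
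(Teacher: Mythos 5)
Your proposal is correct in outline, but it controls the truncation error by a different mechanism than the paper. Both arguments begin identically: expand $\bm{1}_{(n,P(W))=1}$ by inclusion--exclusion over $e\mid(n,P(W))$, keep the terms with $e$ below a threshold that is a huge power of $W$ but a small power of $X$ (so that $de$ stays inside the Type I range of Proposition \ref{typeiprop}), and absorb $e$ into the Type I modulus. Where you invoke the classical fundamental lemma with Rosser/Brun weights $\lambda^{\pm}_e$ and the $O(e^{-s})$ main-term discrepancy, the paper instead keeps the raw M\"obius sum and bounds the tail $e>X^{\eta}$ directly: a square-free $W$-smooth $e>X^{\eta}$ has a divisor $e_1\in[X^{\eta},X^{2\eta}]$ with $P^+(e_1)<W$, and such smooth numbers are so sparse (Lemma \ref{smoothlemma}, giving a saving $e^{-u/2}$ with $u\geq\eta(\log\log X)^2$) that crude upper bounds on $\sum_n b_{de_1n}$ suffice. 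The paper's route is more elementary in that it never needs the sieve weights nor an asymptotic evaluation of the main terms; your route needs the local densities of $a^{(j)}$ and $b$ in the progressions $n\equiv 0\ (e)$ to actually be evaluated (not just the difference controlled), since $\lambda^{+}_e-\lambda^{-}_e$ is not of one sign, so the $e^{-s}$ saving only appears after the main terms are extracted. That evaluation is elementary for $b_n$ and transfers to $a^{(j)}_n$ via Proposition \ref{typeiprop}, as you note, so this is fillable; but it is extra work the paper's tail bound avoids.

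Two small technical points to tighten. First, the coefficient $\alpha'(d')=\sum_{d'=de}\alpha(d)\lambda_e$ is not bounded but only divisor-bounded; you must either absorb the $\tau(d')\ll_\eta X^{\eta/2}$ loss into the power saving $X^{1-\eta}$ of Proposition \ref{typeiprop} (as the paper does), or split off the $d'$ with abnormally large $\tau(d')$ via Lemma \ref{divisorlemma}. Second, truncating the raw M\"obius sum at level $z_0$ does not by itself give a pointwise error $O(e^{-s})$; the sandwich and the $e^{-s}$ saving are properties of the specially constructed $\lambda^{\pm}$, so the argument must be phrased entirely in terms of those weights, as you do in your second paragraph.
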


\begin{remark} \label{defexplanationremark}
In the definitions of  $a_n^{(k)}$, and $b_n$ we have included the coprimality conditions such as $(a,b^2+1)=1$, and $(a,b)=1$ so that the Type I sums match. Without this Proposition \ref{typeiprop} would be false. For instance, if $d=5$, then we have
\[
\begin{split}
\sum_{\substack{a \sim A \\ b^2+1 \sim B \\ a^2+(b^2+1)^2 \equiv 0 \, (5)}} 2b &= \sum_{\substack{\mu,\nu  \, (5)\\ \mu^2+(\nu^2+1)^2 \equiv 0\,\,(5)}}  \sum_{\substack{a \sim A \\ a \equiv \mu \, (5)}}\sum_{\substack{b^2+1 \sim B \\ b \equiv \nu \, (5)}} 2b =(1+o(1)) \sum_{\substack{\mu,\nu  \, (5)\\ \mu^2+(\nu^2+1)^2 \equiv 0\,\,(5)}} \frac{AB}{25}  \\
&=(1+o(1)) \frac{8}{25}AB 
\end{split}
\]
and
\[
\begin{split}
\sum_{\substack{a \sim A \\ b \sim B \\ a^2+b^2 \equiv 0 \, (5)}} 1 &= \sum_{\substack{\mu,\nu  \, (5)\\ \mu^2+\nu^2 \equiv 0\,\,(5)}}  \sum_{\substack{a \sim A \\ a \equiv \mu \, (5)}}\sum_{\substack{b \sim B \\ b \equiv \nu \, (5)}} 1 =(1+o(1))  \sum_{\substack{\mu,\nu  \, (5)\\ \mu^2+\nu^2 \equiv 0\,\,(5)}} \frac{AB}{25} =(1+o(1)) \frac{9}{25} AB.
\end{split}
\]
If we replace the modulus $5$ by $3$, then the first sum is empty while the second is $\sim AB/9$. The issue arises from the potential common factors of $d$ and $\mu$. For every odd prime divisor $p|(d,\mu)$ there is exactly one $\nu \, (p)$ such that $p|\nu$ but there are either zero or two $\nu$ for which $p|(\nu^2+1)$. Inserting the coprimality conditions we get $(p,\mu)=1$ which removes this problem. To see that we need to normalize by a factor $\kappa_2$ consider $d=1$ for which
\[
\sum_{\substack{a\sim A\\ (b^2+1)\sim B \\ (a,b^2+1)=1}} 2b = \sum_{e}\mu(e) \sum_{\substack{a\sim A\\ (b^2+1)\sim B \\ e|a \\ e| (b^2+1)}} 2b =(1+o(1))  \sum_{e}\frac{\mu(e)\rho_2(e)}{e^2}\sum_{\substack{a\sim A\\ b\sim B }} 1=(1+o(1))  \kappa_2^{-1} \sum_{\substack{a\sim A\\ b\sim B \\ (a,b)=1}}  1.
\]
Similar arguments apply for $k \geq 3$.
\end{remark}

\subsection{Introducing finer-than-dyadic smooth weights} \label{smoothweightsection}
Here we describe a device that allows us to partition a sum smoothly into finer-than-dyadic intervals. Let $\delta \in (0,1/10)$ be small (we will use $\delta=X^{-\eps}$ or $\delta= \log^{-C} X$), and fix a non-negative $C^\infty$-smooth function $\psi$ supported on $[1-\delta,1+\delta]$ and satisfying
\[
|\psi^{(j)}| \ll_j \delta^{-j},\, \, j \geq 0 \quad \text{and} \quad \int_{1/2}^2 \psi(1/t) \frac{dt}{t} = \delta.
\]
Suppose that we want to introduce a smooth partition to bound a sum of the form $\sum_{n \leq N} f_n$. We can write (using a change of variables $t \mapsto t/n$)
\[
\sum_{n \leq N} f_n = \frac{1}{\delta} \sum_{n \leq N} f_n \int_{1/2}^2 \psi(1/t) \frac{dt}{t} =\frac{1}{\delta}   \sum_{n \leq N} \int_{1/2}^{2N} f_n \psi(n/t)  \frac{dt}{t}= \frac{1}{\delta} \int_{1/2}^{2N}  \sum_{n \leq N} f_n \psi(n/t) \frac{dt}{t},
\]
so that 
\[
\bigg| \sum_{n \leq N} f_n \bigg| \leq \frac{1}{\delta} \int_{1/2}^{2N} \bigg| \sum_{n \leq N} f_n \psi(n/t) \bigg| \frac{dt}{t}.
\]
Hence, at the cost of a factor $\delta^{-1} \log N$, it suffices to consider sums of the form
\[
 \sum_{n \leq N} f_n \psi(n/t)
\]
for $t \leq  2N$. Naturally, if the original sum is
$ \sum_{n \sim N} f_n$ (dyadic $n$), then it suffices to consider $\sum_{n \sim N} f_n \psi(n/t)$ for $t \asymp N$ at a cost $\delta^{-1}$. The effect is the same as with the usual smooth partition of unity except that we did not need to explicitly construct the partition functions $\psi$. We will denote $\psi_t (n):=\psi(n/t)$.
\subsection{Lemmas}
For the proof we need the large sieve inequality for roots of quadratic congruences (see \cite[Lemma 3.2]{FI} ).
\begin{lemma} \label{largesievelemma} For any complex numbers $\gamma_n$ we have
\[
\sum_{d\leq D} \sum_{\nu^2+1 \equiv 0 \, (d)} \bigg| \sum_{n \leq N} \gamma_n e_d (\nu n) \bigg|^2 \ll (D+N)  \sum_{n \leq N} |\gamma_n|^2.
\]
\end{lemma}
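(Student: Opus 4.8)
The plan is to deduce this from the standard (duality form of the) large sieve inequality
\[
\sum_{r \leq R}\ \sideset{}{^\ast}\sum_{a\,(r)}\ \Bigl|\sum_{n \leq N}\gamma_n\, e\bigl(\tfrac{an}{r}\bigr)\Bigr|^2 \ll (R^2 + N)\sum_{n \leq N}|\gamma_n|^2,
\]
after first passing from the moduli $d$ to a bounded-denominator family of rationals $\nu/d$ and then bunching these into genuine Farey fractions. The key structural fact is that the roots $\nu\,(d)$ of $\nu^2+1\equiv 0\,(d)$ are \emph{well-spaced}: for two pairs $(\nu_1,d_1)\neq(\nu_2,d_2)$ with $d_1,d_2\leq D$ and $d_i \leq 2D$ one has, whenever $\nu_1/d_1 \neq \nu_2/d_2$,
\[
\Bigl\|\frac{\nu_1}{d_1}-\frac{\nu_2}{d_2}\Bigr\| \geq \frac{1}{d_1 d_2} \geq \frac{1}{4D^2},
\]
which is just the statement that distinct fractions with denominators $\leq 2D$ are $\gg D^{-2}$ apart, while for a \emph{fixed} $d$ the residues $\nu\,(d)$ are obviously distinct mod $d$. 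Thus the points $\{\nu/d \bmod 1 : d \leq D,\ \nu^2+1\equiv 0\,(d)\}$ form a $\delta$-spaced set with $\delta \asymp D^{-2}$, and the analytic large sieve inequality in its general form (for $\delta$-well-spaced points $x_j$, $\sum_j |\sum_n \gamma_n e(x_j n)|^2 \ll (\delta^{-1}+N)\sum_n|\gamma_n|^2$) immediately yields the bound $\ll (D^2+N)\sum_n|\gamma_n|^2$.

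That, however, is weaker than what is asserted (we want $D+N$, not $D^2+N$), so the real work is to exploit that the $\nu$ are roots of a \emph{quadratic} congruence, which makes them far more regularly distributed than a generic $\delta$-spaced set. The standard route, which is exactly the one in \cite[Lemma 3.2]{FI}, is: (i) reduce to $d$ squarefree and odd by standard manipulations (the number of $\nu$ is $\rho_1(d) = \prod_{p\mid d}(1 + \chi_{-4}(p))$ up to bounded factors, so one may splinter off the square part and the prime $2$ at negligible cost); (ii) write each root as $\nu \equiv s\bar t\,(d)$ coming from a factorization $d = t^2+s^2$ in the form $\nu/d \equiv \bar t \cdot (s/d)$ via the correspondence with Gaussian integers $z = t+is$, $|z|^2 = d$; (iii) after opening the square, the diagonal contributes $\ll N \sum|\gamma_n|^2$ and the off-diagonal becomes a sum over differences controlled by counting Gaussian integers in a region, where Poisson summation / a Weyl-type bound over the lattice $\Z[i]$ gives the saving that upgrades $D^2$ to $D$. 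Concretely one is led to estimate $\sum_{d\leq D}\sum_{\nu}\sum_{n_1,n_2}\gamma_{n_1}\overline{\gamma_{n_2}} e_d(\nu(n_1-n_2))$ and to show the off-diagonal is $\ll (D+N)\sum|\gamma_n|^2$ by a Fourier expansion in $\nu$ together with the divisor-type bound $\sum_{d}1 \ll$ (number of $z \in \Z[i]$ with $|z|^2 = d$), which is $\ll d^\eps$ on average.

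The main obstacle is precisely step (iii): getting the off-diagonal down to size $D+N$ rather than $D^2+N$. The point is that for a generic $\delta$-spaced set the large sieve bound $\delta^{-1}+N$ is sharp, and only the arithmetic of the roots of $x^2+1$ — via their parametrization by Gaussian integers and the resulting lattice-point count — produces the extra cancellation. I would handle this exactly as Friedlander and Iwaniec do: expand $e_d(\nu h)$ (with $h = n_1 - n_2$) into additive characters, use the Gaussian-integer parametrization of the roots to turn the $\nu$-sum into a sum over a shifted sublattice of $\Z[i]$, apply Poisson summation on that lattice, and bound the resulting dual sum trivially, the gain coming from the fact that the dual lattice has covolume $\asymp d$, which converts the expected $D^2$ into $D$. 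All remaining estimates (truncations, the contribution of non-squarefree or even $d$, and bounding the number of relevant Gaussian integers) are routine. Since the statement is quoted verbatim from \cite[Lemma 3.2]{FI}, it suffices to cite that proof; I would include the reduction to squarefree odd $d$ and the Gaussian-integer parametrization as the skeleton and refer to \cite{FI} for the remaining harmonic-analysis estimate.
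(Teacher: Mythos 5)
Your proposal is correct and matches the paper, which does not prove this lemma at all but simply quotes it verbatim from \cite[Lemma 3.2]{FI}; citing that proof is exactly what is done here. Your accompanying sketch correctly identifies the essential point — that the parametrization of the roots of $\nu^2+1\equiv 0\,(d)$ by representations $d=s^2+t^2$ effectively reduces the set of fractions $\nu/d$ to Farey fractions of denominator $O(\sqrt{D})$ plus small perturbations, which is what upgrades the generic spacing bound $D^2+N$ to $D+N$.
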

The following lemma gives a truncated version of the Poisson summation formula.
\begin{lemma}\emph{(Poisson summation).}  \label{poissonlemma} Let $\psi$ be as in Section \ref{smoothweightsection} for some $\delta \in (0,1/10)$ and denote $\psi_N(n):=\psi(n/N)$. Let $x \gg 1$ and let $q \sim Q$ be an integer. Let $\eps >0$ and denote
\[
H:= \delta^{-1} (QN)^{\eps} Q/N
\]
Then for any $A >0$
\begin{align*}
\sum_{n \equiv a \, (q)} \psi_N(n) = \frac{N}{q} \hat{\psi}(0) + \frac{N}{q} \sum_{1 \leq |h| \leq H} \widehat{\psi} \left( \frac{h N}{q}\right) e_q (-ah) + O_{A,\epsilon,\psi}((QN)^{-A}),
\end{align*}
where $\hat{f}(h):= \int f(u)e(hu) du$ is the Fourier transform.
\end{lemma}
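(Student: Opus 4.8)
The plan is to apply the Poisson summation formula to the progression $n \equiv a \,(q)$ and then truncate the resulting dual sum using the rapid decay of $\widehat{\psi}$. First I would parametrize the progression by $n = a + qm$ with $m \in \Z$, so that
\[
\sum_{n \equiv a \, (q)} \psi_N(n) = \sum_{m \in \Z} \psi\!\left( \frac{a+qm}{N} \right) = \sum_{m \in \Z} g(m), \qquad g(u) := \psi\!\left( \frac{a+qu}{N} \right).
\]
Since $\psi \in C^\infty$ has compact support, $g$ is Schwartz, and the Poisson summation formula gives $\sum_{m \in \Z} g(m) = \sum_{h \in \Z} \widehat{g}(h)$ (both sides absolutely convergent; with the convention $\widehat{f}(h) = \int f(u)e(hu)\,du$ the identity is unchanged after $h \mapsto -h$).

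The linear change of variables $v = (a+qu)/N$ evaluates
\[
\widehat{g}(h) = \int_{\R} \psi\!\left( \frac{a+qu}{N} \right) e(hu)\, du = \frac{N}{q}\, e_q(-ah)\, \widehat{\psi}\!\left( \frac{hN}{q} \right),
\]
whence $\sum_{n \equiv a\,(q)} \psi_N(n) = \frac{N}{q} \sum_{h \in \Z} \widehat{\psi}(hN/q)\, e_q(-ah)$. The term $h = 0$ contributes $\frac{N}{q}\widehat{\psi}(0)$, the terms $1 \leq |h| \leq H$ give the claimed main sum, and the whole lemma reduces to showing that the tail over $|h| > H$ is $O_{A,\eps,\psi}\big((QN)^{-A}\big)$ (when $H < 1$ the main sum is empty and the same bound must cover all $|h| \geq 1$).

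For the tail I would exploit that $\psi$ is supported on an interval of length $2\delta$ with $|\psi^{(j)}| \ll_j \delta^{-j}$: integrating by parts $j$ times, the boundary terms vanishing, gives
\[
|\widehat{\psi}(\xi)| \leq \frac{1}{(2\pi|\xi|)^j} \int_{\R} |\psi^{(j)}(v)|\, dv \ll_j \delta\, (\delta|\xi|)^{-j}.
\]
Since $q \sim Q$ we have $N/q \asymp N/Q$ and $|hN/q| \asymp |h|N/Q$; taking $j \geq 2$ so that $\sum_{|h| > H} |h|^{-j} \ll H^{1-j}$ and inserting $\xi = hN/q$, the tail is bounded by
\[
\ll_j \frac{N}{Q}\, \delta \left( \frac{\delta N}{Q} \right)^{-j} H^{1-j} = \left( \frac{N}{Q}\, \delta H \right) \left( \frac{\delta N H}{Q} \right)^{-j}.
\]
By the definition $H = \delta^{-1}(QN)^\eps Q/N$ one checks that both $\frac{N}{Q}\delta H = (QN)^\eps$ and $\frac{\delta N H}{Q} = (QN)^\eps$, so the tail collapses to $\ll_j (QN)^{\eps(1 - j)}$, which is $\leq (QN)^{-A}$ once $j$ is chosen large enough in terms of $A$ and $\eps$. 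I do not expect a genuine obstacle here, as this is a standard truncated Poisson summation; the only point requiring care is tracking the $\delta$-dependence through the integration-by-parts bound for $\widehat{\psi}$ and verifying that the prescribed $H$ is precisely the threshold making the tail collapse to an arbitrary negative power of $QN$, which is what the identity $\frac{\delta N H}{Q} = (QN)^\eps$ records.
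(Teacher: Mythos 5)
Your proposal is correct and follows essentially the same route as the paper: apply the classical Poisson summation formula to the progression and truncate the dual sum at $|h|>H$ by integrating $\widehat{\psi}$ by parts $j$ times, using $|\psi^{(j)}|\ll_j\delta^{-j}$ and the choice of $H$ to make the tail an arbitrary negative power of $QN$. Your bookkeeping (keeping the extra factor $\delta$ from the length of the support of $\psi$) is in fact slightly sharper than what the paper records, but the argument is the same.
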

\begin{proof}
By the usual Poisson summation formula we have
\[
\sum_{n \equiv a \, (q)} \psi_N(n)=\frac{N}{q} \sum_{h} \widehat{\psi} \left( \frac{h N}{q}\right) e_q (-ah)
\]
For $|h| > H$ we have by integration by parts $j\geq 2$ times
\[
\widehat{\psi} \left( \frac{h N}{q}\right) = \int \psi(u)e(u hN/q) du \ll_j \delta^{-j} (hN/q)^{-j} \ll_j (QN)^{-j\eps}(h/H)^{-2},
\]
which gives the result.
\end{proof}
We need the following lemma bounding the number of representations of an integer as a sum of two cubes.
\begin{lemma} \label{cubereplemma} The number of representations of $n$ as sum of two cubes $c^3+d^3$ with $c,d > 0$ is bounded by $\ll_\eps n^\eps$.
\end{lemma}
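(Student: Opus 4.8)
The plan is to exploit the factorization $c^3+d^3 = (c+d)(c^2-cd+d^2)$. First I would observe that if $n = c^3+d^3$ with $c,d>0$, then the integer $s:=c+d$ is a divisor of $n$ lying in the range $2 \leq s \leq 2n^{1/3}$. Next, for each fixed admissible value of $s$, I would recover the product $cd$ from the identity $c^3+d^3 = s^3 - 3s\cdot cd$, which gives $cd = (s^3-n)/(3s)$; hence $c$ and $d$ are the two roots of the quadratic $t^2 - s t + (s^3-n)/(3s) = 0$, and there are at most two pairs $(c,d)$ of positive integers associated to a given $s$. Summing over the divisors $s\mid n$, the total number of representations is therefore at most $2\tau(n)$, where $\tau$ denotes the divisor function. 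Finally I would invoke the standard bound $\tau(n) \ll_\eps n^\eps$ to conclude.

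The argument is entirely elementary, and I do not anticipate a genuine obstacle; an approach via factorization in $\Z[\omega]$ (with $\omega$ a primitive cube root of unity) would also work but is unnecessary. The only points needing a word of care are that $s=c+d$ genuinely divides $n$, which is immediate from the factorization, and that one should phrase the middle step so as not to assume a priori that $3s \mid (s^3-n)$: for each divisor $s$ of $n$ one simply forms the quadratic $t^2 - s t + (s^3-n)/(3s)$ and checks whether it has positive integer roots, discarding $s$ otherwise. This yields at most two admissible pairs per divisor, and the bound $\ll_\eps n^\eps$ follows.
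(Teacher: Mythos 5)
Your argument is correct, and it is a genuinely different (and more elementary) route than the one in the paper. You use the rational factorization $c^3+d^3=(c+d)(c^2-cd+d^2)$ to see that $s=c+d$ divides $n$, and then the identity $n=s^3-3s\,cd$ to recover $cd$, so that each divisor $s$ of $n$ determines $\{c,d\}$ as the root set of a single quadratic; this gives the clean explicit bound $2\tau(n)$ with no need to treat common factors separately. The paper instead first reduces to primitive representations $(c,d)=1$ and attaches to each one the residue $\omega=c\bar d \bmod n$, a root of $\omega^3+1\equiv 0\ (n)$; injectivity of this map follows from a size argument ($cd_1,c_1d<n$), and the count of roots is $\ll_\eps n^\eps$ by the Chinese remainder theorem. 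What the paper's method buys is robustness: it does not rely on the special factorization of $X^3+Y^3$ over $\QQ$ and is the template for the analogous representation bound for incomplete norm forms (Lemma \ref{normreplemma}), where one counts ideal factorizations rather than roots of a congruence. Your method buys simplicity and a sharper explicit constant, but is tied to binary forms that split off a linear factor over $\Z$. Your cautionary remark about not assuming $3s\mid(s^3-n)$ a priori is the right way to phrase the middle step; for the purposes of an upper bound one simply discards the divisors $s$ for which the quadratic has no positive integer roots.
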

\begin{proof}
Let $n=c^3+d^3$. By factoring out the common factor it suffices to consider representations where $(c,d)=1$ and $(d,n)=1$. Any such representation produces a root of the equation $\omega^3 +1\equiv 0 \, (n)$ by $\omega=c\bar{d}$ where $d \bar{d} \equiv 1 \,\, (n)$. If we have another representation $n=c_1^3+d_1^3$ with $\omega=c_1\bar{d_1}$, then
\[
c d_1 \equiv c_1 d \,\, (n).
\]
Since $c d_1, c_1 d < c^3+d^3=n$, this implies  $c d_1 = c_1 d $. Since $(c,d)=1$ and $c,c_1,d,d_1 > 0$, we must have $c=c_1$ and $d= d_1$. The lemma now follows since the number of roots $\omega$ is by the Chinese remainder theorem at most $\ll_\eps n^\eps$. The claim also follows trivially from the factorization $c^3+d^3 = (c+d)(c^2-cd+d^2)$ and the divisor bound.
\end{proof}
In the proof of Proposition \ref{flprop} we use the following standard bound for exceptionally smooth numbers
(see \cite[Chapter III.5, Theorem 1]{Ten}, for instance). 
\begin{lemma} \label{smoothlemma} For any $2 \leq Z \leq Y$ we have
\begin{align*}
\sum_{\substack{n \sim Y \\ P^+(n) < Z}} 1 \, \ll \, Y e^{-u/2},
\end{align*}
where $u:= \log Y/\log Z.$
\end{lemma}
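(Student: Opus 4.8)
The plan is to deduce this from the standard Rankin-type bound for smooth numbers. Recall that if $\Psi(Y,Z)$ denotes the number of $Z$-smooth integers up to $Y$, then for $u = \log Y/\log Z$ one has $\Psi(Y,Z) \ll Y \rho(u) + \text{(error)}$ where $\rho$ is the Dickman function, and in particular the crude bound $\Psi(Y,Z) \ll Y u^{-u+o(u)}$, which is far stronger than what we need. Since the statement only asks for $Y e^{-u/2}$, I would not invoke the full Dickman asymptotics; instead the cleanest route is Rankin's trick. For any $\sigma \in (0,1)$,
\[
\sum_{\substack{n \sim Y \\ P^+(n) < Z}} 1 \;\leq\; \sum_{\substack{n \leq 2Y \\ P^+(n) < Z}} \Big(\tfrac{2Y}{n}\Big)^{\sigma} \;\leq\; (2Y)^{\sigma} \prod_{p < Z} \Big(1 - p^{-\sigma}\Big)^{-1}.
\]
Taking logarithms, $\log\prod_{p<Z}(1-p^{-\sigma})^{-1} = \sum_{p<Z}\sum_{k\geq 1} \tfrac{1}{k} p^{-k\sigma}$, and one estimates this by splitting at $p^{\sigma} \leq 2$ versus $p^{\sigma} > 2$; a standard computation (e.g.\ via partial summation against the prime counting function, or directly) gives $\sum_{p<Z}(1-p^{-\sigma})^{-1}$-logarithm $\ll \frac{Z^{1-\sigma}}{(1-\sigma)\log Z}$ for the relevant range of $\sigma$. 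Writing $\sigma = 1 - \tfrac{c}{\log Z}$ for a parameter $c > 0$ to be chosen, we get $Y^{\sigma} = Y e^{-c u}$ and $Z^{1-\sigma} = e^{c}$, so the whole bound becomes $\ll Y \exp(-cu + O(e^{c}/c))$.

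Choosing $c$ to be a suitably small fixed constant (any $c$ with, say, $c \geq 1$ and the implied $O(e^c/c)$ term absorbable, or more carefully optimizing by taking $c = \log(u\log Z)$ when $u$ is large, and handling small $u$ trivially since then $e^{-u/2} \gg 1$ and the bound $\ll Y$ is immediate) yields $\ll Y e^{-u/2}$ in the stated range $2 \leq Z \leq Y$. The only mild care needed is the regime where $u$ is bounded, where $e^{-u/2}$ is bounded below and the trivial bound $\sum_{n \sim Y} 1 \ll Y$ already suffices, so one may assume $u$ is large; and the regime $Z$ close to $Y$, i.e.\ $u$ close to $1$, is likewise covered trivially.

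The only genuinely nontrivial ingredient is the bound $\sum_{p < Z} p^{-\sigma} \ll \frac{Z^{1-\sigma}}{(1-\sigma)\log Z}$ uniformly for $\sigma$ slightly below $1$, together with the convergence of the contribution of prime powers, but this is entirely routine partial summation from the prime number theorem (or even Chebyshev's bound), so there is no real obstacle. Since the excerpt explicitly permits citing \cite[Chapter III.5, Theorem 1]{Ten} for this lemma, the honest approach is simply to quote that reference; the Rankin argument above is the self-contained alternative and is what I would write if a proof were expected rather than a citation. I expect the main (very minor) obstacle to be bookkeeping the uniformity in $Z$ and the transition between the "$u$ large" and "$u$ bounded" cases cleanly.
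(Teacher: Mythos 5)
The paper offers no proof of this lemma: it is quoted directly from \cite[Chapter III.5, Theorem 1]{Ten}, which is exactly the route you identify as "the honest approach", so on that level your proposal matches the paper. The issue is with the self-contained Rankin sketch you offer as the alternative, which contains a genuine gap. The estimate $\sum_{p<Z}p^{-\sigma}\ll Z^{1-\sigma}/((1-\sigma)\log Z)$ is correct when $\sigma$ is bounded away from $1$, but it fails for your choice $\sigma=1-c/\log Z$ with $c$ a fixed constant: there the right-hand side is $e^{c}/c=O(1)$, whereas partial summation gives $\sum_{p<Z}p^{-\sigma}=\int \frac{e^{cv}}{v}\,dv+O(e^c)=\log\log Z+O(e^{c})$, which is unbounded. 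Consequently Rankin's method yields
\[
\sum_{\substack{n\sim Y\\P^{+}(n)<Z}}1\;\ll\;Y e^{-cu}\,(\log Z)\,e^{O(e^{c})},
\]
not $Y\exp(-cu+O(e^{c}/c))$ as claimed. The parasitic factor $\log Z$ can be absorbed into $e^{-u/2}$ only when $u\gg\log\log Z$ (and taking $c$ large to compensate is self-defeating because of the $e^{O(e^{c})}$), while the trivial bound $\ll Y$ only covers $u=O(1)$. This leaves the intermediate range $C_{0}\leq u\leq\log\log Z$ --- for instance $u=10$ and $Z=\exp(\exp(100))$ --- where your argument proves nothing, yet the lemma is asserted uniformly for all $2\leq Z\leq Y$. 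Closing that range requires a different input (Tenenbaum's proof uses the elementary Chebyshev-type iteration on $\Psi(x,y)\log x$ rather than pure Rankin).

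Two mitigating remarks. First, in the only place the paper uses the lemma (the proof of Proposition \ref{flprop}, and its analogues later), one has $Z=W=X^{1/(\log\log X)^{2}}$ and $Y\geq X^{\eta}$, so $u\asymp\eta(\log\log X)^{2}\gg\log\log W$, and your Rankin argument with fixed $c$ would in fact suffice for the application, just not for the lemma as stated. Second, your instinct to simply cite the reference is the right one and is what the paper does; if you do want a self-contained proof valid in the full range, you should either reproduce Tenenbaum's iteration or prove the weaker but uniform-in-the-needed-range statement and record that restriction explicitly.
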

We also require the following elementary bound (see \cite[Lemma 1]{fisieve}, for instance).
\begin{lemma} \label{divisorlemma0} For every square-free integer $n$ and every $k \geq 2$ there exists some $d|n$ such that $d \leq n^{1/k}$ and
\begin{align*}
\tau(n) \leq 2^k \tau(d)^k.
\end{align*}
\end{lemma}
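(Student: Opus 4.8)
The plan is to reduce the claimed inequality to a statement about the number of prime factors. Since $n$ is square-free we may write $n = p_1 p_2 \cdots p_r$ with primes $p_1 < p_2 < \cdots < p_r$, so that $\tau(n) = 2^r$; and for a divisor $d \mid n$ having exactly $t$ prime factors one has $\tau(d) = 2^t$ and hence $2^k \tau(d)^k = 2^{k(t+1)}$, so the bound $\tau(n) \leq 2^k \tau(d)^k$ reads simply $r \leq k(t+1)$. Thus it suffices to exhibit a divisor $d$ of $n$ with $d \leq n^{1/k}$ that has at least $\lfloor r/k \rfloor$ prime factors.

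I would simply take $d := p_1 p_2 \cdots p_t$, the product of the $t := \lfloor r/k \rfloor$ smallest prime factors of $n$ (with the convention $d := 1$ when $t = 0$). Then $d \mid n$ and $d$ has $t$ prime factors, so writing $r = qk + s$ with $0 \leq s \leq k-1$ we get $k(t+1) = kq + k = r - s + k \geq r + 1$, which already gives $\tau(n) = 2^r \leq 2^{k(t+1)} = 2^k \tau(d)^k$.

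The only point that needs an argument is the size bound $d \leq n^{1/k}$, and the key observation is a rearrangement inequality: since $kt \leq r$ and the $p_i$ are increasing, for each $j = 1, \ldots, k$ one has $p_1 p_2 \cdots p_t \leq p_{(j-1)t+1} p_{(j-1)t+2} \cdots p_{jt}$, because the $i$-th factor on the right dominates the $i$-th factor on the left. Multiplying these $k$ inequalities together yields
\[
d^k = (p_1 \cdots p_t)^k \leq \prod_{j=1}^{k}\bigl(p_{(j-1)t+1}\cdots p_{jt}\bigr) = p_1 p_2 \cdots p_{kt} \leq p_1 p_2 \cdots p_r = n,
\]
so $d \leq n^{1/k}$, as required.

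There is no real obstacle here; the argument is entirely elementary. The only things to watch are the floor book-keeping (one must have $kt \leq r$ so that the indices $p_{(j-1)t+1}, \ldots, p_{jt}$ all lie in the range $1, \ldots, r$, which is exactly why $t = \lfloor r/k\rfloor$ is the right choice) and the degenerate range $r < k$, where $t = 0$, $d = 1$, and the assertion $\tau(n) = 2^r \leq 2^k$ is immediate.
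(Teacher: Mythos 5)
Your proof is correct and complete. The paper does not actually prove this lemma -- it simply cites \cite[Lemma 1]{fisieve} -- so there is nothing to compare against line by line; your argument (take $d$ to be the product of the $t=\lfloor r/k\rfloor$ smallest prime factors, check $r\le k(t+1)$, and use the rearrangement inequality $(p_1\cdots p_t)^k\le p_1\cdots p_{kt}\le n$ to get $d\le n^{1/k}$) is a clean, self-contained version of the standard elementary proof, with the edge case $t=0$ handled correctly.
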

From this we get the more general version.
\begin{lemma} \label{divisorlemma} For every integer $n$ and every $k \geq 2$ there exists some $d|n$ such that $d \leq n^{1/k}$ and
\begin{align*}
\tau(n) \leq 2^{k^2} \tau(d)^{k^3}.
\end{align*}
\end{lemma}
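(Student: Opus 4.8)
The plan is to reduce the general case to the square-free case handled by Lemma \ref{divisorlemma0}. First I would write $n = n_1 n_2^2 n_3$, or more simply separate the square-free part from the powerful part: let $n = \prod_p p^{a_p}$ and set $s := \prod_{a_p \geq 1} p$ (the radical of $n$), which is square-free, and note that $n = s \cdot m$ where $m := n/s = \prod_{a_p \geq 2} p^{a_p - 1}$. The key point is that $\tau(n) = \prod (a_p+1) \leq \prod_{a_p \geq 1} 2^{a_p} \cdot$ (nothing lost) -- more precisely $a_p + 1 \leq 2^{a_p}$ for all $a_p \geq 1$, so $\tau(n) \leq 2^{\Omega(n)}$, but this is too lossy. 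Instead I would keep the two parts separate: $\tau(n) \leq \tau(s)\,\tau(m)$ trivially by submultiplicativity (since $(s,m)$ need not be $1$, but $\tau$ is submultiplicative: $\tau(ab) \leq \tau(a)\tau(b)$ always).

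Now apply Lemma \ref{divisorlemma0} to the square-free integer $s$ with the parameter $k$: there is $e \mid s$ with $e \leq s^{1/k}$ and $\tau(s) \leq 2^k \tau(e)^k$. For the powerful part $m$, I would bound $\tau(m)$ crudely against a small divisor of $m$: every prime in $m$ appears to power $\geq 1$, and one checks that for any integer $m$ there is $f \mid m$ with $f \leq m^{1/k}$ and $\tau(m) \leq C_k \tau(f)^{k'}$ for suitable exponents -- in fact the cleanest route is to take $f$ to be the product of a well-chosen subset of the prime powers dividing $m$, balancing $\prod (a_p+1)$ across the subset, exactly as in the square-free argument but now each prime power $p^{a_p}$ plays the role of a single "prime". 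Then set $d := ef$. Since $e \mid s$, $f \mid m$ and $sm = n$, we get $d \mid n$; and $d = ef \leq s^{1/k} m^{1/k} = n^{1/k}$. Finally $\tau(n) \leq \tau(s)\tau(m) \leq 2^k \tau(e)^k \cdot C_k \tau(f)^{k'} \leq 2^{k^2}\tau(d)^{k^3}$ after absorbing constants and using $\tau(e),\tau(f) \leq \tau(d)$, where one checks the stated exponents $2^{k^2}$ and $k^3$ are generous enough to cover the loss.

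The main obstacle -- really the only technical point -- is handling the powerful part $m$ without a square-free hypothesis: Lemma \ref{divisorlemma0} is stated only for square-free $n$, so I cannot apply it directly to $m$. The resolution is to observe that the greedy/balancing argument behind Lemma \ref{divisorlemma0} works verbatim if one treats each maximal prime power $p^{a_p} \| m$ as an indivisible atom with "weight" $\log p^{a_p}$ and "divisor cost" $a_p + 1$: one greedily collects atoms into $f$ until $f$ first exceeds $m^{1/k}$, which keeps $f \leq (m^{1/k})^{?}$ within a bounded power and forces $\tau(m)/\tau(f)$ to be at most a bounded power of $\tau(f)$ -- the only new feature compared to the square-free case is that a single atom can have size up to $m$, but then that atom alone already has $\tau \geq$ a large factor of $\tau(m)$ and we can just take $f$ to be (a divisor of) that atom. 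Once this mild generalization is in hand, the combination step is purely bookkeeping, and choosing the exponents $2^{k^2}, k^3$ with room to spare makes the final inequality immediate.
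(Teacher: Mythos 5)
Your route is genuinely different from the paper's, and it contains a gap at exactly the point you flag as the only technical one. For comparison: the paper writes $n = b_1 b_2^2 \cdots b_{k-1}^{k-1} b_k^k$, where $b_k$ is the largest integer with $b_k^k \mid n$, so that $n/b_k^k$ is $k$-free and each $b_j$ ($j \le k-1$) is square-free; it applies Lemma \ref{divisorlemma0} to each $b_j$ and puts all of $b_k$ into $d$, which is free since $b_k = (b_k^k)^{1/k} \le n^{1/k}$. That reduces everything to the square-free lemma in one step. Your decomposition $n = s\cdot m$ with $s = \mathrm{rad}(n)$ achieves no such reduction: $m = n/\mathrm{rad}(n)$ is again an arbitrary integer (e.g.\ $n = p^2q^3$ gives $m = pq^2$, with an exponent equal to $1$), so the statement you need for $m$ is the lemma itself, and the whole burden falls on your direct ``prime-power atoms'' argument --- at which point the factorization $n = sm$ is superfluous and you could run that argument on $n$ directly.

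That direct argument (take every $k$-th atom in increasing order of size, with special treatment at the top end) can be made to work, but the claim you use to dispose of the hard case is false: if a single atom $q = p^a$ has size comparable to $m$, it is \emph{not} true that this atom carries a large share of $\tau(m)$. Take $m = 2\cdot 3 \cdots p_{r-1}\cdot P$ with $P$ a prime vastly exceeding the product of the small primes: the dominant atom has $\tau(P)=2$ while $\tau(m)=2^{r}$, so taking $f$ to be a divisor of that atom gives $\tau(f)\le 2$ and the bound fails; the correct $f$ here is built from the many small atoms. The actual fix in the atom argument is local: only the largest \emph{selected} atom causes an endpoint problem, and one replaces it by a divisor of the form $p^{\lfloor at/k\rfloor}$ of itself (losing a bounded factor in $\tau$), while keeping the other selected atoms. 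As written, your fallback step would fail, so the proof is incomplete; the paper's device of stripping off the maximal $k$-th power $b_k^k$ is the clean way to avoid this issue entirely.
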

\begin{proof}
Write $n=b_1 b_2^2 \cdots b_{k-1}^{k-1} b_k^k$ with $b_1, \dots, b_{k-1}$ square-free, by letting $b_k$ be the largest integer such that $b_k^k | n$, so that $n/b_k^k$ is $k$-free and splits uniquely into $b_1 b_2^2 \cdots b_{k-1}^{k-1}$ with $b_j$ square-free. We have
\[
\tau(n) \leq \tau(b_1) \tau(b_2)^2 \cdots \tau(b_k)^k.
\]
By Lemma \ref{divisorlemma0} for all $j \leq k-1$ there is some $d_j|b_j$ with $d_j \leq b_j^{1/k}$ and $\tau(b_j) \leq 2^k\tau(d_j)^k$. Hence, for $d=d_1\cdots d_{k-1} b_k$ we have 
\[
d \leq (b_1\cdots b_{k-1})^{1/k} b_k  \leq (b_1\cdots b_{k-1}b_k^k)^{1/k}  \leq n^{1/k}
\]
and
\[
\tau(n) \leq (2\tau(d_1)\cdots \tau(d_{k-1}) \tau(b_k))^{k^2} \leq 2^{k^2}\tau(d)^{k^3}.
\]
\end{proof}

\subsection{Proof of Proposition \ref{typeiprop}}
In this section all implicit constants depend on the parameter $\eta$, that is, $O$ and $\ll$ stand for $O_\eta$ and $\ll_\eta$ throughout this section. Let us first consider the case $j=1$, and denote
\[
\Delta(b):= 1 - \kappa_1\sum_{b=b_0^2+1}2|b_0| .
\]
Then the claim is that for $D_1 \leq X^{3/4-\eta'}$ we have for some $\eta > 0$
\[
\sum_{d \sim D_1} \alpha_d \sum_{\substack{a^2+b^2 \sim X \\
a^2+b^2 \equiv 0 \, (d) \\ (a,b)=1 }} \Delta(b) \ll X^{1-\eta}.
\]
We split the sums over $a$ and $b$ into short sums using Section \ref{smoothweightsection} with $\delta=X^{-\eps}$ with $\eps=4\eta$. Letting $A,B \leq 4 X^{1/2}$ with $A^2+B^2 \sim X$, we consider
\[
\sum_{d \sim D_1} \alpha_d \sum_{\substack{a^2+b^2 \sim X \\
a^2+b^2 \equiv 0 \, (d) \\ (a,b)=1  }} \Delta(b) \psi_A(a)\psi_B(b),
\]
which is trivially bounded by $\ll_\eps \delta^{-1} AB X^{\eps}$ (using $\tau(a^2+b^2) \ll X^\eps$). The contribution from $A \leq X^{1/2-\eta-2\eps}$ or $B \leq X^{1/2-\eta-2\eps}$ is 
\[
\frac{1}{\delta^2} \iint_{\substack{ A,B \leq 4 X^{1/2}\\ \min\{A,B\} \leq X^{1/2-\eta-2\eps} }} \sum_{a,b} |\Delta(b)| \psi_A(a)\psi_B(b) \tau(a^2+b^2) \frac{dAdB}{AB} \ll_\eps  X^{1-\eta},
\]
so that we may restrict to $A,B > X^{1/2-\eta-2\eps}$. The condition $a^2+b^2 \sim X$ is trivially true except in the diagonal cases when
\[
A^2+B^2 = X + O(X^{1-\eps}) \quad \text{or}\quad  A^2+B^2 = 2X + O(X^{1-\eps}),
\]
and the total contribution from these diagonal cases is at most
\[
\sum_{k \in \{1,2\}} \frac{1}{\delta^2} \iint_{\substack{ A,B \leq 4 X^{1/2}\\ \max\{A,B\} > X^{1/2-\eta-2\eps} \\  A^2+B^2 = kX + O(X^{1-\eps})}} \sum_{a,b} |\Delta(b)| \psi_A(a)\psi_B(b) \tau(a^2+b^2) \frac{dAdB}{AB} \ll_\eps  X^{1-\eps/2}.
\]
Hence, it suffices to show that for all $A,B \in [X^{1/2-\eta},10 X^{1/2}]$ we have
\begin{equation} \label{smoothedclaim}
\sum_{d \sim D_1} \alpha_d \sum_{\substack{a^2+b^2 \equiv 0 \, (d) \\ (a,b)=1 } } \Delta(b) \psi_A(a)\psi_B(b) \ll X^{1-2\eps-\eta}.
\end{equation}
The sum can be written as
\[
\begin{split}
\sum_{d \sim D_1} &\alpha_d \sum_{b} \Delta(b)  \psi_B(b) \sum_{\substack{a^2\equiv - b^2 \, (d)  \\ (a,b)=1 }} \psi_A(a) =\sum_{d \sim D_1} \alpha_d \sum_{b} \Delta(b)\psi_B(b) \sum_{\nu^2+1 \equiv 0 \, (d)} \sum_{\substack{a\equiv \nu b \, (d) \\   (a,b)=1 } } \psi_A(a)
\end{split}
\]
Expanding the condition $(a,b)=1$ using the M\"obius function gives
\[
S(A,B):=\sum_{d \sim D_1} \alpha_d \sum_{b} \Delta(b)\psi_B(b) \sum_{c|b} \mu(c) \sum_{\nu^2+1 \equiv 0 \, (d)} \sum_{\substack{a\equiv \nu b \, (d) \\ a \equiv 0 \, (c) } } \psi_A(a).
\]
The contribution from $c > X^\eps$ is trivially bounded by
\[
\begin{split}
&\ll  \sum_{b} |\Delta(b)|\psi_B(b) \sum_{\substack{c|b \\ c > X^\eps}} \sum_{a \equiv 0 \, (c)} \psi_A(a) \tau(a^2+b^2)^2 \\
&\ll X^{\eps/4} \sum_{b} |\Delta(b)|\psi_B(b) \sum_{\substack{c|b \\ c > X^\eps}} \sum_{a \equiv 0 \, (c)} \psi_A(a) \ll X^{1-2\eps-\eps/2}.
\end{split}
\]
For $c \leq X^\eps$ in $S(A,B)$ we apply the Poisson summation formula (Lemma \ref{poissonlemma}) with $H:=X^{\eps'} cD_1/A$ for some small $\eps' >0$ to the sum over $a$ to get 
\[
\begin{split}
&\sum_{d \sim D_1} \alpha_d \sum_{b} \Delta(b)\psi_B(b)\sum_{\substack{c|b \\ c \leq X^\eps}} \mu(c)  \frac{\hat{\psi}(0) A\rho_1(d) }{cd} \\
& \hspace{40pt} +\sum_{d \sim D_1} \alpha_d \sum_{b} \Delta(b)\psi_B(b) \sum_{\substack{c|b \\ c \leq X^\eps}} \mu(c)  \sum_{\nu^2+1 \equiv 0 \, (d)} \frac{X^{\eps'}}{H} \frac{D_1}{d} \sum_{0 < |h| \leq H} \hat{\psi}\left( \frac{h A}{cd}\right) e_d(hb \overline{c} \nu )  \\
&=\sum_{d \sim D_1} \alpha_d  \sum_{\substack{c \leq X^\eps}}\mu(c) \sum_{b} \Delta(bc)\psi_B(bc)  \frac{\hat{\psi}(0) A\rho_1(d) }{d} \\
& \hspace{40pt} +\sum_{\substack{c \leq X^\eps}} \mu(c)\sum_{d \sim D_1} \alpha_d \sum_{b} \Delta(bc)\psi_B(bc)  \sum_{\nu^2+1 \equiv 0 \, (d)} \frac{X^{\eps'}}{H} \frac{D_1}{d} \sum_{0 < |h| \leq H} \hat{\psi}\left( \frac{h A}{cd}\right) e_d(hb  \nu ) .
\end{split}
\]
We can absorb the factor $D_1/d \asymp 1$ into the coefficient $\alpha_{d}$ for the second sum. 

\subsubsection{Bounding the main term} \label{typeimaintermsection} 
We get by Poisson summation (Lemma \ref{poissonlemma}, using $B > X^{1/2-\eta}$) 
\[
\sum_{b}\Delta(bc)\psi_B(bc)  = \sum_{b}\psi_B(bc) -  \sum_{\substack{b_0 \\ b_0^2+1 \equiv 0 \, (c)}}\psi_B((b_0^2+1)) 2 b_0 =  \hat{\psi(0)} B \left(  \frac{1}{c}  - \frac{\rho_1(c)}{c}\right)+ O(X^{-10}).
\]
Summing over $c$ and recalling the definition (\ref{kappadef}) of $\kappa_1$ we see that the main terms cancel in
\[
\sum_{c \leq X^{\eps} } \frac{\mu(c)}{c^2} (1 - \kappa_1\rho_1(c)) =\sum_{c  } \frac{\mu(c)}{c^2} - \kappa_1 \sum_{c  } \frac{\mu(c) \rho_1(c)}{c^2}  + O(X^{-\eps}) \ll X^{-\eps},
\]
so that the total contribution from the main term is
\[
\ll\hat{\psi}(0)^2 AB X^{-\eps/2} \ll X^{1-2\eps-\eps/2},
\]
which is sufficient for (\ref{smoothedclaim}).
\subsubsection{Bounding the non-zero frequencies} \label{nonzerotypeisection}
Recall that we need to show that
\[
\begin{split}
\sum_{\substack{c \leq X^\eps}} \sum_{d \sim D_1} \sum_{\nu^2+1 \equiv 0 \, (d)}\bigg|\sum_{b} \Delta(bc)\psi_B(bc)   \frac{X^{\eps'}}{H}  \sum_{0 < |h| \leq H} \hat{\psi}\left( \frac{h A}{cd}\right) e_d(hb  \nu ) \bigg| \ll X^{1-2\eps -\eta}.
\end{split}
\]
We can remove the smooth weight $\hat{\psi}\left( \frac{h A}{cd}\right)$ by applying Mellin transform to separate the variables. Thus, we need to show for any bounded coefficients $c_h$ we have
\[
\sum_{\substack{c \leq X^\eps}}\sum_{d \sim D_1} \sum_{\nu^2 +1 \equiv 0 \, (d)}  \bigg|\sum_{b} \Delta(bc) \psi_B(bc)  \sum_{h \leq H} c_h e_{d}(hb\nu) \bigg| \ll H X^{1-5\eps -\eta}.
\]
Write $\Delta(b)= \Delta_1(b) - \Delta_2(b)$ with
\[
\Delta_1(b) = 1  \quad \text{and}  \quad \Delta_2(b)= \kappa_1\sum_{b=b_0^2+1}2|b_0|.
\]
It then suffices to show that for all $c \leq X^{\eps}$ we have (recalling $H = X^{\eps'} c D_1 / A$ and $A \leq 2 X^{1/2}$)
\begin{equation} \label{fouriersmoothclaim}
\sum_{d \sim D_1}   \sum_{\nu^2 +1 \equiv 0 \, (d)} \bigg|\sum_{b} \Delta_k (bc) \psi_B(bc) \sum_{h\leq H} c_h  e_{d}(hb\nu)\bigg| \ll D_1 X^{1/2-3\eps -\eta} 
\end{equation}
Let us consider this for $k=2$, the case $k=1$ being similar but easier. Let $N:= H B/c$ and denote
\[
\gamma_n := \sum_{n=hb} c_h\Delta_2(bc) \psi_B(bc) \bm{1}_{h \leq H}
\]
Then by Cauchy-Schwarz and Lemma \ref{largesievelemma} the left-hand side in (\ref{fouriersmoothclaim}) is at most
\[
\ll D_1^{1/2}(D_1 + HB)^{1/2} \bigg( \sum_{n \ll N} |\gamma_n|^2 \bigg)^{1/2}.
\]
We have
\begin{equation} \label{divisorreduction}
\sum_{n \ll N} |\gamma_n|^2 \ll B^{1/2}  \sum_{n \ll N} \tau(n) \gamma_n  \ll B \sum_{b_0 \leq 2 B^{1/2}} \sum_{h \leq 2 H'} \tau(h(b_0^2+1)) \ll B^{3/2} H X^{\eps}.
\end{equation}
Thus, the final contribution to  (\ref{fouriersmoothclaim}) is (using $B \leq 2 X^{1/2}$, $H \leq c X^{\eps'}D_1 /A$, $c \leq X^\eps$, and $A \geq X^{1/2-\eps}$)
\[
\ll X^{\eps} (D_1 (B^{3/2} H)^{1/2} + D_1^{1/2} B^{5/4} H) \ll X^{6\eps} D_1^{3/2} X^{1/8} .
\]
Hence, (\ref{fouriersmoothclaim}) is satisfied provided that
\[
 D_1^{3/2} X^{1/8} \ll D_1 X^{1/2-9\eps -\eta},
\]
which holds if
\[
D_1 \ll X^{3/4-3\eta'}.
\]
for $\eps=4\eta$ and $\eta'=100\eta$.
\subsubsection{Case of $a^2+(c^3+d^3)^2$} \label{typeicubesection}
The details are essentially the same, with the only change that we let
\[
\Delta(b) := 1- \kappa_2 \sum_{\substack{b=c_0^3+d_0^3 \\ c_0,d_0 >X^{1/6-\eps}}} \Omega(c_0,d_0),
\]
bounding the contribution from $\min\{c_0,d_0\} \leq X^{1/6-\eps}$ trivially. The contribution of the main term from Poisson summation is handled by the same argument as in Section \ref{typeimaintermsection}, applying Poisson summation to the variables $c_0,d_0$ to extract $\rho_2(c)$.

For the non-zero frequencies, the argument is essentially the same as in Section \ref{nonzerotypeisection}. Let 
\[
\Delta'_2(b) := \kappa_2 \sum_{\substack{b=c_0^3+d_0^3 \\ c_0,d_0 >X^{1/6-\eps}}} \Omega(c_0,d_0) \quad \text{and} \quad 
\gamma'_n := \sum_{n=hb}c_h \Delta'_2(bc) \psi_B(bc) \psi_{H'}(h).
\]
To get the bound corresponding to (\ref{divisorreduction}) one also has to use Lemma \ref{cubereplemma} to bound the number of representations as a sum of two cubes, which gives
 \[
\sum_{n \sim N} |\gamma'_n|^2 \ll X^{\eps\/2} B^{1/3} \sum_{n \sim N} \tau(n) \gamma'_n  \ll X^{\eps/2} B^{2/3} \sum_{c_0,d_0 \leq 2 B^{1/3}} \sum_{h \leq 2 H'} \tau(h(c_0^3+d_0^3)) \ll B^{4/3} H' X^{\eps}. 
 \]
The end bound is (using $B \leq 2 X^{1/2}$, $H_1 \leq c X^{\eps'}D_1 /A$, $c \leq X^\eps$, and $A \geq X^{1/2-\eps}$)
\[
\ll X^{2\eps} (D_1 (B^{4/3} H')^{1/2} + D_1^{1/2} B^{7/6} H') \ll X^{6\eps} D_1^{3/2} X^{1/12},
\]  
which is sufficient for the bound (\ref{fouriersmoothclaim}) if $D_2 \leq X^{5/6-\eta'}$ with $\eps=4\eta$ and $\eta'=100\eta$. \qed

\subsection{Proof of Proposition \ref{flprop}} \label{flpropsection}
To prove Proposition \ref{flprop} we write
\[
\bm{1}_{(n,P(W))=1} = \sum_{e|(n,P(W))} \mu (e) =\sum_{\substack{e|(n,P(W)) \\ e \leq X^{\eta}}} \mu (e)+\sum_{\substack{e|(n,P(W)) \\ e > X^{\eta}}} \mu (e).
\]
For $e \leq X^{\eta}$ we get by Proposition \ref{typeiprop} with level $D_j X^{\eta}$
\[
\sum_{e \leq X^{\eta/2}} \mu (e) \sum_{\substack{ d \sim D_j \\ n \sim X/de}} \alpha(d) (a^{(j)}_{den} - b_{de n}) \ll_C X \log^{-C} X,
\]
since the coefficient of $d'=ed$ is bounded by $\ll_\eta X^{\eta/2}$.

For $e > X^{\eta}$ we note that $e|P(W)$ implies that there is a divisor $e_1| e$ with $e_1 \in [X^{\eta},X^{2\eta}]$. Thus, by crude bounds, Lemma \ref{divisorlemma}, and Proposition \ref{typeiprop} we get
\[
\begin{split}
 \sum_{\substack{e > X^{\eta}\\e | P(W)}} \mu (e)& \sum_{\substack{ d \sim D_j \\ n \sim X/de}} \alpha(d) (a^{(j)}_{den} - b_{de n}) \ll \sum_{\substack{e_1 \in [X^{\eta},X^{2\eta}] \\ P^{+}(e_1) < W}} (a^{(j)}_{e_1n} +b_{e_1 n}) \tau(n)^2 \\
& \ll  \sum_{\substack{e_1 \in [X^{\eta},X^{2\eta}] \\ P^{+}(e_1) < W}} \sum_{d \leq X^{1/10}} \tau(d)^{O(1)}(a^{(j)}_{de_1n} +b_{d e_1 n}) \\ 
& \ll \sum_{\substack{e_1 \in [X^{\eta},X^{2\eta}] \\ P^{+}(e_1) < W}} \sum_{d \leq X^{1/10}} \tau(d)^{O(1)} b_{de_1 n} + O(X^{1-\eta})
\\
& \ll X  \sum_{\substack{e_1 \in [X^{\eta},X^{2\eta}] \\ P^{+}(e_1) < W}} \sum_{d \leq X^{1/10}} \frac{\tau(d)^{O(1)} \tau(d) \tau(e_1)}{de_1}+O(X^{1-\eta})  \\
& \ll X (\log^{O(1)} X) \sum_{f \leq X^{\eta/10} } \frac{\tau(f)^{O(1)} }{f}\sum_{\substack{e_2 \in [X^{\eta}/f,X^{2\eta}/f] \\ P^{+}(e_1) < W}}  \frac{1}{e_2}+O(X^{1-\eta}) \\
&\ll_C X \log^{-C} X 
\end{split}
\]
by applying Lemma \ref{smoothlemma}  to get the last bound. \qed
\section{Type II information: initial reductions} \label{typeiisection}
Let
\[
\varphi_{\Z[i]}(d) := |\{ v \in \Z[i]/d \Z[i]: \, (v,d)=1  \}|.
\]
 For coefficients $\beta(n)$ we denote $\beta_z := \beta(|z|^2)$ for $z \in \Z[i]$ and for any $Y > 0$ write $\psi_Y(y):= \psi(y/Y)$. It is convenient for us to give the following definition of the Siegel-Walfisz property over Gaussian integers, where the variable is weighted by a smooth function supported on a polar box. 
\begin{definition}(Siegel-Walfisz property on $\Z[i]$). We say that coefficients $\beta(n)$, supported on $n \sim N$, satisfies \emph{the Siegel-Walfisz property} if the following holds for any $C>0$ and any smooth weight $\psi$ as in Section \ref{smoothweightsection} with $\delta^{-1}=\log^{O(1)} N$. For any $\theta\in[\pi,5\pi],$ $N_1 \sim N$, $v \in \Z[i]$ and $d = \log^{O(1)}N$ we have
\begin{equation} \label{swproperty}\begin{split}
\sum_{\substack{ z \equiv v \,(d)}} \beta_{z}\psi_{N_1}(|z|^2) \psi_\theta(\arg z+2\pi) =\frac{\widehat{\psi_\theta}(0)\bm{1}_{(v,d)=1}}{\varphi_{\Z[i]}(d)}\sum_{z} \beta_{z} \psi_{N_1}(|z|^2)   + O_C( N \log^{-C} N).
\end{split}
\end{equation}
\end{definition}

For all coefficients $\beta(n)$ that we consider this property follows by standard arguments (similar to \cite[Section 16]{FI}). More precisely, we will need this for $\beta(n)$ which have a large prime factor, that is, for $\beta(n)$ such that for some $PK\asymp N$ with $P \gg W$ and for some divisor bounded coefficients $\gamma(k)$ supported on $(k,P(W))=1$ we have
\begin{align} \label{betageneric}
    \beta(n) = \psi_N(n)\sum_{\substack{n=pk \\ p \equiv 1\, (4)\\ k \sim K}} \gamma(k) .
\end{align}
Fixing $|u|^2=k$  and relabeling, it then suffices to show that for $P\gg W$  we have
\[
\sum_{\substack{   z \equiv v  \,(d)}} \Lambda(|z|^2)\psi_{P}(|z|^2) \psi_\theta(\arg z+2\pi) =\frac{\widehat{\psi_\theta}(0)\bm{1}_{(v,d)=1}}{\varphi_{\Z[i]}(d)}\sum_{z} \Lambda(|z|^2) \psi_{P}(|z|^2)   + O_C( P \log^{-C} N).
\]
Using Dirichlet characters to detect $z\equiv v\, (d)$ and the Fourier series expansion to $\psi_\theta$ the contribution from the principal character and the zeroth frequency  in the Fourier expansion of $\psi_\theta$ cancels the main term on the right-hand side of \eqref{swproperty}.  For non-principal characters or non-zero frequencies we use Melling inversion for $\psi_P$ and shift the contour past $s=1$. Shifting the contour is justified by applying the Siegel-Walfisz bound \cite[Lemma 16.1]{FI} with the character
\[
\psi(z) :=\chi(z) \left(\frac{z}{|z|}\right)^{k}
\]
where $\chi$ a Dirichlet character of $(\Z[i]/d \Z[i])^\times$.

Our Type II information for Theorems \ref{maintheorem1} and \ref{maintheorem2} is given by the following.
\begin{prop}\emph{(Type II information).}\label{typeiiprop}
Let $W=X^{1/(\log \log X)^2}$. For $j \in \{1,2\}$ let $M_j N_j\asymp X$ and for any small $\eta >0$ let
\[
X^{1/4+\eta} \ll N_1 \ll X^{1/3-\eta}
\]
and
\[
X^{1/6+\eta} \ll N_2 \ll X^{2/9-\eta}.
\]
Let $\alpha(m)$ and $\beta(n)$ be bounded coefficients.  Assume that $\beta(n)$ is supported on square-free numbers with $(n,P(W))=1$ and satisfies the Siegel-Walfisz property (\ref{swproperty}) and assume that $\alpha(m)$ is supported on $(m,P(W))=1$. Then for any $C > 0$
\[
\sum_{\substack{ m \sim M_j \\ n \sim N_j}} \alpha(m) \beta(n) (a^{(j)}_{mn} - b_{mn}) \ll_{\eta,C} X \log^{-C} X.
\]
\end{prop}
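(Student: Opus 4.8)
## Proof proposal for Proposition \ref{typeiiprop}

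The plan is to follow the strategy outlined in the introduction, which reduces the estimate to a Weil-bound counting problem on a nonsingular curve. First I would use the device of Heath-Brown (announced in the sketch, to be stated as Proposition \ref{typeiiproper}) to reduce to the special case where $\beta(n)$ satisfies the Siegel-Walfisz property with main term $0$, i.e.\ condition (\ref{swmain0}). Concretely, one writes $\beta(n) = \beta^\flat(n) + \beta^\sharp(n)$ where $\beta^\sharp$ carries the ``expected'' contribution captured by Hecke characters of bounded conductor and bounded frequency (a smoothed linear combination of the $|z|^{it}(z/|z|)^k$), and $\beta^\flat$ has vanishing projection onto all such characters; the $\beta^\sharp$-part is then absorbed into a Type I estimate via Proposition \ref{typeiprop}, so it suffices to treat $\beta$ satisfying (\ref{swmain0}). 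For such $\beta$ the goal becomes to show
\[
\sum_{\substack{m\sim M_j\\ n\sim N_j}} \alpha(m)\beta(n)\sum_{mn = a^2+(\cdots)^2} 1 \ll_C X^{3/4}/\log^C X
\]
(and the analogous bound with $b_{mn}$, which is the classical $a^2+b^2$ count and handled identically), since $a^{(j)}_{mn}-b_{mn}$ is a difference of two such sums.

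Next I would set up the Gaussian-integer parametrization exactly as in \cite[Sections 4--5]{FI}: writing $m=|w|^2$, $n=|z|^2$, $\overline w z = (b^2+1) + ia$ (resp.\ $(c^3+d^3)+ia$), the Type II sum becomes $\sum_{|w|^2\sim M}\alpha_w\sum_{|z|^2\sim N}\beta_z\sum_{\Re(\overline w z) = b^2+1}1$ after dealing with the common-factor and unit-ambiguity bookkeeping. Then I would apply Cauchy-Schwarz in the $w$ variable to eliminate the (unknown) coefficients $\alpha_w$, opening the square into a sum over pairs $z_1,z_2$ with $|z_1|^2,|z_2|^2\sim N$. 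Introducing finer-than-dyadic smooth weights $\psi(b_j/B_j)$ with $B_j\ll X^{1/4}$ via Section \ref{smoothweightsection}, the inner sum over $w$ produces the divisibility condition $b_1^2+1 \equiv a(b_2^2+1)\ (\Delta)$ with $\Delta=\Im(\overline{z_1}z_2)$ and $a\equiv (r_1r_2+s_1s_2)/(r_1^2+s_1^2)\ (|\Delta|)$. The diagonal $\Delta=0$ contributes $\ll N \cdot (\text{trivial})$, which is acceptable precisely because $N\gg X^{1/4+\eta}$ (resp.\ $N_2\gg X^{1/6+\eta}$), exactly as in \cite[Section 6]{FI}.

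For $\Delta\neq 0$ I would apply truncated Poisson summation (Lemma \ref{poissonlemma}) to both $b_1$ and $b_2$ sums. This splits the count into a main term governed by $N(a;\Delta) = (B_1B_2/\Delta^2)\,|\{x_1,x_2\,(\Delta): x_1^2+1\equiv a(x_2^2+1)\,(\Delta)\}|$ and an error term which is an average over nonzero frequencies $(h_1,h_2)$ of the complete sums $S(a,h_1,h_2;\Delta)$ running over the curve $x_1^2+1\equiv a(x_2^2+1)$ in $(\Z/\Delta)^2$. For generic $a$ this curve is nonsingular, so the Weil bound (to be recorded as one of the corollaries in Section \ref{weilsection}) gives $S(a,h_1,h_2;\Delta)\ll |\Delta|^{1/2+\eps}\ll N^{1/2+\eps}$; inserting this and summing over frequencies and over $z_1,z_2$ shows the error term is $\ll X^{3/4}/\log^C X$ provided $N\ll X^{1/3-\eta}$ (resp.\ $N_2\ll X^{2/9-\eta}$ — here one also invokes Lemma \ref{cubereplemma} to control the number of cube representations). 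For the main term, writing $N(a;p^k)=p^k+\eps_{p^k}(a)$ with $\eps_p(a)\ll p^{1/2}$ (Weil again, generic $a$) and using the Chinese remainder theorem gives $N(a;\Delta)=|\Delta|\sum_{d|\Delta}\eps_d(a)/d$ with $\eps_d(a)\ll d^{1/2+\eps}$; the tail $d>\log^{2C}X$ is negligible, and the remaining short sum over $d$ is handled by opening $\Delta\equiv 0\ (d)$ and applying the Siegel-Walfisz property (\ref{swmain0}) to the $z_1,z_2$ sums, exactly as in \cite[Section 16]{FI}. The main obstacle is organizing the Weil-bound input: one must carefully identify the ``non-generic'' values of $a$ (where the curve degenerates or the cohomology jumps) and show their total contribution is a lower-order term, and one must track uniformity in $\Delta$ and in the frequencies $h_1,h_2$ through the completed-sum estimates — this is where the bulk of the technical work lies, and it is deferred to Sections \ref{weilsection} and \ref{typeiiproofsection}.
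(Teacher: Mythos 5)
Your proposal is correct and follows essentially the same route as the paper: the Heath-Brown-type splitting $\beta=\beta_N+(\beta-\beta_N)$ to reduce to the case of Siegel--Walfisz with vanishing main term (Proposition \ref{typeiiproper}), then the Gaussian parametrization, Cauchy--Schwarz in $w$, Poisson summation, the Weil bound for the complete sums $S(a,\bm{h};\Delta)$ and the point counts $N(a;\Delta)$, truncation of the $d$-sum, and the Siegel--Walfisz input for the main term. The only cosmetic difference is that the paper realizes the first reduction with the simpler renormalized coefficient $\beta_N(n)=\bm{1}_{(n,P(W))=1}\sum_{n}\beta(n)/\sum_n \bm{1}_{(n,P(W))=1}$ handled by the fundamental lemma, rather than a full projection onto Hecke characters, and it additionally localizes $z_1,z_2$ in polar boxes to remove the cross-condition and treat the near-diagonal $\arg z_1\approx\arg z_2$ — technical points your outline correctly defers.
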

We begin by reducing to a case where $\beta(n)$ is oscillatory. For this we make the following definition.
\begin{definition}(Siegel-Walfisz property with main term equal to $0$). We say that coefficients $\beta(n)$, supported on $n \sim N$, satisfies \emph{the Siegel-Walfisz property with main term equal to $0$} if the following holds for any $C>0$ and any smooth weight $\psi$ as in Section \ref{smoothweightsection} with $\delta^{-1}=\log^{O(1)} N$. For any $\theta\in[\pi,5\pi],$ $N_1 \sim 2N$, $v \in \Z[i]$ and $d = \log^{O(1)}N$ we have
\begin{equation} \label{swmain0}\begin{split}
\sum_{\substack{ z \equiv v \,(d)}} \beta_{z}\psi_{N_1}(|z|^2) \psi_\theta(\arg z+2\pi) \ll_C N \log^{-C} N.
\end{split}
\end{equation}
\end{definition}
 We show in Section \ref{reductionsection} that for Proposition \ref{typeiiprop} it suffices to prove the following.
\begin{prop} \label{typeiiproper}
Let $W=X^{1/(\log \log X)^2}$. For $j \in \{1,2\}$ let $M_j N_j\asymp X$ and for any small $\eta >0$ let
\[
X^{1/4+\eta} \ll N_1 \ll X^{1/3-\eta}
\]
and
\[
X^{1/6+\eta} \ll N_2 \ll X^{2/9-\eta}.
\]
Let $\alpha(m)$ and $\beta(n)$ be bounded coefficients. Assume that $\beta(n)$  is supported on square-free numbers with $(n,P(W))=1$ and satisfies the Siegel-Walfisz property with main term equal to 0, that is, (\ref{swmain0}).  Assume that $\alpha(m)$ is supported on $(m,P(W))=1$.  Then for any $C > 0$
\[
\sum_{\substack{ m \sim M_j \\ n \sim N_j}} \alpha(m) \beta(n) a^{(j)}_{mn}  \ll_{\eta,C} X \log^{-C} X
\]
and
\[
\sum_{\substack{ m \sim M_j \\ n \sim N_j}} \alpha(m) \beta(n) b_{mn}  \ll_C X \log^{-C} X.
\]
\end{prop}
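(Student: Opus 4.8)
The plan is to handle the two assertions of Proposition \ref{typeiiproper} separately, since the second one is considerably easier. For the sum involving $b_{mn}$, I would expand $b_{mn}=\sum_{mn=a^2+b^2,\,(a,b)=1,\,b>0}1$ via the factorization $a^2+b^2=(b+ia)(b-ia)$ over $\Z[i]$. Writing $m=|w|^2$, $n=|z|^2$ with the coprimality and splitting conditions (exactly as in the sketch, reducing to $(m,n)=1$ and discarding the contribution of non-primitive or inert primes trivially, using that $\alpha,\beta$ are supported away from small prime factors), the constraint becomes $\overline{w}z=b+ia$ with $b=\Re(\overline{w}z)$. Then $b_{mn}$-sum becomes roughly
\[
\sum_{|w|^2\sim M}\alpha_w\sum_{|z|^2\sim N}\beta_z\,\bm{1}_{\Re(\overline{w}z)>0},
\]
and the point is that the inner sum over $w$ (or a suitable Cauchy--Schwarz followed by opening the square and applying Poisson, or more directly a Mellin/Fourier decomposition of the half-plane condition $\Re(\overline{w}z)>0$ into Hecke characters $|z|^{it}(z/|z|)^k$) reduces everything to the Siegel--Walfisz hypothesis (\ref{swmain0}) for $\beta$, which forces the whole sum to be $\ll_C X\log^{-C}X$. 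This is essentially the argument of \cite[Section 16]{FI} adapted to main term $0$, so I would cite that and indicate the modifications.

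For the main assertion, the sum involving $a^{(j)}_{mn}$, the strategy is the one outlined in the introduction's sketch. I would first reduce to $(m,n)=1$ (the non-coprime part being negligible by the support of $\alpha,\beta$ and divisor bounds), pass to Gaussian integers writing $m=|w|^2$, $n=|z|^2$, $\overline{w}z=(b^2+1)+ia$ for $j=1$ (resp.\ $(c^3+d^3)+ia$ for $j=2$), so that the sum becomes
\[
\sum_{|w|^2\sim M}\alpha_w\sum_{|z|^2\sim N}\beta_z\sum_{\Re(\overline{w}z)=b^2+1}2b
\]
(and analogously with $\Omega(c,d)$ for $j=2$, using Lemma \ref{cubereplemma} to control multiplicity). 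Next I would apply Cauchy--Schwarz in the $w$-variable to eliminate $\alpha_w$, following \cite[Sections 4--6]{FI}; opening the square introduces a pair $z_1,z_2$ and, after detecting the congruence $b_1^2+1\equiv a(b_2^2+1)\ (\Delta)$ coming from $\overline{w}(z_1-z_2)\equiv 0$, one is reduced precisely to bounding
\[
\sum_{z_1,z_2}\beta_{z_1}\overline{\beta_{z_2}}\sum_{\substack{b_1,b_2\\ b_1^2+1\equiv a(b_2^2+1)\,(\Delta)}}\psi(b_1/B_1)\psi(b_2/B_2)\ll_C X^{1/2}N\log^{-C}X
\]
with $B_1,B_2\ll X^{1/4}$ and $\Delta,a$ as in the sketch. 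Here I separate the diagonal $\Delta=0$ (bounded trivially, fine since $N\gg X^{1/4+\eta}$ for $j=1$, $N\gg X^{1/6+\eta}$ for $j=2$) from $\Delta\neq 0$.

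For $\Delta\neq 0$ I would apply Poisson summation (Lemma \ref{poissonlemma}) to both $b_1$ and $b_2$, producing the main term governed by $N(a;\Delta)$ and an error term governed by the incomplete exponential sums $S(a,h_1,h_2;\Delta)$ over the curve $x_1^2+1\equiv a(x_2^2+1)\ (\Delta)$. The error term is handled by the Weil-bound corollaries promised in Section \ref{weilsection}, giving $S(a,h_1,h_2;\Delta)\ll|\Delta|^{1/2+\eps}\ll N^{1/2+\eps}$ for generic $a$ (the non-generic $a$, corresponding to a degenerate curve, forming a sparse set handled separately), and this is where the constraint $N\ll X^{1/3-\eta}$ (resp.\ $N\ll X^{2/9-\eta}$ for $j=2$) enters; summing over $1\le|h_1|,|h_2|\le H$ with the truncation from Lemma \ref{poissonlemma} is then routine. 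For the main term I would use the multiplicativity $N(a;\Delta)=|\Delta|\sum_{d\mid\Delta}\eps_d(a)/d$ together with $\eps_d(a)\ll d^{1/2+\eps}$ (again Weil, from Section \ref{weilsection}) to truncate the $d$-sum at $d\le\log^{2C}X$ at negligible cost, and then bound the resulting sum
\[
\sum_{d\le\log^{2C}X}\sum_{\substack{z_1,z_2\\ \Delta\equiv 0\,(d)}}\beta_{z_1}\overline{\beta_{z_2}}\frac{\eps_d(a)}{d|\Delta|}
\]
by the argument of \cite[Section 16]{FI}, invoking the Siegel--Walfisz property with main term $0$, (\ref{swmain0}), to gain $\log^{-C}X$. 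The case $j=2$ runs in parallel, with $b^2+1$ replaced by $c^3+d^3$, $\rho_1$ by $\rho_2$, the extra weight $\Omega(c,d)$ absorbed by partial summation and the finer-than-dyadic device of Section \ref{smoothweightsection}, and Lemma \ref{cubereplemma} used wherever a multiplicity bound for sums of two cubes is needed.

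The main obstacle, and the place where genuinely new work is required compared to \cite{FI}, is the analysis of the exponential sum $S(a,h_1,h_2;\Delta)$ and the point count $N(a;\Delta)$ for the \emph{non-homogeneous} curve $x_1^2+1\equiv a(x_2^2+1)$: one must verify that for all but a controlled set of moduli $\Delta$ and residues $a$ the curve is non-singular mod every prime power dividing $\Delta$, carefully track the finitely many bad primes (including $p=2$ and primes where the curve degenerates, e.g.\ $a\equiv 1$), and establish the square-root cancellation uniformly; the bookkeeping of these exceptional cases, together with checking that they contribute acceptably, is the technical heart of the argument and is precisely what is deferred to Sections \ref{weilsection} and \ref{typeiiproofsection}.
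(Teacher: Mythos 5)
Your overall strategy is the same as the paper's (Cauchy--Schwarz in $w$, parametrization $i\Delta w=z_2(b_1^2+1)-z_1(b_2^2+1)$, Poisson summation, Weil bounds for both the point count $N(a;\Delta)$ and the incomplete sums $S(a,h_1,h_2;\Delta)$, truncation of the $d$-sum, and Siegel--Walfisz with main term $0$), and you have correctly identified where the ranges $N_1\ll X^{1/3-\eta}$, $N_2\ll X^{2/9-\eta}$ enter. However, there is a genuine gap in the reduction step. After opening the square, each pair $(b_1,b_2)$ satisfying the congruence determines a unique $w$, but that $w$ only contributes if $|w|^2\sim M$; so the bilinear sum you claim to be ``reduced precisely to'' in fact carries the extra weight $F_{M}\bigl(\bigl|\tfrac{z_2(b_1^2+1)-z_1(b_2^2+1)}{\Delta}\bigr|^2\bigr)$ coupling $z_1,z_2,b_1,b_2$. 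This weight can be frozen (and hence removed) only when $|\Delta|\gg N/\log^{O(1)}X$, which is why the paper first localizes $z_1,z_2$ in finer-than-dyadic polar boxes and splits off the regime $\arg z_1=\arg z_2+O(\sqrt{\delta})\pmod{\pi}$. In that small-$|\Delta|$ regime dropping $F_M$ is fatal: the congruence alone admits $\asymp B_1B_2/|\Delta|$ pairs $(b_1,b_2)$, and summing over the $\asymp\delta_1 N^2$ pairs $(z_1,z_2)$ with $|\Delta|\sim\delta_1N$ gives a contribution $\asymp B_1^2B_2^2N\asymp XN$ for \emph{each} dyadic $\delta_1$, with no mechanism to gain $\log^{-C}X$. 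One must instead keep $F_M$ and exploit that it confines $b_2$ to an interval of length $\asymp\delta_1X^{1/2}/B_1$, which supplies the missing factor $\delta_1$; this is the content of the paper's Section \ref{smalldeltasection} and is absent from your plan.

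A second, smaller omission: reducing to $(m,n)=1$ before Cauchy--Schwarz does not dispose of the terms with $(z_1,z_2)>1$ after opening the square, and for such pairs the identity $(\Delta,|z_1|^2)=1$ underlying the definition of $a\equiv z_2/z_1\,(|\Delta|)$ fails, so the whole congruence setup breaks down there. The paper treats these separately (Section \ref{diag2section}), using that $\beta$ is supported on $(n,P(W))=1$ so any common Gaussian factor has norm at least $W$, making the contribution negligible; you would need to include such an argument. Your treatment of the $b_{mn}$ case and of the diagonal $\Delta=0$ is fine, and your identification of the curve-singularity bookkeeping as the arithmetic input is accurate, but note that the paper also needs the Siegel--Walfisz hypothesis transported to polar boxes (Lemma \ref{polarswlemma}) before it can be applied to the truncated main term.
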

We will give the proof of this proposition in Section \ref{typeiiproofsection}. 

\subsection{Proof that Proposition \ref{typeiiproper} implies Proposition \ref{typeiiprop}} \label{reductionsection}
The idea is similar to one that appears in Heath-Brown's work \cite{hb}. Let $W:=X^{1/(\log\log x)^2}$. For a given $\delta>0$  let $\delta'=\delta (\log X)^{-C}$ for some large $C>0$ and let $\psi'$ be a smooth function as in Section \ref{smoothweightsection} with parameter $\delta'$. Let 
\begin{align}\label{approxconstruction}
 \CC(\beta, N') := \bigg( \sum_{(z,P(W))=1} \beta(|z|^2) \psi'_{N'}(|z|^2) \bigg) \bigg( \sum_{(z,P(W))=1}  \psi'_{N'}(|z|^2) \bigg)^{-1}   
\end{align}
Given the coefficients $\beta(n)$ we write
\begin{equation*} 
  \beta^{\#}(n):= \bm{1}_{(n,P(W))=1} \int_{N' \sim N_j} \frac{1}{\delta'}\psi'_{N'}(n)  \CC(\beta, N')  \frac{dN'}{N' },
\end{equation*}
approximates $\beta(n)$ by $\bm{1}_{(n,P(W))=1}$  while mimicking the distribution of $\beta(|z|^2)$ over short intervals near $n$. If $\beta(n)$  is bounded and supported on $(n,P(W))=1$, then the coefficient $\beta^\#(n)$ is also bounded since $|C(\beta,N')|\, \ll 1$ and $\int \psi'(1/y) \frac{dy}{y}= \delta'$. By Proposition \ref{fundamentallemma} the claim of Proposition \ref{typeiiprop} holds for $\beta$ replaced by $\beta^\#$, that is, if we let $S(\alpha,\beta)$ denote the Type II sum, then
\begin{equation} \label{typeiifl}
\begin{split}
S(\alpha,\beta^\#) = \sum_{\substack{ m \sim M_j \\ n \sim N_j}}& \alpha(m) \beta^\#(n) (a^{(j)}_{mn} - b_{mn})  \\
& = \frac{1}{\delta'}\int C(\beta,N')\sum_{\substack{ m \sim M_j \\ n \sim N_j}} \alpha(m)  \bm{1}_{(n,P(W))=1}\psi'_{N'}(n) (a^{(j)}_{mn} - b_{mn}) \frac{dN'}{N'} \\
& \ll_C X \log^{-C} X,
\end{split}
\end{equation}
since the weight $\psi'_{N'}(n)$ may be removed by partial summation.
Furthermore, by the fundamental lemma of the sieve (arguing similar to Section \ref{flpropsection}) it follows that $\beta^\#(n)$ satisfies the Siegel-Walfisz condition  \eqref{swproperty}. Thus, if $\beta(n)$ also satisfies the Siegel-Walfisz property, then the coefficient $(\beta - \beta^\#)(n)$ satisfies the Siegel-Walfisz property (\ref{swmain0}) with a main term equal to 0, as can be seen from
\begin{align} \nonumber
&\sum_{\substack{ z \equiv v \,(d)}} (\beta - \beta^\#)(|z|^2)\psi_{N_1}(|z|^2) \psi_\theta(\arg z+2\pi) \\ \nonumber
&=\frac{\widehat{\psi_\theta}(0)\bm{1}_{(v,d)=1}}{\varphi_{\Z[i]}(d)}\sum_{z }  (\beta - \beta^\#)(|z|^2) \psi_{N_1}(|z|^2) + O_C( N \log^{-C} N) \\ \nonumber
&= \frac{\widehat{\psi_\theta}(0)\bm{1}_{(v,d)=1}}{\varphi_{\Z[i]}(d)} \int_{N' \sim N_j} \frac{1}{\delta'} \sum_{z }  (\beta(|z|^2)  - C(\beta,N') \mathbf{1}_{(z,P(W))=1})\psi'_{N'}(|z|^2)\psi_{N_1}(|z|^2) \frac{dN'}{N'}  + O_C( N \log^{-C} N) \\
&\hspace{70pt}\ll_C N \log^{-C} N,\label{sw0ii}
\end{align}
where the last bound holds by the construction \eqref{approxconstruction} since the parameter $\delta'$ (associated to $\psi'_{N'}$) is small in comparison to the parameter $\delta$ (associated to $\psi_{N_1}$).
 Thus, the using decomposition
 \[
 \beta = \beta^\# + (\beta - \beta^\#), \quad S(\alpha,\beta) = S(\alpha,\beta^\#) + S(\alpha,\beta-\beta^\#) 
 \]
  and \eqref{typeiifl} and \eqref{sw0ii}, Proposition \ref{typeiiprop} follows from Proposition \ref{typeiiproper} applied to bound $S(\alpha,\beta-\beta^\#) $.

\section{Applications of the Weil bound} \label{weilsection}
For the proof of Propostion \ref{typeiiproper} we require the Weil bound both for counting points on certain curves over finite fields as well as for bounding algebraic exponential sums. 

Let $p$ be a prime and fix an algebraic closure $\overline{\FF}_p$ of the finite field $\FF_p$. Let $C$ be the projective curve defined by a polynomial equation $F(X,Y,Z) = 0$ with a homogeneous polynomial $F \in \FF_p [X,Y,Z]$. Recall that $C$ is \emph{non-singular} if for all  $(x,y,z) \in \PP^2(\overline{\FF}_p)$ (that is, $(x,y,z) \neq (0,0,0)$ ) with $F(x,y,z)=0$ we have $\partial_X F(x,y,z) \neq 0$ or $\partial_Y  F(x,y,z) \neq 0 $ or $\partial_Z F(x,y,z)  \neq 0$. 

For Theorem \ref{maintheorem1} we need to understand affine curves over $\FF_p$ defined by the equation 
\[
x_1^2+1 = a(x_2^2+1)
\]
with varying $a$. For $a \neq 0,1$ the homogenization $(1-a)x_0^2+ x_1^2-ax_2^2 =0$ defines a non-singular curve. For Theorem \ref{maintheorem2} we end up with affine varieties defined by the equation
\[
x_1^3+x_2^3 = a(x_3^3+x_4^3)
\]
with varying $a$. By fixing $x_3$ and $x_4$ this reduces to understanding curves defined by the equation
\[
x_1^3+x_2^3 = \gamma,
\]
which is non-singular for $\gamma \neq 0$.
\subsection{Points on curves}
We set
\[
N_1(a;d):=|\{x_1,x_2 \,(d): x_1^2+1 \equiv a(x_2
^2+1) \,\, (d)\}|,
\]
and define 
\[
\eps_p(a) := N_1(a;p)-p.
\]
For $k \geq 2$ we define
\[
\eps_{p^k}(a) := N_1(a,p^k) - p N_1(a,p^{k-1}) = N_1(a,p^k) - (p^k +p^{k-1}\eps_p(a) +\cdots + p \eps_{p^{k-1}}(a))
\]
and for any $d$
\[
\eps_d(a) := \prod_{p^k||d} \eps_{p^k}(a).
\]
Then by the Chinese remainder theorem
\begin{align*}
   N_1(a,D) = \prod_{p^k||D} N_1(a,p^k) =  \prod_{p^k||D} \bigg(\sum_{j=0}^k p^{k-j} \eps_{p_j}(a)\bigg) = \sum_{d|D} \frac{D}{d} \eps_d(a).
\end{align*}

\begin{lemma} \label{pointsweil1lemma}
For $a\not\equiv 0,1 \, (p)$ we have
\[
\eps_p(a) \ll p^{1/2}.
\]
For any $a$ and $k \geq 1$ we have
\[
N_1(a;p^k) \leq 2p^k.
\]
For any $d$ we have
\[
|\eps_d(a)| \,\leq d \tau(d)^{O(1)}.
\]
\end{lemma}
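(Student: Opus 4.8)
The plan is to prove the two assertions of Lemma~\ref{pointsweil1lemma} separately, treating the square-root cancellation over $\FF_p$ via the Weil bound and the prime-power bound by an elementary counting argument.

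\medskip

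\emph{The bound $\eps_p(a) \ll p^{1/2}$ for $a \not\equiv 0,1 \,(p)$.} For such $a$ the projective conic $(1-a)x_0^2 + x_1^2 - a x_2^2 = 0$ over $\FF_p$ is non-singular (the three partial derivatives $2(1-a)x_0$, $2x_1$, $-2ax_2$ vanish simultaneously only at $(0,0,0)$, provided $p \neq 2$; for $p=2$ the statement is trivial since $\eps_2(a) \ll 1$). A smooth plane conic over $\FF_p$ has genus $0$, so the Weil bound for the number of $\FF_p$-points on the projective curve gives $|C(\FF_p)| = p+1 + O(p^{1/2})$ — in fact for a conic one knows $|C(\FF_p)| \in \{p+1, p-1\}$ or similar, but the crude Weil bound suffices. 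To pass from the projective count to the affine count $N_1(a;p)$ I would remove the points at infinity (those with $x_0 = 0$, i.e. $x_1^2 = a x_2^2$, contributing $O(1)$ points) and account for the normalization: the affine equation $x_1^2 + 1 = a(x_2^2+1)$ is the dehomogenization at $x_0 = 1$, so $N_1(a;p) = |C(\FF_p)| - |\{x_0 = 0\}| = p + O(p^{1/2})$, whence $\eps_p(a) = N_1(a;p) - p \ll p^{1/2}$. The one point of care is the finitely many excluded residues $a \equiv 0, 1$, where the conic degenerates; these are exactly the cases excluded in the hypothesis.

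\medskip

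\emph{The bound $N_1(a;p^k) \leq 2p^k$ for all $a$ and $k \geq 1$.} Here I would argue directly. Fix $x_2 \,(p^k)$ and set $\gamma := a(x_2^2+1) - 1 \,(p^k)$; we must count $x_1 \,(p^k)$ with $x_1^2 \equiv \gamma \,(p^k)$. For $p$ odd, the number of square roots of a fixed $\gamma$ modulo $p^k$ is: $0$ or $p^{v}$ (roughly) when $p \mid \gamma$, but always at most... — more precisely, writing $\gamma = p^{2j} u$ or $p^{2j+1}u$ with $p \nmid u$, the count of solutions to $x_1^2 \equiv \gamma\,(p^k)$ is at most $2 p^{\lceil k/2 \rceil} \leq 2 p^{(k+1)/2}$ in the worst case, but summed trivially this already overshoots; the clean way is: for each of the $p^k$ values of $x_2$, the congruence $x_1^2 \equiv \gamma\,(p^k)$ has at most $2$ solutions unless $p \mid \gamma$, and the number of $x_2$ with $p \mid a(x_2^2+1)-1$ is at most $2 p^{k-1}$ (at most two residues mod $p$ lifting in all ways), each contributing at most $p^k$ trivially — this gives $\leq 2 p^k + 2 p^{k-1} \cdot p^k$, too weak. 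So the right approach is Hensel-type: count solutions $(x_1, x_2)$ to the single equation $x_1^2 - a x_2^2 = 1 - a \,(p^k)$ by lifting from mod $p$. For $p$ odd and $p \nmid (1-a)$ lifting is unobstructed and the count is exactly $N_1(a;p) \cdot p^{k-1} \leq 2 p^k$; when $p \mid (1-a)$ one checks the solution set mod $p$ and its lifts directly, and for $p = 2$ one computes the few cases by hand. I expect this elementary case analysis — organizing the Hensel lifting over the bad primes $p \mid (1-a)$, over $p \mid 2$, and tracking that the worst case genuinely stays at $2 p^k$ rather than something larger — to be the main technical obstacle, though it is entirely routine.

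\medskip

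A cleaner unified route for the second bound: substitute $y = x_1$, $z = x_2$ and observe the equation is $y^2 - a z^2 = c$ with $c = 1-a$; the number of solutions mod $p^k$ of any equation $Q(y,z) = c$ with $Q$ a binary quadratic form is $\leq p^k \cdot (\text{number of solutions mod } p)$ when $p$ is odd and $Q$ is nondegenerate mod $p$, and a direct inspection handles the degenerate and $p=2$ cases; since the number of solutions mod $p$ of a conic is at most $2p$ (even in degenerate cases, e.g. a pair of lines gives $\leq 2p - 1$), one gets $N_1(a;p^k) \leq 2 p^k$. I would present this as the proof, filling in the degenerate-prime bookkeeping as the only nontrivial point.
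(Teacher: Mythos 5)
Your treatment of the first assertion is correct and is essentially the paper's own argument: the paper simply observes that the projective conic $(1-a)x_0^2+x_1^2-ax_2^2=0$ is non-singular for $a\not\equiv 0,1\;(p)$ and cites the Hasse--Weil bound, and you have filled in the same non-singularity check and the routine projective-to-affine bookkeeping (in fact a smooth conic has exactly $p+1$ points, so $\eps_p(a)=\pm 1$, but $\ll p^{1/2}$ is all that is claimed).

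The second assertion is where the real problem lies, and it is not one you can close by more careful bookkeeping: the inequality $N_1(a;p^k)\leq 2p^k$ is false as stated. Take $p$ odd and $a\equiv 1\pmod{p^k}$. The congruence becomes $x_1^2\equiv x_2^2\pmod{p^k}$, i.e. $uv\equiv 0\pmod{p^k}$ after the invertible substitution $u=x_1-x_2$, $v=x_1+x_2$, and a direct count gives $\bigl(k(1-1/p)+1\bigr)p^k$ solutions, which exceeds $2p^k$ for every $k\geq 2$ (concretely $N_1(1;9)=21>18$). Your Hensel strategy correctly isolates the delicate case $p\mid(1-a)$, where $x_1\equiv x_2\equiv 0$ is a singular point of the reduction, but your expectation that "the worst case genuinely stays at $2p^k$" fails precisely there: when $p^2\mid(1-a)$ and the form $x_1^2-ax_2^2$ splits mod $p$, solutions proliferate above the singular point. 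What your factorization argument does prove is a bound of the shape $N_1(a;p^k)\leq (k+1)p^k\ll \tau(p^k)^{O(1)}p^k$ (the split case reduces to counting $uv\equiv c\pmod{p^k}$, and the anisotropic case is even better), and this weaker bound is all that is actually needed in Lemma \ref{points1Dlemma}, whose error terms already carry a factor $\tau(D)^{O(1)}$. For what it is worth, the paper's own proof addresses only the first assertion and offers no argument at all for the second, so on this point you have uncovered a (harmless but genuine) misstatement in the lemma rather than a route to proving it.
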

\begin{proof}
For $a \not \equiv 0,1 \, (p)$ the projective curve over $\FF_p$ defined by
\[
(1-a)x_0^2+ x_1^2-ax_2^2=0
\]
is non-singular, so the claim follows from the classical Hasse-Weil bound (\cite{weil} or \cite[Excercise V.1.10]{hartshorne}, for instance).
The last claim follows from
\[
|\eps_{p^k}(a)| = |N_1(a,p^k) - p N_1(a,p^{k-1})| \leq \max\{N_1(a,p^k) , p N_1(a,p^{k-1})\} \leq 2p^k.
\]
\end{proof}
Recall that an integer $c$ is said to be powerful if $p|c$ implies $p^2|c$. The next lemma gives an evaluation of $N_1(a;D)$ for a generic $a$ unless $D$ has a large powerful factor. When we apply this we take $Y=\log^C X$ and make use of the fact that $D$ which have a large powerful factor are very sparse.
\begin{lemma} \label{points1Dlemma}
Let $Y\geq 1$. Let $P_Y(d)$ denote the characteristic function of $d$ such that there exists a powerful divisor $c|d$ with $c> Y$. Then
\[
N_1(a;D) = \sum_{\substack{d|D \\ d <Y^4 }} \frac{D}{d} \eps_d(a) + D\tau(D)^{O(1)}  O(Y^{-1}+ P_Y(D)+ \bm{1}_{(a(a-1),D) > Y}).
\]
\end{lemma}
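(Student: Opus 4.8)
The plan is to exploit multiplicativity of $N_1(a;\cdot)$ in the modulus via the Chinese remainder theorem, writing $N_1(a;D) = \prod_{p^k \| D} N_1(a;p^k) = \prod_{p^k\|D}(p^k + \eps_{p^k}(a))$, which upon expanding the product gives the exact identity $N_1(a;D) = \sum_{d|D} (D/d)\,\eps_d(a)$. The content of the lemma is therefore entirely in truncating this sum at $d < Y^4$ with an acceptable error. First I would record the pointwise bounds on the local factors: Lemma \ref{pointsweil1lemma} gives $\eps_p(a) \ll p^{1/2}$ for $p \nmid a(a-1)$, and the trivial bound $N_1(a;p^k) \leq 2p^k$ gives $|\eps_{p^k}(a)| \leq p^k$ for all $p^k$; when $p \mid a(a-1)$ we also only have $|\eps_p(a)| \leq p$. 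So for a general divisor $d = \prod p^{k_p}$, writing $d = d_0 d_1 d_2$ where $d_0$ collects the primes $p \| d$ with $p \nmid a(a-1)$, $d_1$ collects the primes $p \| d$ with $p \mid a(a-1)$, and $d_2 = d/(d_0 d_1)$ is powerful (the part coming from prime powers $p^k \| d$ with $k \geq 2$), we get $|\eps_d(a)| \leq d_0^{1/2} d_1 d_2 \leq d \cdot d_0^{-1/2} \cdot (d_1 d_2)^{1/2} \cdot \text{(something)}$ — more cleanly, $|\eps_d(a)| \ll d / d_0^{1/2} \leq d \cdot (d_1 d_2 / d)^{1/2}$, i.e. $|\eps_d(a)/d| \ll (d_1 d_2/d)^{1/2} \leq 1$, and crucially $|\eps_d(a)/d| \ll d^{-1/2}$ whenever $d$ is squarefree and coprime to $a(a-1)$.

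The main step is then to bound the tail $\sum_{d|D,\, d \geq Y^4} (D/d)|\eps_d(a)|$. Here I would split a divisor $d \geq Y^4$ into its squarefree part coprime to $a(a-1)$ and a ``bad'' cofactor $b \mid d$ built from (i) primes dividing $a(a-1)$ and (ii) the powerful part of $d$. If $b \leq Y$, then the coprime squarefree part $d/b$ satisfies $d/b \geq Y^3$, and on this part we have the genuine square-root saving $|\eps_{d/b}(a)|/(d/b) \ll (d/b)^{-1/2} \leq Y^{-3/2}$, while $|\eps_b(a)|/b \leq 1$; summing $\sum_{d|D,\, d \geq Y^4}(D/d)|\eps_d(a)| \ll D \sum_{d|D} d^{-1/2}\cdot (\text{from the good part}) \ll D\tau(D) Y^{-3/2} \cdot Y^{O(1)}$, which after absorbing into $D\tau(D)^{O(1)} O(Y^{-1})$ is fine (one can be generous with the exponent of $\tau(D)$ and of $Y$ since we only claim $Y^{-1}$). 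If instead $b > Y$, then either $D$ has a powerful divisor $> Y^{1/2}$, say, forcing $P_Y(D) = 1$ after adjusting constants (powerful divisors of size $> Y^{1/2}$ still make $P_{Y^{1/2}}(D)=1$, and one rescales $Y$), or else the part of $b$ coming from primes dividing $a(a-1)$ exceeds a power of $Y$, forcing $(a(a-1),D) > Y$ (again up to rescaling $Y$); in either case one of the two indicator terms on the right is $1$ and we bound the whole tail trivially by $\sum_{d|D}(D/d)d = D\tau(D)$, absorbed into $D\tau(D)^{O(1)}$. Finally, since we included the truncation threshold $Y^4$ rather than $Y$, there is slack to push all the $Y^{O(1)}$ losses from divisor counts and from the bad cofactor below $Y^{-1}$; this is why the statement uses $d < Y^4$ and $D\tau(D)^{O(1)}$ rather than sharper constants.

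The genuine obstacle, and the only place requiring care, is the bookkeeping for divisors $d$ that mix all three phenomena — a powerful part, primes dividing $a(a-1)$, and a large squarefree coprime part — and arguing that whenever the square-root saving from the coprime part is \emph{not} by itself enough to beat $Y$, one of the two exceptional indicators $P_Y(D)$ or $\bm{1}_{(a(a-1),D)>Y}$ must be triggered. I expect to handle this by a clean case division on $\max(\text{powerful part of }d,\ (a(a-1),d))$ versus a fixed small power of $Y$: in the ``generic'' case the coprime squarefree part of $d$ is $\gg d / Y^{O(1)} \gg Y^{4 - O(1)}$ and its $\eps$-contribution is $\ll Y^{-(2-O(1))}$ per divisor, summing to $\ll \tau(D) Y^{-1}$; in the complementary case an exceptional indicator equals $1$ and the trivial bound $D\tau(D)$ suffices. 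No deep input beyond Lemma \ref{pointsweil1lemma} and elementary divisor estimates is needed.
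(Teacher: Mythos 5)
Your proposal is correct and follows essentially the same route as the paper: Chinese remainder theorem, expansion of $\prod(p^k+\eps_{p^k}(a))$ into $\sum_{d\mid D}(D/d)\eps_d(a)$, trivial dispatch of the cases where $P_Y(D)=1$ or $(a(a-1),D)>Y$, and the square-root saving from Lemma \ref{pointsweil1lemma} on the squarefree part of each $d\geq Y^4$ coprime to $a(a-1)$, which is forced to be $\geq Y^2$. The only (cosmetic) difference is that the paper performs the exceptional-case split globally on $D$ at the outset rather than per divisor $d$, which avoids your $Y$ versus $Y^{1/2}$ rescaling remark entirely.
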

\begin{proof}
By the Chinese remainder theorem we have
\[
N_1(a;D) = \prod_{p^k| |D} N_1(a;p^k). 
\]
If $P_Y(D)=1$  or  $(a(a-1),D) > Y$, then we use the bound $N_1(a;p^k) \leq 2p^k$ to get
\[N_1(a;D) \ll D\tau(D). \]
Suppose then that $P_Y(D)=0$  and  $(a(a-1),D) \leq Y$.  We have 
\[
\begin{split}
N_1(a;D)  = \prod_{p^k|| D} (p^k+\eps_{p^k}(a))  = \sum_{d|D} \frac{D}{d} \eps_d(a) = \sum_{\substack{d|D \\ d <Y^4 }} \frac{D}{d} \eps_d(a) + \sum_{\substack{d|D \\ d \geq Y^4 }} \frac{D}{d} \eps_d(a).
\end{split}
\]
Let us write $D=D_1D_2$ where $D_1$ is the largest square-free divisor with $(D_1,D/D_1)=1$ so that $D_2$ is powerful. Then for any $d|D$ with $d \geq Y^4$ we have $d_1:=(d,D_1) \geq Y^3$ and 
\[
d_0:= \frac{d_1}{(a(a-1),d_1)} \geq Y^2.
\] 
By Lemma \ref{pointsweil1lemma} we have $\eps_{d_0}(a) \ll \tau(D)^{O(1)} d_0^{1/2}$, so that for $d > Y^4$ we have
\[\eps_{d_0}(a) / d_0 \ll\tau(D)^{O(1)} Y^{-1}.\]
and by the trivial bound $\eps_{d/d_0}d_0/d \ll \tau(D)^{O(1)}$ we get
\[
\frac{\eps_d(a)}{d} =  \frac{\eps_{d_0}(a)}{d_0} \frac{\eps_{d/d_0}(a)}{d/d_0}  \ll \tau(D)^{O(1)} Y^{-1}.
\]
Therefore,  the contribution from  $d \geq Y^4$ is
\[
\ll \sum_{\substack{d|D \\ d>Y^4}} \frac{D}{d} \eps_d(a) \ll D\tau(D)^{O(1)}Y^{-1}.
\]
\end{proof}

We now give a similar treatment for the curve corresponding to Theorem \ref{maintheorem2}. Similarly as in Lemma \ref{pointsweil1lemma} we get the following by the Weil bound.
\begin{lemma}
Denote
\[
N_2(\gamma;D):= |\{x_1,x_2 \, (D): x_1^3+x_2^3 \equiv \gamma \, (D)\}| 
\]
Let $\gamma \not \equiv 0 \, (p)$. Then
\[
N_2(\gamma;p)= p + O(p^{1/2})
\]
and
\[
N_2(0;p) \ll p.
\]
Furthermore, $N_2(\gamma,p^k) \ll p^k$.
\end{lemma}

\begin{lemma} \label{points2Dplemma}
Let 
\[
N_3(a;D) := |\{x_1,x_2,x_3,x_4 \, (D) : \, x_1^3+x_2^3 \equiv a(x_3^3+x_4^3) \, (D)\}|
\]
and
define $\eps'_{p^k}(a)$ by
\[
p^2 \eps'_p(a) :=N_3(a;p) - p^3, \quad p^{2k} \eps'_{p^{k}}(a):=  N_3(a;p^k) - p^{3} N_3(a;p^{k-1}).
\]
Set
\[
\eps'_d(a) := \prod_{p^k|| d} \eps_{p^k}(a).
\]
For $a \not \equiv 0 \, (p)$ we have
\[
N_3(a;p) = p^3 +O( p^{5/2}),
\]
and for any $a$ we have $N_3(a,p) \ll p^3.$ Furthermore,
\[
\eps'_d(a) \ll d \tau(d)^{O(1)}.
\]
\end{lemma}
\begin{proof}
Clearly
\[
N_3(a;p) = \sum_{x_3,x_4 \, (p)} N_2(a(x_3^3+x_4^3);p).
\]
The contribution from $x_3^3+x_4^3\equiv 0 \, (p)$ is trivially bounded by $\ll p^2$, and from  $x_3+x_4^3 \not \equiv 0 \, (p)$ we get a contribution
\[
(p^2+O(p))(p+ O(p^{1/2})) = p^3 + O(p^{5/2}).
\]
The last claim follows since by $N_2(\gamma,p^k) \ll p^k$ we have
$N_3(a,p^k) \ll p^{3k}$ and therefore $\eps'_{p^k}(a) \ll p^k$.
\end{proof}

Note that by the Chinese remainder theorem
\[
N_3(a;D) =\prod_{p^k||D} N_3(a,p^k) =  D^2\prod_{p^k||D} \bigg(\sum_{j=0}^k p^{k-j} \eps'_{p_j}(a)\bigg) = \sum_{d|D} \frac{D^3}{d} \eps'_d(a).
\]
Then by a similar argument as in Lemma \ref{points1Dlemma} we get the following.
\begin{lemma} \label{points2Dlemma}
We have for $\eps'_d(a) \ll d \tau(d)^{O(1)}$ that
\[
N_3(a;D) = \sum_{\substack{d|D \\ d <Y^4 }} \frac{D^3}{d} \eps'_d(a) + D^3\tau(D)^{O(1)} O( Y^{-1} +   P_Y(D)  +   \bm{1}_{(a,D) > Y} ).
\]
\end{lemma}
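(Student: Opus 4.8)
The plan is to mirror the proof of Lemma \ref{points1Dlemma}, using the factorization of $N_3(a;D)$ provided by the Chinese remainder theorem together with the prime-power count from Lemma \ref{points2Dplemma}. First I would write $N_3(a;D) = \prod_{p^k||D} N_3(a;p^k)$, which is immediate since the congruence $x_1^3+x_2^3 \equiv a(x_3^3+x_4^3)\,(D)$ splits componentwise. For this I need to know $N_3(a;p^k)$ for all prime powers; the bound $N_3(a;p)\ll p^3$ from Lemma \ref{points2Dplemma} together with a lifting/Hensel-type argument gives $N_3(a;p^k)\ll p^{3k}\tau(p^k)^{O(1)}$ unconditionally, so that whenever $P_Y(D)=1$ (a powerful divisor exceeding $Y$) or $(a,D)>Y$ we get the crude bound $N_3(a;D)\ll D^3\tau(D)^{O(1)}$, absorbing it into the error term.

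For the generic case $P_Y(D)=0$ and $(a,D)\le Y$, I would define $\eps'_{p^k}(a) := N_3(a;p^k) - p^{3(k-1)}\cdot p^3\cdot(\text{leading coefficient adjustment})$; more precisely set $\eps'_{p^k}(a) := p^{-3(k-1)}N_3(a;p^k) - p^3$ so that $N_3(a;p^k) = p^{3(k-1)}(p^3 + \eps'_{p^k}(a))$, and then multiplicatively $\eps'_d(a) := \prod_{p^k||d}\eps'_{p^k}(a)$. Writing each local factor as $p^{3k} + p^{3(k-1)}\eps'_{p^k}(a)$ and expanding the product over all $p^k||D$ gives
\[
N_3(a;D) = \sum_{d|D} \frac{D^3}{d}\,\eps'_d(a),
\]
exactly as in Lemma \ref{points1Dlemma} but with $D$ replaced by $D^3$ and $d$ by $d$ in the denominator (since removing one prime-power factor $p^k$ costs $p^{3k}$ in the numerator and the associated $\eps'$ term carries an extra $p^{-3(k-1)}$, leaving $p^{-k}$ net, i.e. division by $d$). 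From Lemma \ref{points2Dplemma} we have $\eps'_p(a)\ll p^{1/2}$ for $p\nmid a$, hence $\eps'_d(a)\ll d^{1/2}\tau(d)^{O(1)}$ for the square-free part coprime to $a$, and $\eps'_d(a) \ll d\tau(d)^{O(1)}$ in general; this last bound is the one recorded in the statement.

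It then remains to truncate the tail $d\ge Y^4$. Following the decomposition $D = D_1 D_2$ with $D_1$ the largest squarefree unitary divisor, any $d|D$ with $d\ge Y^4$ satisfies $d_1 = (d,D_1)\ge Y^3$, and after removing the common factor with $a$ (which is $\le (a,D)\le Y$) we get a squarefree piece $d_0\ge Y^2$ coprime to $a$, on which $\eps'_{d_0}(a)\ll \tau(D)^{O(1)} d_0^{1/2}$, giving $\eps'_{d_0}(a)/d_0 \ll \tau(D)^{O(1)} Y^{-1}$. Summing, the contribution of $d\ge Y^4$ to $\sum_{d|D} (D^3/d)\eps'_d(a)$ is $\ll D^3 \tau(D)^{O(1)} Y^{-1}$, which is absorbed into the stated error term. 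The main obstacle is handling the prime-power moduli $p^k$ with $k\ge 2$ when $p\nmid a$: one must check that lifting solutions mod $p$ to mod $p^k$ behaves regularly enough that $N_3(a;p^k) = p^{3(k-1)}N_3(a;p) + O(p^{3k-1/2})$ — i.e. that the singular locus of $x_1^3+x_2^3 = a(x_3^3+x_4^3)$ over $\FF_p$ contributes negligibly under the hypothesis $(a,D)\le Y$ — but since $p^k\le d < Y^4$ forces $p$ to be small relative to any genuine saving, and powerful divisors are excluded by $P_Y(D)=0$, these moduli are themselves bounded by $Y^4$ and can be treated by the trivial bound $N_3(a;p^k)\ll p^{3k}\tau(p^k)^{O(1)}$, so they land in the explicit sum $\sum_{d<Y^4}$ with no further analysis needed.
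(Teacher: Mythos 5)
Your overall strategy --- the Chinese remainder theorem factorization, the Weil saving on the square-free part of $d$ coprime to $a$, truncation at $Y^4$ using $P_Y(D)=0$ together with $(a,D)\le Y$, and the trivial bound $N_3(a;D)\ll D^3\tau(D)^{O(1)}$ in the degenerate cases --- is exactly the argument the paper intends (it offers no proof beyond ``by a similar argument as in Lemma \ref{points1Dlemma}''), and your treatment of the tail $d\ge Y^4$ and of the prime-power moduli is fine. The one genuine error is your normalization of $\eps'_{p^k}$, which as written breaks both the expansion identity and the stated bound. With $\eps'_{p^k}(a):=p^{-3(k-1)}N_3(a;p^k)-p^3$ the local factor is $p^{3k}+p^{3(k-1)}\eps'_{p^k}(a)$, and expanding the product over $p^k\,\|\, D$ gives $\sum_{d} D^3\,\mathrm{rad}(d)^{-3}\,\eps'_d(a)$ over unitary divisors $d$, not $\sum_d D^3 d^{-1}\eps'_d(a)$: deleting the factor $p^{3k}$ and inserting $p^{3(k-1)}\eps'_{p^k}$ nets $p^{-3}$, not the $p^{-k}$ you claim. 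Moreover, for square-free $d=p$ your definition yields $\eps'_p(a)=N_3(a;p)-p^3\ll p^{5/2}$ by Lemma \ref{points2Dplemma}, which is neither the $\ll p^{1/2}$ you assert nor $\ll d\,\tau(d)^{O(1)}$ as required by the statement. The correct choice is $\eps'_{p^k}(a):=\bigl(N_3(a;p^k)-p^{3k}\bigr)/p^{2k}$, so that the local factor is $p^{3k}+p^{2k}\eps'_{p^k}(a)$ and the product expands to $\sum_{d}\frac{D^3}{d}\eps'_d(a)$; then the trivial bound $N_3(a;p^k)\ll p^{3k}$ gives $\eps'_{p^k}(a)\ll p^k$ (hence $\eps'_d(a)\ll d\,\tau(d)^{O(1)}$), the Weil bound gives $\eps'_p(a)\ll p^{1/2}$ for $p\nmid a$, and your truncation argument for $d\ge Y^4$ goes through verbatim.
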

\subsection{Algebraic exponential sums}
\begin{lemma} \label{expsumplemma}
Let $\bm{h}=(h_1,h_2)$. We have for $a \not \equiv 0,1 \, (p)$ 
\[
\sum_{\substack{x_1,x_2 \, (p) \\ x_1^2+1 \equiv a(x_2^2+1) \, (p)}} e_p ( \bm{h} \cdot \bm{x}) \ll (h_1,h_2,p)^{1/2}  p^{1/2},
\]
and for $a \equiv 1 \, (p)$ we have
\[
\sum_{\substack{x_1,x_2 \, (p) \\ x_1^2+1 \equiv a(x_2^2+1) \, (p)}} e_p ( \bm{h} \cdot \bm{x}) \ll (h_1-h_2,p) +  (h_1+h_2,p)
\]
\end{lemma}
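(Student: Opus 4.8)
The plan is to complete the exponential sum over the affine curve into complete Gauss and Kloosterman sums and to evaluate everything explicitly. Throughout I take $p$ odd, the prime $p=2$ being trivial since the sum then has $O(1)$ terms. Rewriting the defining congruence as $x_1^2 - a x_2^2 \equiv a-1 \pmod p$, I would detect it with additive characters:
\[
\sum_{\substack{x_1,x_2\,(p)\\ x_1^2+1\equiv a(x_2^2+1)\,(p)}} e_p(\bm{h}\cdot\bm{x}) = \frac1p \sum_{t\,(p)} e_p\!\left(-t(a-1)\right)\left(\sum_{x_1\,(p)} e_p(tx_1^2+h_1x_1)\right)\left(\sum_{x_2\,(p)} e_p(-atx_2^2+h_2x_2)\right).
\]
The term $t\equiv 0$ equals $p\,\bm{1}_{p\mid h_1}\bm{1}_{p\mid h_2}$, and so contributes at most $p\,\bm{1}_{(h_1,h_2,p)=p}$.

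For $t\not\equiv 0$ the two inner sums are classical quadratic Gauss sums, each of modulus $p^{1/2}$. Completing the square and applying the standard evaluation (which contributes the Jacobi symbols and a Gauss-sum sign whose square is $\left(\tfrac{-1}{p}\right)$), the product simplifies to
\[
\left(\frac{a}{p}\right) p\, e_p\!\left(\frac{(h_2^2\bar{a} - h_1^2)\,\overline{4}}{t}\right),
\]
where $\bar{a}$ and $\overline{4}$ denote inverses mod $p$. Multiplying by $e_p(-t(a-1))$ and summing over $t\not\equiv 0$ therefore gives $(\tfrac{a}{p})\,p$ times the Kloosterman sum
\[
\sum_{t\not\equiv 0\,(p)} e_p\!\left(-t(a-1) + \frac{(h_2^2\bar{a} - h_1^2)\,\overline{4}}{t}\right).
\]
This is where the hypothesis $a\not\equiv 0,1$ enters: the coefficient $-(a-1)$ is a unit mod $p$, so Weil's bound for Kloosterman sums \cite{weil} gives $\le 2p^{1/2}$ here, independently of the second coefficient (in particular also when $p$ divides it). Dividing by the outer $1/p$, the $t\not\equiv 0$ part is $\ll p^{1/2}$, and together with the $t\equiv 0$ term this yields $\ll p^{1/2} + p\,\bm{1}_{(h_1,h_2,p)=p} \ll (h_1,h_2,p)^{1/2}p^{1/2}$, since $(h_1,h_2,p)\in\{1,p\}$.

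For the remaining case $a\equiv 1\pmod p$ the curve degenerates: $x_1^2\equiv x_2^2$ forces $x_1\equiv \pm x_2$, a union of two lines meeting only at the origin (as $p$ is odd). The sum then evaluates exactly to
\[
\sum_{x\,(p)} e_p\!\left((h_1+h_2)x\right) + \sum_{x\,(p)} e_p\!\left((h_1-h_2)x\right) - 1 = p\,\bm{1}_{p\mid h_1+h_2} + p\,\bm{1}_{p\mid h_1-h_2} - 1,
\]
whose absolute value is at most $(h_1+h_2,p)+(h_1-h_2,p)$, which is what is claimed.

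The only substantive ingredient is the Weil bound $\le 2p^{1/2}$ for Kloosterman sums; everything else is bookkeeping. The step I expect to demand the most care is the explicit evaluation of the product of the two quadratic Gauss sums — tracking the sign factors and Jacobi symbols so that the Kloosterman sum emerges in the clean form above — together with the simple but essential observation that $a\ne 1$ is exactly what keeps that Kloosterman sum in its non-degenerate range.
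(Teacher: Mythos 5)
Your proof is correct, and it takes a genuinely different route from the paper. The paper treats the sum geometrically: it homogenizes the equation to the projective conic $(1-a)x_0^2+x_1^2-ax_2^2=0$, observes it is non-singular for $a\not\equiv 0,1$, and quotes the general Weil bound for exponential sums on complete non-singular curves (after discarding the case $p\mid h_j$ by the trivial bound). You instead complete the sum with additive characters in $t$, evaluate the two quadratic Gauss sums explicitly, and reduce the whole thing to $(\tfrac{a}{p})$ times a Kloosterman sum $\sum_{t\neq 0}e_p(-t(a-1)+\beta\bar t)$ with $\beta=(h_2^2\bar a-h_1^2)\overline 4$; the hypothesis $a\not\equiv 1$ keeps the linear coefficient a unit so Weil's $2\sqrt p$ bound (or the trivial evaluation $-1$ when $p\mid\beta$) applies, and $a\not\equiv 0$ is what makes $\bar a$ exist. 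I checked the Gauss-sum bookkeeping ($g_p^2=(\tfrac{-1}{p})p$ combining with $(\tfrac{t}{p})(\tfrac{-at}{p})$ to give $(\tfrac{a}{p})p$) and the $t=0$ term $p\,\bm 1_{p\mid(h_1,h_2)}$, and both are right; the degenerate case $a\equiv 1$ as a union of two lines is also handled correctly. Your approach buys something concrete: it yields the full $\ll p^{1/2}$ bound uniformly even when exactly one of $h_1,h_2$ is divisible by $p$ (where the paper's shortcut of falling back on the trivial bound only gives $\ll p$), and it is self-contained modulo the classical Kloosterman bound rather than the general curve-counting machinery. What it loses is generality: the Gauss-sum evaluation is special to quadrics, whereas the paper's argument is the template reused for the cubic and norm-form cases later on.
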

\begin{proof}
For $a\equiv 1 \, (p)$ we have
\begin{align*}
    \sum_{\substack{x_1,x_2 \, (p) \\ x_1^2 \equiv x_2^2 \, (p)}} e_p ( \bm{h} \cdot \bm{x}) =&  -1+ \sum_{x \, (p)} e_p ((h_1+h_2)x) +  e_p ((h_1-h_2)x)   \\
    \ll& \, (h_1-h_2,p) +  (h_1+h_2,p).
\end{align*}
Consider $a \not \equiv 0, 1 \, (p)$ and fix $j \in \{1,2\}$. If $(h_1,h_2,p)\neq 1$ we apply the trivial bound, so we may assume $(h_1,h_2,p) =1$.  The claim now follows from the Weil bound (see \cite[Theorem 5]{bombieri}, for instance), applied to the projective (hence complete) curve $X$ defined by the the (homogenized) polynomial
\[
(1-a)x_0^2 + x_1^2-ax_2^2,
\]
which is non-singular for $a \not \equiv 0,1 \, (p).$
\end{proof}
By using the Chinese remainder theorem as in the proof of Lemma \ref{points1Dlemma}, this implies the following.
\begin{lemma} \label{expsums1Dlemma}
Let $D_1$ be the largest square-free divisor of $D$ such that $(D_1,D/D_1)=1$. Let $(a,D)=1$. Then 
\[\sum_{\substack{x_1,x_2 \, (D) \\ x_1^2+1 \equiv a(x_2
^2+1) \, (D)}} e_{D}(\bm{h} \cdot \bm{x}) \ll \tau(D)^{O(1)} D D_1^{-1/2} (h_1,h_2,D_1)^{1/2} (a-1,D_1)^{1/2}
\]
\end{lemma}
\begin{proof}
 By the Chinese remainder theorem we have
 \[
 \sum_{\substack{x_1,x_2 \, (D) \\ x_1^2+1 \equiv a(x_2
^2+1) \, (D)}} e_{D}(\bm{h} \cdot \bm{x}) = \prod_{p^k || D} \sum_{\substack{x_1,x_2 \, (p^k) \\ x_1^2+1 \equiv a(x_2
^2+1) \, (p^k)}} e_{p^k}( (D/p^k)^{-1} \bm{h} \cdot \bm{x}),
 \]
 where $(D/p^k)^{-1}$ denotes the multiplicative inverse modulo $p^k$. For $p^k|| (D/D_1)$ we use the trivial bound
 \[
 \sum_{\substack{x_1,x_2 \, (p^k) \\ x_1^2+1 \equiv a(x_2
^2+1) \, (p^k)}} e_{p^k}( (D/p^k)^{-1} \bm{h} \cdot \bm{x}) \ll p^k.
 \]
 For $p|D_1$ we have by Lemma \ref{expsumplemma}
 \[
  \sum_{\substack{x_1,x_2 \, (p) \\ x_1^2+1 \equiv a(x_2
^2+1) \, (p)}} e_{p}( (D/p)^{-1} \bm{h} \cdot \bm{x}) \ll p^{1/2}(h_1,h_2,p)^{1/2}(a-1,p)^{1/2}
 \]
\end{proof}
We now turn to the corresponding sum for Theorem \ref{maintheorem2}. Here we apply a slightly different argument to the above.
\begin{lemma} \label{expsum2Dprelimlemma}
Let $\bm{h}=(h_1,h_2,h_3,h_4)$. We have for $a \not \equiv 0 \, (p)$ 
\[
\sum_{\substack{x_1,x_2,
x_3,x_4 \, (p) \\ x_1^3+x_2^3 \equiv a(x_3^3+x_4^3) \, (p)}} e_p ( \bm{h} \cdot \bm{x}) \ll (h_1,h_2,h_3,h_4,p)p^2
\]
\end{lemma}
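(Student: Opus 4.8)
The plan is to detect the congruence by additive characters, which factors the sum over the four variables, and then to invoke the classical Weil bound for one-variable cubic exponential sums.

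First I would use $\bm{1}_{n\equiv 0\,(p)} = p^{-1}\sum_{t\,(p)} e_p(tn)$ with $n = x_1^3+x_2^3 - a(x_3^3+x_4^3)$. This rewrites the sum in the lemma as
\[
\frac1p\sum_{t\,(p)} S(t,h_1)\,S(t,h_2)\,S(-ta,h_3)\,S(-ta,h_4), \qquad S(u,h) := \sum_{x\,(p)} e_p(u x^3 + h x).
\]
Next I would split off the term $t\equiv 0\,(p)$. There $S(0,h) = \sum_{x\,(p)} e_p(hx) = p\,\bm{1}_{p\mid h}$, so the $t=0$ contribution is exactly $p^{-1}p^{4}\prod_{j}\bm{1}_{p\mid h_j} = p^{3}\,\bm{1}_{p\mid(h_1,h_2,h_3,h_4)}$; since the gcd $(h_1,h_2,h_3,h_4,p)$ is either $1$ or $p$, this is $\le (h_1,h_2,h_3,h_4,p)\,p^{2}$, which is within the claimed bound.

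For $t\not\equiv 0\,(p)$ we also have $-ta\not\equiv 0\,(p)$, because $a\not\equiv 0\,(p)$. Assuming $p>3$, each polynomial $ux^3+hx$ with $u\not\equiv 0\,(p)$ has degree $3$ coprime to $p$, so the Weil bound (e.g.\ \cite[Theorem~5]{bombieri}) yields $|S(u,h)|\le 2p^{1/2}$ for all such $u$ and all $h$. Consequently every one of the $p-1$ nonzero values of $t$ contributes at most $16p^{2}$ in modulus, and after dividing by $p$ the whole $t\not\equiv 0$ part is $\le 16p^{2}\le 16\,(h_1,h_2,h_3,h_4,p)\,p^{2}$. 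Adding the two pieces gives the lemma for $p>3$; the remaining primes $p\in\{2,3\}$ are absorbed into the implied constant, since for them the left side is $O(1)$ while the right side is $\ge p^{2}$.

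I do not anticipate any real difficulty: the lemma reduces directly to a product of four standard cubic Weil sums. The only things to watch are the innocuous hypothesis $p>3$ (so that $\gcd(3,p)=1$ in the Weil bound) and the bookkeeping of the gcd factor, i.e.\ confirming that the $t=0$ resonance produces exactly the $p^{3}$ that is needed precisely when $p\mid(h_1,\dots,h_4)$.
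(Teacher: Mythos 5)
Your proof is correct and follows essentially the same route as the paper: detecting the congruence with additive characters, factoring into four one-variable cubic sums, and applying the Weil bound to the nonzero frequencies. The only cosmetic difference is that the paper first disposes of the case $p\mid(h_1,\dots,h_4)$ by the trivial bound (so that the $w=0$ term vanishes), whereas you evaluate the $t=0$ term exactly and check it is covered by the gcd factor; your explicit handling of $p\le 3$ is a small point the paper leaves implicit.
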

\begin{proof}
If $(h_j,p)\neq 1$ for all $j \leq 4$ then we apply the trivial bound, so we may assume that $(h_1,h_2,h_3,h_4,p) =1$. We expand the congruence condition to get
\[ \begin{split}
\sum_{\substack{x_1,x_2,
x_3,x_4 \, (p) \\ x_1^3+x_2^3 \equiv a(x_3^3+x_4^3) \, (p)}} e_p ( \bm{h} \cdot \bm{x}) &= \frac{1}{p} \sum_{w \, (p)} \sum_{\substack{x_1,x_2,
x_3,x_4 \, (p)}}  e_p ( \bm{h} \cdot \bm{x} + w( x_1^3+x_2^3 - a(x_3^3+x_4^3)))  \\
&= \frac{1}{p} \sum_{w \, (p)} \prod_{j=1}^4 \bigg(\sum_{x \, (p)} e_p (h_j x + w a_j x^3) \bigg),
\end{split}
\]
where $a_1=a_2=1$ and $a_3=a_4=-a$. The contribution from $w=0$ is 0 (as $(h_j,p)=1$ for some $j\in\{1,2,3,4\}$), and for $w \neq 0$ we apply the Weil bound (\cite[Theorem 5]{bombieri} for $X=\mathbb{P}^1$ or \cite{weil2}) to each of the four sums to get
\[
\sum_{\substack{x_1,x_2,
x_3,x_4 \, (p) \\ x_1^3+x_2^3 \equiv a(x_3^3+x_4^3) \, (p)}} e_p ( \bm{h} \cdot \bm{x}) \ll\frac{1}{p} \sum_{w \, (p)}  (p^{1/2})^4 \ll p^2.
\]
\end{proof}
\begin{remark}
It should be possible to obtain the Deligne bound $ \ll p^{3/2}$ for the generic $\bm{h}$ and $a$, but since this is not necessary for our application nor sufficient for an asymptotic version of Theorem \ref{maintheorem2}, we do not pursue this issue here.
\end{remark}
Using Lemma \ref{expsum2Dprelimlemma} we get the following by the Chinese remainder theorem.
\begin{lemma} \label{expsums2Dlemma}
Let $D_1$ be the largest square-free divisor of $D$ such that $(D_1,D/D_1)=1$. Let $(a,D)=1.$ Then 
\[
\sum_{\substack{x_1,x_2,
x_3,x_4 \, (D) \\ x_1^3+x_2^3 \equiv a(x_3^3+x_4^3) \, (D)}} e_D ( \bm{h} \cdot \bm{x}) \ll \tau(D)^{O(1)} D^3 D_1^{-1} (h_1,h_2,h_3,h_4,D_1) .
\]
\end{lemma}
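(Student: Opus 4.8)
The plan is to factor the complete exponential sum modulo $D$ into a product of complete sums modulo prime powers via the Chinese remainder theorem --- exactly as in the deduction of Lemma \ref{expsums1Dlemma} from its prime-level analogue, and as in the proof of Lemma \ref{points1Dlemma} --- then to apply Lemma \ref{expsum2Dprelimlemma} on the square-free part of $D$ and to bound the powerful part trivially.

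Write $D = D_1 D_2$, where $D_1 = \prod_{p \| D} p$ is the largest square-free divisor with $(D_1, D/D_1)=1$, so that $(D_1, D_2)=1$ and $D_2$ is powerful. Under the isomorphism $\Z/D\Z \cong \Z/D_1\Z \times \Z/D_2\Z$, writing $x_i \leftrightarrow (x_i^{(1)}, x_i^{(2)})$, the congruence $x_1^3+x_2^3 \equiv a(x_3^3+x_4^3) \ (D)$ becomes the conjunction of the same congruence modulo $D_1$ and modulo $D_2$, and $e_D(\bm h \cdot \bm x) = e_{D_1}(\overline{D_2}\,\bm h\cdot \bm x^{(1)})\, e_{D_2}(\overline{D_1}\, \bm h \cdot \bm x^{(2)})$. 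Hence the sum factors as $S_{D_1}\cdot S_{D_2}$, where $S_{D_i}$ is a sum of the same shape modulo $D_i$ but with the frequency vector $\bm h$ replaced by a fixed unit multiple of itself; in particular $(\overline{D_2}h_j, D_1) = (h_j, D_1)$ and likewise modulo $D_2$.

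For $S_{D_2}$ I would use the trivial bound $|S_{D_2}| \le N_3(a; D_2) \ll \tau(D_2)^{O(1)} D_2^3$, which follows from Lemma \ref{points2Dlemma} (bounding $\eps'_d(a) \ll d\tau(d)^{O(1)}$ and the error term there by $O(1)$). For $S_{D_1}$ I would factor it once more over the primes $p \| D_1$ by the Chinese remainder theorem into $\prod_{p \| D_1} S_p$, where each $S_p$ is a sum of the form treated in Lemma \ref{expsum2Dprelimlemma} with frequency vector a unit multiple of $\bm h$ mod $p$, so that the gcd of all four coordinates with $p$ equals $(h_1,h_2,h_3,h_4,p)$. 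Since $(a,D)=1$ forces $a \not\equiv 0 \ (p)$, Lemma \ref{expsum2Dprelimlemma} gives $S_p \ll (h_1,h_2,h_3,h_4,p)\,p^2 \le (h_j,p)\,p^2$ for any fixed $j$. Multiplying over the $\omega(D_1)$ primes $p \| D_1$ --- the product of the absolute implied constants being $\ll \tau(D)^{O(1)}$, and $\prod_{p\mid D_1}(h_j,p) = (h_j,D_1)$ since $D_1$ is square-free --- yields $|S_{D_1}| \ll \tau(D)^{O(1)} D_1^2 (h_j,D_1)$.

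Combining the two estimates gives $\ll \tau(D)^{O(1)} D_1^2 D_2^3 (h_j, D_1)$, and since $D^3 = D_1^3 D_2^3$ we have $D_1^2 D_2^3 = D^3 D_1^{-1}$, which is the asserted bound. The argument is purely organizational once Lemma \ref{expsum2Dprelimlemma} is in hand; the only mildly delicate points are checking that the unit twists produced by the Chinese remainder theorem do not alter the relevant gcds, and that the product of the per-prime implied constants is absorbed into $\tau(D)^{O(1)}$ --- neither poses a real obstacle.
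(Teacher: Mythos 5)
Your proof is correct and is exactly the route the paper intends: the paper gives no written-out argument for this lemma, stating only that it follows from Lemma \ref{expsum2Dprelimlemma} "by the Chinese remainder theorem," and your factorization $D=D_1D_2$, the unit-twist bookkeeping for the gcds, the per-prime application of Lemma \ref{expsum2Dprelimlemma} on $D_1$, and the trivial point-count bound $\ll \tau(D_2)^{O(1)}D_2^3$ on the powerful part combine to give precisely $\tau(D)^{O(1)}D_1^2D_2^3(h_j,D_1)=\tau(D)^{O(1)}D^3D_1^{-1}(h_j,D_1)$.
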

\begin{proof}
    By the Chinese remainder theorem we have
 \[
\sum_{\substack{x_1,x_2,
x_3,x_4 \, (D) \\ x_1^3+x_2^3 \equiv a(x_3^3+x_4^3) \, (D)}} e_D ( \bm{h} \cdot \bm{x}) = \prod_{p^k || D} \sum_{\substack{x_1,x_2,
x_3,x_4 \, (p^k) \\ x_1^3+x_2^3 \equiv a(x_3^3+x_4^3) \, (p^k)}} e_{p^k} ((D/p^k)^{-1} \bm{h} \cdot \bm{x}),
 \]
 where $(D/p^k)^{-1}$ denotes the multiplicative inverse modulo $p^k$. For $p^k|| (D/D_1)$ we use the trivial bound
 \[
\sum_{\substack{x_1,x_2,
x_3,x_4 \, (p^k) \\ x_1^3+x_2^3 \equiv a(x_3^3+x_4^3) \, (p^k)}} e_{p^k} ((D/p^k)^{-1} \bm{h} \cdot \bm{x})\ll p^{3k}.
 \]
 For $p|D_1$ we have by Lemma \ref{expsumplemma}
 \[
\sum_{\substack{x_1,x_2,
x_3,x_4 \, (p) \\ x_1^3+x_2^3 \equiv a(x_3^3+x_4^3) \, (p)}} e_{p} ((D/p)^{-1} \bm{h} \cdot \bm{x}) \ll p^{2}(h_1,h_2,h_3,h_3,p).
 \] 
\end{proof}

\section{Proof of Proposition \ref{typeiiproper}} \label{typeiiproofsection}
We need the following simple bound.
\begin{lemma}\label{Deltaboundlemma}
Let $\Delta=\Delta(z_1,z_2):= \emph{\Im}(z_2\overline{z_1})$ for $z_1,z_2 \in \Z[i]$.  Then
\[
\sum_{\substack{|z_1|^2,|z_2|^2 \sim N \\ \Delta =D}} 1 \ll N \log^{2} N.  
\]
\end{lemma}
\begin{proof}
Writing $z_j=r_j+is_j$, we have $\Delta=r_2 s_1-r_1s_2$. Since $\Delta \neq 0$ we have $s_1 \neq 0$ or $s_2\neq 0$, so that we can assume that $s_2\neq 0$. Summing over $r_2$ with $r_2s_1 \equiv D \, (s_2)$ we have 
\begin{equation*} 
\sum_{\substack{|z_1|^2,|z_2|^2 \sim N \\ \Delta =D}} 1 \ll N^{1/2}\sum_{s_1,s_2 \leq 2 N^{1/2}}  \frac{(s_1,s_2)}{s_2} \ll N \log^{2} N. 
\end{equation*}
\end{proof}
\subsection{Application of Cauchy-Schwarz}
Since $mn$ is not divisible by any prime $p\equiv 3 \, (4)$ by the support of $a^{(j)}_{mn}$ and $b_{mn}$, we can write $m=|w|^2$ and $n=|z|^2$ for some Gaussian integers $w,z \in \Z[i]$. Since the expression $\alpha(m)\beta(n)$ is supported on $(mn,P(W))=1$, the conditions $(a,b^2+1)=1$, $(a,c^3+d^3)=1$, and $(a,b)=1$  in the definitions of $a_n^{(1)}$, $a_n^{(2)}$, and $b_n$  may be dropped with a negligible error term. 
Let
\[
\begin{split}
S^{(1)}_1 (w,z) &= \sum_{\substack{\Re (\bar{w}z) = b^2+1\\ b > 0}} b, \\
S^{(2)}_1 (w,z) &= \sum_{\substack{\Re (\bar{w}z) = c^3+d^3 \\  c,d>0}}\Omega(c,d), \\
S^{(1)}_2 (w,z)&=S^{(2)}_2 (w,z) = \bm{1}_{\Re (\bar{w}z)  > 0}.
\end{split}  
\]
Denote $\alpha_w=\alpha(|w|^2)$ and $\beta_z = \beta(|z|^2).$ Note that since $\beta(n)$ is supported on square-free numbers, $\beta_z$ is supported on $z\in \Z[i]$ with $(z, \overline{z})=1$.  Then Proposition \ref{typeiiproper} follows once we show that for $j,k \in \{1,2\}$
\[
\sum_{ \substack{|w|^2 \sim M_j \\ |z|^2 \sim N_j }} \alpha_w \beta_z S^{(j)}_k (w,z) \ll X \log^{-C} X.
\]
Let $F_{M_j}(m)=F(m/M_j)$ denote a $C^{\infty}$-smooth majorant of $m \sim M_j$, for some fixed smooth $F$ supported on $[1/2,4]$.  Applying Cauchy-Schwarz we get
\begin{equation} \label{dispersion}
\sum_{ \substack{|w|^2 \sim M_j \\ |z|^2 \sim N_j }} \alpha_w \beta_z S^{(j)}_k (w,z)\ll M_j^{1/2} (U^{(j)}_{k} )^{1/2}
\end{equation}
for
\[
U^{(j)}_{k}:= \sum_{|z_1|^2,|z_2|^2 \sim N_j} \beta_{z_1}\beta_{z_2} \sum_{w} F_{M_j}(|w|^2)  S^{(j)}_k (w,z_1) S^{(j)}_k (w,z_2).
\]
Then our task is reduced to showing that for $j,k\in \{1,2\}$ we have
\begin{equation} \label{typeiiclaimaftercs}
U^{(j)}_{k} \ll_C X N_j\log^{-C} X.
\end{equation}
Let us first restrict only to the case $j=k=1$.  Similarly as in \cite[Section 6]{FI}, let
\[
\Delta =\Delta(z_1,z_2) := \Im( \overline{z_1} z_2) = r_1s_2-r_2s_1 = |z_1z_2|\sin (\arg z_2 - \arg z_1).
\]
Let $a_i, b_i$ denote integers such that
\[
\Re(\overline{w}z_i) = (b_i^2+1) \quad \text{and} \quad \Im(\overline{w}z_i) = a_i.
\]
Then
\[
z_2\Re(\overline{w}z_1) -z_1\Re(\overline{w}z_2) = \frac{1}{2} w (z_2 \overline{z_1} -z_1\overline{z_2}) = i \Delta w,
\]
that is,
\[
i \Delta w = z_2(b_1^2+1)-z_1(b_2^2+1).
\]
We first bound separately the contribution from the diagonal terms where $\Delta=0$ or $(z_1,z_2) > 1$.
\subsection{Diagonal terms with $\Delta=0$} \label{diag1section}
By $\Delta=0$  we have
\[
z_1(b_2^2+1) =  z_2(b_1^2+1).
\]
Multiplying both sides by $\overline{w}$ and taking the imaginary parts we find
\[
a_1(b_2^2+1)=a_2 (b_1^2+1).
\]
Hence, from $\arg z_1=\arg z_2$ we get a contribution to $U_{1}^{(1)}$ of size
\[
 \sum_{\substack{a_1,a_2 \ll X^{1/2} \\ b_1,b_2 \ll X^{1/4} \\ a_1(b_2^2+1)=a_2(b_1^2+1)}} b_1b_2 \ll X^{1/2}\sum_{\ell \ll X^{3/4}} \bigg(\sum_{\ell=a(b^2+1)}1 \bigg)^{2}  \ll_\eps X^{5/4+\eps}  \ll X^{1-\eta} N_j.
\]
by using the bound $\tau(\ell) \ll_\eps \ell^\eps$ and the assumption $N_1 \gg  X^{1/4+\eta}$.  The same bound holds for $U^{(1)}_2$, so that the diagonal contribution is sufficiently small in terms of (\ref{typeiiclaimaftercs}). 

Similar arguments apply in the case $j=2$, with $b_i^2+1$ replaced by $c_i^3+d_i^3$, and by using the trivial bound $\Omega(c_j,d_j) \ll \max\{c_j,d_j\} \ll X^{1/6}$ we get a contribution
\[
\sum_{\substack{a_1,a_2 \ll X^{1/2} \\ c_1,c_2,d_1,d_2 \ll X^{1/6} \\ a_1(c_2^3+d_2^3)=a_2(c_1^3+d_1^3)}}  \Omega(c_1,d_1)\Omega(c_2,d_2)\ll X^{1/3}  \sum_{\substack{a_1 \ll X^{1/2} \\ c_2,d_2\ll X^{1/6}}}  \bigg( \sum_{\substack{a_2 \ll X^{1/2} \\ c_1,d_1 \ll X^{1/6} \\ a_1(c_2^3+d_2^3)=a_2(c_1^3+d_1^3)}} 1 \bigg)^{2} 
\]
Using Lemma \ref{cubereplemma} to bound the number of representations as a sum of two cubes we get
\[
\sum_{\substack{a_1,a_2 \ll X^{1/2} \\ c_1,c_2,d_1,d_2 \ll X^{1/6} \\ a_1(c_2^3+d_2^3)=a_2(c_1^3+d_1^3)}}  \Omega(c_1,d_1)\Omega(c_2,d_2) \ll_\eps X^{7/6+\eps} \ll X^{1-\eta} N_2
\]
since $N_2 \gg X^{1/6+\eta}$. 

\subsection{Diagonal terms with $(z_1,z_2) > 1$} \label{diag2section}
Consider first the case $j=1$. Recall that, by assumptions in Proposition \ref{typeiiproper}, $\beta(n)$ is supported on $n$ with $(n,P(W))=1$ with $W=X^{1/(\log\log X)^2}$. Therefore, if $z_0:=(z_1,z_2) > 1$, then $|z_0|^2 \geq W$. Making the change of variables $z_i \mapsto z_0z_i$, we have
\[
\Re(\overline{w}z_0z_i) = (b_i^2+1).
\]
Denoting $\Delta'= \Im(\overline{z_1}z_2) \neq 0$ and combining $w'=w \overline{z_0}$ we have
\[
i \Delta' w' = z_2(b_1^2+1)-z_1(b_2^2+1) \neq 0,
\]
so that fixing $z_1,z_2,b_1,b_2$ fixes $w'$. We have
\[
\sum_{\substack{W \leq |z_0|^2 \ll N_1\\ (|z_0|^2,P(W))=1} } \bm{1}_{\overline{z_0} | w} \ll  2^{\log X/\log W} \ll W^{1/2}.
\]
Hence, by a dyadic decomposition we see that the part  $(z_1,z_2) > 1$ contributes at most
\[
\begin{split}
&\ll \sum_{\substack{W \leq |z_0|^2 \ll N_1\\ (|z_0|^2,P(W))=1} } \sum_{|w'|^2  \sim M_1 |z_0|^2} \sum_{\substack{|z_1|^2,|z_2|^2 \sim N_1/|z_0|^2 \\ (z_1,z_2)=1}} S^{(1)}_1 (w',z_1)S^{(1)}_1 (w',z_2) \\
& \ll X^{1/2}W^{1/2}(\log X )\sup_{W \leq P \leq N_1} \sum_{b_1,b_2 \ll X^{1/4}} \sum_{\substack{|z_1|^2,|z_2|^2 \sim N_1/P \\ (z_1,z_2)=1}} \bm{1}_{\Delta' | z_2(b_1^2+1)-z_1(b_2^2+1) \neq 0}.
\end{split}
\]
Let $z_i:=r_i+is_i$. Then we have 
\[
\Delta' | s_2(b_1^2+1)-s_1(b_2^2+1) \quad \text{and} \quad \Delta' | r_2(b_1^2+1)-r_1(b_2^2+1) 
\]
and $z_2(b_1^2+1)-z_1(b_2^2+1) \neq 0$ so that by symmetry we may assume $s_2(b_1^2+1)-s_1(b_2^2+1) \neq 0$ and thus by symmetry we may also assume that $s_2 \neq 0$. For a given $\Delta',s_1,s_2$ we have 
\[
\sum_{\substack{ r_1,r_2 \ll N_1^{1/2}/P^{1/2}\\r_1s_2-r_2s_1= \Delta'}} 1 \ll \sum_{\substack{ r_2 \ll N_1^{1/2}/P^{1/2}\\ -r_2s_1= \Delta' \, (s_2)}} \ll \frac{N_1^{1/2}(s_1,s_2)}{P^{1/2} s_2}. 
\]
Hence, denoting $s_0=(s_1,s_2)$ and $s_i'=s_0 s_i$, we get
\[
\begin{split}
\ll \frac{X^{1/2}N_1^{1/2} W^{1/2}\log X }{P^{1/2}} \sum_{s_0 \ll N_1^{1/2}} &\tau(s_0) \sum_{\substack{s_1',s_2' \ll N_1^{1/2}P^{-1/2}/s_0 \\ (s_1',s_2')=1 \\ s_2' \neq 0}}\frac{1}{s_2'} \\
& \sum_{\substack{b_1,b_2 \ll X^{1/4} \\ s'_2(b_1^2+1)-s'_1(b_2^2+1) \neq 0}} \tau( s'_2(b_1^2+1)-s'_1(b_2^2+1)).
\end{split}
\]
By Lemma \ref{divisorlemma} there exists $d | s'_2(b_1^2+1)-s'_1(b_2^2+1)$ such that $d \ll X^{\eps}$ and \[\tau( s'_2(b_1^2+1)-s'_1(b_2^2+1)) \ll \tau(d)^{O_\eps(1)}.\] Thus, the last expression is bounded by
\[
\begin{split}
&\ll\frac{X^{1/2}N_1^{1/2} W^{1/2}\log X }{P^{1/2}}\sum_{s_0 \ll N_1^{1/2}} \tau(s_0) \sum_{d \ll X^{\eps}} \tau(d)^{O_\eps(1)} \\
& \hspace{140pt} \sum_{\substack{s_1',s_2' \ll N_1^{1/2}P^{-1/2}/s_0 \\ (s_1',s_2')=1 \\ s_2' \neq 0}} \frac{1}{s_2'} \sum_{\substack{b_1,b_2 \ll X^{1/4} \\ s'_2(b_1^2+1)-s'_1(b_2^2+1) \equiv 0 \, (d)}} 1 \\ 
&\ll \frac{X N_1^{1/2} W^{1/2}\log X }{P^{1/2}}  \sum_{s_0 \ll N_1^{1/2}} \tau(s_0) \sum_{d \ll X^{\eps}} \frac{\tau(d)^{O_\eps(1)} }{d} \sum_{\substack{s_1',s_2' \ll N_1^{1/2}P^{-1/2}/s_0 \\   s_2' \neq 0}}\frac{(d,s_2')}{s_2'} 
\\ 
&\ll \frac{X N_1^{1/2} W^{1/2}\log^{O(1)} X }{P^{1/2}} \sum_{s_0 \ll N_1^{1/2}} \tau(s_0)  \sum_{\substack{s_1',s_2' \ll N_1^{1/2}P^{-1/2}/s_0 \\   s_2' \neq 0}} \frac{\tau(s_2')^{O_\eps(1)}}{s_2'} \\
&\ll W^{-1/2} X N_1 \log^{O(1)} X  \ll_C X N_1 \log^{-C} X,
\end{split}
\]
where we have used the bounds
\[
\sum_{\substack{b_1,b_2 \ll X^{1/4} \\ s'_2(b_1^2+1)-s'_1(b_2^2+1) \equiv 0 \, (d)}} 1  \ll X^{1/2} (d,s_2') /d 
\]
and
\[
\sum_{d \sim D} \tau(d)^{O(1)} (d,s)  \leq \sum_{v| s} \sum_{\substack{d \sim D \\ d \equiv 0\, (v)}} \tau(d)^{O(1)}  \ll \tau(s)^{O(1)} D \log^{O(1)} D.
\]
Similar arguments again apply when $j=2$ or $k=2$.
\subsection{The off-diagonal for $U^{(1)}_{1}$} \label{offdiagsection}
Let $U^{(1)}_{11}$ denote the part of $U^{(1)}_{1}$ where $\Delta \neq 0$ and $(z_1,z_2)=1$, so that we have 
\[(z_1,z_2)=(z_1,\overline{z_1})=(z_2,\overline{z_2})=(\Delta,|z_2z_1|^2)=1.\] 
To see the last equality, suppose that $p|(\Delta,|z_1|^2)$. Then using $(z_1,\overline{z_1})$=1 we have $p \equiv 1 \, (4)$ so that $p=\pi \bar{\pi}$ for some Gaussian prime $\pi | z_1$. By $(z_1,z_2)=1$ we have $\pi \nmid z_2$, so that $\bar{\pi} | z_2 \overline{z_1}$ but $p \nmid  z_2 \overline{z_1}$. This is a contradiction, since $p | \Im(z_2\overline{z_1})$ and $\bar{\pi} | z_2 \overline{z_1}$ imply that $p| \Re(z_2 \overline{z_1})$ and thus $p| z_2 \overline{z_1}$. Similarly we see that $(\Delta,|z_2|^2)=1$.

In particular, we have
\[
1 \leq |\Delta| \ll N_1.
\]
Recall that
\[
i \Delta w = z_2(b_1^2+1)-z_1(b_2^2+1).
\]
That is, $w$ is fixed for given $z_1,z_2,b_1,b_2,$ and we have
\[
z_2(b_1^2+1)\equiv z_1(b_2^2+1) \,\, (|\Delta|).
\]
Note that ($\overline{z_1 }$ denoting the complex conjugate)
\[
a = a(z_1,z_2):= z_2/z_1 = \frac{\overline{z_1 }z_2}{|z_1|^2} \equiv \frac{r_1r_2+s_1s_2}{r_1^2+s_1^2} \, (|\Delta|)
\]
is a congruence in $\Z$ despite the fact that $z_1,z_2 \in \Z[i]$, and note that $(a,\Delta)=1$. Hence, we have
\[
U^{(1)}_{11} = \sum_{\substack{|z_1|^2,|z_2|^2  \sim N_1 \\ \Delta \neq 0 \\ (z_1,z_2)=1}} \beta_{z_1}\beta_{z_2} \sum_{\substack{b_1,b_2 \\ b_2^2+1 \equiv a(b_1^2+1) \, (\Delta)}} b_1b_2 F_{M_1} \bigg( \bigg| \frac{z_2(b_1^2+1)-z_1(b_2^2+1)}{\Delta} \bigg|^2 \bigg).
\]
Let $\Delta_2$ denote the largest powerful divisor of $\Delta$ and let $Y=(\log X)^{C'}$ for some large $C'>0$.  Note that $(a-1,\Delta)=(z_2-z_1,\Delta).$ We now wish to discard the part where $\Delta_2 > Y$ or $(z_2-z_1,\Delta) > Y$ and  write
\[
U_{11}^{(1)}= U_{11\leq}^{(1)} +  U_{11>}^{(1)}
\]
with
\begin{align*}
    U_{11\leq }^{(1)} := \sum_{\substack{|z_1|^2,|z_2|^2  \sim N_1 \\ \Delta \neq 0 \\ (z_1,z_2)=1 \\ \max\{\Delta_2,(z_2-z_1,\Delta)\} \leq Y }} \beta_{z_1}\beta_{z_2} \sum_{\substack{b_1,b_2 \\ b_2^2+1 \equiv a(b_1^2+1) \, (|\Delta|)}} b_1b_2 F_{M_1} \bigg( \bigg| \frac{z_2(b_1^2+1)-z_1(b_2^2+1)}{\Delta} \bigg|^2 \bigg), \\
        U_{11 > }^{(1)} := \sum_{\substack{|z_1|^2,|z_2|^2  \sim N_1 \\ \Delta \neq 0 \\ (z_1,z_2)=1 \\ \max\{\Delta_2,(z_2-z_1,\Delta)\} > Y}} \beta_{z_1}\beta_{z_2} \sum_{\substack{b_1,b_2 \\ b_2^2+1 \equiv a(b_1^2+1) \, (|\Delta|)}} b_1 b_2 F_{M_1} \bigg( \bigg| \frac{z_2(b_1^2+1)-z_1(b_2^2+1)}{\Delta} \bigg|^2 \bigg).
\end{align*}
We will show in Section \ref{boundinglargeysection} by crude estimates that
\begin{align} \label{baddeltaerror}
 U_{11 > }^{(1)} \ll _C XN_1 \log^{-C} X.   
\end{align}
For $U_{11\leq }^{(1)}$ we futher separate terms near the diagonal by writing
\[
U^{(1)}_{11 \leq} = U^{(1)}_{111} + U^{(1)}_{112},
\]
where $U^{(1)}_{112}$ is the part where for $\delta=\log^{-C_1} X$ with some large $C_1 > 0$
\[
\arg z_1 = \arg z_2 + O(\sqrt{\delta}) \pmod{\pi}.
\]
We will show  in Section \ref{smalldeltasection} that for $C_1$ large enough in terms of $C$ we have
\begin{equation} \label{pseudodiagerror}
U^{(1)}_{112} \ll_C X N_1 \log^{-C} X.
\end{equation}
For now we consider $U^{(1)}_{111}$, where $|\arg z_1 - \arg z_2 \pmod{\pi}| > \sqrt{\delta}$.
To simplify the evaluation we split the sums smoothly into finer-than-dyadic intervals. Let $\delta=\log^{-C_1} X$ be as above and let $\psi$ be a $C^\infty$-smooth function as in Section \ref{smoothweightsection}. We will apply the procedure of Section \ref{smoothweightsection} six times, to introduce smooth weights for the variables $z_1,z_2,\arg z_1, \arg z_2, b_1,b_2$, which will cost us a factor of  $\delta^{-6} \log^2 X$. 
For any $Y\geq 1$ let $\psi_Y(y):= \psi(y/Y)$. Let $N_{11 },N_{12}\asymp N_1$, $\theta_1,\theta_2 \in [\pi,5\pi)$, and $B_{1},B_{2} \ll X^{1/4}$ and denote 
\[
\beta^{(\ell)}_{z} := \beta_{z} \psi_{N_{1\ell}}(|z|^2)\psi_{\theta_\ell}(\arg z + 2\pi) .
\] 
For $z_1,z_2$ such that $\beta^{(1)}_{z_1}\beta^{(2)}_{z_2} \neq 0$ we have using  $|\theta_1-\theta_2 \pmod{\pi}| \gg \sqrt{\delta}$
\[
|\Delta| = |z_1z_2||\sin (\arg z_2 - \arg z_1)| = (N_{11}N_{12})^{1/2}|\sin (\theta_2 - \theta_1)|(1 + O(\delta)) \gg  \sqrt{\delta} N .
\]
Hence, for any fixed tuple $(\bm{B},\bm{N},\bm{\theta})$ there is a constant $F(\bm{B},\bm{N},\bm{\theta})$ such that for all $z_i,b_i$ in the support of $\beta^{(1)}_{z_1}\beta^{(2)}_{z_2}\psi_{B_1}(b_1)\psi_{B_2}(b_2)$ we have (using $|\Delta| \gg \sqrt{\delta} N $)
\begin{equation} \label{ccremoval}
b_1 b_2 F_{M_1} \bigg( \bigg| \frac{z_2(b_1^2+1)-z_1(b_2^2+1)}{\Delta} \bigg|^2 \bigg) = B_1 B_2(F(\bm{B},\bm{N},\bm{\theta}) + O(\sqrt{\delta})).
\end{equation}
Accordingly we write 
\[
U^{(1)}_{111} =  U^{(1)}_{1111} + O(\sqrt{\delta} U^{(1)}_{1112}),
\]
where in $U^{(1)}_{1112}$ we have used the triangle inequality to put absolute values around $\beta_{z}$. We will show in Section \ref{ccremovalsubsection} that 
\begin{equation} \label{ccerrorbound}
U^{(1)}_{1112} \ll XN_1 \log^{O(1)} X, 
\end{equation}
so that by taking $C_1$ to be large enough in terms of $C$ we get
\[
\sqrt{\delta} U^{(1)}_{1112} \ll_C XN_1 \log^{-C} X.
\]
Before this we consider the main term $U^{(1)}_{1111}$, for which we need to bound sums of the form
\[
U_1^{(1)}(\bm{B},\bm{N},\bm{\theta}) :=  B_1B_2\sum_{\substack{z_1,z_2 \\ \Delta \neq 0 \\ (z_1,z_2)=1 \\ \max\{\Delta_2,(z_2-z_1,\Delta)\} \leq Y}} \beta^{(1)}_{z_1}\beta^{(2)}_{z_2} \sum_{\substack{b_1,b_2 \\ b_2^2+1 \equiv a(b_1^2+1) \, (|\Delta|)}} \psi_{B_1}(b_1)\psi_{B_2}(b_2).
\]
Since the cost from introducing smooth weights is bounded by $\ll \delta^6 \log^{6} X= \log^{O(1)} X$, to handle $U^{(1)}_{1111}$ it suffices to show that for any $C > 0$
\begin{equation} \label{umainclaim}
U_1^{(1)}(\bm{B},\bm{N},\bm{\theta}) \ll_C X N_1 \log^{-C} X.
\end{equation}

Let $H_j := X^\eps|\Delta|/B_j$ and define 
\[
\begin{split}
N_1(a; \Delta) &:= |\{x_1,x_2 \, (\Delta): x_1^2+1 \equiv a(x_2
^2+1) \, (|\Delta|)\}|, \\
S_1(a,\bm{h};|\Delta|)&:= \sum_{\substack{x_1,x_2 \, (\Delta) \\ x_1^2+1 \equiv a(x_2
^2+1) \, (|\Delta|)}} e_{|\Delta|}(\bm{h} \cdot \bm{x}),
\end{split}
\]
and
\[
\hat{U}_1^{(1)}(\bm{B},\bm{N},\bm{\theta})  := B_1 B_2 \sum_{\substack{z_1,z_2 \\ \Delta \neq 0 \\ (z_1,z_2)=1 \\\max\{\Delta_2,(z_2-z_1,\Delta)\} \leq Y}} |\beta^{(1)}_{z_1}\beta^{(2)}_{z_2} |\frac{1}{H_1 H_2} \sum_{\substack{|h_j| \ll H_j \\ (h_1,h_2) \neq (0,0)}} | S_1(a,\bm{h};\Delta)|.
\]
Then applying the Poisson summation formula (Lemma \ref{poissonlemma}) twice we get
\[
U_1^{(1)}(\bm{B},\bm{N},\bm{\theta}) = B_1^2B_2^2 \sum_{\substack{z_1,z_2 \\ \Delta \neq 0 \\ (z_1,z_2)=1 \\ \max\{\Delta_2,(z_2-z_1,\Delta)\} \leq Y}} \beta^{(1)}_{z_1}\beta^{(2)}_{z_2} \frac{\hat{\psi}(0)^2}{\Delta^2} N_1(a; \Delta) + O(X^{2 \eps}\hat{U}_1^{(1)}(\bm{B},\bm{N},\bm{\theta})) + O_\eps(X^{-10}).
\]
Note that here $\hat{\psi}(0) \ll \delta$.

\subsubsection{Bounding the main term} \label{maintermsection}
We have to show that for any $C >0$
\begin{align} \label{maintermclaim}
    \delta^2 B_1^2B_2^2 \sum_{\substack{z_1,z_2  \\ \Delta \neq 0 \\ (z_1,z_2)=1 \\\max\{\Delta_2,(z_2-z_1,\Delta)\} \leq Y}} \beta^{(1)}_{z_1}\beta^{(2)}_{z_2} \frac{1}{\Delta^2} N_1(a; \Delta)  \ll_C X N \log^{-C} X.
\end{align}
Recall that $(a,\Delta)=1$ for all $z_1,z_2$ appearing in the above sum and note that $(a-1,\Delta)= (z_2-z_1,\Delta).$ Using Lemma \ref{points1Dlemma} with $Y=\log^{C'} X$ as above, noting that $\max\{\Delta_2,(z_2-z_1,\Delta)\} \leq Y$, the left-hand side in \eqref{maintermclaim} is bounded by $T_1 + O(T_2) $ for
\[
\begin{split}
T_1 &:=  \delta^2B_1^2B_2^2 \sum_{\substack{z_1,z_2  \\ \Delta \neq 0 \\ (z_1,z_2)=1 \\ \max\{\Delta_2,(z_2-z_1,\Delta)\} \leq Y}} \beta^{(1)}_{z_1}\beta^{(2)}_{z_2} \frac{1}{\Delta} \sum_{\substack{d|\Delta \\ d \leq Y^4 }} \frac{\eps_d(a)}{d}, \\
T_2 &:= Y^{-1}\delta^2 B_1^2B_2^2 \sum_{\substack{z_1,z_2  \\ \Delta \neq 0\\ (z_1,z_2)=1}}|\beta^{(1)}_{z_1}\beta^{(2)}_{z_2} |\frac{1}{\Delta}\tau(\Delta)^{O(1)}  \\
\end{split}
\]
Using Lemma \ref{Deltaboundlemma} we see (taking $Y$ to be sufficiently large power of $\log X$)
\[
T_2 \ll_C B_1^2 B_2^2 N_1 \log^{-C} X \ll_C X N_1 \log^{-C} X.
\]
In $T_1$ the condition $d|\Delta$ is equivalent to $z_2 \equiv a z_1 \, (d)$ for some $a\,(d)$, so that
\[
T_1 = \delta^2 B_1^2 B_2^2 \sum_{d \leq Y^4} \sum_{(a,d)=1} \frac{\eps_d(a)}{d} \sum_{\substack{z_2 \equiv az_1 \,(d)\\ \Delta \neq 0 \\ (z_1,z_2)=1 \\ \max\{\Delta_2,(z_2-z_1,\Delta)\} \leq Y }} \beta^{(1)}_{z_1}\beta^{(2)}_{z_2} \frac{1}{\Delta}.
\]
The condition  $(z_1,z_2)=1$ can be dropped with a negligible error term by trivial bounds, recalling that $\beta(n)$ is supported on $(n,P(W))=1$. Similarly, the condition $\max\{\Delta_2,(z_2-z_1,\Delta)\} \leq Y$ may also be dropped with a negligible error term by crude bounds.

Recall that in the support of $\beta^{(1)}_{z_1}\beta^{(2)}_{z_2}$ we have for some constant $\sqrt{\delta} N_1 \ll D \leq 2N_1$
\[
\Delta = D(1+O(\sqrt{\delta})).
\]
The contribution from $O(\sqrt{\delta})$ is bounded by trivial bounds, recalling that  $\beta^{(\ell)}_{z}$ are supported on small polar boxes.  Applying \eqref{swmain0} (the Siegel-Walfisz property) we get 
\[
T_1 \ll_C B_1^2 B_2^2 N_1 (\delta^{-1/2}\log^{-C} X +  \delta^6 \sqrt{\delta} )  \ll_C X  N_1 (\log^{-C} X+ \delta^6 \sqrt{\delta}),
\]
which is sufficient for (\ref{umainclaim}) since the cost from introducing the smooth weights was $\delta^{-6}$.
\begin{remark}
At first it may seem surprising that in our problem the evaluation of the main term is vastly easier than in the situation in \cite{FI}. This is because here with $N(a;\Delta)$ we are computing solutions to a non-singular equation. This allowed us to bound the large moduli $d$ by using the Hasse-Weil bound (Lemma \ref{pointsweil1lemma}). 
\end{remark}
\subsubsection{Bounding the non-zero frequencies} \label{nonzerosection}
It suffices to show that for $N_1 \ll X^{1/3-\eta}$ we have for some $\eta >0$
\[
\hat{U}^{(1)}_1(\bm{B},\bm{N},,\bm{\theta}) \ll X^{1-\eta} N_1 .
\]
For $h_1 = 0$ we apply Lemma \ref{expsums1Dlemma} with $h_2 \neq 0$ to get a bound
\begin{align*}
 B_1 B_2& \sum_{\substack{z_1,z_2 \\ \max\{\Delta_2,(z_2-z_1,\Delta)\} \leq Y}}  \frac{1}{H_1 H_2} \sum_{\substack{0<|h_2| \ll H_2 }} | S_1(a,0,h_2;\Delta)| \\
 &\ll  Y^{O(1)}B_1 B_2 \sum_{\substack{z_1,z_2 }} \frac{|\Delta| \tau(\Delta)^{O(1)}}{\Delta^{1/2}} \frac{1}{H_1 H_2} \sum_{\substack{0<|h_2| \ll H_2 }} (h_2,\Delta_1)    
\end{align*}

Using $H_j=|\Delta|/B_j$ and Lemma \ref{Deltaboundlemma} the last expression is bounded by
\[
\ll  Y^{O(1)} \sum_{z_1,z_2 } \frac{|\Delta| \tau(\Delta)^{O(1)}}{\Delta^{1/2}} \frac{B_1 B_2}{H_1}
 \ll Y^{O(1)}B_1^2 B_2   \sum_{z_1,z_2 } \frac{\tau(\Delta)^{O(1)}}{\Delta^{1/2}} \ll Y^{O(1)} B_1^2 B_2 N_1^{3/2} \ll X^{1-\eta} N_1,
\]
since $B_j \ll X^{1/4}$ and $N \ll X^{1/3-\eta} < X^{1/2-\eta}$.
Similarly, we get a sufficient bound for the contribution from $h_2=0$ with $h_1 \neq 0$.
 
For the contribution from $(h_1,h_2) \neq  \bm{0}$ we get by Lemma 8 a bound
\[
\begin{split}
\ll Y^{O(1)} \frac{B_1 B_2}{H_1H_2} \sum_{z_1,z_2}\tau(\Delta)^{O(1)} \frac{|\Delta|}{\Delta^{1/2}} \sum_{0 < |h_j| \ll H_j} ((h_1,\Delta_1) +(h_1+h_2,\Delta_1)^{1/2}+(h_1-h_2,\Delta_1)^{1/2} ) \\
 \ll Y^{O(1)} B_1 B_2 \sum_{z_1,z_2} \tau(\Delta)^{O(1)}\bigg(\frac{|\Delta|}{\Delta^{1/2}}  + \frac{|\Delta|}{H_1} \bigg) \ll B_1 B_2 \sum_{z_1,z_2} \tau(\Delta)^{O(1)}\bigg(\frac{|\Delta|}{\Delta^{1/2}}  + B_1 \bigg)
\end{split}.
\]
By Lemma \ref{Deltaboundlemma}  the last expression is at most
\[
\ll X^{\eps}(B_1 B_2 N_1^{5/2} + B_1^2 B_2 N_1^{3/2}) \ll X^\eps( X^{1/2} N_1^{5/2} + X^{3/4} N_1^{3/2}) \ll X^{1-\eta} N_1
\]
by using $N_1  \ll X^{1/3-\eta}$.
\begin{remark} Morally speaking we  have in the above applied the bound $S_1(a;\Delta) \ll \Delta^{1/2}$. The corresponding exponential sum in \cite{FI} is bounded in terms of Ramanujan sums, so that in there one morally gets a bound $\ll 1$ for the exponential sums. This loss is the reason why our Type II range is narrower than in \cite{FI}.
\end{remark}
\subsubsection{Bounding the error from removing cross-conditions} \label{ccremovalsubsection}
 We now explain how to get (\ref{ccerrorbound}). To bound $U^{(1)}_{1112}$ we consider
\[
V_1^{(1)}(\bm{B},\bm{N},\bm{\theta}) := B_1B_2\sum_{\substack{|z_1|^2,|z_2|^2 \sim N_1 \\ \Delta \neq 0 \\ (z_1,z_2)=1 \\ \max\{\Delta_2,(z_2-z_1,\Delta)\} \leq Y}} |\beta^{(1)}_{z_1}||\beta^{(2)}_{z_2}| \sum_{\substack{b_1,b_2 \\ b_2^2+1 \equiv a(b_1^2+1) \, (|\Delta|)}} \psi_{B_1}(b_1)\psi_{B_2}(b_2)
\]
For fixed $\bm{B}$ and $\bm{N}$ we can write
\[
\begin{split}
V_1^{(1)}(\bm{B},\bm{N},\bm{\theta}) \ll  B_1B_2\sum_{\substack{|z_1|^2,|z_2|^2 \sim N_1 \\ \Delta \neq 0 \\ (z_1,z_2)=1\\ \max\{\Delta_2,(z_2-z_1,\Delta)\} \leq Y}}  \psi_{N_1}(|z_1|^2) \psi_{N_1}(|z_2|^2)&\psi_{\theta_1}(\arg z_1 + 2\pi) \psi_{\theta_2}(\arg z_2 + 2\pi)  \\
& \sum_{\substack{b_1,b_2 \\ b_2^2+1 \equiv a(b_1^2+1) \, (|\Delta|)}} \psi_{B_1}(b_1)\psi_{B_2}(b_2).
\end{split}
\]
Then applying Poisson summation and carrying out the same arguments as in Sections \ref{maintermsection} and \ref{nonzerosection}, replacing the use of the Siegel-Walfisz property with $\beta^{(1)}_{z_1}\beta^{(2)}_{z_2}$ by a trivial point count with $z_1$ and $z_2$ in small polar boxes, we can evaluate the sum to obtain for any $C>0$
\[
V_1^{(1)}(\bm{B},\bm{N},\bm{\theta}) \ll_C (\delta^6 X N_1 \log^{O(1)}  X + XN_1 \log^{-C} X).
\]
Since the integration over the tuple $(\bm{B},\bm{N},\bm{\theta})$ is weighted by $\delta^{-6} \log^{O(1)} X$ (as in Section \ref{smoothweightsection}), we get
\[ 
U^{(1)}_{1112}  \ll_C XN_1\log^{O(1)} X.
\]

\subsubsection{Bounding the contribution from $\arg z_1 = \arg z_2 + O(\sqrt{\delta}) \pmod{\pi}$} \label{smalldeltasection}
To complete the proof for $a^{(1)}_n$ we need to show the bound (\ref{pseudodiagerror}) for the contribution from $U^{(1)}_{112}$, where  $\arg z_1 = \arg z_2 + O(\sqrt{\delta}) \pmod{\pi}$. The idea is similar as in the previous section but we cannot ignore the weight
\[
F_{M_1} \bigg( \bigg| \frac{z_2(b_1^2+1)-z_1(b_2^2+1)}{\Delta} \bigg|^2 \bigg),
\] 
which now restricts the variables $b_j$ to a narrow subset. We split $U^{(1)}_{112}$ dyadically according to the size of $\Delta \sim \delta_1 N $ to get
\[
U^{(1)}_{112} \ll \sum_{\delta_1=2^{-j} \ll \sqrt{\delta}} U(\delta_1),
\]
where
\[
U(\delta_1) =  \sum_{\substack{|z_1|^2,|z_2|^2  \sim N_1 \\ (z_1,z_2)=1 \\ \max\{\Delta_2,(z_2-z_1,\Delta)\} \leq Y\\ \Delta \sim \delta_1 N_1}} |\beta_{z_1}||\beta_{z_2}| \sum_{\substack{b_1,b_2 \\ b_2^2+1 \equiv a(b_1^2+1) \, (\Delta)}} b_1b_2 F_{M_1} \bigg( \bigg| \frac{z_2(b_1^2+1)-z_1(b_2^2+1)}{\Delta} \bigg|^2 \bigg)
\]
We introduce finer-than-dyadic weights for $b_2$ using Section \ref{smoothweightsection} with $\delta_1$, denoting $\psi=\psi^{(\delta_1)}$, to get
\[
U(\delta_1) \ll  \int_{1/2}^{2 X^{1/4}}  | V(\delta_1,B_1)| \frac{d B_1}{B_1}, 
\]
where
\[
V(\delta_1,B_1)  = \frac{1}{\delta_1} \sum_{\substack{|z_1|^2,|z_2|^2  \sim N_1 \\ (z_1,z_2)=1 \\ \max\{\Delta_2,(z_2-z_1,\Delta)\} \leq Y \\ \Delta \sim \delta_1 N_1}} |\beta_{z_1}||\beta_{z_2}| \sum_{\substack{b_1,b_2 \\ b_2^2+1 \equiv a(b_1^2+1) \, (\Delta)}} b_1b_2\psi^{(\delta_1)}_{B_1}(b_1) F_{M_1} \bigg( \bigg| \frac{z_2(b_1^2+1)-z_1(b_2^2+1)}{\Delta} \bigg|^2 \bigg).
\]
The weight $F_{M_1}$ is supported on
\[
| b_2^2+1 -   z_2/z_1(b_1^2+1)| \ll \delta_1 M_1^{1/2} N_1^{1/2}  = \delta_1 X^{1/2},
\]
Denoting $B_2^2 :=  -1+z_2/z_1(B_1^2+1)$, for $b_1=B_1(1+O(\delta_1))$ this is contained in
\[
|b_2^2 - B_2^2| \, \ll \delta_1 X^{1/2} + \delta_1 B_1^2 \ll  \delta_1 X^{1/2},
\]
which is contained in
\[
|b_2-B_2| \,\ll \delta_1 X^{1/2}/ B_1.
\]
Thus, if we let $\psi^{(\delta_2)}_{B_2}(b_2)$ denote a suitable smooth majorant to this with
\[
\delta_2 = \min \bigg\{ 1, \delta_1 \frac{X^{1/2}}{B_1 B_2}\bigg\},
\]
we get
\[
V(\delta_1,B_1)  \ll \frac{1}{\delta_1} \sum_{\substack{|z_1|^2,|z_2|^2  \sim N_1 \\ (z_1,z_2)=1 \\ \max\{\Delta_2,(z_2-z_1,\Delta)\} \leq Y\\ \Delta \sim \delta_1 N_1}} |\beta_{z_1}||\beta_{z_2}| B_1 B_2 \sum_{\substack{b_1,b_2 \\ b_2^2+1 \equiv a(b_1^2+1) \, (\Delta)}} \psi^{(\delta_1)}_{B_1}(b_1)  \psi^{(\delta_2)}_{B_2}(b_2).
\]
We now evaluate the inner sum by Poisson summation (Lemma \ref{poissonlemma}) to get
\[
V(\delta_1,B_1) = \frac{1}{\delta_1} \sum_{\substack{|z_1|,|z_2|  \\ (z_1,z_2)=1 \\ \Delta \sim \delta_1 N_1}} |\beta_{z_1}||\beta_{z_2}|  B_1^2B_2^2\frac{\hat{\psi}^{(\delta_1)}(0)\hat{\psi}^{(\delta_2)}(0)}{\Delta^2} N_1(a; \Delta) + X^{2 \eps}\hat{V} + O_\eps(X^{-10})
\] 
The non-zero frequencies $\hat{V}$ are bounded just as before, making use of bound
\[
\widehat{\psi}^{(\delta_j)} \left( \frac{h B_1}{|\Delta|}\right) \ll \delta_j
\]
to cancel the factor $\frac{1}{\delta_1 \delta_2}$ and $\Delta \sim \delta_1 N_1$ to cancel the factor $\frac{1}{\delta_1}$. We get similarly as in Section \ref{nonzerosection}
\[
\hat{V} \ll X^{\eps}(B_1 B_2 N_1^{5/2} + B_1^2 B_2 N^{3/2}) \ll X^\eps( X^{1/2} N^{5/2} + X^{3/4} N_1^{3/2}) \ll X^{1-\eta} N_1
\]
by using $N_1  \ll X^{1/3-\eta}$.

For the main term we get using the trivial bound  $N_1(a; \Delta) \ll \tau(\Delta)^{O(1)} |\Delta|$
\[
\frac{1}{\delta_1} \sum_{\substack{|z_1|,|z_2|  \\ (z_1,z_2)=1 \\ \Delta \sim \delta_1 N_1}} |\beta_{z_1}||\beta_{z_2}| B_1^2B_2^2\frac{\hat{\psi}^{(\delta_1)}(0)\hat{\psi}^{(\delta_2)}(0)}{\Delta^2} N_1(a; \Delta)  \ll \delta_1 X N_1^2 \ll \sqrt{\delta} X N_1^2,
\]
which is $\ll XN_1 \log^{-C} X$ once we choose a sufficiently large $C_1$ in $\delta= \log^{-C_1} X$.
\begin{remark}
When we split $b_1$ into intervals of length $\delta_1 B_1$ it can happen that $\delta_1 B_1 < 1$ for small $\delta_1$, so that the sum over $b_1$  may be empty. This is not a problem since we are not trying to show that the error term $\hat{V}$ from the Poisson summation is smaller than the main term, only that both terms are smaller than $\ll XN_1 \log^{-C} X.$
\end{remark}
\subsubsection{Bounding $U^{(1)}_{11>}$}
\label{boundinglargeysection} Recall that we are aiming to show \eqref{baddeltaerror} for
\begin{align} \nonumber
    U_{11 > }^{(1)} = &\sum_{\substack{|z_1|^2,|z_2|^2  \sim N_1 \\ \Delta \neq 0 \\ (z_1,z_2)=1 \\ \max\{\Delta_2,(z_2-z_1,\Delta)\} > Y}} \beta_{z_1}\beta_{z_2} \sum_{\substack{b_1,b_2 \\ b_2^2+1 \equiv a(b_1^2+1) \, (|\Delta|)}} b_1 b_2 F_{M_1} \bigg( \bigg| \frac{z_2(b_1^2+1)-z_1(b_2^2+1)}{\Delta} \bigg|^2 \bigg)\\ \nonumber
    \ll & \,X^{1/2}\sum_{\substack{|z_1|^2,|z_2|^2  \sim N_1 \\ \Delta \neq 0  \\ (z_1,z_2)=1 \\ \Delta_2 > Y}}  \sum_{\substack{b_1,b_2 \ll X^{1/4} \\ z_1( b_2^2+1) \equiv z_2(b_1^2+1) \, (|\Delta|)}} 1  + X^{1/2}\sum_{\substack{|z_1|^2,|z_2|^2  \sim N_1 \\ \Delta \neq 0  \\ (z_1,z_2)=1 \\ (z_2-z_1,\Delta) > Y}}  \sum_{\substack{b_1,b_2 \ll X^{1/4} \\ z_1 (b_2^2+1) \equiv z_2(b_1^2+1) \, (|\Delta|)}} 1  \\
\nonumber =:& V^{(1)}_{11>} + W^{(1)}_{11>},
\end{align}
where $\Delta_2$ is the powerful part of $\Delta$. For both of the terms we will apply a similar argument as in Section \ref{diag2section}. 
We first consider $W^{(1)}_{11>}$, which is the more difficult of the two. Denoting $z_j = r_j+is_j$, since $\Delta = r_1s_2-r_2s_1\neq 0$, by symmetry we may assume that $s_1 \neq s_2$ and thus by symmetry we can further assume that $s_2\neq 0$.  Let $D_0 | \Delta$ stand for $(z_2-z_1,\Delta)$, so that we have $D_0 | s_2-s_1$ and $D_0|r_2-r_1$. Hence,
\begin{align} \label{w11first}
   W_{11 > }^{(1)}  \ll  X^{1/2}  \sum_{D_0 > Y} \sum_{b_1,b_2 \ll X^{1/4}} \sum_{\substack{ |s_1|,|s_2| \ll N_1^{1/2} \\ s_1\neq s_2 \neq 0  \\  s_2 (b_1^2+1) \equiv  s_1(b_2^2+1) \, (D_0)  \\ s_2\equiv s_1 \, (D_0)}} \sum_{\substack{\Delta \neq 0 \\ D_0|\Delta \\ \Delta |(s_2 (b_1^2+1) -  s_1(b_2^2+1))  }} \sum_{ \substack{|r_1|,|r_2| \ll N_1^{1/2} \\ r_1s_2-r_2s_1 = \Delta 
 \\ r_2\equiv r_1 \, (D_0) }} 1  
\end{align}

For fixed $s_1,s_2,\Delta$ with $s_2\neq 0, |s_2| \ll N_1^{1/2}$, denoting $r_1=r_2+r_3 D_0$ we see that
\[
\sum_{\substack{|r_1|,|r_2| \ll N_1^{1/2} \\ r_1s_2-r_2s_1 = \Delta 
 \\ r_2\equiv r_1 \, (D_0) }} 1  \ll \sum_{\substack{|r_2|,|r_3| \ll N_1^{1/2} \\ r_3D_0s_2-r_2s_1 + r_2s_2 = \Delta 
  }} 1  \ll \sum_{\substack{|r_2| \ll N_1^{1/2} \\ r_2(s_2-s_1) \equiv \Delta 
 \, (D_0 s_2)  }} 1 \ll 1+  \frac{N_1^{1/2}(D_0 s_2,(s_2-s_1))}{D_0 s_2}.
\]
Thus, denoting  $s_0 = (s_1,s_2)$ and $s_j=s_0 s_j'$ 
we have
\[
\frac{(D_0 s_2,(s_2-s_1))}{D_0 s_2}  = \frac{(D_0,(s'_2-s'_1))}{D_0 s'_2},
\]
so that
\begin{align} \nonumber 
        W_{11 > }^{(1)}  \ll &X^{1/2} \sum_{D_0 > Y} \sum_{b_1,b_2 \ll X^{1/4}} \sum_{s_0 \ll N_1^{1/2}} \tau(s_0) \\  \label{w11second}&\sum_{\substack{ |s'_1|,|s'_2| \ll N_1^{1/2}/s_0  \\(s_1',s_2')=1 \\   s'_1\neq s'_2 \neq 0 \\ s_0 s_2' (b_1^2+1) \equiv  s_0s_1'(b_2^2+1) \, (D_0) \\   D_0 | s_0 (s'_2-s'_1)  }} \bigg(1+  \frac{N_1^{1/2}(D_0,(s_2'-s_1'))}{D_0 s'_2} \bigg)\tau(s_2'(b_1^2+1)-s_1'(b_2^2+1))  .
\end{align}
By  using Lemma \ref{divisorlemma} to find a  divisor $d \leq X^{\eps}$ of $\frac{1}{D_0}(s_0s'_2(b_1^2+1) - s_0s'_1(b_2^2+1))$ with
\[
\tau\bigg(\frac{1}{D_0}(s_0s'_2(b_1^2+1) - s_0s'_1(b_2^2+1))\bigg) \ll \tau(d)^{O_\eps(1)}
\]
we get
\begin{align*}
        W_{11 > }^{(1)}  \ll \,&X^{1/2} \sum_{D_0 > Y} \tau(D_0) \sum_{d \leq X^\eps} \tau(d)^{O_\eps(1)}  \sum_{s_0 \ll N_1^{1/2}} \tau(s_0) \\&\sum_{\substack{ |s'_1|,|s'_2| \ll N_1^{1/2}/s_0 \\ (s_1',s_2')=1 \\ s'_1\neq s'_2 \neq 0 \\   D_0 | s_0 (s'_2-s'_1)  }} \bigg(1+  \frac{N_1^{1/2}(D_0,(s_2'-s_1'))}{D_0 s'_2} \bigg)  \sum_{\substack{b_1,b_2 \ll X^{1/4}   \\ s_0 s_2' (b_1^2+1) \equiv  s_0s_1'(b_2^2+1) \, (dD_0) }} 1.
        \end{align*}
Using the trivial bound
\[
\sum_{\substack{b_1 \ll X^{1/4}   \\ s_0 s_2' (b_1^2+1) \equiv  s_0s_1'(b_2^2+1) \, (dD_0) }} 1 \ll \tau(D_0)\tau(d) \bigg( 1 + \frac{X^{1/4}(dD_0,s_0s_2')}{dD_0}\bigg)
\]
we get        
        \begin{align*}
      W_{11 > }^{(1)}    \ll\, &X^{1/2} \sum_{D_0 > Y}\tau(D_0)^2 \sum_{d \leq X^\eps} \tau(d)^{O_\eps(1)}  \sum_{s_0 \ll N_1^{1/2}} \tau(s_0) \\ &\sum_{\substack{ |s'_1|,|s'_2| \ll N_1^{1/2}/s_0 \\ (s_1',s_2')=1 \\ s'_1\neq s'_2 \neq 0 \\   D_0 | s_0 (s'_2-s'_1)  }} \bigg(1+  \frac{N_1^{1/2}(D_0,(s_2'-s_1'))}{D_0 s'_2} \bigg)  X^{1/4} \bigg( 1 + \frac{X^{1/4}(dD_0,s_0s_2')}{dD_0}\bigg)  \\
        \ll\, &X^{3/4}  \sum_{d \leq X^\eps} \tau(d)^{O_\eps(1)}  \sum_{s_0 \ll N_1^{1/2}} \tau(s_0) \sum_{\substack{ |s'_1|,|s'_2| \ll N_1^{1/2}/s_0 \\ (s_1',s_2')=1 \\ s'_1\neq s'_2 \neq 0  }} \\&\sum_{\substack{D_0 > Y \\   D_0 | s_0 (s'_2-s'_1) }} \tau(D_0)^2 \bigg( 1 + \frac{X^{1/4}(dD_0,s_0s_2')}{dD_0} + \frac{N_1^{1/2}}{ s'_2} + \frac{N_1^{1/2}X^{1/4} (D_0,(s_2'-s_1'))(dD_0,s_0s_2')}{d D_0^2 s'_2}\bigg).
\end{align*}
Note that by $(s_1',s_2')=1$ and $D_0|s_0(s_2'-s_1')$ we have
\[
(dD_0,s_0s_2') \leq  (d,s_0s_2') (D_0,s_0).
\]
and
\[
(D_0,(s_2'-s_1'))(D_0,s_0s_2') =(D_0,(s_2'-s_1'))(D_0,s_0)  \leq D_0(s_0,(s_2'-s_1'))
\]
so that
\begin{align*}
      W_{11 > }^{(1)}   
        \ll\, &X^{3/4}  \sum_{d \leq X^\eps} \tau(d)^{O_\eps(1)}  \sum_{s_0 \ll N_1^{1/2}} \tau(s_0) \sum_{\substack{ |s'_1|,|s'_2| \ll N_1^{1/2}/s_0 \\ (s_1',s_2')=1 \\ s'_2 \neq 0  }} \\&\sum_{\substack{D_0 > Y \\   D_0 | s_0 (s'_2-s'_1) }} \tau(D_0)^2 \bigg( 1 + \frac{X^{1/4}(d,s_0s_1')(D_0,s_0)}{dD_0} + \frac{N_1^{1/2}}{ s'_2} + \frac{N_1^{1/2}X^{1/4} (d,s_0s_2') (s_0,(s_2'-s_1'))}{d D_0 s'_2}\bigg).
\end{align*}
By using the bounds
\[
\sum_{\substack{D_0 > Y \\   D_0 | s_0 (s'_2-s'_1) }} \frac{\tau(D_0)^2(D_0,s_0)}{D_0} \leq (Y^{-1/2} +  \mathbf{1}_{s_0 > Y^{1/2}} )\tau(s_0)^3 \tau(s_2'-s_1')^3
\] 
and
\[
\sum_{\substack{D_0 > Y \\   D_0 | s_0 (s'_2-s'_1) }} \frac{\tau(D_0)^2}{D_0} \leq Y^{-1}\tau(s_0)^3 \tau(s_2'-s_1')^3
\]
we get
\begin{align*}
    W_{11 > }^{(1)}   \ll\, &X^{3/4}   \sum_{d \leq X^\eps} \tau(d)^{O_\eps(1)} \sum_{s_0 \ll N_1^{1/2}} \tau(s_0)^4  \\
    & \sum_{\substack{ |s'_1|,|s'_2| \ll N_1^{1/2}/s_0 \\ (s_1',s_2')=1 \\ s'_1\neq s'_2 \neq 0  }}  \tau(s_2'-s_1')^3 \bigg( 1   + \frac{N_1^{1/2}}{ s'_2}  +\frac{X^{1/4}(d,s_0s_2')}{d}(Y^{-1/2} +  \mathbf{1}_{s_0 > Y^{1/2}} ) \\
    & \hspace{190pt}+ \frac{N_1^{1/2}X^{1/4} (s_0,(s_2'-s_1'))(d,s_0s_2')}{ Y d  s'_2} \bigg) \\
    \ll_\eps \, & X^{3/4+2\eps} N_1  + \frac{XN_1 \log^{O(1)} X}{Y^{1/2}},
\end{align*}
where the last line follows by first computing the sum over $d \leq X^\eps$ using the bound
\[
\sum_{d \sim D} \tau(d)^{O(1)} (d,s)  \leq \sum_{v| s} \sum_{\substack{d \sim D \\ d \equiv 0\, (v)}} \tau(d)^{O(1)}  \ll \tau(s)^{O(1)} D \log^{O(1)} D.
\]
Thus, by taking $\eps >0$ small and $Y= \log^{C'} X$ with $C'\gg C$ we get
\[
 W_{11 > }^{(1)} \ll XN_1 \log^{-C} X.
\]

To bound $V_{11 > }^{(1)}$ we  note that similarly as in \eqref{w11first} we have
\[ 
V_{11 > }^{(1)}  \ll  X^{1/2}  \sum_{\substack{Y< \Delta_2 \ll N_1  \\ \Delta_2 \, \text{powerful}}} \sum_{b_1,b_2 \ll X^{1/4}} \sum_{\substack{ |s_1|,|s_2| \ll N_1^{1/2} \\ s_1\neq s_2 \neq 0  \\  s_2 (b_1^2+1) \equiv  s_1(b_2^2+1) \, (\Delta_2)  }} \sum_{\substack{\Delta \neq 0 \\ D_0|\Delta \\ \Delta |(s_2 (b_1^2+1) -  s_1(b_2^2+1))  }} \sum_{ \substack{|r_1|,|r_2| \ll N_1^{1/2} \\ r_1s_2-r_2s_1 = \Delta 
 }} 1.
 \]
Thus, similar to \eqref{w11second} we get
\begin{align*}
     V_{11 > }^{(1)}  \ll &X^{1/2}\sum_{\substack{Y< \Delta_2 \ll N_1  \\ \Delta_2 \, \text{powerful}}}  \sum_{b_1,b_2 \ll X^{1/4}} \sum_{s_0 \ll N_1^{1/2}} \tau(s_0) \\&\sum_{\substack{ |s'_1|,|s'_2| \ll N_1^{1/2}/s_0 \\ (s_1',s_2')=1 \\   s'_1\neq s'_2 \neq 0 \\ s_0 s_2' (b_1^2+1) \equiv  s_0s_1'(b_2^2+1) \, (\Delta_2)  }} \bigg(1+  \frac{N_1^{1/2}}{ s'_2} \bigg)\tau(s_2'(b_1^2+1)-s_1'(b_2^2+1))  .
\end{align*}
By  using Lemma \ref{divisorlemma} to find a suitable divisor $d \leq X^{\eps}$ of $\frac{1}{\Delta_2}(s_0s'_2(b_1^2+1) - s_0s'_1(b_2^2+1))$ and using the trivial bound for the sum over $b_1,b_2$ we get
\begin{align*}
        V_{11 > }^{(1)}  \ll \,&X^{1/2} \sum_{\substack{Y< \Delta_2 \ll N_1  \\ \Delta_2 \, \text{powerful}}} \tau(\Delta_2) \sum_{d \leq X^\eps} \tau(d)^{O_\eps(1)}  \sum_{s_0 \ll N_1^{1/2}} \tau(s_0) \\&\sum_{\substack{ |s'_1|,|s'_2| \ll N_1^{1/2}/s_0 \\ (s_1',s_2')=1 \\ s'_1\neq s'_2 \neq 0   }}   \frac{N_1^{1/2}}{s'_2}  \sum_{\substack{b_1,b_2 \ll X^{1/4}   \\ s_0 s_2' (b_1^2+1) \equiv  s_0s_1'(b_2^2+1) \, (d\Delta_2) }} 1 \\
        \ll\, &X^{1/2} \sum_{\substack{Y< \Delta_2 \ll N_1  \\ \Delta_2 \, \text{powerful}}} \tau(\Delta_2)^2 \sum_{d \leq X^\eps} \tau(d)^{O_\eps(1)}  \sum_{s_0 \ll N_1^{1/2}} \tau(s_0) \\ &\sum_{\substack{ |s'_1|,|s'_2| \ll N_1^{1/2}/s_0 \\ (s_1',s_2')=1 \\ s'_1\neq s'_2 \neq 0   }} \frac{N_1^{1/2}}{s'_2}  X^{1/4} \bigg( 1 + \frac{X^{1/4}(d\Delta_2,s_0s_2')}{d\Delta_2}\bigg)  \\
        \ll\, &X^{3/4} \sum_{\substack{Y< \Delta_2 \ll N_1  \\ \Delta_2 \, \text{powerful}}} \tau(\Delta_2)^2  \sum_{d \leq X^\eps} \tau(d)^{O_\eps(1)}  \sum_{s_0 \ll N_1^{1/2}} \tau(s_0) \\& \sum_{\substack{ |s'_1|,|s'_2| \ll N_1^{1/2}/s_0 \\ (s_1',s_2')=1 \\ s'_1\neq s'_2 \neq 0  }} \bigg(  \frac{N_1^{1/2}}{ s'_2} + \frac{N_1^{1/2}X^{1/4} (d\Delta_2,s_0s_2')}{d \Delta_2 s'_2}\bigg).
        \end{align*}
Thus, we get
\begin{align} \nonumber
  V_{11 >}^{(1)} \ll\, & X^{3/4}(\log^{O(1)}  X )\sum_{\substack{Y< \Delta_2 \ll N_1  \\ \Delta_2 \, \text{powerful}}} \tau(\Delta_2)^{O(1)} \bigg( X^\eps N_1 + \frac{N_1 X^{1/2}}{\Delta_2} \bigg) \\ \label{v11final}
  \ll\, & X^{3/4+2 \eps} N_1^{3/2} + \frac{XN_1 \log^{O(1)}  X }{Y}.
\end{align}
Since $N_1 \leq X^{1/2-10 \eps}$, by taking $\eps >0$ small and $Y= \log^{C'} X$ with $C'\gg C$ we get
\[
 V_{11 > }^{(1)} \ll XN_1 \log^{-C} X.
\]

\subsection{The off-diagonal for $U^{(2)}_{1}$} \label{typeii2offdiag}
Here we apply the same arguments as with $U^{(1)}_{11}$, except that we count solutions to
\[
c_1^3+d_1^3 \equiv a(c_2^3+d_2^3) \ (|\Delta|),
\]
with $c_1,c_2,d_1,d_2 \ll X^{1/6}$, so that in place of Lemmas \ref{points1Dlemma} and \ref{expsums1Dlemma} we use Lemmas \ref{points2Dlemma} and \ref{expsums2Dlemma}. 
First we separate the part where $\Delta_2 > Y$ by writing
\[
U^{(2)}_{11} = U^{(2)}_{11 \leq} + U^{(2)}_{11 >}.
\]
By similar arguments as in Section \ref{boundinglargeysection}, in place of \eqref{v11final} we get
\[
U^{(2)}_{11 >} \ll X^{5/6+2 \eps} N_2^{3/2} + \frac{XN_2 \log^{O(1)}  X }{Y},
\]
which suffices since $N_2 \ll X^{1/3-10\eps}$. Note that it is not necessary to separate large values of $(z_2-z_1,\Delta)$ since the bounds from Section \ref{weilsection} we will use here do not care about the common factor $(a-1,D)$.

For $U^{(2)}_{11 \leq}$ the technical issues of removing the smooth cross-condition $F_{M_2}$ and bounding the part $\theta_1 = \theta_2 + O(\sqrt{\delta}) \pmod{\pi}$ are handled similarly as for $U^{(1)}_{11 \leq}$ (cf. Sections \ref{ccremovalsubsection} and \ref{smalldeltasection}), so that we need to consider sums of the form 
\[
\begin{split}
U_1^{(2)}(\bm{C},\bm{D},\bm{N},\bm{\theta}):=\Omega(C_1,D_1)&\Omega(C_2,D_2)\sum_{\substack{|z_1|,|z_2| \\ \Delta \neq 0  \\ (z_1,z_2)=1 \\ \max\{\Delta_2,(z_2-z_1,\Delta)\} \leq Y}} \beta^{(1)}_{z_1}\beta^{(2)}_{z_2}  \\
&\sum_{\substack{c_1,c_2,d_1,d_2 \\ c_2^3+d_2^3 \equiv a(c_1^3+d_1^3) \, (|\Delta|)}} \psi_{C_1}(c_1)\psi_{C_2}(c_2) \psi_{D_1}(d_1)\psi_{D_2}(d_2)
\end{split}
\]
with $C_1,C_2,D_1,D_2 \ll X^{1/6}$. Let $H_1 := X^\eps|\Delta|/C_1,$ $H_2 := X^\eps|\Delta|/C_2,$  $H_3 := X^\eps|\Delta|/D_{1},$ and $H_4 := X^\eps|\Delta|/D_{2}.$ Applying Poisson summation four times and denoting
\[
S_2(\bm{h};\Delta) :=  \sum_{\substack{x_1,x_2,
x_3,x_4 \, (\Delta) \\ x_1^3+x_2^3 \equiv a(x_3^3+x_4^3) \, (\Delta)}}e_{\Delta}(\bm{h}\cdot \bm{x}),
\]
the contribution from $(h_1,h_2,h_3,h_4) \neq \bm{0}$ is by Lemma \ref{expsums2Dlemma} bounded by (using $(a-1,\Delta) \leq Y$ and $(a,\Delta)=1$)
\[
\begin{split}
\Omega(C_1,D_1)&\Omega(C_2,D_2)\sum_{\substack{z_1,z_2 \\ \Delta \neq 0  \\ (z_1,z_2)=1 \\ \max\{\Delta_2,(z_2-z_1,\Delta)\} \leq Y }}  \frac{1}{H_1 H_2H_3H_4} \sum_{\substack{|h_j| \leq H_j \\ (h_1,h_2,h_3,h_4) \neq \bm{0}}} S_2(\bm{h};\Delta)  \\
&\ll  (X^{1/6})^2 \sum_{\substack{z_1,z_2 \\ \arg z_1 \neq \arg z_2 \\ (z_1,z_2)=1 \\ \max\{\Delta_2,(z_2-z_1,\Delta)\} \leq Y}}  \frac{Y^{O(1)}\tau(\Delta)^{O(1)}\Delta^3}{\Delta} \ll X^{1/3+\eps} N^4 \ll X^{1-\eta} N,
\end{split}
\]
since $N \ll X^{2/9-\eta}$.
For the main term (with $(h_1,h_2,h_3,h_4) = \bm{0}$)
\[
\Omega(C_1,D_1)\Omega(C_2,D_2)C_1C_2D_1D_2\sum_{\substack{z_1,z_2 \\ \Delta \neq 0  \\ (z_1,z_2)=1 \\  \max\{\Delta_2,(z_2-z_1,\Delta)\} \leq Y}} \beta^{(1)}_{z_1}\beta^{(2)}_{z_2} \frac{\hat{\psi}(0)^4}{\Delta^4} N_3(a; \Delta) 
\]
we get a sufficient bound by using Lemma \ref{points2Dlemma} and the Siegel-Walfisz property with main term 0 \eqref{swmain0} similarly as in Section \ref{maintermsection}. 

\subsection{The off-diagonal for $U^{(j)}_{2}$}
Here we can apply the same arguments as in the above sections, but the evaluation  is of course much easier. We are counting $b_1,b_2$ which satisfy the simple equation (with $(a,\Delta)=1$)
\[
b_2=ab_1 \, (|\Delta|).
\]
Here the lengths $B_j$ of $b_j$ satisfy $B_j > X^{1/2-\eta} > N X^\eta \geq |\Delta| X^{\eta}$, so that after Poisson summation we only get a contribution from the frequency $(0,0)$. The point count modulo $\Delta$ corresponding to $N_1(a;\Delta)$ is trivially equal to $|\Delta|$. Thus, applying the Siegel-Walfisz property \eqref{swmain0} we get
\[
U^{(j)}_{2} \ll_C XN \log^{-C} X.
\]
Again the technical issues of the contribution from $\max\{\Delta_2,(z_2-z_1,\Delta)\} > Y$, removing $F_{M_1}$, and a bounding the part $\theta_1 = \theta_2 + O(\sqrt{\delta}) \pmod{\pi}$ are handled similarly as above.
\section{The sieve argument for Theorems \ref{maintheorem1} and \ref{maintheorem2}} \label{sievesection}
In this section we give the proofs of Theorems \ref{maintheorem1} and \ref{maintheorem2} by applying Harman's sieve method with the arithmetic information given by Propositions \ref{flprop} and \ref{typeiiprop}. Define
\[
S(\A^{(j)}_d,Z) := \sum_{\substack{ n \sim X/d\\ (n,P(Z))=1}} a^{(j)}_{dn} \quad \text{and} \quad S(\B_d,Z) := \sum_{\substack{ n \sim X/d\\ (n,P(Z))=1}} b_{dn},
\]
and denote $\A^{(j)}_1=\A^{(j)}$ and $\B_1=\B$. Then Theorems \ref{maintheorem1} and \ref{maintheorem2} follow from the following quantitative version.  The lower bounds have not been optimized and could be improved by more Buchstab iterations, especially for $\A^{(2)}$. The upper bounds in Theorems \ref{maintheorem1} and \ref{maintheorem2} follow from a standard sieve upper bound, using Proposition \ref{typeiprop} or even just the trivial level of distribution $X^{1/2-\eta}$.
\begin{theorem} \label{quanttheorem}
For $X$ sufficiently large we have
\[
S(\A^{(1)}, 2 X^{1/2}) \geq 0.6 \cdot S(\B, 2 X^{1/2})
\]
and 
\[
S(\A^{(2)}, 2 X^{1/2}) \geq 0.1 \cdot S(\B, 2 X^{1/2}).
\]
\end{theorem}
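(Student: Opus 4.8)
The plan is to deduce Theorem~\ref{quanttheorem} from Harman's sieve, fed by the two pieces of arithmetic information already established: Proposition~\ref{flprop} (the Type~I, or fundamental‑lemma, comparison of $\A^{(j)}$ and $\B$, valid for moduli up to $X^{3/4-\eta'}$ when $j=1$ and up to $X^{5/6-\eta'}$ when $j=2$), and Proposition~\ref{typeiiprop} (the Type~II comparison, valid whenever the cofactor runs over a dyadic range inside $[X^{1/4+\eta},X^{1/3-\eta}]$, resp.\ $[X^{1/6+\eta},X^{2/9-\eta}]$, with a coefficient supported on squarefree integers coprime to $P(W)$ and satisfying the Siegel--Walfisz property~(\ref{swproperty})). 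The key structural point is that $a_n^{(j)}$ and $b_n$ were normalized by $\kappa_j$ precisely so that their main terms coincide, so in both propositions the proportionality constant is $1$; hence the same Buchstab decomposition can be run simultaneously on $S(\A^{(j)},2X^{1/2})$ and on $S(\B,2X^{1/2})$, and the corresponding atoms cancel in pairs up to those we choose to discard.

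Concretely: start from $S(\A^{(j)},2X^{1/2})=\sum_{p\sim X}a_p^{(j)}$ (the choice $z=2X^{1/2}$ forces $n$ to be prime, since $n\le 2X$), and apply Buchstab's identity iteratively, extracting the smallest prime factor at each step. Each atom $S(\A^{(j)}_d,w)$ gets a role. If $d\le X^{3/4-\eta'}$ (resp.\ $X^{5/6-\eta'}$), Buchstab it down to the level $W=X^{1/(\log\log x)^2}$ and use Proposition~\ref{flprop} to replace $S(\A^{(j)}_d,W)$ by $S(\B_d,W)$ up to an admissible error, returning the residual sum over larger moduli to the machine. If the primes comprising $d$ can instead be regrouped as $d=d'd''$ with $d''$ in the Type~II window --- in particular as soon as the sieved cofactor has been resolved to a single prime in that window --- then the atom is a bilinear form of the shape governed by Proposition~\ref{typeiiprop}, with $\beta$ the part supported on $d''$ (a prime, or a product of distinct primes from fixed ranges, hence squarefree and satisfying Siegel--Walfisz) and $\alpha$ the complementary cofactor part, supported on $(m,P(W))=1$; again $S(\A^{(j)}_d,w)$ is replaced by $S(\B_d,w)$. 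Every remaining atom is discarded: those carrying the sign $+$ in the Buchstab expansion may be dropped outright (this only weakens the lower bound), and the truncation levels must be arranged so that no $-$-signed atom is ever discarded. The sole way an atom escapes both roles is to have $d$ a product of (essentially two) primes each above $X^{1/3}$ (resp.\ $X^{2/9}$), so that no sub-product meets the narrow Type~II window, while simultaneously $d>X^{3/4-\eta'}$ (resp.\ $>X^{5/6-\eta'}$), so that the cofactor $X/d$ is too small to furnish a Type~II variable; such atoms carry the sign $+$ and are dropped.

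After this, the $\A^{(j)}$- and $\B$-atoms cancel in pairs apart from the discarded ones, yielding
\[
S(\A^{(j)},2X^{1/2})\ \geq\ S(\B,2X^{1/2})\ -\ \sum_{\text{discarded}}S(\B_d,w)\ +\ O_C\!\big(X\log^{-C}X\big).
\]
Each $\B$-sum on the right is evaluated asymptotically by the prime number theorem and the Buchstab function $\omega$, so the discarded sum equals $I_j\cdot S(\B,2X^{1/2})(1+o(1))$ for an explicit finite sum $I_j$ of iterated $\omega$-integrals over the bad sub-region of the simplex of prime-factor exponents. The proof is then completed by the numerical verification $I_1\le 0.4$ and $I_2\le 0.9$. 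For $j=1$ the Type~I exponent $3/4$ together with the comparatively wide window $[1/4,1/3]$ leaves ample room, and one or two Buchstab iterations suffice comfortably; for $j=2$ the window $[1/6,2/9]$ is narrow, the bad region is correspondingly larger, more iterations are needed to keep every $-$-signed atom resolvable, and the resulting constant $0.1$ is far from optimal. (The upper bounds in Theorems~\ref{maintheorem1} and~\ref{maintheorem2} are separate, following from a standard upper-bound sieve with Proposition~\ref{typeiprop}.)

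The main obstacle is the combinatorial bookkeeping rather than any individual estimate: one must choose the Buchstab truncation parameters so that no negatively-signed atom ever ends up with a modulus past the Type~I level that the Type~II window cannot catch. This is forced because no usable trivial bound exists for such a term --- the most one can say about $\sum_{d,n}a^{(j)}_{dn}$ unconditionally is that it is of size $X^{1+\eps}$, far exceeding the target $X/\log X$. Arranging the decomposition correctly, and then carrying out the (for $j=2$, delicate) numerical integration, is where the real work lies.
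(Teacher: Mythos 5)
Your strategy is the paper's strategy: run Harman's sieve in parallel on $\A^{(j)}$ and $\B$, feed it Proposition \ref{flprop} for moduli below the Type I level and Proposition \ref{typeiiprop} for atoms with a sub-product in the Type II window, discard the remaining positively-signed atoms, and bound the resulting deficiency $\sum_{k\in\mathcal{K}}S_k^{(j)}(\B)$ by Buchstab integrals. The final inequality you display is exactly the one the paper arrives at.

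However, the proposal stops short of the content that actually produces the constants $0.6$ and $0.1$, and in two places the sketch does not match what a working decomposition must look like. First, the description of the discarded set is too coarse: you say the only atoms escaping both roles are products of "essentially two primes each above $X^{1/3}$ (resp.\ $X^{2/9}$)" with $d$ past the Type I level. In fact the paper's decomposition (two Buchstab steps down to $X^{\gamma_j}$ with $\gamma_1=1/12-2\eta$, $\gamma_2=1/18-2\eta$, then two further steps on the portion with $p_1p_2^2\le D_j$) discards two families: the two-prime atoms with $D_j<p_1p_2^2\le 4X$ and no sub-product in $I(j)$, and the four-prime atoms with $p_1p_2^2\le D_j$ and no sub-product of $\{p_1,p_2,p_3,p_4\}$ in $I(j)$; the primes there range over all of $[X^{\gamma_j},2X^{1/2}]$, not just above the upper Type II endpoint. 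Second, your remark that $j=1$ needs "one or two iterations" while $j=2$ needs "more" is backwards relative to the paper: the same depth-four decomposition is used for both $j$, and the difference in the constants comes entirely from the narrower window $J(2)$ inflating the four-dimensional integral ($\Omega_4(2)\le 0.49$ versus $\Omega_4(1)\le 0.017$), not from a deeper iteration. Finally, the numerical verification $\Omega_2(j)+\Omega_4(j)<c_j$ is asserted, not performed; since the theorem is precisely the quantitative statement, and since a slightly different choice of decomposition could easily push the deficiency past $1$ (making the lower bound vacuous), the claim is not established without exhibiting the explicit integrals $\Omega_2(j)$, $\Omega_4(j)$ over their regions $U_2(j)$, $U_4(j)$ and bounding them rigorously, as the paper does via the piecewise upper bound for the Buchstab function and the linked computations.
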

Let $\eta >0$ be small and define the widths of the Type II ranges (Proposition \ref{typeiiprop})
\[
\gamma_1 := 1/3-1/4 - 2\eta=1/12 - 2\eta \quad \text{and} \quad \gamma_2 := 2/9-1/6 - 2\eta=1/18 - 2\eta.
\]
While our arithmetic information is not sufficient to give an asymptotic formula for primes, we can still give an asymptotic formula for certain sums of almost-primes with no prime factors below $X^{\gamma_j}$, as the following Proposition shows.
\begin{prop} \label{funprop}
Let $D_1 := X^{3/4-\eta}$ and $D_2 := X^{5/6-\eta}$ and let $U_j \leq D_j $. Let $W={X^{1/(\log\log x)^2}}$. Then for any bounded coefficients $\alpha(m)$ supported on $(m,P(W))=1$ we have
\[
\sum_{m \sim U_j} \alpha(m) S(\A^{(j)}_m,X^{\gamma_j}) = \sum_{m \sim U_j} \alpha(m) S(\B_m,X^{\gamma_j}) + O_{\eta,C}( X/\log^C X ).
\]
\end{prop}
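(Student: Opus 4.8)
The plan is to deduce Proposition~\ref{funprop} from the Type~I estimate (Proposition~\ref{flprop}) and the Type~II estimate (Proposition~\ref{typeiiprop}) by a combinatorial decomposition of the sifted sums. The key is that the sifting level $X^{\gamma_j}$ was chosen to equal the length (in the exponent) of the Type~II range, and that its left endpoint $X/D_j$ — equal to $X^{1/4+\eta}$ for $j=1$ and $X^{1/6+\eta}$ for $j=2$ — is essentially the smallest value of $n\sim X/m$ available when $m\sim U_j\le D_j$. Note first that $\beta$ no longer plays a role, so the quantity to estimate is $\sum_{m\sim U_j}\alpha(m)\big(S(\A^{(j)}_m,X^{\gamma_j})-S(\B_m,X^{\gamma_j})\big)$, and that the fundamental lemma alone does not suffice: with Type~I level $D_j$ and sifting level $X^{\gamma_j}$ it only yields relative error $\ll e^{-\log D_j/\log X^{\gamma_j}}$, a fixed constant, so the Type~II information is genuinely used.

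I would write $S(\A^{(j)}_m,X^{\gamma_j})=\sum_{n\sim X/m,\;(n,P(X^{\gamma_j}))=1}a^{(j)}_{mn}$ (so $n$ has least prime factor $P^-(n)\ge X^{\gamma_j}$), first dispose of the non-squarefree $n$, and then decompose the sum over squarefree $n$ by a finite Buchstab iteration. The non-squarefree $n$ (hence $\nu:=mn$) are divisible by $q^2$ for some prime $q\ge X^{\gamma_j}$, and in $\sum_m\alpha(m)(\cdots)$ the number of admissible $m\mid\nu$ is $O_\eta(1)$ since $\nu/m$ has $O_\eta(1)$ prime factors; so this contribution is $\ll_\eta\sum_{\nu\le 2X,\;q^2\mid\nu\text{ for some }q\ge X^{\gamma_j}}(a^{(j)}_\nu+b_\nu)$. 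Using the Gaussian factorization $\nu=(a+(b^2+1)i)(a-(b^2+1)i)$ (for $j=2$, $(a+(c^3+d^3)i)(a-(c^3+d^3)i)$, together with Lemma~\ref{cubereplemma}), the divisibility $q^2\mid\nu$ combined with $(a,b^2+1)=1$ pins $a$ into a fixed residue class modulo $q^2$; together with the sparsity of such $q$ — and, for $q$ near $X^{1/2}$, where $\nu$ is forced to be essentially a prime square, a Pythagorean-type count bounding how often $b^2+1$ is a leg of a Pythagorean triple — this gives a total $\ll_\eta X^{1-\eta}$, which is negligible (the $\B$-side is handled similarly with $b_\nu\ll\tau(\nu)$). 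For the squarefree $n$ the Buchstab iteration terminates after $O(1/\gamma_j)$ steps, as $P^-(n)\ge X^{\gamma_j}$ and $n\le 2X$, and produces a bounded number of product-form sums; in each I would seek a squarefree divisor $n_1\mid n$ in the window $[X/D_j,\;X^{\gamma_j}\cdot X/D_j]$. When it exists, $\nu=(mn_2)\cdot n_1$ is a Type~II configuration — $mn_2=\nu/n_1$ lies in the matching window $[D_j/X^{\gamma_j},D_j]$, all prime factors of $n_1,n_2$ exceed $X^{\gamma_j}>W$, the $n_1$-coefficient is supported on squarefree integers coprime to $P(W)$ and satisfies the Siegel--Walfisz property by the argument of \cite[Section~16]{FI}, and the $m$-coefficient is supported on $(mn_2,P(W))=1$ — so Proposition~\ref{typeiiprop} applies. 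When no such $n_1$ exists, a greedy argument shows $n$ must have a prime factor exceeding $X^{1/3-\eta}$ (resp.\ $X^{2/9-\eta}$); then $\nu$ has the complementary divisor $\nu/(\text{that prime})\le D_j$, and after returning from ``$n$ with $P^-(n)\ge X^{\gamma_j}$'' to ``all $n$'' by a Legendre-type expansion — whose large moduli are again either absorbed into a divisor $\le D_j$ or split to feed Proposition~\ref{typeiiprop} — the term is of Type~I and Proposition~\ref{flprop} applies. The divisor-function losses from regrouping by a combined modulus are $\ll\tau^{O(1)}$ and are absorbed by a dyadic decomposition in the size of $\tau$, as in the proof of Proposition~\ref{typeiprop}.

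The hardest part is the combinatorial bookkeeping: checking that after each Buchstab step every resulting piece either carries a Type~I divisor $\le D_j$ or admits a Type~II factorization with $N$-variable in $[X/D_j,\;X^{\gamma_j}X/D_j]$, and that the remaining exceptional configurations — a large prime factor, a non-squarefree $n$, or $n$ in a thin boundary window of thickness $X^{O(\eta)}$ around $X/D_j$ or around $X^{1/3-\eta}$ (resp.\ $X^{2/9-\eta}$) — are either reducible to Type~I or too sparse to matter. That the two available tools (Type~I up to $D_j$; Type~II of exponent-width $X^{\gamma_j}$) between them cover every configuration with no gap is precisely what the identities $\gamma_1=1/3-1/4-2\eta$, $\gamma_2=2/9-1/6-2\eta$ and $X/D_j=X^{1/4+\eta}$ (resp.\ $X^{1/6+\eta}$) encode, which is why the proposition holds with these parameters.
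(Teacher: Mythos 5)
Your overall strategy (play Proposition \ref{flprop} against Proposition \ref{typeiiprop}, exploiting that $\gamma_j$ equals the exponent-width of the Type II range and that $X/D_j$ is its left endpoint) is the right one, and several side remarks (reduction to squarefree $n$, the divisor-function losses, the Siegel--Walfisz verification for the prime-supported coefficient) match the paper. But the central combinatorial dichotomy is false, and this is a genuine gap. You decompose the squarefree $n$ with all prime factors $\geq X^{\gamma_j}$ into those prime factors and claim that either some divisor $n_1\mid n$ lies in $[X/D_j,\,X^{\gamma_j}X/D_j]$ or else $n$ has a prime factor exceeding $X^{1/3-\eta}$ (resp.\ $X^{2/9-\eta}$). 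Take $j=1$, $U_1=X^{3/5}$, $m\sim X^{3/5}$ and $n=q_1q_2$ with $q_1,q_2\sim X^{1/5}$ distinct primes: both prime factors exceed $X^{\gamma_1}=X^{1/12-2\eta}$, the divisors of $n$ have exponents $0,\,1/5,\,2/5$, none in $[1/4+\eta,\,1/3-\eta]$, and no prime factor exceeds $X^{1/3-\eta}$. Neither branch applies. Worse, no Type II factorization of $\nu=mn$ exists at all (the divisors of $\nu$ containing $m$ have exponents $3/5,\,4/5,\,1$, so the complementary window $[2/3+\eta,\,3/4-\eta]$ is also missed), and the largest admissible Type I modulus is $m\sim X^{3/5}$, which leaves the inner variable $q_1q_2$ constrained to be a product of two primes of prescribed sizes --- not the free variable (subject only to $(n,P(W))=1$) that Proposition \ref{flprop} requires. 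Such configurations contribute $\gg X\log^{-O(1)}X$ and cannot be discarded.

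The root of the problem is that a greedy argument needs the building blocks to be smaller than the target window, whereas your blocks (the prime factors of $n$) are all \emph{at least} $X^{\gamma_j}$, i.e.\ at least the window width, so partial products can jump clean over it. The paper never isolates such configurations: it first expands the sifting condition, $\bm{1}_{(n,P(X^{\gamma_j}))=1}=\bm{1}_{(n,P(W))=1}\sum_{d\mid (n,P(W,X^{\gamma_j}))}\mu(d)$, so that the new moduli $d=p_1\cdots p_k$ have all prime factors \emph{below} $X^{\gamma_j}$. Then either $dU_j\leq D_j$, giving a Type I sum whose inner variable is genuinely free (my counterexample lives here, as one unremarkable term of a smooth count), or $dU_j>D_j$, in which case the greedy algorithm applied to $m\,p_1\cdots p_\ell$ --- now with steps of size $<X^{\gamma_j}$ --- necessarily lands in $[D_jX^{-\gamma_j},D_j]$, the admissible $M$-range of Proposition \ref{typeiiprop}. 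Your closing appeal to a ``Legendre-type expansion'' is essentially this move, but performed only after the bad configurations have been isolated, by which point (in the example) the combined modulus $mq_1\sim X^{4/5}$ already exceeds $D_1$ and the argument cannot be completed as described. You should restructure the proof to perform the M\"obius expansion of the sieve condition first and split on the size of $dU_j$, as the paper does.
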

\begin{proof}
Let $\CC=(c_n) \in \{\A^{(j)},\B\}$. Then by expanding using the M\"obius function we get
\[
\begin{split}\sum_{m \sim U_j}\alpha(m)S(\CC_m,X^{\gamma_j}) &= \sum_{m \sim U_j} \alpha(m)\sum_{\substack{ n \sim X/m\\ (n,P(X^{\gamma_j}))=1}} c_{mn} \\
&=   \sum_{m \sim U_j}\alpha(m) \sum_{d| P(W,X^{\gamma_j})} \mu (d) \sum_{\substack{ n \sim X/dm\\ (n,P(W))=1}} c_{dmn}.
\end{split}
\]
We split the sum in two parts, $dU_j \leq D_j$ and $dU_j > D_j$, and show that in each part we can get an asymptotic formula by Type I information and Type II information, respectively. 

For the first part we write 
\[
\alpha'(m') := \sum_{\substack{m'=dm \\ m \sim U_j \\d| P(W,X^{\gamma_j})\\ d U_j \leq D_j }} \alpha(m)\mu(d) =\alpha_1'(m') + \alpha_2'(m'),  
\]
where  for some large constant $C >0$
\[
\alpha_1'(m') =  \alpha'(m') \bm{1}_{\tau(m') \leq \log^C X} \quad \text{and} \quad \alpha_2'(m') =  \alpha'(m') \bm{1}_{\tau(m') > \log^C X}.
\]
Since $|\alpha'_1(m') | \leq \log^C X$, we get by Proposition \ref{flprop}
\[
\sum_{m' \leq 2D_j} \alpha'_1(m') \sum_{\substack{ n \sim X/m'\\ (n,P(W))=1}} a^{(j)}_{m'n} = \sum_{m' \leq 2D_j} \alpha'_1(m')\sum_{\substack{ n \sim X/m'\\ (n,P(W))=1}} b_{m'n} + O_C(X/\log^C X).
\]
For $\alpha'_2(m') \leq \tau(m')$ we get by Lemma \ref{divisorlemma} with $k=10$ and by trivial bounds
\begin{equation} \label{tautrick}
\begin{split}
&\bigg|\sum_{m' \leq 2D_j} \alpha'_2(m') \sum_{\substack{ n \sim X/m'\\ (n,P(W))=1}} c_{m'n}\bigg| \leq \sum_{\substack{e \leq X^{1/10} \\ \tau(e)^{10^3} > \log^C X}} \tau(e)^{10^3} \sum_{n \sim X/e} \tau(n) c_{en}  \\
&\leq  \sum_{\substack{e,f \leq X^{1/10} \\ \tau(e)^{10^3} > \log^C X}} \tau(e)^{O(1)}\tau(f)^{O(1)} \sum_{n \sim X/ef}  c_{efn} \ll X  \sum_{\substack{e,f \leq X^{1/10} \\ \tau(e)^{10^3} > \log^C X}} \frac{\tau(e)^{O(1)}\tau(f)^{O(1)}}{ef}  \\
&\ll  \frac{X}{\log^C X}  \sum_{\substack{e,f \leq X^{1/10} }} \frac{\tau(e)^{10^3+O(1)}\tau(f)^{O(1)}}{ef} \ll X \log^{-C +O(1)} X,
\end{split}
\end{equation}
where we have plugged in the factor $\tau(e)^{10^3} / \log^C X \geq 1$ to get the penultimate step.

Similarly, in the part $dU_j > D_j$ we get an asymptotic formula by Proposition \ref{typeiiprop}. To see this, let us write $d=p_1\cdots p_k$ to get  (noting that $k\neq 0$ since $d > D_j/U_j > 1$)
\[
\sum_{\substack{m \sim U_j \\ dU_j > D_j \\ d| P(W,X^{\gamma_j})}} \alpha(m)  \mu (d) \sum_{\substack{ n \sim X/dm\\ (n,P(W))=1}} c_{dmn} = \sum_{1 \leq k \ll \log X}(-1)^k\sum_{\substack{m \sim U_j}} \alpha(m) \sum_{\substack{W \leq p_1 < \cdots < p_k < X^{\gamma_j} \\ p_1\cdots p_k U_j > D_j}} \sum_{\substack{ n \sim X/p_1\cdots p_k m \\ (n,P(W))=1}} c_{p_1\cdots p_k mn}
\]
Since $p_1\cdots p_k U_j > D_j$, $U_j \leq D_j$ and $p_\ell < X^{\gamma_j}$, by the greedy algorithm there is a unique $0\leq \ell \leq k$ such that
\[
p_1\cdots p_\ell U_j \in [D_j X^{-\gamma_j},  D_j]  \quad \text{and} \quad p_1\cdots p_{\ell-1} U_j  < D_j X^{-\gamma_j} 
\]
Note that $[D_j X^{-\gamma_j},  D_j] $ is now exactly the admissible range for $M$ in Proposition \ref{typeiiprop}. We obtain that the part  $dU_j > D_j$ is partitioned into $\ll \log^2 X$ sums of the form
\[
\sum_{\substack{m \sim U_j}}\alpha(m) \sum_{\substack{W \leq p_1 < \cdots < p_k < X^{\gamma_j} \\ p_1\cdots p_k U_j > D_j \\ p_1\cdots p_\ell U_j \in [D_j X^{-\gamma_j},  D_j]  \\  p_1\cdots p_{\ell-1} U_j  < D_j X^{\gamma_j} }} \sum_{\substack{ n \sim X/p_1\cdots p_k m \\ (n,P(W))=1}} c_{p_1\cdots p_k mn}.
\]
The cross-conditions  $p_{\ell+1} > p_\ell$ and $p_1\cdots p_k U_j > D_j$ are easily removed by applying a finer-than-dyadic decomposition (Section \ref{smoothweightsection} with $\delta=\log^{-C} X$) to the four variables $p_\ell$, $p_{\ell+1}$, $p_1 \cdots p_\ell,$ and $p_{\ell+1} \cdots p_k$. Hence, writing
\[
m':= m p_1\cdots p_\ell \quad \text{and} \quad n' := n p_{\ell+1} \cdots p_k,
\]
we obtain for some coefficients $\alpha'(m')$ and $\beta'(n')$  supported on $(m'n',P(W))=1$ sums of the form
\[
\sum_{m'n' \sim X}  \alpha'(m') \beta'(n') c_{m'n'}.
\]
Here $\alpha'(m')$ is supported on $m' \in [D_j X^{-\gamma_j},  2 D_j]$ with $(m',P(W))=1$. The cross-condition $m'n' \sim X$ can be removed by a finer-than-dyadic decomposition and the coefficients $\alpha'(m'),\beta'(n')$ can be reduced to bounded coefficients by a similar argument as in (\ref{tautrick}).  Hence, by Propostion \ref{typeiiprop} we get
\[
\sum_{m'n' \sim X}  \alpha'(m') \beta'(n') a^{(j)}_{m'n'} = \sum_{m'n' \sim X}  \alpha'(m') \beta'(n') b_{m'n'} + O_C(X/\log^C X),
\]
provided that we show that $\beta'(|z|^2)$ satisfies the Siegel-Walfisz property (\ref{swproperty}). To this end we write for any $m'$
\[
\beta'(n') =  \sum_{p \geq W} \beta'(pn) \bm{1}_{P^+(n) \leq p} = \sum_{p \geq W} \beta'(pn)\bm{1}_{P^+(n) < p}  \bm{1}_{p\nmid m'n}+\sum_{p \geq W} \beta'(pn) \bm{1}_{P^+(n) \leq p}\bm{1}_{p| m'n}.
\]
In the first term we see by the support of $c_{m'n'}$ that $p$ is restricted to $p \equiv 1 \, (4)$ by the sum-of-two-squares theorem. Hence, by dropping $p\nmid m'n$ we get
\[
\beta'(n')=\sum_{\substack{p \geq W \\ p \equiv 1 \, (4)}}\bm{1}_{P^+(n) < p}  \beta'(pn) +O\bigg(\sum_{p \geq W} |\beta'(pn)| \bm{1}_{p| m'n} \bigg)
\]
The contribution from the second term is negligible by trivial bounds since we get a square factor $\geq W$. Similarly,  we may reduce to the case where  $\beta'(n')$ is supported on square-free integers.  By the construction of $\beta'(n')$ we can write 
\[
n' = pn= p q_1 \cdots q_K, \quad \text{for} \quad W < q_1 < \cdots < q_K < p.
\]
and remove any cross-conditions involving $p$ by a finer-than-dyadic decomposition (Section \ref{smoothweightsection} with $\delta$) applied to $p$ and at most $O(1)$ variables. Hence, for some $\beta''(n)$ the coefficient $\beta'(n')$ is up to a negligible error term replaced by coefficients of the form
\[
\beta''(n)\sum_{\substack{p \equiv 1 \, (4)  \\ p \nmid n}} \psi_P(p) 
\]
with $P \geq W/2$. The condition $p\nmid n$ may be droppped with a negligible error term as $p \gg W$. Let $\psi'$ be as in Section \ref{smoothweightsection} with $\delta'=\delta (\log X)^{-C}$. Then applying Section \ref{smoothweightsection} to the variable $n$ with $\psi'$ we get coefficients of the form
\[
\sum_{n'=pn} \beta''(n) \psi_{N_0}'(n)\sum_{\substack{p \equiv 1 \, (4)  }} \psi_P(p) ,
\]
and since $\delta'$ is small compared to $\delta$, this can be replaced with a negligible error term by
\[
\beta'''(n') := \psi_{PN_0} (n') \sum_{n'=pn} \beta''(n) \psi_{N_0}'(n) 
  \sum_{\substack{p \equiv 1 \, (4) }}  1 
\]
which is as in \eqref{betageneric} and thus satisfies the Siegel-Walfisz property \eqref{swproperty}.
\end{proof}

The general idea of Harman's sieve is to use Buchstab's identity to decompose the sum $S(\CC,2\sqrt{X})$  (in parallel for $\CC=\A^{(j)}$ and $\CC=\B$) into a sum of the form $\sum_k \epsilon_k S^{(j)}_k(\CC),$ where $\epsilon_k \in \{-1,1\}$ and  $S^{(j)}_k(\CC) \geq 0$ are sums over almost-primes.  Since wish to obtain a lower bound, for $\CC=\A^{(j)}$ we can insert the trivial estimate $S^{(j)}_k(\A^{(j)}) \geq 0$ for any $k$ such that the sign $\epsilon_k =1$ -- we say that these sums are \emph{discarded}. For the remaining $k$ we will obtain an asymptotic formula by using Propositions \ref{typeiiprop} and \ref{funprop}. That is, if $\mathcal{K}$ is the set of indices of the sums that are discarded, then we will show
\begin{align*}
S(\A^{(j)}, 2\sqrt{X})&= \sum_k \epsilon_k S^{(j)}_k(\A^{(j)}) \geq \sum_{k \notin \mathcal{K}} \epsilon_k S^{(j)}_k(\A^{(j)})  \\
&=(1+o(1))  \sum_{k \notin \mathcal{K}} \epsilon_k  S^{(j)}_k(\B) = (1+o(1)) S(\B,2\sqrt{X}) -   \sum_{k \in \mathcal{K}}  S^{(j)}_k(\B).
\end{align*} 
We are successful if we can then show that $\sum_{k \in \mathcal{K}} S^{(j)}_k(\B) < c_j S(\B, 2\sqrt{X})$ for $c_1 = 0.4$ and $c_2=0.9$. 

To bound the error terms corresponding to $k \in \mathcal{K}$ we need a lemma which converts sums over almost primes over $b_n$ into integrals which can be bounded numerically.  Let $\omega(u)$ denote the Buchstab function (see \cite[Chapter 1]{harman} for the properties below, for instance), so that by the Prime number theorem for $Y^{\epsilon} < Z < Y$ 
\[
\sum_{Y< n \leq 2Y} \bm{1}_{(n,P(Z))=1} = (1+o(1)) \omega \left(\frac{\log Y}{\log Z} \right) \frac{Y}{\log Z}.
\]
Similarly, using the Prime number theorem for primes $p\equiv 1 \,\, (4)$ we get
\begin{equation}\label{buchasymp}
\sum_{Y< n \leq 2Y } \bm{1}_{(n,P(Z))=1} \sum_{n=a^2+b^2} 1  = (1+o(1)) \omega \left(\frac{\log Y}{\log Z} \right) \frac{Y}{\log Z}.
\end{equation}
Note that for $1< u \leq 2$ we have $\omega(u)=1/u.$ In the numerical computations we will use the following upper bound for the Buchstab function (see \cite[Lemma 5]{hbjia}, for instance)
\[
\omega(u) \, \leq \begin{cases} 0, &u < 1 \\
1/u, & 1 \leq u < 2 \\
(1+\log(u-1))/u, &2 \leq u < 3 \\
0.5644, &  3 \leq u < 4 \\
0.5617, & u \geq 4.
\end{cases}
\]
In the lemma below we assume that the range $\mathcal{U}\subset [X^{2 \delta},X]^{k}$ is sufficiently well-behaved, for example, an intersection of sets of the type $\{ \boldsymbol{u}: u_i < u_j \}$ or $\{\boldsymbol{u}: V <  f(u_1, \dots,u_k) < W\}$ for some polynomial $f$ and some fixed $V,W.$
\begin{lemma}  \label{buchstablemma}Let $\mathcal{U} \subset [X^{2 \delta},X]^{k}.$ Then
\[
\sum_{(p_1, \dots , p_k) \in \mathcal{U}} S(\B_{p_1, \dots, p_k},p_k) = S(\B,2\sqrt{X})(1+o(1))\int \omega (\boldsymbol{\beta }) \frac{d\beta_1 \cdots d\beta_k}{\beta_1\cdots\beta_{k-1}\beta_k^2},
\]
where the integral is over the range 
\[
\{\boldsymbol{\beta}: \, (X^{\beta_1}, \dots, X^{\beta_k}) \in \mathcal{U}\}
\]
 and $\omega(\boldsymbol{\beta})= \omega(\beta_1,\dots,\beta_k):= \omega((1-\beta_1-\cdots 
 -\beta_k)/\beta_k)$.
\end{lemma}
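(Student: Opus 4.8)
The plan is to reduce the weighted sum over primes to a sum over the free variable $n$ in $S(\B_{p_1,\dots,p_k},p_k)$ and then apply the asymptotic \eqref{buchasymp} together with partial summation over the $p_i$. Writing out the definition, $S(\B_{p_1,\dots,p_k},p_k)=\sum_{n\sim X/(p_1\cdots p_k)}\bm 1_{(n,P(p_k))=1}b_{p_1\cdots p_k n}$, and since $b_m$ is supported on integers $m$ not divisible by any prime $p\equiv 3\,(4)$, on the support of the summand we have $p_i\equiv 1\,(4)$ for each $i$; moreover $b_m=\sum_{m=a^2+c^2,(a,c)=1,c>0}1$ is multiplicative enough that for squarefree $m$ one has $b_{p_1\cdots p_k n}=2^{k}b_n$ up to the coprimality bookkeeping already built into the definition. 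The cleanest route, however, is to apply \eqref{buchasymp} directly to the inner sum over $n$: for $(p_1,\dots,p_k)\in\mathcal U\subset[X^{2\delta},X]^k$ with product $Y:=p_1\cdots p_k$, the inner sum over $n\sim X/Y$ sieved by $P(p_k)$ and weighted by the representation count $b_n$ is $(1+o(1))\,\omega\!\left(\tfrac{\log(X/Y)}{\log p_k}\right)\frac{X/Y}{\log p_k}\cdot(\text{same constant as in }S(\B,2\sqrt X))$, by the prime number theorem in the progression $1\,(4)$.

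Next I would substitute $p_i=X^{\beta_i}$, so that $Y=X^{\beta_1+\cdots+\beta_k}$, $\log(X/Y)/\log p_k=(1-\beta_1-\cdots-\beta_k)/\beta_k=:$ the argument of $\omega(\boldsymbol\beta)$, and $\log p_i=\beta_i\log X$. Summing over the primes $p_i$ and using the prime number theorem in the form $\sum_{p\sim P,\,p\equiv 1(4)}1\sim \frac{P}{2\log P}$ (equivalently, $\sum_{p}f(\log p/\log X)\approx \frac{1}{2}\int f(\beta)\,\frac{X^{\beta}d\beta}{\beta\log X}$ after partial summation), each sum over $p_i$ contributes a factor $\frac{1}{2}\,\frac{d\beta_i}{\beta_i}$ for $i<k$, while the innermost sum over $n$ already carried the factor $\frac{X/Y}{\log p_k}=\frac{X^{1-\beta_1-\cdots-\beta_k}}{\beta_k\log X}$; collecting the powers of $X$ one finds they telescope to $X$, the factors $\frac{1}{\log X}$ combine, and one is left with $S(\B,2\sqrt X)(1+o(1))\int\omega(\boldsymbol\beta)\frac{d\beta_1\cdots d\beta_k}{\beta_1\cdots\beta_{k-1}\beta_k^2}$ over $\{\boldsymbol\beta:(X^{\beta_1},\dots,X^{\beta_k})\in\mathcal U\}$, where the extra $\beta_k$ in the denominator comes from the $\frac{1}{\log p_k}$ in the sieved inner sum. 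The normalisation against $S(\B,2\sqrt X)$ absorbs all the multiplicative constants ($2^{-k}$, the constant from \eqref{buchasymp}, etc.), so they need not be tracked.

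The main obstacle is making the passage from a $k$-fold sum over primes to a $k$-fold integral rigorous while the $p_i$ range over the possibly irregular region $\mathcal U$ and while the $o(1)$ from \eqref{buchasymp} is uniform enough. This is handled in the standard way: since $\mathcal U$ is an intersection of finitely many sets of the shape $\{u_i<u_j\}$ and $\{V<f(\boldsymbol u)<W\}$, one dyadically (indeed finer-than-dyadically, as in Section \ref{smoothweightsection}) decomposes each variable $p_i$ into ranges $p_i\sim P_i$ with $P_i=X^{\beta_i}$ on a grid of mesh $o(1)$ in the $\beta$-coordinates; on each such box the condition $(p_1,\dots,p_k)\in\mathcal U$ is either satisfied by the whole box, or fails on the whole box, or only the $O(1)$ boundary boxes are in question, and the latter contribute $o(1)$ to the total because $\omega$ is bounded and the measure of the boundary is negligible. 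On each interior box one applies \eqref{buchasymp} with $Y\asymp P_1\cdots P_k$, replaces $\omega\!\left(\tfrac{\log(X/Y)}{\log p_k}\right)$ by $\omega(\boldsymbol\beta)$ up to an error $o(1)$ (using continuity of $\omega$ away from the integers, and noting that the integrand vanishes when the argument is $<1$, i.e. outside the relevant range), sums the resulting geometric-type sums over $p_i$ inside each box via the prime number theorem, and finally recognises the Riemann sum over boxes as the stated integral. One technical point worth a sentence in the write-up: the lower cutoff $p_i\ge X^{2\delta}$ keeps all $\beta_i$ bounded away from $0$, so the weights $1/\beta_i$ are bounded and the Riemann-sum approximation is uniform; and the condition $p_i\equiv 1\,(4)$ only changes the implied constant, which is irrelevant after normalising by $S(\B,2\sqrt X)$.
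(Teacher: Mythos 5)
Your proposal is correct and follows essentially the same route as the paper: apply (\ref{buchasymp}) to the inner sieved sum, convert the outer prime sums to integrals via the prime number theorem for $p\equiv 1\,(4)$, and change variables $u_j=X^{\beta_j}$. One small caution: the factor $2^{k}$ from the multiplicativity of the representation count and the factor $2^{-k}$ from the density of primes $\equiv 1\,(4)$ must cancel exactly (they do), rather than being ``absorbed'' by the normalisation against $S(\B,2\sqrt{X})$, since the lemma asserts an asymptotic equality with no free constant.
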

\begin{proof}
Recall that
\[
b_n = \sum_{\substack{n=a^2+b^2 \\ (a,b)=1 \\ b > 0}} 1 = \frac{1}{2} \sum_{\substack{n=a^2+b^2 \\ (a,b)=1 }} 1.
\]
By (\ref{buchasymp}) and by the Prime number theorem for primes $p \equiv 1 \, \, (4)$, the left-hand side in the lemma is
\[
\begin{split}
& \sum_{(p_1, \dots , p_k) \in \mathcal{U}} \sum_{q\sim X/p_1\cdots p_k} \bm{1}_{(q,P(p_k))=1} b_{q p_1\cdots p_k} \\
&= (1+o(1))\frac{X}{2}\sum_{(p_1, \dots , p_k) \in \mathcal{U}} \frac{1}{p_1\cdots p_k \log p_k} \omega \left( \frac{\log(X/(p_1\cdots p_k))}{\log p_k} \right) \\
&= (1+o(1))\frac{X}{2} \hspace{-5pt}  \sum_{(n_1,\dots,n_k ) \in \mathcal{U}} \frac{1}{n_1\cdots n_k (\log n_1) \dots (\log n_{k-1} )\log^2 n_k} \omega \left( \frac{\log(X /(n_1\cdots n_k))}{\log n_k} \right) \\
&= (1+o(1))\frac{X}{2} \int_{\mathcal{U}}  \omega \left( \frac{\log(X/(u_1\cdots u_k))}{\log u_k} \right)   \frac{du_1\cdots du_k}{u_1\cdots u_k (\log u_1) \dots (\log u_{k-1} )\log^2 u_k}\\
&= (1+o(1))\frac{X}{2\log X} \int \omega (\boldsymbol{\beta }) \frac{d\beta_1 \cdots d\beta_k}{\beta_1\cdots\beta_{k-1}\beta_k^2} =(1+o(1))S(\B,2\sqrt{X}) \int \omega (\boldsymbol{\beta }) \frac{d\beta_1 \cdots d\beta_k}{\beta_1\cdots\beta_{k-1}\beta_k^2}
\end{split}
\]
by the change of variables $u_j=X^{\beta_j}$.
\end{proof}

\subsection{Buchstab iterations}
We are now ready to give the proof of Theorem \ref{quanttheorem}. We will apply similar Buchstab decompositions in both cases $j \in \{1,2\}$. Let $\CC=(c_n) \in \{\A^{(j)},\B\}$. We get by applying Buchstab's identity twice
\[\begin{split}
S(\CC,2 X^{1/2}) &=  S(\CC,X^{\gamma_j}) - \sum_{X^{\gamma_j} \leq p_1 <2X^{1/2}} S(\CC_{p_1},X^{\gamma_j})+  \sum_{X^{\gamma_j} \leq p_2< p_1< 2 X^{1/2}}S(\CC_{p_1p_2},p_2)  \\
&=: S_1(\CC) - S_2(\CC) + S_3(\CC).
\end{split}
\]
By Proposition \ref{funprop} we get for $j,k \in \{1,2\}$
\[
S_k(\A^{(j)}) = S_k(\B) + O_C( X / \log^C X).
\]

For the third sum we first separate the part where $p_1p_2^2 > 4X$. In this part $S(\CC_{p_1p_2},p_2)$ counts numbers $n$ of size $\leq 2X/p_1p_2 < p_2/2$ with $(n,P(p_2))=1$, so that we must have $n=1$ and $p_1p_2 \sim X$. Hence,  we get
\[
\sum_{\substack{X^{\gamma_j} \leq p_2< p_1< 2 X^{1/2} \\ p_1p_2^2 > 4X }}S(\B_{p_1p_2},p_2) \ll \sum_{p_1,p_2 \ll X^{1/2}} b_{p_1p_2} \ll X/ \log^2 X.
\]
and for $\CC= \A^{(j)}$ we use positivity and the trivial bound
\[
\sum_{\substack{X^{\gamma_j} \leq p_2< p_1< 2 X^{1/2} \\ p_1p_2^2 > 4X}}S(\A^{(j)}_{p_1p_2},p_2)  \geq 0.
\]

Let $\tilde{S}_3(\CC)$ denote the part of $S_3(\A)$ with $p_1p_2^2 \leq 4X$. We split it into two parts
\[
\tilde{S}_3(\CC) = S_{31}(\CC) + S_{32}(\CC),
\]
where
\[
\begin{split}
S_{31}(\CC):=\sum_{\substack{X^{\gamma_j} \leq p_2< p_1< 2 X^{1/2} \\ p_1p_2^2 \leq D_j }}S(\CC_{p_1p_2},p_2) \quad \text{and} \quad
S_{32}(\CC):=\sum_{\substack{X^{\gamma_j} \leq p_2< p_1< 2 X^{1/2} \\  D_j < p_1p_2^2 \leq 4X }}S(\CC_{p_1p_2},p_2).
\end{split}
\]

Consider first $S_{32}(\CC).$ Let  $D_j := X^{\alpha_j}$ and denote the Type II range (Proposition \ref{typeiiprop}) by
\[
\begin{split}
I(j) &:= [X^{1-\alpha_j},X^{1-\alpha_j+\gamma_j}] \cup [X^{\alpha_j-\gamma_j} \cup X^{\alpha_j}] \quad \text{\and} \quad\\
 J(j)&:= \frac{\log I(j)}{\log X} = [1-\alpha_j,1-\alpha_j+\gamma_j] \cup [\alpha_j-\gamma_j, \alpha_j].
\end{split}\]
By Proposition \ref{typeiiprop} (after removal of cross-conditions) we get an asymptotic formula whenever a combination of the variables lies in $I(j)$, that is,
\[
\sum_{\substack{X^{\gamma_j} \leq p_2< p_1< 2 X^{1/2} \\  D_j < p_1p_2^2 \leq 4X  \\ p_1,p_2, \,\text{or} \, p_1p_2 \in I(j) }}S(\A^{(j)}_{p_1p_2},p_2) = \sum_{\substack{X^{\gamma_j} \leq p_2< p_1< 2 X^{1/2} \\  D_j < p_1p_2^2 \leq 4X  \\ p_1,p_2, \,\text{or} \, p_1p_2 \in I(j) }}S(\B_{p_1p_2},p_2)  + O_C(X/\log^C X).
\]
For the remaining part we write by positivity $S_{32}(\A^{(j)}) \geq 0$ and by Lemma \ref{buchstablemma}
\begin{equation} \label{2dimintegral}
\sum_{\substack{X^{\gamma_j} \leq p_2< p_1< 2 X^{1/2} \\  D_j < p_1p_2^2 \leq 4X  \\ p_1,p_2, \,\text{and} \, p_1p_2 \not \in I(j) }}S(\B_{p_1p_2},p_2)  = (\Omega_2(j)+o(1)) S(\B,2 X^{1/2})
\end{equation}
for
\[
\Omega_2(j):= \iint_{U_2(j)} \omega((1-\beta_2-\beta_2)/\beta_2) \frac{d \beta_1 d \beta_2}{\beta_1\beta_2^2}
\]
with the two dimensional range
\[
U_2(j):= \{ (\beta_1,\beta_2): \gamma_j < \beta_2 <\beta_1 < 1/2,\,\, \alpha_j <\beta_1  + 2 \beta_2 < 1,\, \beta_1,\beta_2,\beta_1+\beta_2 \not \in J(j) \}.
\]

In the part $S_{31}(\CC)$ we can apply Buchstab's identity to get
\[
S_{31}(\CC)= \sum_{\substack{X^{\gamma_j} \leq p_2< p_1< 2 X^{1/2} \\ p_1p_2^2 \leq D_j }}S(\CC_{p_1p_2},X^{\gamma_j})  - \sum_{\substack{X^{\gamma_j} \leq  p_3<p_2< p_1< 2 X^{1/2} \\ p_1p_2^2 \leq D_j }}S(\CC_{p_1p_2p_3},p_3) 
\]
If $p_1p_2p_3^2 > 4X$ then $S(\CC_{p_1p_2p_3},p_3) =0$ except if $p_1p_2 p_3 \sim X$, which contradicts the fact that $p_1p_2 p_3 < p_1p_2^2 \leq D_j$. Thus, in the second sum we always have  $p_1p_2p_3^2 \leq 4X$. Applying Buchstab's identity once more we get
\[
S_{31}(\CC) =  S_{311}(\CC) - S_{312}(\CC) + S_{313}(\CC)
\]
with
\[
\begin{split} S_{311}(\CC) &:= \sum_{\substack{X^{\gamma_j} \leq p_2< p_1< 2 X^{1/2} \\ p_1p_2^2 \leq D_j }}S(\CC_{p_1p_2},X^{\gamma_j}), \\
S_{312}(\CC) &:= \sum_{\substack{X^{\gamma_j} \leq  p_3<p_2< p_1< 2 X^{1/2} \\ p_1p_2^2 \leq D_j  \\ p_1p_2p_3^2 \leq 4X}}S(\CC_{p_1p_2p_3},X^{\gamma_j}),  \quad \text{and} \\
S_{313}(\CC) & := \sum_{\substack{X^{\gamma_j} \leq p_4 < p_3<p_2< p_1< 2 X^{1/2} \\ p_1p_2^2 \leq D_j \\  p_1p_2p_3^2 \leq 4X }}S(\CC_{p_1p_2p_3p_4},p_4).
\end{split}
\]
Since $p_1p_2p_3 \leq p_1p_2^2 \leq D_j$, we have asymptotic formulas for the first two sums, that is, for $j,k \in \{1,2\}$
\[
S_{31k}(\A^{(j)}) = S_{31k}(\B) + O_C( X / \log^C X).
\]

For the third sum we use a similar treatment as for $S_{32}(\CC)$ above. First, we separate the contribution from $p_1p_2p_3p_4^2 > 4X$, which implies $p_1p_2p_3p_4 \sim X$. By $p_1p_2p_3^2 \leq 4X$ and $p_4 < p_3$ we infer $p_4=(1+O(1))p_3$. Hence, we have 
\[
\sum_{\substack{X^{\gamma_j} \leq p_4 < p_3<p_2< p_1< 2 X^{1/2} \\ p_1p_2^2 \leq D_j \\ p_1p_2p_3^2 \leq 4X\\ p_1p_2p_3p_4^2 > 4X }}S(\B_{p_1p_2p_3p_4},p_4) \ll \sum_{\substack{ p_1p_2p_3p_4 \sim X \\   X^{\gamma_j} \leq p_4 < p_3<p_2< p_1< 2 X^{1/2} \\ p_4 =(1+O(1))p_3} } b_{p_1p_2p_3p_4} \ll X/ \log^2 X,
\]
so that we may restrict to $p_1p_2p_3p_4^2 \leq 4X$. Here we use Proposition \ref{typeiiprop} to give an asymptotic formula whenever a combination of the variables is in the Type II range $I(j)$ and we discard the rest. This gives us an error contribution (by Lemma \ref{buchstablemma})
\begin{equation} \label{4dimintegral}
\sum_{\substack{X^{\gamma_j} \leq p_4 < p_3<p_2< p_1< 2 X^{1/2} \\ p_1p_2^2 \leq D_j \\ p_1p_2p_3^2 \leq 4X\\ p_1p_2p_3p_4^2 \leq 4X \\ \prod_{i \in I} p_i \notin I(j) }}S(\B_{p_1p_2p_3p_4},p_4) = (\Omega_4(j) + o(1)) S(\B,2 X^{1/2}),
\end{equation}
where 
\[
\Omega_4(j):= \iiiint_{U_4(j)} \omega((1-\beta_1-\beta_2-\beta_3-\beta_4)/\beta_4) \frac{d \beta_1 d \beta_2 d \beta_3 d \beta_4}{\beta_1\beta_2 \beta_3 \beta_4^2}
\]
with the four dimensional range $U_4(j)$ defined by
\[
\begin{split}
\gamma_j <\beta_4< \beta_3< \beta_2 <\beta_1 < 1/2,\quad \beta_1  + 2 \beta_2 < \alpha_j,\quad \beta_1+\beta_2+2\beta_3 < 1, \\ \beta_1+\beta_2+\beta_3+2\beta_4 < 1,  \quad \text{and} \quad \sum_{i \in I} \beta_i \not \in J(j) \,\, \forall I \subseteq\{1,2,3,4\} .
\end{split}
\]

Combining the above asymptotic evaluations and the error terms (\ref{2dimintegral}) and (\ref{4dimintegral}), we have shown that for $j \in \{1,2\}$
\[
S(\A^{(j)},2 X^{1/2}) \geq (1-\Omega_2(j)-\Omega_4(j)) S(\B,2 X^{1/2}).
\]
To complete the proof of Theorem \ref{quanttheorem}, we compute the numerical upper bounds
\[
\Omega_2(1)+\Omega_4(1) \leq 0.38 + 0.017 < 0.4
\]
and
\[
\Omega_2(2)+\Omega_4(2) \leq 0.38 + 0.49 < 0.9.
\]
Python codes which compute rigorous upper bounds for these integrals can be found at the following links.

\begin{tabular}{ c c c }
&\hspace{50pt} $\Omega_2(1)$ & \quad \quad \url{http://codepad.org/lO5xHGuO} \\
&\hspace{50pt} $\Omega_4(1)$ &\quad \quad \url{http://codepad.org/Ygi8kNUF}  \\
&\hspace{50pt} $\Omega_2(2)$ & \quad \quad\url{http://codepad.org/6hNTZtYt}  \\
&\hspace{50pt} $\Omega_4(2)$ &\quad \quad \url{http://codepad.org/spA3ENVz} 
\end{tabular}

This completes the proof of Theorem \ref{quanttheorem}, and as mentioned above, we then get Theorems \ref{maintheorem1} and \ref{maintheorem2}.
\qed
 
\section{The set-up for Theorem \ref{maintheorem3}} \label{setup3section}
\subsection{The set-up}
Let $K/\QQ$ be a Galois extension of degree $k$ and fix a basis $\omega_1,\cdots, \omega_k$ for the ring of integers $O_K$. Define the form
\[
N(b_1,\dots,b_k) := N_{K/\QQ}(b_1 \omega_1+\cdots + b_k \omega_k)
\]
and the incomplete form
\[
N(b_1,\dots, b_{k-1}) := N(b_1,\dots,b_{k-1},0).
\]
Let $\psi$ be as in Section \ref{smoothweightsection} with $\delta= \log^{-C} X$ for some large $C > 0$. We define
 \begin{align*}
 \Omega(b_1,\cdots,b_{k-1}) :=&\mathbf{1}_{0 \leq b_i \leq   X^{1/(2k)}}
 \frac{1}{\delta} \\
 &\times \int_{1/2 \leq B  \leq \frac{2\sqrt{X}}{C_k}} \psi_B(N(b_1,\cdots,b_{k-1})) \frac{\int \psi_{B}(u) du}{\int_{\mathbf{u} \in [0,X^{1/(2k)}]^{k-1}}  \psi_B(N(u_1,\dots,u_{k-1})) d \mathbf{u}} \frac{dB}{B},     
 \end{align*}

where $C_k > 0$ is a large enough constant so that
 \[
\int_{\mathbf{u} \in [0,C^{1/k}_k ]^{k-1}}  \psi_2(N(u_1,\dots,u_{k-1})) d \mathbf{u} \neq 0, 
 \]
 so that the denominator  in  the definition of $\Omega(b_1,\cdots,b_{k-1})$ is always non-zero. We also define the arithmetic factor
 \[
\kappa_k := \sum_{c \geq 1} \frac{\mu(c) }{c^2} \bigg(\sum_{c \geq 1} \frac{\mu(c) \rho_k(c)}{c^{k}} \bigg)^{-1}, 
 \]
where
\[
\rho_k(c) = |\{b_1,\cdots,b_{k-1} \, (c):  N(b_1,\cdots,b_{k-1}) \equiv 0 \, (c) \}|
\]
We set
 \[
a_n^{(k)} := \kappa_k \sum_{\substack{n= a^2+N(b_1,\cdots,b_{k-1})^2  \\ (a,N(b_1,\cdots,b_{k-1}))=1}}  \Omega(b_1,\cdots,b_{k-1}),
\]
  For typical $b_1,\dots,b_{k-1}$ we have
 \[
  \Omega(b_1,\cdots,b_{k-1}) \asymp X^{1/(2k)},
 \]
 where the upper bound holds for all $b_1,\dots,b_{k-1}$,
 so that
 \[
 \sum_{n \sim X} a_n^{(k)} \asymp X.
 \]
 For the comparison sequence we set
 \[ 
b_n := \sum_{\substack{n=a^2+b^2 \\ (a,b)=1 }}\frac{1}{\delta}\int_{1/2 \leq B  \leq \frac{2\sqrt{X}}{C_k}}\psi_B(b)  \frac{dB}{B},
 \]
 which is the same as before except that we essentially restrict to $b \leq 2 C_k^{-1} \sqrt{X}$. When we write $a_n^{(k)}$ we have suppressed the fact that the sequence depends also on $K$ and the choice of basis $\omega_j$. We will use similar convention below to other quantities, and with the exception of Section \ref{sieve3section} all implied constants are allowed to depend on $K$ and the $\omega_j$.
\subsection{Lemmas}
We need the following lemma for handling the diagonal terms after Cauchy-Schwarz with the Type I and Type II sums. Basically, this ensures that the density of the numbers $a^2+N(b_1,\cdots,b_{k-1})^2 \sim X$ is $ \asymp X^{-1/(2k)+o(1)},$ as expected, so that we get an optimal control for the diagonal terms. Here we do not need to restrict to $b_k=0$, and the lemma is essentially an analogue of the divisor bound $\tau(n) \ll_\eps n^\eps$ for the the number field $K$.  
 \begin{lemma} \label{normreplemma}
The number of representations
\[
N(b_1,\dots,b_k) =b
\]
with $|b_j|\, \leq B$ is $\ll_\eps B^\eps$, where the implied constant may depend on $K$ and the choice of  $\omega_j$.
\end{lemma}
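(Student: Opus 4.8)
The plan is to reduce the counting of solutions to $N(b_1,\dots,b_k)=b$ to counting roots of a fixed polynomial congruence, exactly as in the proof of Lemma \ref{cubereplemma}. First I would dispose of the degenerate cases: if $b=0$ then $b_1\omega_1+\cdots+b_k\omega_k=0$, which forces $b_1=\cdots=b_k=0$ since the $\omega_j$ are a $\Z$-basis; so assume $b\neq 0$. Write $\beta:=b_1\omega_1+\cdots+b_k\omega_k\in O_K$, so $N_{K/\QQ}(\beta)=b$. By factoring out the (bounded, $\ll_\eps B^\eps$ many choices) rational content, it suffices to count representations with $\beta$ primitive in the sense that no rational prime divides all of $b_1,\dots,b_k$, and with $(b,m)=1$ for a fixed auxiliary integer $m$ to be chosen (e.g.\ the index $[O_K:\Z[\theta]]$ for a primitive element, or simply a discriminant-type constant so that $O_K/bO_K$ is controlled); the number of such auxiliary factorizations is again $\ll_\eps B^\eps$.

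The key step is the following injectivity-type argument. Since $K/\QQ$ is Galois, fix an embedding and a primitive element $\theta$ so that $K=\QQ(\theta)$; after absorbing a bounded constant, each $\beta$ with $N_{K/\QQ}(\beta)=b$ and $b$ coprime to the relevant constant determines, modulo $b$, a root of the minimal polynomial of $\theta$ in $\Z/b\Z$ via the congruence defining the ideal $(\beta)$ — more precisely, $\beta \bmod bO_K$ lies over a well-defined residue, because $N_{K/\QQ}(\beta)=b$ means $\beta\bar\beta\cdots = b$ (product of conjugates), and modulo $b$ the element $\beta$ is a zero-divisor supported on the prime ideals above $b$, which correspond to roots of the defining polynomial mod $b$ by the standard factorization (Dedekind/Kummer) since $b$ is coprime to the index. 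Two representations $\beta,\beta'$ with $|b_j|,|b_j'|\le B$ giving the \emph{same} such root then satisfy $\beta\beta'' \equiv \beta'\beta''' \pmod b$ for suitable conjugate-type auxiliary elements, and the crucial size input is that the relevant products of coordinates have absolute value $\ll_K B^{k}$ while $|b|\asymp$ the product of the $|$conjugate norms$|$; choosing the normalization so that these products are genuinely smaller than $|b|$ forces the congruence to be an equality, and then primitivity plus $|b_j|\le B$ forces $\beta=\beta'$ (up to units, of which there are $\ll_K 1$ in any bounded region, or $\ll_\eps B^\eps$ if the unit group has positive rank — here one uses that the $b_j$ are bounded by $B$, so only $\ll_K\log B$ units are relevant, which is absorbed).

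The main obstacle I expect is making the size comparison clean when $K$ has units of infinite rank: unlike the imaginary-quadratic case in Lemma \ref{cubereplemma}, where $c d_1=c_1 d$ followed immediately from $cd_1,c_1d<n$, here the conjugates of $\beta$ are not all comparable in size to $B$, so one cannot directly bound an auxiliary product by $|b|$. The fix is to work with the box condition $|b_j|\le B$ rather than with $N(\beta)$ alone: this confines $\beta$ to a fixed bounded region scaled by $B$, hence confines $\beta$ to $\ll_K \log^{O(1)}B$ cosets of the unit group (by a standard fundamental-domain argument for $O_K^\times$ acting on elements of bounded norm and bounded archimedean size), and within each coset the injectivity above applies verbatim. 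Thus the total count is
\[
\ll_\eps B^\eps \cdot (\text{number of roots of a fixed polynomial mod } b) \cdot \log^{O(1)} B \ll_\eps B^\eps,
\]
using that the number of roots of a fixed nonzero polynomial modulo $b$ is $\ll_\eps \tau(b)^{O(1)} \ll_\eps b^\eps \ll_\eps B^\eps$ by the Chinese remainder theorem and Hensel's lemma, exactly as at the end of the proof of Lemma \ref{cubereplemma}.
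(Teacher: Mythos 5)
Your final assembled argument is essentially the paper's proof: the associate classes of $\beta=b_1\omega_1+\cdots+b_k\omega_k$ with $N_{K/\QQ}(\beta)=b$ correspond to ideals of norm $|b|$, of which there are $\ll_\eps B^{\eps}$, and within a fixed class the box condition $|b_j|\leq B$ together with $|N_{K/\QQ}(\beta)|\geq 1$ forces every archimedean absolute value to satisfy $B^{1-k}\ll|\sigma(\beta)|\ll B$, hence pins the logarithmic (Minkowski) embedding of the relevant unit to a ball of radius $O(\log B)$ and yields $\ll(\log B)^{r-1}$ generators. The paper counts the ideals by Galois-conjugating the prime factorization of one fixed $(\beta)$, which sidesteps your Dedekind--Kummer reductions to $b$ coprime to the index and to squarefree-type moduli.

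Two caveats. First, your central ``injectivity-type'' step --- the congruence $\beta\beta''\equiv\beta'\beta'''\pmod b$ with unspecified auxiliary elements, followed by a size comparison --- cannot be made to work and is not needed: unlike in Lemma \ref{cubereplemma}, where $cd_1,c_1d<n$ holds automatically, here $|b|$ is only constrained by $1\leq|b|\ll B^k$ and may be far smaller than the relevant coordinate products, so no choice of normalization forces the congruence to be an equality. Second, the box condition does not confine $\beta$ to $\log^{O(1)}B$ cosets of the unit group; it confines $\beta$ to $\log^{O(1)}B$ elements \emph{within} each coset (associate class), while the number of cosets is what the ideal count controls. Your final displayed bound nevertheless has the correct shape, namely (number of ideals of norm $|b|$) times (number of generators per ideal lying in the box), so the proof goes through once these two points are repaired.
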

\begin{proof}
Let $b_1,\dots,b_k$ be such that $N(b_1,\dots,b_k) =b$. The principal ideal factorizes uniquely into prime ideals
\[
(b_1\omega_1+\cdots+b_k\omega_k)O_K = \mathfrak{p}_1 \cdots \mathfrak{p}_m.
\]
We have 
\[
N_{K/\QQ}(b'_1\omega_1+\cdots+b'_k\omega_k) =b
\]
only if
\[
(b'_1\omega_1+\cdots+b'_k\omega_k)O_K = \sigma_1(\mathfrak{p}_1) \cdots \sigma_m(\mathfrak{p}_m)
\]
for some $\sigma_j \in \text{Gal}(K/\QQ)$.
The number of choices for $\sigma_j$ which give a different ideal is at most
\[
\leq \prod_{p^\ell || b} (\ell+k)^k \ll \tau(b)^{O_k(1)} \ll_\eps B^\eps 
\]
by a divisor bound. To see this, by multiplicativity of $N_{K/\QQ}$ it suffices to consider the case that
\begin{align*}
    b =  p^{\ell}
\end{align*}
so that $m \leq \ell$ and for some $efg=k$ we have $p O_K= P_1^e \cdots P_g^e$ for some prime ideals $P_i \subseteq O_K$. Then the number of choices for $\sigma_j$ which give a distinct ideal is bounded by the number of ways choosing $\ell$ (possibly repeating) elements from a set of size $k$, which is
\[
\leq \binom{\ell+k-1}{\ell} \leq (\ell+k)^k.
\]

Thus, it suffices to show that the number of units $\eps_0$ of the ring of integers $O_K$ with
\[
\eps_0 (b_1\omega_1+\cdots+b_k\omega_k) = b'_1\omega_1+\cdots+b'_k\omega_k, \quad |b_j'| \,\leq B
\]
is $\ll_\eps B^\eps$. Consider the Minkowski embedding
\[
M:\beta \mapsto (\log |\sigma(\beta) |)_\sigma  \in \R^r,
\]
indexed by the $r =r_1+r_2$ complex embeddings $\sigma: K \hookrightarrow \C$. For every such embedding we have
\[
|\sigma(b_1\omega_1+\cdots+b_k\omega_k)| = |b_1\sigma(\omega_1)+\cdots+b_k\sigma(\omega_k)| \,\ll B,
\]
and 
\[
|\sigma(b_1\omega_1+\cdots+b_k\omega_k)|= \frac{b}{\prod_{\sigma' \neq \sigma} |\sigma'(b_1\omega_1+\cdots+b_k\omega_k)|} \,\gg B^{1-k}.
\]
Thus, our task is reduced to enumerating units $\eps_0$ such that
\[
\|M(\eps_0)\| \,\ll \log B.
\]
The set of units of $O_K$ is mapped by $M$ to a rank $r-1$ lattice in $\R^r$ of determinant $\ll_K 1$, so that by considering a Minkowski-reduced basis (see \cite[Lemma 4.1]{maynard}, for instance) we see that the number of such units is
\[
\ll \log^{r-1} B \ll \log^k B \ll_\eps B^{\eps}.
\]
\end{proof}

For a  matrix $A$ denote by $A^{ij}$ the matrix which is obtained by deleting $i$th row and $j$th column.  For the exponential sums in the next section we need the following two lemmas.
\begin{lemma} \label{Alemma}
Let $\text{\emph{Gal}}(K/\QQ)= \{\sigma_1,\dots,\sigma_k\}$ and define
\[
A:= (a_{ij}), \quad a_{ij} = \sigma_{i}(\omega_j).
\]
Then $\det A \neq 0$, that is, $A:K^k \to K^k$ is invertible. Furthermore, for any fixed $i$ the numbers $\det A^{ij} \in K, \,j \leq k$ are linearly independent over $\QQ$, so that in particular $\det A^{ij} \neq 0$ for all $i,j \leq k$.
\end{lemma}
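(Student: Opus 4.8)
The plan is to derive both assertions from the non-degeneracy of the trace form of $K$, together with a Galois symmetrization in the second assertion.

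For the first assertion I would compute $A^{\mathsf T}A$: its $(j,l)$ entry is $\sum_i \sigma_i(\omega_j)\sigma_i(\omega_l)=\sum_i\sigma_i(\omega_j\omega_l)=\operatorname{Tr}_{K/\QQ}(\omega_j\omega_l)$, so that $(\det A)^2=\det\big(\operatorname{Tr}_{K/\QQ}(\omega_j\omega_l)\big)_{j,l}$ is the discriminant of the $\QQ$-basis $\omega_1,\dots,\omega_k$ of $K$. Since $K/\QQ$ is separable, the pairing $(x,y)\mapsto\operatorname{Tr}_{K/\QQ}(xy)$ on $K$ is non-degenerate, hence this discriminant is nonzero and $\det A\neq 0$. (Here one uses that $K/\QQ$ is Galois, so each $\sigma_i$ maps $K$ to itself and $A\in M_k(K)$, whence $A\colon K^k\to K^k$ is invertible.)

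For the second assertion fix $i$ and suppose $\sum_{j=1}^k q_j\det A^{ij}=0$ with $q_j\in\QQ$. For $\sigma_p\in\operatorname{Gal}(K/\QQ)$ let $\tau_p$ be the permutation of $\{1,\dots,k\}$ with $\sigma_{\tau_p(m)}=\sigma_p\sigma_m$. Applying $\sigma_p$ to every entry of $A^{ij}$ replaces $\sigma_m(\omega_l)$ by $\sigma_{\tau_p(m)}(\omega_l)$, which relabels the rows by $m\mapsto\tau_p(m)$ and leaves the columns untouched; after sorting the rows back into increasing order one recovers $A^{\tau_p(i),j}$ up to a sign $\epsilon(p,i)$ that depends only on $p$ and $i$ (not on $j$, since $j$ indexes the fixed columns). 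Thus $\sigma_p(\det A^{ij})=\epsilon(p,i)\det A^{\tau_p(i),j}$, and applying $\sigma_p$ to the vanishing combination (whose rational coefficients $q_j$ are fixed) gives $\sum_j q_j\det A^{\tau_p(i),j}=0$. As $\sigma_p$ runs over the group, $\sigma_p\sigma_i$ runs over the whole group, so $\tau_p(i)$ takes all values in $\{1,\dots,k\}$; hence $\sum_j q_j\det A^{i'j}=0$ for every $i'$. This says the matrix of minors $M:=(\det A^{i'j})_{i',j}$ annihilates the rational vector $(q_j)_j$. But $M$ differs from $\operatorname{adj}(A)$ only by a transpose and by sign changes of rows and columns, so $|\det M|=|\det\operatorname{adj}(A)|=|\det A|^{k-1}\neq 0$ by the first part (recall $k\ge 2$). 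Therefore $(q_j)_j=0$, which is the claimed $\QQ$-linear independence; taking $(q_j)_j$ a standard basis vector yields in particular $\det A^{ij}\neq 0$ for all $i,j$.

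I do not expect a genuine obstacle here: the only points requiring care are the sign bookkeeping showing $\epsilon(p,i)$ is independent of $j$ and the identity $\det\operatorname{adj}(A)=(\det A)^{k-1}$, both of which are routine. The only real structural inputs are separability of $K/\QQ$ (for $\det A\neq 0$) and the transitivity of $p\mapsto\sigma_p\sigma_i$ on $\{1,\dots,k\}$ (for the symmetrization step).
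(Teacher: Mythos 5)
Your proof is correct. For the first assertion you take a different route from the paper: you compute $A^{\mathsf T}A=\bigl(\operatorname{Tr}_{K/\QQ}(\omega_j\omega_l)\bigr)_{j,l}$ and invoke non-degeneracy of the trace form, so $(\det A)^2$ equals the (nonzero) discriminant of the basis; the paper instead argues directly that $\det A=0$ would force $\sum_i x_i\sigma_i$ to be the zero map on $K$, contradicting Dedekind's theorem on linear independence of characters. Both are standard inputs for separable extensions and of comparable length, so neither buys much over the other, though yours gives the quantitative byproduct $(\det A)^2=\operatorname{disc}(\omega_1,\dots,\omega_k)$. For the second assertion your argument is essentially the paper's: transport the hypothetical relation $\sum_j q_j\det A^{ij}=0$ to every row by applying Galois elements (using that $\sigma_p\sigma_i$ exhausts the group), then conclude that the matrix of minors is singular, contradicting $\det\operatorname{adj}(A)=(\det A)^{k-1}\neq 0$. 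You are in fact more careful than the paper on the one point that needs care: the paper writes $\sigma^{(i,\ell)}(A^{\ell j})=A^{ij}$ without noting that the rows come back in a permuted order, whereas you make explicit that the resulting sign $\epsilon(p,i)$ is independent of $j$ and therefore harmless. This is a genuine (if minor) tightening of the written argument rather than a divergence from it.
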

\begin{proof}
We have $\det(A) \neq 0$ since $\det(A)^2$ is the discriminant of the number field $K/\QQ$, which is always a non-zero integer (cf. \cite[Chapter III.3]{lang}, for instance).

For the second claim suppose that for some $i$ there are $q_1,\dots,q_k \in \QQ$ not all zero such that
\[
\sum_{j \leq k} q_j \det A^{ij} = 0.
\]
For all $\ell \leq k$ there is some $\sigma^{(i,\ell)}  \in \text{Gal}(K/\QQ)$ and $\eps_{i\ell} \in \{\pm \}$ such that 
\[\det(\sigma^{(i,\ell)}( A^{i j })) = \eps_{i\ell} \det A^{\ell j} , \quad j\leq k,\] 
since any
 $\sigma\in \text{Gal}(K/\QQ)$ the  permutes the row vectors of $A$ and so we may choose $\sigma^{(i,\ell)} = \sigma_\ell \sigma_i^{-1}$. Note that $\eps_{i\ell}$ is independent of $j$ as it only depends on the sign of the permutation of rows defined by $\sigma^{(i,\ell)}$. Thus, we get
\[
\sum_{j \leq k}  q_j \eps_{i\ell} \det A^{\ell j } = \sum_{j \leq k}q_j    \det (\sigma^{(i,\ell)}(A^{ij})) = \sigma^{(i,\ell)} \bigg(\sum_{j \leq k} q_j \det A^{ij}\bigg)  = 0.
\]
Then the column vectors of the  matrix
$D =(d_{\ell j})= (\eps_{i\ell}\det A^{\ell j})$ are linearly dependent, that is, for  $\bm{c}_j=(\eps_{i \ell }\det A^{ij})_{\ell \leq k}$ we have
\[
\sum_{j \leq k} q_j \bm{c}_j = \bm{0}.
\] 
Hence $\det D = 0$, which implies  that also for the cofactor matrix $C=((-1)^{\ell+j} \det A^{\ell j})$ we have $\det C = 0$, since $C$ can be obtained from $D$ by the row/column operations of multiplying the rows by $\eps_{i\ell} (-1)^{\ell}$ and the columns by $(-1)^j$. Then $\det A^{-1} =  (\det A)^{-1}\det C = 0$, which is a contradiction, since in the above we have shown that $\det A \neq 0$.  
\end{proof}
We include the following lemma for completeness, even though we will not need it.
\begin{lemma} \label{Flemma}
Let $F \in K[X_1,\dots,X_k]$ be the form defined by
\[
F(X_1,\dots,X_k) := \prod_{i \leq k} \bigg(\sum_{j \leq K} (-1)^{i+j} X_j \det A^{ij} \bigg).
\]
Then $F \in \Z[X_1,\dots,X_k]$.
\end{lemma}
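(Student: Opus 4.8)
The plan is to write $F$ explicitly as (plus or minus) a norm form with integer coefficients, which makes integrality immediate. Set
\[
L_i(X_1,\dots,X_k) := \sum_{j \leq k} (-1)^{i+j} X_j \det A^{ij},
\]
so that $F = \prod_{i \leq k} L_i$. The first step is the observation that each $L_i$ is just a cofactor (Laplace) expansion: $L_i(X) = \det B_i(X)$, where $B_i(X)$ is the matrix obtained from $A$ by replacing its $i$th row with the row $(X_1,\dots,X_k)$ of indeterminates. This is a formal identity between polynomials with coefficients in $K$ (deleting the $X$-row introduces no indeterminates, so the relevant minors are exactly the $\det A^{ij}$).

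Next I would track the Galois action. Relabel the automorphisms so that $\sigma_1 = \mathrm{id}$, and for each $i$ let $\pi_i$ be the permutation of $\{1,\dots,k\}$ defined by $\sigma_i \sigma_\ell = \sigma_{\pi_i(\ell)}$, which is well defined because $\mathrm{Gal}(K/\QQ)$ is a group; note $\pi_i(1) = i$. Applying $\sigma_i$ to the coefficients of $L_1$ fixes the indeterminates $X_j$, so it amounts to applying $\sigma_i$ entrywise to the $A$-part of $B_1(X)$; since $\big(\sigma_i\sigma_\ell(\omega_m)\big)_{\ell,m}$ is just $A$ with its rows permuted by $\pi_i$, reordering the rows of $\sigma_i B_1(X)$ by $\pi_i$ produces precisely $B_i(X)$. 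Taking determinants gives $L_i(X) = \operatorname{sgn}(\pi_i)\,\sigma_i\!\big(L_1(X)\big)$ for every $i$. Hence
\[
F(X) = \prod_{i \leq k} L_i(X) = \Big(\prod_{i \leq k} \operatorname{sgn}(\pi_i)\Big)\,\prod_{i \leq k} \sigma_i\!\big(L_1(X)\big) = \pm\, N_{K/\QQ}\!\big(L_1(X)\big).
\]

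Finally I would check that $N_{K/\QQ}\big(L_1(X)\big) \in \Z[X_1,\dots,X_k]$. Since $K/\QQ$ is Galois, each $\sigma_\ell$ preserves $O_K$, so every entry $\sigma_\ell(\omega_m)$ of $A$ lies in $O_K$, and therefore each coefficient $(-1)^{1+j}\det A^{1j}$ of $L_1$ lies in $O_K$; thus $L_1(X) \in O_K[X_1,\dots,X_k]$. The norm $\prod_{\sigma \in \mathrm{Gal}(K/\QQ)} \sigma\big(L_1(X)\big)$ then has each coefficient equal to a $\Z$-polynomial expression in the algebraic integers $\sigma\big(\det A^{1j}\big)$ — hence itself an algebraic integer — and fixed by every element of $\mathrm{Gal}(K/\QQ)$, hence rational; so it lies in $\Z$. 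Therefore $N_{K/\QQ}\big(L_1(X)\big) \in \Z[X_1,\dots,X_k]$, and $F = \pm N_{K/\QQ}\big(L_1(X)\big) \in \Z[X_1,\dots,X_k]$. The only slightly delicate point in the argument is the bookkeeping of the row permutations and parities in the middle step; but, as the displayed identity shows, the overall sign $\prod_i \operatorname{sgn}(\pi_i)$ plays no role in the conclusion, so no careful sign-chasing is in fact needed.
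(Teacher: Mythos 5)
Your proof is correct and follows essentially the same route as the paper: identify $F$ as the product of the Galois conjugates of the linear form $L_1$ (i.e.\ $\pm N_{K/\QQ}(L_1)$), deduce Galois invariance and hence rationality of the coefficients, and conclude integrality since the coefficients are algebraic integers. You are in fact somewhat more careful than the paper, which writes $\det A^{ij}=\sigma_i(\det A^{1j})$ without addressing the row-permutation sign that you track explicitly (and correctly observe is immaterial).
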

\begin{proof}
We have  $\text{Gal}(K/\QQ) \cong \text{Gal}(K(X_1,\dots,X_k)/\QQ(X_1,\dots,X_k))$, where the isomorphism is given by letting $\sigma \in \text{Gal}(K/\QQ)$ act on the rational functions $P/Q \in K(X_1,\dots,X_k)$ coefficient-wise.
By the definition of $A$ we have
\[
F(X_1,\dots,X_k) = \prod_{\sigma \in \text{Gal}(K/\QQ)}  \bigg(\sum_{j \leq k} (-1)^{i+j}X_j \sigma(\det A^{1j}) \bigg).
\]
Hence, for any $\sigma \in  \text{Gal}(K(X_1,\dots,X_k)/\QQ(X_1,\dots,X_k))$
\[
\sigma F = F,
\]
so that $F \in \QQ[X_1,\dots,X_k]$.  Clearly then $F \in \Z[X_1,\dots,X_k].$
\end{proof}
\section{A Deligne-type bound}
Throughout this section we denote
\[
\bm{h}_{2k}=(h_1,\dots,h_{2k}) \quad \text{and} \quad  \bm{h}=(h_1,\dots,h_{k-1},h_{k+1},\dots,h_{2k-1}).
\]
We shall first consider the exponential sum with the complete norm form with $k$ variables
\[
\sum_{\substack{b_1,c_1,\dots,b_{k},c_{k} \, (p) \\ N(b_1,\dots,b_{k}) \equiv aN(c_1,\cdots,c_{k}) \, (p)}} e_p ( \bm{h}_{2k} \cdot (\bm{b},\bm{c})),
\]
and in the proof of Lemma \ref{deligneboundDlemma} we reduce the case of incomplete forms with $k-1$ variables to this. The square-root bound would be $\ll p^{k-3/2}$, whereas we lose a factor of $p^{3/2}$ and get only $\ll p^{k}$ for the generic $\bm{h}$. This does not matter in the application, where we take a large $k$ so that the relative loss is quite small.

As is often the case with such exponential sums, it turns out to be helpful to consider a more general sum over finite fields $\FF_{p^m}$, where the additive character $e_p(x)$ on $\FF_p$ is replaced by  the additive character on $\FF_{p^m}$ 
\[
e_p( \text{Tr}_{\FF_{p^m}/\FF_p} (x)).
\] 
We need the following lemma, which is equivalent to the rationality of the $L$-function associated to the exponential sum. 
\begin{lemma} \label{rationalitylemma}
\[
\sum_{\substack{b_1,c_1,\dots,b_{k},c_{k} \in \FF_{p^m} \\ N(b_1,\dots,b_{k}) = aN(c_1,\dots,c_{k}) }} e_p ( \text{\emph{Tr}}_{\FF_{p^m}/\FF_p} (\bm{h}_{2k} \cdot (\bm{b},\bm{c})))  = \sum_{j\leq g} \epsilon_j  \lambda_j^m,
\]
where for some fixed $g$ depending only on $k,a, N, \bm{h}_{2k}$ and $\lambda_j \in \C, \epsilon_j \in \{\pm 1\}$ depending only on  $k,p,a, N, \bm{h}_{2k}$.
\end{lemma}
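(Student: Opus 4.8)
The statement is exactly the assertion that the exponential sum, viewed as a function of $m$, is a finite exponential polynomial in $m$. The plan is to realize the sum as the number of $\FF_{p^m}$-points of an affine variety twisted by an additive character, and to invoke the Grothendieck--Lefschetz trace formula together with Deligne's theory of $\ell$-adic sheaves. First I would set up the affine $\FF_p$-scheme
\[
V := \{(b_1,c_1,\dots,b_k,c_k) : N(b_1,\dots,b_k) = a\,N(c_1,\dots,c_k)\} \subset \mathbb{A}^{2k},
\]
where $N$ is the norm form regarded as a polynomial with coefficients in $\FF_p$ (reducing the fixed integral norm form modulo $p$; for $p$ large it stays of degree $k$ and the construction is insensitive to the finitely many bad $p$). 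On $V$ one has the regular function $f(\bm{b},\bm{c}) := \bm{h}_{2k}\cdot(\bm{b},\bm{c})$, a linear form, and the sum in question is
\[
\sum_{x \in V(\FF_{p^m})} e_p\big(\mathrm{Tr}_{\FF_{p^m}/\FF_p} f(x)\big) = \sum_{x \in V(\FF_{p^m})} \psi_m(f(x)),
\]
where $\psi_m = \psi \circ \mathrm{Tr}_{\FF_{p^m}/\FF_p}$ and $\psi$ is a fixed nontrivial additive character of $\FF_p$. (If the specific $\psi$ coming from $e_p$ differs from a chosen $\psi$ one absorbs the scalar into $\bm h_{2k}$; this does not affect the shape of the conclusion.)

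Next I would introduce the Artin--Schreier sheaf $\mathcal{L}_\psi$ on $\mathbb{A}^1_{\FF_p}$ and its pullback $\mathcal{F} := f^*\mathcal{L}_\psi$, a lisse rank-one $\overline{\QQ}_\ell$-sheaf on $V$ (for $\ell \neq p$). By the Grothendieck--Lefschetz trace formula applied to $\mathcal{F}$ on $V$,
\[
\sum_{x \in V(\FF_{p^m})} \psi_m(f(x)) = \sum_{i=0}^{2\dim V} (-1)^i \,\mathrm{Tr}\big(\mathrm{Frob}_p^m \,\big|\, H^i_c(V_{\overline{\FF}_p}, \mathcal{F})\big).
\]
Each cohomology group $H^i_c(V_{\overline{\FF}_p}, \mathcal{F})$ is a finite-dimensional $\overline{\QQ}_\ell$-vector space on which geometric Frobenius acts; writing its eigenvalues (with multiplicity, and with sign $(-1)^i$ folded in) as $\lambda_1,\dots,\lambda_g$ gives $\sum_{x\in V(\FF_{p^m})}\psi_m(f(x)) = \sum_{j\le g}\lambda_j^m$, which is precisely the claimed identity. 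The total dimension $g := \sum_i \dim H^i_c(V_{\overline{\FF}_p},\mathcal{F})$ is bounded in terms of $k$ alone (it depends only on the geometry of $V$ and the degree of the linear form $f$, hence only on $k$, for all but finitely many $p$; one can make this explicit using e.g. Bombieri's or Adolphson--Sperber's bounds on Betti numbers, but the precise bound is not needed here), so $g$ can be taken to depend only on $k,a,N,\bm{h}_{2k}$, while the eigenvalues $\lambda_j$ of course depend also on $p$. This matches the statement verbatim.

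The only genuinely delicate point is a bookkeeping one rather than a conceptual one: ensuring the construction is valid for the primes $p$ under consideration, i.e.\ that the reduction of the fixed integral norm form modulo $p$ still defines the correct variety $V$ and that $\ell \neq p$ can be chosen. For all large $p$ this is automatic, and for the finitely many remaining $p$ the identity holds trivially with $g$ at most the crude bound $p^{2k}$ (take $\lambda_j$ to be suitable roots of unity times cardinalities, or simply note any eventually-periodic-in-$m$ sequence, which this is not needed to be — one just needs \emph{some} representation $\sum\lambda_j^m$, and over a finite field the number of points and character values are algebraic, so one can even brute-force it). Since Lemma~\ref{rationalitylemma} is only an existence statement about the shape $\sum_j \lambda_j^m$ with $g$ bounded uniformly in $p$ (depending only on $k$ in the relevant range), no effort on optimal constants is required. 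I would therefore present the argument as: reduce mod $p$, form $V$ and $\mathcal{F}=f^*\mathcal{L}_\psi$, apply Grothendieck--Lefschetz, read off $\lambda_j$ as Frobenius eigenvalues on $\bigoplus_i H^i_c(V_{\overline{\FF}_p},\mathcal{F})$, and cite Deligne (\cite{weil2}, SGA 4$\frac12$) for the trace formula and the uniform bound on $\sum_i \dim H^i_c$.
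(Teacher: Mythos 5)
Your proposal is correct, and it rests on the same underlying theory as the paper (rationality of $L$-functions of exponential sums), but the execution is genuinely different. The paper does not work with the cohomology of the hypersurface $V$ directly: it first partitions the sum according to which coordinates $b_i,c_j$ vanish, then unfolds the defining congruence $N(\bm b)=aN(\bm c)$ via $\frac{1}{p^m}\sum_{w}e_p(\mathrm{Tr}(w(\cdots)))$, converting everything into exponential sums of a polynomial phase over products of $\mathbb{A}^1$ and $\mathbb{G}_m$, to which Bombieri's rationality theorem (\cite[Theorem 1]{bombieri}) applies off the shelf; the prefactor $p^{-m}$ and the indicator of $p\mid h_j$ are themselves of the form $\lambda^m$, so each piece has the required shape. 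You instead apply the Grothendieck--Lefschetz trace formula to $f^*\mathcal{L}_\psi$ on $V$ itself and read off the $\lambda_j$ as Frobenius eigenvalues on $H^i_c$. Your route is more conceptual and shorter, at the cost of invoking the étale-cohomology formalism and a uniform Betti-number bound (Bombieri/Katz/Adolphson--Sperber) to make $g$ independent of $p$; the paper's route is a purely combinatorial reduction to one classical quotable statement. One shared caveat: the trace formula (and likewise Bombieri's theorem) produces an \emph{alternating} sum $\sum_i(-1)^i\sum_\alpha \alpha^m$, which is not literally $\sum_{j\le g}\lambda_j^m$ since $-\alpha^m\ne(-\alpha)^m$; the paper's own proof has exactly the same discrepancy with the lemma's statement, and it is harmless because the subsequent eigenvalue-extraction argument (Lemma \ref{lambdalemma}) works equally well for signed sums after grouping equal eigenvalues. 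So there is no gap in your argument beyond what is already present in the paper.
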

\begin{proof}
We partition the sum into
\[
\sum_{I,J \subseteq \{ 1,\dots, k\}} S(I,J),
\]
where 
\[
S(I,J) =  \sum_{\substack{b_1,c_1,\dots,b_{k},c_{k} \in \FF_{p^m} \\ N(b_1,\dots,b_{k}) = aN(c_1,\dots,c_{k}) \\ b_i=c_j=0 \Leftrightarrow (i,j) \in I\times J}} e_p ( \text{Tr}_{\FF_{p^m}/\FF_p} (\bm{h}_{2k} \cdot (\bm{b},\bm{c}))).
\]
Here $b_i=c_j=0$ means that these variables are $0$ as elements of the field $\FF_{p^m}$ so that for $(i,j) \not \in I \times J$ we have $b_j,c_j \in \FF_{p^m}^\times$. 
We expand the condition $ N(b_1,\dots,b_{k}) = aN(c_1,\cdots,c_{k})$ to get
\[
\frac{1}{p^m} \sum_{w \in \FF_{p^m}} \sum_{\substack{b_1,c_1,\dots,b_{k},c_{k} \in \FF_{p^m}  \\ b_i=c_j=0 \Leftrightarrow (i,j) \in I\times J}} e_p ( \text{Tr}_{\FF_{p^m}/\FF_p} (w(N(b_1,\dots,b_{k}) -aN(c_1,\dots,c_{k}))+ \bm{h}_{2k} \cdot (\bm{b},\bm{c}))).
\]
The contribution from $w=0$ is 
\[
\frac{1}{p^m} \sum_{I,J \subseteq \{ 1,\dots, k\}}\sum_{\substack{b_1,c_1,\dots,b_{k},c_{k} \in \FF_{p^m} \\   b_i=c_j=0 \Leftrightarrow (i,j) \in I\times J }} e_p ( \text{Tr}_{\FF_{p^m}/\FF_p} (\bm{h}_{2k} \cdot (\bm{b},\bm{c}))).
\]
which is 0 or $p^{m(2k-1)}$ if $p| h_j$ for all $j \leq 2k$, so that it is of a suitable form. For the contribution from $w \neq 0$ we have by \cite[Theorem 1]{bombieri} (since the $b_i,c_j$ with $(i,j) \not \in I\times J$ run over  $\FF_{p^m}^\times$)
\[
\begin{split}
& \frac{1}{p^m} \sum_{w \in \FF_{p^m}^\times} \sum_{\substack{b_1,c_1,\dots,b_{k},c_{k} \in \FF_{p^m} \\ b_i=c_j=0 \Leftrightarrow (i,j) \in I\times J }} e_p ( \text{Tr}_{\FF_{p^m}/\FF_p} (w(N(b_1,\dots,b_{k}) -aN(c_1,\dots,c_{k}))+ \bm{h}_{2k} \cdot (\bm{b},\bm{c}))) \\
&= \sum_{i=1}^r \omega_i^m - \sum_{j=1}^s \eta_j^m
\end{split}
\]
for some fixed $r,s$ depending only on  $k,a, N, \bm{h}_{2k}, I,J$ and $\omega_i,\eta_j \in \C$ depending only on  $k,p,a, N, \bm{h}_{2k},I,J$. 
\end{proof}
We also need the following standard lemma (see \cite[Lemma 4.15]{kowalskinotes}, for instance).
\begin{lemma} \label{lambdalemma} Let $n,g \geq 1$ and let $\lambda_1,\dots,\lambda_g \in \C$. Suppose that there are constants $A,B > 0$ such that for every $m \geq 1$ we have for 
\[
\bigg|\sum_{j \leq g} \lambda_j^{mn} \bigg|\leq A B^{mn}.
\]
Then $|\lambda_j| \leq B$ for all $j \in \{1,\dots,g\}$.
\end{lemma}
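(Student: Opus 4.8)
The plan is to prove Lemma \ref{lambdalemma} by a standard argument exploiting the growth of power sums of complex numbers. First I would reduce to the case $n=1$: replacing each $\lambda_j$ by $\mu_j := \lambda_j^n$ and $B$ by $B^n$, the hypothesis becomes $\sum_{j\le g}\mu_j^m \le A B^{nm}$ for all $m\ge 1$, and the conclusion $|\lambda_j|\le B$ is equivalent to $|\mu_j|\le B^n$. (Strictly speaking the hypothesis as stated controls only real parts, but since the $\mu_j$ may be grouped into conjugate-closed families — or one may simply note the inequality is assumed for the complex sum — I would treat $\sum_j \mu_j^m$ as the quantity bounded in modulus by $AB^{nm}$; if only $\Re$ is controlled, apply the argument below to $m$ and $m+1$ simultaneously, or replace $\mu_j$ by $\mu_j e^{i\theta}$ for a suitable rotation, which does not change moduli.) So it suffices to show: if $\sum_{j\le g}\mu_j^m = O(C^m)$ as $m\to\infty$ then $|\mu_j|\le C$ for all $j$.

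Next I would group equal values: write the distinct values among the $\mu_j$ as $\nu_1,\dots,\nu_r$ with multiplicities $a_1,\dots,a_r$, so that $\sum_{j\le g}\mu_j^m = \sum_{i\le r} a_i \nu_i^m$. Order them so that $|\nu_1|\ge |\nu_2|\ge\cdots$, and suppose for contradiction that $|\nu_1| > C$. The key step is then a separation-of-main-term argument: among the $\nu_i$ of maximal modulus $R := |\nu_1|$, write $\nu_i = R e^{i\phi_i}$, and consider $\frac{1}{R^m}\sum_{i\le r} a_i\nu_i^m = \sum_{|\nu_i|=R} a_i e^{im\phi_i} + O((|\nu_{r'+1}|/R)^m)$ where the error term tends to $0$. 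The main term $\sum_{|\nu_i|=R} a_i e^{im\phi_i}$ is an almost-periodic function of $m$; I would show it does not tend to $0$ along some subsequence. This can be done either by a pigeonhole / equidistribution argument (choosing $m$ so that all $e^{im\phi_i}$ are simultaneously close to $1$, using simultaneous Dirichlet approximation, so the main term is close to $\sum_{|\nu_i|=R} a_i > 0$), or more slickly by a Cesàro / averaging argument: $\frac{1}{M}\sum_{m=1}^M |\sum_{|\nu_i|=R} a_i e^{im\phi_i}|^2 \to \sum_{|\nu_i|=R} a_i^2 > 0$, which forces $\limsup_m |\sum_{i\le r} a_i\nu_i^m|/R^m > 0$. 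Combined with the assumed bound $|\sum_i a_i\nu_i^m| \le A C^m$, this gives $R^m \ll C^m$ along a subsequence, hence $R\le C$, contradicting $|\nu_1| = R > C$. Therefore $|\nu_i|\le C$ for all $i$, i.e. $|\mu_j|\le C$ for all $j$, and unwinding the substitution gives $|\lambda_j|\le B$.

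The main obstacle — really the only point requiring care — is the separation of the dominant terms: showing that the top-modulus part $\sum_{|\nu_i|=R} a_i e^{im\phi_i}$ cannot decay to zero. The cleanest route is the second moment computation, which only needs $\lim_{M\to\infty}\frac1M\sum_{m=1}^M e^{im(\phi_i-\phi_{i'})} = \mathbf{1}_{\phi_i\equiv\phi_{i'}}$ (a one-line geometric-series estimate), yielding $\frac1M\sum_{m\le M}|\cdot|^2 \to \sum_{|\nu_i|=R}a_i^2$, which is a strictly positive constant since the $a_i$ are positive integers and there is at least one term of modulus $R$. Since the tail $\sum_{|\nu_i|<R} a_i(\nu_i/R)^m$ contributes $o(1)$ in this average (its $L^2$-average is bounded by $\sum_{|\nu_i|<R} a_i^2 \cdot \frac1M\sum_{m\le M}(|\nu_i|/R)^{2m} \to 0$), we conclude $\limsup_m R^{-m}|\sum_i a_i\nu_i^m| \ge (\sum_{|\nu_i|=R}a_i^2)^{1/2}/2 > 0$ for $M$ large, which combined with the hypothesis forces $R\le C$. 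This completes the proof.
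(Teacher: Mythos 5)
Your proof is correct. Note that the paper does not prove this lemma at all --- it is stated as ``standard'' and used immediately afterwards --- so there is no argument of the paper's to compare against. The route you take (reduce to $n=1$ via $\mu_j=\lambda_j^n$; group the distinct values $\nu_i$ with multiplicities $a_i$; isolate the terms of maximal modulus $R$ and show via the second-moment average $\frac1M\sum_{m\le M}\bigl|\sum_{|\nu_i|=R}a_ie^{im\phi_i}\bigr|^2\to\sum_{|\nu_i|=R}a_i^2>0$ that the dominant part cannot decay, so that $R\le B^n$) is precisely the standard argument by which one passes from bounds on exponential sums over all extensions $\FF_{q^n}$ to bounds on the inverse roots of the associated rational $L$-function, which is exactly how the lemma is used in conjunction with Lemma \ref{rationalitylemma}. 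Two small remarks. First, the lemma as printed omits absolute value signs; the intended hypothesis is $|\sum_{j\le g}\lambda_j^{mn}|\le AB^{mn}$ and the intended conclusion is $|\lambda_j|\le B$, since the quantities arising in Lemma \ref{rationalitylemma} are complex exponential sums --- you read it this way, which is right, and your parenthetical hedge about ``only real parts being controlled'' is unnecessary (and the rotation fix suggested there would not actually be available, since the $\lambda_j$ are fixed by the hypothesis; fortunately nothing in your main argument relies on it). Second, your deduction at the end is sound: the tail $\sum_{|\nu_i|<R}a_i(\nu_i/R)^m$ tends to $0$ pointwise, so along the subsequence on which the top part has modulus $\ge\tfrac12(\sum a_i^2)^{1/2}$ one gets $(C/R)^m\gg 1$, forcing $R\le C$ and completing the contradiction.
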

The purpose of the above two lemmas is that they allow us to reduce bounding an exponential sum over $\FF_p$ to bounding the corresponding sum over $\FF_{q^n}$ for some fixed $q=p^f$. The benefit of this is that after a suitable extension $\FF_q/\FF_p$ the norm form $N(b_1,\dots,b_k)$ factors into a product of $k$ linear forms. 

We need to set up some notations for the next lemma. Recall that for any prime $p$ there are integers $e,f,g$ with $efg=k$ such that
\[
pO_K = (P_1 \dots P_g)^e
\]
for some prime ideals $P_j$ of $K$. The integer $f$ is called the inertia degree of $p$, and we have for all $j \leq g$
\[
O_K/P_j \cong \FF_{p^f}.
\]
We will denote $q = p^f$ and choose some $P_j$ for each prime $p$ (the exact choice is not important, see Remark \ref{choiceremark}). For each prime $p$ we denote the reduction map by
\begin{align} \label{reductionmodpmap}
\pi_p: O_K \to \FF_q.    
\end{align}
We then identify $\FF_p =\pi_p(\Z)$ as a subfield of $\FF_q$.

We have the following Deligne-type bound, which is an explicit version for a special case of the more general stratification result of Fouvry and Katz \cite[Theorem 1.2]{fouvrykatz}. 
 \begin{lemma} \label{deligneboundlemma}
Let $p$ be sufficiently large in terms of $K$ and $\omega_j$. Let $a \in \FF_p$, $t \in \FF_p^\times$, and $q=p^f$. Let $A$ be as in Lemma \ref{Alemma}. Let $F$ be as in Lemma \ref{Flemma} and let $Y_1 \subseteq \mathbb{A}_{\Z}^{2k}$ be defined by
\begin{equation} \label{Fequation}
F(x_1,\dots,x_k) = (-1)^{k-1}a F(x_{k+1},\dots,x_{2k}).
\end{equation}
Let $\bm{h}_{2k} = (\bm{h}_{2k,1},\bm{h}_{2k,2}) \in \FF_p^{2k}$. 

There are algebraic sets $X_j$ with $ \mathbb{A}_{K}^{2k} = X_0 \supseteq X_1 \supseteq \cdots \supseteq X_{2k} \supseteq X_{2k+1} = \emptyset $ such that the following hold. 

(1) For $j \geq 1$ and $\bm{h}_{2k} \in X_j(\FF_q) \setminus  X_{j+1}(\FF_q)$ we have 
 \[
\sum_{\substack{b_1,c_1,\dots,b_{k},c_{k} \in \FF_p \\ N(b_1,\dots,b_{k}) = aN(c_1,\cdots,c_{k}) \, (p)}} e_p ( t \bm{h}_{2k} \cdot (\bm{b},\bm{c})) \ll (a,p)^{1/2}p^{k-1+j/2}.
 \]
For $\bm{h}_{2k} \in X_0(\FF_q) \setminus (Y_1(\FF_q) \cup X_1(\FF_q)) $ we have
 \[
\sum_{\substack{b_1,c_1,\dots,b_{k},c_{k} \in \FF_p \\ N(b_1,\dots,b_{k}) = aN(c_1,\cdots,c_{k}) \, (p)}} e_p ( t \bm{h}_{2k} \cdot (\bm{b},\bm{c})) \ll (a,p)^{1/2} p^{k-1/2}.
 \]
For $\bm{h}_{2k} \in Y_1(\FF_q) \setminus  X_{1}(\FF_q)$ we have
 \[
\sum_{\substack{b_1,c_1,\dots,b_{k},c_{k} \in \FF_p \\ N(b_1,\dots,b_{k}) = aN(c_1,\cdots,c_{k}) \, (p)}} e_p (t \bm{h}_{2k} \cdot (\bm{b},\bm{c})) \ll p^{k}.
 \]

(2) For each $j \geq 1$ we have

\[
X_j = \bigcup_{\substack{j=r+s \\ r,s \leq k}} Z_{r} \times Z_{s},
\] 
where $Z_{r} \subseteq  \mathbb{A}_{K}^{k}$ is a union of the $\binom{k}{r}$ hyperplanes of dimension $k-r$ defined by the $r$ independent linear equations
\begin{equation} \label{linearequations}
\sum_{j \leq k} (-1)^{j} \det(A^{i_s j}) h_j = 0
\end{equation}
for some distinct $i_1,\dots,i_r \in \{1,\dots,k\}$ with $A^{ij}$ as in Lemma \ref{Alemma}.
\end{lemma}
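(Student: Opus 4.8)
The plan is to prove Lemma \ref{deligneboundlemma} by combining the rationality machinery of Lemmas \ref{rationalitylemma} and \ref{lambdalemma} with an explicit factorization of the norm form after a base change to $\FF_q$, where everything splits into linear forms, and then applying the one-dimensional Weil bound term by term.

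\textbf{Step 1: Reduce to a field where $N$ splits.} By Lemmas \ref{rationalitylemma} and \ref{lambdalemma}, it suffices to bound the analogous sum over $\FF_{q^n}$ for all $n \geq 1$, where $q = p^f$ and $f$ is the inertia degree of $p$ in $K$. Indeed, if the sum over $\FF_{q^n}$ is $\ll A B^{n(k-1/2)}$ (resp.\ $\ll A B^{nk}$) uniformly in $n$, with $B = p$, then writing the $\FF_p$-sum as $\sum_{j \leq g}\lambda_j$ with $\sum_j \lambda_j^{m} $ the $\FF_{p^m}$-sum, and applying Lemma \ref{lambdalemma} along the subsequence $m = fn$, we conclude $|\lambda_j| \leq p^{k-1/2}$ (resp.\ $p^{k-1}\cdot p^{1/2}$, etc.), hence the desired bound on the $\FF_p$-sum up to the fixed constant $g$. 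Over $\FF_q$, since $f$ is the inertia degree, the minimal polynomial of a primitive element of $O_K$ factors completely, so $N(b_1,\dots,b_k) = N_{K/\QQ}(\sum_j b_j\omega_j)$ becomes, after the linear change of variables given by the matrix $A$ of Lemma \ref{Alemma}, a product $\prod_{i \leq k} L_i(\bm{b})$ of $k$ independent linear forms $L_i(\bm{b}) = \sum_j (-1)^{i+j}b_j \det A^{ij}$ (up to the constant $\det A$, which is a fixed nonzero element of $\FF_q$ for $p$ large). By Lemma \ref{Alemma} these forms are linearly independent, so $\bm{b}\mapsto (L_1(\bm{b}),\dots,L_k(\bm{b}))$ is an invertible linear change of coordinates over $\FF_q$.

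\textbf{Step 2: Expand the congruence and apply Weil.} After the change of variables, with $u_i = L_i(\bm{b})$, $v_i = L_i(\bm{c})$, the condition becomes $\prod_i u_i = a' \prod_i v_i$ (with $a'$ a unit times $a$), and the linear phase $\bm{h}_{2k}\cdot(\bm{b},\bm{c})$ becomes $\sum_i g_i u_i + \sum_i g'_i v_i$ where $(\bm{g},\bm{g}')$ is the image of $\bm{h}_{2k}$ under the transpose of the inverse change of variables; in particular $g_i$ is, up to nonzero scalars, the linear form $\sum_j (-1)^j \det(A^{ij}) h_j$ appearing in (\ref{linearequations}). Now detect $\prod u_i = a'\prod v_i$ by a multiplicative character sum or, more directly, by introducing an additive variable $w$ as in Lemma \ref{expsum2Dprelimlemma}: write the sum over $\FF_{q^n}$ as $\frac{1}{q^n}\sum_{w}\sum_{\bm u,\bm v}e_p(\mathrm{Tr}(w(\prod u_i - a'\prod v_i) + \sum g_iu_i + \sum g'_iv_i))$. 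Split off the locus where some $u_i$ or $v_i$ vanishes (these contribute lower-order terms absorbed by the same rationality argument applied to boundary strata, exactly as in the proof of Lemma \ref{rationalitylemma}), and on the open torus $w\neq 0$ the sum over $\bm u$ factors as $\prod_i\sum_{u_i}e_p(\mathrm{Tr}(wc_i u_i^{?}))$ — this is not quite a monomial, so instead I would fix all $u_i$ except treat the product constraint directly: for fixed $w\neq 0$ the variables $u_1,\dots,u_{k-1}$ are free in $\FF_{q^n}^\times$ and $u_k$ is determined, giving a character sum in $k-1$ free toric variables whose Weil bound is $q^{n(k-1)/2}$ per nontrivial factor, while the $w\neq 0$ sum contributes another saving; one arranges the bookkeeping so that for generic $\bm h$ (i.e.\ all $g_i\neq 0$, which is the complement of $X_1$) one gets square-root cancellation $q^{n(k-1/2)}$, and when $\bm h$ lies on $j$ of the hyperplanes (\ref{linearequations}) the corresponding factors are complete sums of size $q^n$, losing $q^{nj/2}$ and yielding $\max\{q^{n(k-1/2)}, q^{n(k-1+j/2)}\}$. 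The bad locus $Y_1$, where (\ref{Fequation}) holds, is where the main term $w=0$ or a vanishing of $\prod u_i - a'\prod v_i$ produces no cancellation at all, giving only the trivial $q^{nk}$; this corresponds precisely to the equation $F(\bm x) = (-1)^{k-1}aF(\bm y)$ in the original coordinates.

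\textbf{Step 3: Identify the stratification.} Finally I would record that the strata $X_j$ are exactly the unions $\bigcup_{j_1+j_2=j} Z_{j_1}\times Z_{j_2}$ with $Z_r$ the union of codimension-$r$ linear subspaces cut out by $r$ of the equations (\ref{linearequations}), since the loss in the Weil bound is governed by how many of the forms $g_i$ (equivalently the forms in (\ref{linearequations}), after accounting for the two independent blocks of $k$ variables) vanish, and translate back through the change of variables. The constant $g$ and the eigenvalue count are all uniform because they come from \cite[Theorem 1]{bombieri} applied to a fixed family of curves/tori whose degrees depend only on $k$.

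\textbf{Main obstacle.} The delicate point is Step 2: correctly tracking, over the extension $\FF_{q^n}$ and through the torus parametrization, exactly how many independent nontrivial Weil factors survive as a function of which subsets of the linear forms (\ref{linearequations}) vanish — in particular making sure the two groups of variables $(b_i)$ and $(c_i)$ contribute independently (hence the product structure $Z_{j_1}\times Z_{j_2}$), and isolating the genuinely degenerate locus $Y_1$ defined by (\ref{Fequation}) where no cancellation occurs. One must also be careful that splitting off the coordinate hyperplanes $\{u_i = 0\}$ does not spoil the rationality argument; this is handled as in Lemma \ref{rationalitylemma} by observing each boundary stratum is itself a sum of the same shape over a torus of lower dimension, so Lemma \ref{lambdalemma} still applies with the claimed exponents.
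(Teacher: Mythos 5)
Your Step 1 matches the paper exactly: reduce to $\FF_{q^n}$ via Lemmas \ref{rationalitylemma} and \ref{lambdalemma}, split $N$ into the linear forms $L_i$ given by the rows of $A$, change variables, and transport $\bm{h}_{2k}$ by $(A^{-1})^T$ so that the strata $X_j$ are read off from the vanishing of the forms (\ref{linearequations}). The gap is in Step 2, and it is not a bookkeeping issue but a missing arithmetic input. After parametrizing the hypersurface $u_1\cdots u_k = a'v_1\cdots v_k$ (say by solving for $u_k$), the phase contains the term $g_k\, a' v_1\cdots v_k/(u_1\cdots u_{k-1})$, which couples \emph{all} $2k-1$ remaining variables through a single non-monomial rational function. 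The sum therefore does not factor into one-dimensional pieces, and the one-dimensional Weil bound "per nontrivial factor" plus "another saving" from the $w\neq 0$ sum is not available. What the sum actually is, after fibering over $x = v_1\cdots v_k$, is the correlation
\[
\sum_{x \in \FF_{q^n}^\times} \mathrm{Kl}_k(a'x;q^n)\,\mathrm{Kl}_k(x;q^n),
\]
and the individual Deligne bound $|\mathrm{Kl}_k| \ll q^{n(k-1)/2}$ only gives the trivial $q^{nk}$ here. The generic bound $q^{n(k-1/2)}$ requires square-root cancellation in the $x$-sum, i.e.\ a Deligne-type bound for correlations of hyper-Kloosterman sums (the paper cites \cite[(6.12)]{polymath}); this is the key input announced in the abstract and nothing in your argument substitutes for it.

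Relatedly, your description of $Y_1$ is off. The degenerate locus is not "where $w=0$ produces no cancellation"; it is precisely where the shift $a' = a\prod_i h'_{1,i}\prod_i (h'_{2,i})^{-1}$ equals $(-1)^{k-1}$, which is the one value for which the Kloosterman correlation above has no cancellation, and unwinding $a'=(-1)^{k-1}$ through the change of variables gives exactly equation (\ref{Fequation}). For the strata with $j\geq 1$ the paper's route is also cleaner than what you sketch: pulling out the $j_1+j_2$ free variables leaves $(q^n)^{j_1+j_2-1}$ times a product of two \emph{complete} sums $\sum_x \mathrm{Kl}_{k-j_i}(x;q^n) \ll (q^n)^{(k-j_i)/2}$, which is where the $p^{k-1+j/2}$ bound and the product structure $Z_{j_1}\times Z_{j_2}$ come from. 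Your skeleton is salvageable, but you must replace the "per-factor Weil bound" step by the hyper-Kloosterman correlation estimate and derive $Y_1$ from the failure locus of that estimate.
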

 \begin{proof}
Suppose first that $a \neq 0$. It suffices to consider  
\[
\sum_{\substack{b_1,c_1,\dots,b_{k},c_{k} \in \FF_{q^n} \\ N(b_1,\dots,b_{k}) = aN(c_1,\dots,c_{k})}} e_p ( t\text{Tr}_{\FF_{q^n}/\FF_p} (\bm{h} \cdot (\bm{b},\bm{c})))
 \]
for all $n \geq 1$ with $q=p^f$, since by Lemma \ref{rationalitylemma} for some $g \geq 1$
\[
\sum_{\substack{b_1,c_1,\dots,b_{k},c_{k} \in \FF_{q^n}\\ N(b_1,\dots,b_{k}) = aN(c_1,\dots,c_{k})}} e_p ( t\text{Tr}_{\FF_{q^n}/\FF_p} (\bm{h} \cdot (\bm{b},\bm{c}))) = \sum_{j\leq g} \epsilon_j \lambda_j^{fn}
\]
for some $\lambda_j \in \C, \epsilon_j \in \{\pm 1\}$ depending only on $p$, $a$, $t\bm{h}_{2k}$, and $N$, so that the bound for $\FF_p$ follows by Lemma \ref{lambdalemma} from the corresponding bound for all $n \geq 1$. 

Since $N(b_1,\cdots,b_k)$ is a norm form, it splits into linear factors over $K$ and hence over $\FF_{q}$
 \[
N(b_1,\cdots,b_k) = \prod_{j=1}^k L_j(b_1,\cdots,b_k),
 \]
where $L_j(b_1,\dots,b_k)$ is the $j$th coordinate of $A\bm{b}$ (with $A$ as in Lemma \ref{Alemma}). Note that the linear map 
\begin{equation}
\label{linearmap}
\bm{b} \mapsto A \bm{b}
\end{equation}
is invertible for $p$ sufficiently large, since $\det A \neq 0$ has finitely many prime factors. Thus, after the linear change of variables $(\bm{b},\bm{c}) \mapsto (A\bm{b},A\bm{c})=: (\bm{b'},\bm{c'}),$ it suffices to consider 
  \[
 \sum_{\substack{b'_1,c'_1,\dots,b'_{k},c'_{k} \in \FF_{q^n} \\ b'_1\cdots b'_{k} = a  c'_1\cdots c'_{k}}} e_p ( t\text{Tr}_{\FF_{q^n}/\FF_p} (\bm{h'} \cdot (\bm{b'},\bm{c'}))) 
 \] 
 where $\bm{h'} = (\bm{h'}_1,\bm{h'}_2)$ is obtained from $\bm{h}_{2k}=(\bm{h}_{2k,1},\bm{h}_{2k,2})$ by the linear map 
 \[
 \bm{h'}_j  = (A^{-1})^T \bm{h}_{2k,j},
 \]  
 since
 \[
 \bm{h'}_1 \cdot \bm{b'} =    (\bm{h'}_1)^T \bm{b}' =  ((A^{-1})^T \bm{h}_{2k,1})^T A \bm{b} = \bm{h}_{2k,1} \cdot \bm{b}.
 \]
 We split into separate cases depending on whether $b'_1\cdots b'_{k} =0$ or $\neq 0$.  In the latter case the contribution is
 \[
\sum_{x \in \FF_{q^n}^\times }  \sum_{\substack{b'_1,\dots,b'_{k} \in \FF_{q^n} \\ b'_1\cdots b'_{k} = a  x}} e_p ( t\text{Tr}_{\FF_{q^n}/\FF_p} (\bm{h'}_1 \cdot \bm{b'})) \sum_{\substack{c'_1,\dots,c'_{k} \in \FF_{q^n} \\ c'_1\cdots c'_{k} = x}} e_p ( t\text{Tr}_{\FF_{q^n}/\FF_p} (\bm{h'}_2 \cdot \bm{c'}))
 \]
Suppose first that all of the coordinates of $\bm{h'}_1$ and $\bm{h'}_2$ are $\neq 0$. Denote
\[
a'= a \prod_{\substack{i } } h'_{1,i} \prod_{\substack{i} } (h'_{2,i})^{-1}
\]
and $\psi(y)=e_p ( t\text{Tr}_{\FF_{q^n}/\FF_p}(y))$. After a change of variables the above sum is 
\[
 \begin{split}
 \sum_{x \in \FF_{q^n}^\times }  & \sum_{\substack{b''_1,\dots,b''_{k} \in \FF_{q^n} \\ b''_1\cdots b''_{k} = a'x}} \psi(b''_1 + \cdots +b''_{k}) \sum_{\substack{c'_1,\dots,c'_{k} \in \FF_{q^n} \\ c'_1\cdots c'_{k} = x}}\psi(c''_{1} + \cdots +c''_{k-j_2})  \\
 &= \sum_{x \in \FF_{q^n}^\times } \text{Kl}_{k}(a'x;q^n) \text{Kl}_{k}(x;q^n).
 \end{split}.
\]
 If $a' \neq (-1)^{k-1}$, then this is $\ll (q^n)^{k-1/2}$ by \cite[(6.12)]{polymath}. If $a' =(-1)^{k-1}$, then we use a point-wise bound for the hyper-Kloosterman sums and bound the sum over $x$ trivially, which gives $\ll (q^n)^{k}$. The equation $a' =(-1)^{k-1}$ holds precisely when (\ref{Fequation}) holds, so that this is covered by $Y_1$.
 
Suppose then that exactly $j_1$ and $j_2$ of the coordinates in $\bm{h'}_1$ and $\bm{h'}_2$ are 0, and denote the sets of such indices $J_1$ and $J_2$.  By symmetry we may suppose that $j_1 \geq 1.$ Then after a change of variables and denoting
\[
a'= a \prod_{\substack{i \notin J_1} } h'_{1,i} \prod_{\substack{i \notin J_2} } (h'_{2,i})^{-1},
\]
we get
 \[
 \begin{split}
 \sum_{x \in \FF_{q^n}^\times }  \sum_{\substack{b''_1,\dots,b''_{j_1} \in \FF_{q^n}}}\sum_{\substack{b''_1,\dots,b''_{k-j_1} \in \FF_{q^n} \\ b''_1\cdots b''_{j_1} b''_{j_1+1}\cdots b''_{k} = a'x}} \psi(b''_{j_{1}+1} + \cdots +b''_{k}) \sum_{\substack{c''_1,\dots,c''_{j_1} \in \FF_{q^n}}}\sum_{\substack{c''_1,\dots,c''_{k-j_1} \in \FF_{q^n} \\ c''_1\cdots c''_{j_2} c''_{j_2+1}\cdots c''_{k} = x}} \psi(c''_{j_{2}+1} + \cdots +c''_{k}) 
 \end{split}
 \] 
Making the change of variables
\[
b''_1 \mapsto \frac{a'x }{b_1'' b_2'' \cdots b_{j_1}''}, \quad x \mapsto x c_1'' \cdots c''_{j_2}
\]
the sum becomes
\[
(q^n)^{j_1+j_2-1} \bigg(\sum_{b_1'' \in \FF_{q^n}^\times} \text{Kl}_{k-j_1}(b_1'';q^n)\bigg)\bigg(\sum_{x \in \FF_{q^n}^\times} \text{Kl}_{k-j_2}(x;q^n)\bigg),
\] 
which by \cite[(6.11)]{polymath} is 
\[
\ll(q^n)^{j_1+j_2-1} (q^n)^{(k-j_1)/2}(q^n)^{(k-j_2)/2} \ll (q^{n})^{k-1+(j_1+j_2)/2}.
\]
We now note that (with $A^{ij}$ as in Lemma \ref{Alemma}) 
\[
(A^{-1})^T = \frac{1}{\det A} ((-1)^{i+j} \det A^{ij})_{i,j \leq k},
\]
so that the equations $h'_i = 0$ are precisely of the form (\ref{linearequations}). By Lemma \ref{Alemma} the equations are independent (for $p$ sufficiently large), so that $Z_r$ has dimension $k-r$ for each $r$.
 
Consider then the part where $b'_1\cdots b'_{k}=0$. If $a \neq 0$,  we also have $c'_1\cdots c'_{k}=0.$ Thus, this part of the sum is 
\[
\sum_{\substack{I,J \subseteq \{1,\dots,k\} \\ I,J \neq \emptyset }} \sum_{\substack{b'_1,c'_1,\dots,b'_{k},c'_{k} \in \FF_{q^n} \\ b'_i=c'_j=0 \Leftrightarrow (i,j) \in I\times J}} e_p(t \text{Tr}_{\FF_{q^n}/\FF_p} (\bm{h'} \cdot (\bm{b'},\bm{c'})) \ll \min\{(p^{mn})^{2k-2} ,\prod_{i \leq 2k} (h'_i,p)^{mn} \},
\]
which is bounded by $\max\{p^{k-1/2},p^{k-1+j/2}\}$. Finally, if $a = 0$, then $b'_1\cdots b'_{k}=0$ and $c_1',\dots,c_k'$ are free and we have
\[
\sum_{\substack{I \subseteq \{1,\dots,k\} \\ I \neq \emptyset }} \sum_{\substack{b'_1,c'_1,\dots,b'_{k},c'_{k} \in \FF_{q^n} \\ b'_i=0 \Leftrightarrow i \in I}} e_p(t \text{Tr}_{\FF_{q^n}/\FF_p} (\bm{h'} \cdot (\bm{b'},\bm{c'})) \ll \min\{(p^{mn})^{2k-1} ,\prod_{i \leq 2k} (h'_i,p)^{mn} \},
\]
which is
\[
\ll (a,p)^{mn/2}\max\{(p^{mn})^{k-1/2},(p^{mn})^{k-1+j/2}\}
\]
with equality at $j=2k-1, 2k$.
\end{proof}
\begin{remark} \label{choiceremark}
The coefficients $\det A^{ij}$ of the equation (\ref{linearequations}) are in $\FF_q$ by the reduction to $O_K/P_j$, where $pO_K= P_1 \cdots P_g$ and $q=p^f$. If $f=1$, then they are in $\FF_p$ and we get a system of $r$ linear equations for $\bm{h} \in \FF_p^k$. Since any $\sigma \in \text{Gal}(K/\QQ)$ permutes the primes $P_1,\dots, P_g$, we see that changing $P_j$ in $\FF_q \cong O_K/P_j$ simply changes the set of $r$ indices $j \leq k$ such that $h'_j=0$, so that the choice of the reduction modulus $P_j$ is inconsequential for our application, since we get the same bound for any set of $r$ indices. We can eliminate the use of the (countable) axiom of choice simply by averaging over all choices $P_j \in \{P_1,\dots,P_g\}$. If $f=k$, then by Lemma \ref{Alemma} if one of the coordinates of $\bm{h}'_i$ is 0, then all are 0 (for $p \gg 1$), so that the only solution $\bm{h} \in \FF_p^k$ is $\bm{h}=\bm{0}$.
\end{remark}
\begin{remark}
Using the Hasse-Davenport relation one can show that the function
\[
K(a;q) := \sum_{\substack{b_1,\dots,b_k \in \FF_q \\ N_{\FF_q / \FF_p}(b_1,\dots,b_{k}) \equiv a \, (p)}} \psi(b_1+\cdots + b_k)
\]
is a constant multiple of the hyper-Kloosterman sum $\text{Kl}_{k}(a;q)$ (see \cite[Applications de la formule des traces aux sommes trigonom\'etriques (7.2.5)]{deligne}, thanks to Emmanuel Kowalski for pointing this out).
\end{remark}

We need to set up some more notations for the following lemma, which uses the Chinese remainder theorem to combine Lemma \ref{deligneboundlemma} for different primes $p$. Then it may happen that $\bm{h}$ lie in different $X_j$ for different primes $p$, where $X_j$ are as in Lemma \ref{deligneboundlemma}. For any prime $p$ we extend the reduction map $\pi_p:O_k \to \FF_q$ defined in \eqref{reductionmodpmap} to a map
\[
\bm{\pi}_p: O_K^{2k} \to \FF_q^{2k}: \quad (z_1,\dots, z_{2k}) \mapsto (\pi_p(z_1),\dots,\pi_p(z_{2k})).
\]
Let $D_1=p_1\dots p_\ell$ be a square-free integer and denote $q_i=p_i^{f_i}$. For $(j_1,\dots,j_\ell)$, $j_i \leq k$, we define
\begin{align*}
  X_{(j_1,\dots,j_\ell)}(D_1) := \{\bm{z} \in O_{K}^{2k} :  \bm{\pi}_{p_{i}}(\bm{z}) \in X_{j_i}(\FF_{q_i}) \, \forall i \leq \ell \}  
\end{align*}
That is, we have
\[
X_{(j_1,\dots,j_\ell)}(D_1)= \bigcap_{i \leq \ell} \bm{\pi}_{p_i}^{-1} X_{j_i}(\FF_{q_i}).
\]
Then we have the following lemma.
\begin{lemma} \label{deligneboundDlemma}
Let $h_1,\dots,h_{k-1},h_{k+1},\dots,h_{2k-1} \in \Z$. Let $D_1=p_1\cdots p_\ell$ denote the largest square-free divisor such that $(D/D_1,D_1)=1$ and  let $X_j$ be as in Lemma \ref{deligneboundlemma}. For $j \in \Z$ denote
\[
j^+ := \max\{2,j\}.
\]
Then
 \[
 \begin{split}
& \sum_{\substack{b_1,c_1,\dots,b_{k-1},c_{k-1} \, (D) \\ N(b_1,\dots,b_{k-1}) \equiv aN(c_1,\cdots,c_{k-1}) \, (D)}} e_{D} ( \bm{h} \cdot (\bm{b},\bm{c})) \\
& \ll (a,D_1)^{1/2}\tau(D)^{O(1)}\bigg(\frac{D}{D_1}\bigg)^{2k-2}  D_1^{k-1} \sum_{0 \leq j_1,\dots,j_\ell \leq 2k} p_1^{j_1^+ /2} \cdots p_\ell^{j_\ell^+ / 2} \frac{1}{D_1^2}\sum_{\substack{h_{k},h_{2k} \, (D_1) \\ \bm{h}_{2k} \in X_{(j_1,\dots,j_\ell)}(D_1)}} 1 .
\end{split}
 \]
\end{lemma}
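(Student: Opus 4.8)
The plan is to reduce to the prime case by the Chinese remainder theorem and then feed in Lemma \ref{deligneboundlemma}. Write $D = D_1 D_2$ with $D_1 = p_1\cdots p_\ell$ the maximal squarefree divisor coprime to $D_2$, so that $D_2 = D/D_1$ is powerful. By CRT the sum factors into the product of the sums modulo $p_i$ (for $i\le\ell$) and the sums modulo $p^m$ for $p^m \| D_2$; along the way $\bm h$ gets multiplied by a unit, but this is harmless because every bound below is invariant under scaling $\bm h$ by a unit. For the powerful factors I use the trivial bound by the number of $(\bm b,\bm c)\in(\Z/p^m\Z)^{2k-2}$ with $N(b_1,\dots,b_{k-1})\equiv aN(c_1,\dots,c_{k-1})\,(p^m)$; this is Lemma \ref{crudeboundklemma} applied to the nonzero degree-$k$ form $P = N(X_1,\dots,X_{k-1}) - aN(Y_1,\dots,Y_{k-1})$ in $2k-2$ variables (nonzero since the coefficient of $X_1^k$ is $N_{K/\QQ}(\omega_1)\neq 0$), giving $\ll (p^m)^{2k-3}$, and multiplying over $p^m \| D_2$ produces the factor $(D/D_1)^{2k-3}$ together with a $\tau(D)^{O(1)}$ loss from the multiplied implied constants.

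For the squarefree part I first pass from the incomplete form in $k-1$ variables to the complete form in $k$ variables by detecting $b_k = c_k = 0$ with additive characters: using $\bm 1_{b\equiv 0\,(p)} = p^{-1}\sum_{t\,(p)}e_p(tb)$ twice,
\[
\sum_{\substack{b_1,\dots,b_{k-1},c_1,\dots,c_{k-1}\,(p)\\ N(b_1,\dots,b_{k-1})\equiv aN(c_1,\dots,c_{k-1})\,(p)}}\hspace{-8pt} e_p(\bm h\cdot(\bm b,\bm c)) = \frac{1}{p^2}\sum_{h_k,h_{2k}\,(p)}\ \sum_{\substack{b_1,\dots,b_{k},c_1,\dots,c_{k}\,(p)\\ N(b_1,\dots,b_k)\equiv aN(c_1,\dots,c_k)\,(p)}}\hspace{-8pt} e_p(\bm h_{2k}\cdot(\bm b,\bm c)),
\]
where $\bm h_{2k}$ is obtained from $\bm h$ by inserting the summation variables $h_k$ and $h_{2k}$ into positions $k$ and $2k$. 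I then apply Lemma \ref{deligneboundlemma} to each inner complete-form sum and split the range of $(h_k,h_{2k})$ according to the finest stratum $X_j\setminus X_{j+1}$ — or the exceptional hypersurface $Y_1$ — containing $\bm h_{2k}$. For the $Y_1$ part one uses that the slice $\{(h_k,h_{2k})\,(p):\bm h_{2k}\in Y_1\}$ is cut out by an equation $g(h_k) = c\,\tilde g(h_{2k})$ with $g,\tilde g$ of degree exactly $k$ (by Lemma \ref{Alemma}), hence has $\ll_k p$ points, so its contribution to the $(h_k,h_{2k})$-average is $\ll p^{k-1}$. This yields, for each $p = p_i$, a bound of the shape $\ll (a,p)^{1/2}\big(p^k + \sum_{j=0}^{2k}p^{k-3+j/2}n_p(j)\big)$ with $n_p(j) = |\{(h_k,h_{2k})\,(p):\bm h_{2k}\in X_j(\FF_{p^f})\}|$, where $p^k$ absorbs both the square-root-size main term and the $Y_1$ contribution. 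Multiplying these over $i\le\ell$, using the CRT-multiplicativity $X_{\bm j}(D_1) = \bigcap_{i}\pi_{p_i}^{-1}X_{j_i}(\FF_{q_i})$ (so that $\prod_i n_{p_i}(j_i)$ is exactly the count $\sum_{\bm h_{2k}\in X_{\bm j}(D_1)}1$ in the statement), expanding the product, and combining with the powerful part, one arrives at the claimed inequality; the key structural point here is that $X_0$ is the whole space, so $n_p(0) = p^2$, which is precisely what normalizes the $D_1^{k-1}\sum_{\bm j}\cdots$ term. The scaling-invariance used for the CRT twists follows because each $X_j$ is a cone and $Y_1$, $N$ are homogeneous: substituting $(h_k,h_{2k})\mapsto(\lambda h_k,\lambda h_{2k})$ gives $n_p(j;\lambda\bm h) = n_p(j;\bm h)$.

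The main difficulty is the squarefree part: carrying out the completion to the $k$-variable sum and then controlling the $(h_k,h_{2k})$-average of the complete-form exponential sum near the bad locus $Y_1$ and on each stratum $X_j$, so that the stratification estimate of Lemma \ref{deligneboundlemma} genuinely survives summing over $h_k$ and $h_{2k}$. The remaining step — reassembling the prime-by-prime bounds into the single displayed inequality — is elementary but requires careful bookkeeping, and this is the source of the divisor-function losses $\tau(D)^{O(1)}$.
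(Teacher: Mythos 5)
Your proposal is correct and follows essentially the same route as the paper: factor by the Chinese remainder theorem, bound the powerful part trivially via Lemma \ref{crudeboundklemma}, complete the incomplete-form sum at the squarefree primes by detecting $b_k\equiv c_k\equiv 0$ with additive characters (this is the source of the $D_1^{-2}\sum_{h_k,h_{2k}}$ average), and apply Lemma \ref{deligneboundlemma} stratum by stratum. Your additional remarks on the unit twists from CRT being harmless because the $X_j$ are cones, and on counting the $(h_k,h_{2k})$-slice of $Y_1$, are correct details that the paper leaves implicit.
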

\begin{proof}
By the Chinese remainder theorem we get
\[
\prod_{p^m|| D} \sum_{\substack{b_1,c_1,\dots,b_{k-1},c_{k-1} \, (p^m) \\ N(b_1,\dots,b_{k-1}) \equiv aN(c_1,\cdots,c_{k-1}) \, (p^m)}} e_{p^m} \bigg( \bigg(\frac{D}{p^m}\bigg)^{-1}\bm{h} \cdot (\bm{b},\bm{c})\bigg),
\]
where the inverse $(\frac{D}{p^m})^{-1}$ is computed modulo $p^m$. By expanding the conditions $b_k \equiv c_k \equiv 0 \, (D_1)$, this is bounded by
\[
\begin{split}
\frac{1}{D_1^2} \sum_{h_{k},h_{2k} \, (D_1)} \bigg( &\prod_{p^m|| D/D_1} \sum_{\substack{b_1,c_1,\dots,b_{k-1},c_{k-1} \, (p^m) \\ N(b_1,\dots,b_{k-1}) \equiv aN(c_1,\cdots,c_{k-1}) \, (p^m)}} 1 \bigg)  \\
&\times \bigg( \prod_{p| D_1} \bigg|\sum_{\substack{b_1,c_1,\dots,b_{k},c_{k} \, (p) \\ N(b_1,\dots,b_{k}) \equiv aN(c_1,\cdots,c_{k}) \, (p)}} e_{p} \bigg( \bigg(\frac{D}{p}\bigg)^{-1}\bm{h}_{2k} \cdot (\bm{b},\bm{c})\bigg)\bigg| \bigg).
\end{split}
\]
For $p^m|| D/D_1$ we use the trivial bound
\[
\sum_{\substack{b_1,c_1,\dots,b_{k-1},c_{k-1} \, (p^m) \\ N(b_1,\dots,b_{k-1}) \equiv aN(c_1,\cdots,c_{k-1}) \, (p^m)}} 1  \ll (p^m)^{2k-2}
\]
For $p| D_1$ we apply Lemma \ref{deligneboundlemma} (with $t= (D/p)^{-1}$) to get for some $j=j(p)$ with $\bm{\pi}_p(\bm{h}_{2k}) \in X_j(\FF_q)$ that
\begin{align*}
\bigg|\sum_{\substack{b_1,c_1,\dots,b_{k},c_{k} \, (p) \\ N(b_1,\dots,b_{k}) \equiv aN(c_1,\cdots,c_{k}) \, (p)}} e_{p} \bigg( \bigg(\frac{D}{p}\bigg)^{-1}\bm{h}_{2k} \cdot (\bm{b},\bm{c})\bigg)\bigg| 
\ll  (a,p)^{1/2}  p^{k-1+j^+/2}.
\end{align*}
Thus, for some $(j_1,\dots,j_\ell)$ with $\bm{h}_{2k} \in X_{(j_1,\dots,j_\ell)}(D_1)$ we have
\begin{align*}
    \bigg( \prod_{p| D_1} \bigg|\sum_{\substack{b_1,c_1,\dots,b_{k},c_{k} \, (p) \\ N(b_1,\dots,b_{k}) \equiv aN(c_1,\cdots,c_{k}) \, (p)}} e_{p} \bigg( \bigg(\frac{D}{p}\bigg)^{-1}\bm{h}_{2k} \cdot (\bm{b},\bm{c})\bigg)\bigg| \bigg) \ll \tau(D)^{O(1)} \prod_{p_i|D_1}  p_i^{k-1+j_i^+/2} \\
    \ll\tau(D)^{O(1)}  D_1^{k-1}\prod_{p_i|D_1} p_i^{j_i^+/2}.
\end{align*}
\end{proof}
\begin{remark}
Note that here we have not used the fact that in $X_0\setminus Y_1$ we get the superior bound $D_1^{k-1/2}$. This is not crucial for our application and using this weaker bound simplifies greatly the upcoming computations. We have included the characterization of $Y_1$ in Lemma \ref{deligneboundlemma} just for the sake of completeness. Making use of this would only marginally widen our Type II range (Proposition \ref{typeiikprop}).
\end{remark}
Let $H_r$ denote any of the hyper-planes defining $Z_r$ in Lemma \ref{deligneboundlemma}. We need the following bound for the number of $(h_1,\dots,h_k)\in \Z^k \subseteq O_K^k$ which are in $H_r(\FF_q)$ after reduction modulo some $P_j$, where as before $p O_K =P_1\cdots P_g$ and $q = p^f$. For a square-free integer $D_1=p_1\cdots p_{\ell}$ and $\bm{r}=(r_1,\dots,r_\ell) \in \Z_{\geq 0}^{\ell}$, denote 
\begin{align*}
    H_{(r_1,\dots,r_\ell)}(D_1) := \{\bm{z} \in O_{K}^{k} : \bm{\pi}_{p_{i}}(\bm{z}) \in H_{r_i}(\FF_{q_i}) \, \forall i \leq \ell \}  
\end{align*}
Then the following lemma is simply a generalization of the elementary bound
\[
\sum_{h \leq H } \bm{1}_{h \equiv a \, (q)} \ll 1 +\frac{H}{q}.
\]
\begin{lemma} \label{h1boundlemma}
Let $H \geq 1$ and let $D_1=p_1\cdots p_\ell$ be square-free. Let $0 \leq r_1,\dots,r_\ell \leq k$. Let 
\[
D_1(\bm{r}) :=  \prod_{\substack{i \leq \ell \\ r_i =0}} p_i
\]
 and define
\[
\bm{r}' =(r_1',\dots,r_\ell'), \quad r_i' = \max\{r_i-1,0\}.
\] 
Then for all $i \leq \ell$ there are $r'_{ij} \in\{0,1\}, \, j  \leq k-1$ such that
\begin{equation} \label{r'condition}
\sum_{j \leq k-1} r'_{ij} = r_i'
\end{equation}
and
\begin{equation*} 
  \sum_{\substack{ h_1,\dots,h_{k}\\ |h_i| \leq H, \, i \leq k-1 \\ h_k \leq D_1}} \mathbf{1}_{\bm{h} \in H_{(r_1,\dots,r_\ell)}(D_1)} \ll D_1(\bm{r})  \prod_{j \leq k-1}\bigg( 1 + \frac{H}{p_1^{r'_{1j}}\dots p_\ell^{r'_{\ell j}}} \bigg).  
\end{equation*}
Furthermore, if $I$ is the set of indices such that for $i \in I$ we have $r_i=k$, then
\[
\sum_{\substack{ h_1,\dots,h_{k}\\ |h_i| \leq H, \, i \leq k-1 \\ h_k \leq D_1 \\ (h_1,\dots,h_{k-1}) \neq \bm{0}}} \mathbf{1}_{\bm{h} \in H_{(r_1,\dots,r_\ell)}(D_1)} \ll  \mathbf{1}_{H \geq \prod_{i \in I} p_i} D_1(\bm{r})  \prod_{j \leq k-1}  \bigg( 1 + \frac{H}{p_1^{r'_{1j}}\dots p_\ell^{r'_{\ell j}}} \bigg),
\]
where we have $r'_{ij}=1$ for $i \in I$.
\end{lemma}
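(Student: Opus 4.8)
The plan is to reduce the count, via the Chinese remainder theorem and the definition of $H_{\bm{r}}(D_1)$, to a product of local conditions modulo the primes $p_1,\dots,p_\ell$ and then to an elementary lattice-point count in the box $[-H,H]^{k-1}\times\{h_k\le D_1\}$; the only real content is a piece of linear algebra over $\FF_{p_i}\subseteq\FF_{q_i}$. First I would treat a single prime $p=p_i$ with $r=r_i\ge 1$. The condition $\pi_p(\bm{h})\in H_r(\FF_q)$ says that $\bm{h}\bmod p$, viewed in $\FF_p^k\subseteq\FF_q^k$, satisfies the $r$ equations of the shape (\ref{linearequations}); by Lemma \ref{Alemma} every coefficient $\det(A^{i_sj})$ is nonzero in $K$ (in particular the one multiplying $h_k$), hence nonzero in $O_K/P_i\cong\FF_q$ once $p$ avoids the finitely many primes dividing $\det A$ and the $\det(A^{ij})$. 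Expanding $\FF_q$ over $\FF_p$ turns these equations into a homogeneous $\FF_p$-linear system whose solution space $S_i\subseteq\FF_p^k$ has some codimension $s_i$; since $\FF_p$-independence of vectors in $\FF_p^k$ is the same as $\FF_q$-independence, $\dim_{\FF_p}S_i\le k-r$, so $s_i\ge r$. As the $h_k$-column of the system is nonzero, two elements of $S_i$ agreeing in coordinates $1,\dots,k-1$ agree also in coordinate $k$; hence the projection of $S_i$ to $\FF_p^{k-1}$ (forgetting $h_k$) is injective, its image $S_i'$ is a subspace of codimension $s_i-1\ge r_i'$, and within $S_i$ the last coordinate is a homogeneous linear function of $(h_1,\dots,h_{k-1})$. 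For $r_i=0$ the local condition is vacuous and we put all $r'_{ij}=0$.

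Next I would assemble the global bound. Given $(h_1,\dots,h_{k-1})$, the previous step determines $h_k\bmod p_i$ for every $i$ with $r_i\ge1$, hence $h_k$ modulo $\prod_{r_i\ge1}p_i$, so the number of admissible $h_k\le D_1$ is at most $D_1/\prod_{r_i\ge1}p_i=\prod_{r_i=0}p_i=D_1(\bm{r})$. It then remains to bound the number of $(h_1,\dots,h_{k-1})\in[-H,H]^{k-1}$ with $(h_1,\dots,h_{k-1})\bmod p_i\in S_i'$ for all $i$, which I would do by choosing the coordinates in the order $h_{k-1},h_{k-2},\dots,h_1$: at the step choosing $h_j$, each $p_i$ either leaves $h_j\bmod p_i$ free or forces it equal to a fixed homogeneous linear function of the already-chosen $h_{j+1},\dots,h_{k-1}$, and the number of ``forcing'' steps for a given $p_i$ over $j=1,\dots,k-1$ equals the codimension $s_i-1\ (\ge r_i')$ of $S_i'$ and does not depend on the previously chosen values. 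Declaring $r'_{ij}=1$ at any $r_i'$ of the forcing steps of $p_i$ and $r'_{ij}=0$ elsewhere yields $\sum_{j\le k-1}r'_{ij}=r_i'$, while at step $j$ the coordinate $h_j$ runs over one residue class modulo $\prod_i p_i^{r'_{ij}}$ and hence over $\ll 1+H/\prod_i p_i^{r'_{ij}}$ values; multiplying over $j\le k-1$ and over the $D_1(\bm{r})$ choices of $h_k$ gives the first inequality.

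For the refinement, for $i\in I$ we have $r_i=k$, so $S_i'=\{\bm{0}\}$ and the condition forces $h_1\equiv\cdots\equiv h_{k-1}\equiv 0\pmod{p_i}$; with $p_I:=\prod_{i\in I}p_i$, the vector $(h_1,\dots,h_{k-1})$ thus lies in $p_I\Z^{k-1}$. Imposing $(h_1,\dots,h_{k-1})\neq\bm{0}$ and running the same coordinate-by-coordinate argument, the hypothesis $\ne\bm{0}$ forces one of the coordinates — which, after ordering the elimination suitably, we may take to be $h_{k-1}$ (whence the convention $r'_{ik-1}=1$ for $i\in I$) — to be a nonzero multiple of $p_I$; since all the constraints are homogeneous (so that $\bm{0}$ is always admissible) this replaces the ``$+1$'' in the corresponding factor by $\ll p_I^{-1}$, costing a factor $\prod_{i\in I}p_i^{-1}$ relative to the first bound, which is exactly the stated saving.

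The step I expect to be the main obstacle is this local/combinatorial core: verifying that reduction from $\FF_{q_i}$ to $\FF_{p_i}$ really preserves codimension $\ge r_i$ together with a nonzero $h_k$-column, and that the ``forced versus free'' pattern of the coordinates is genuinely independent of the order in which they are eliminated, so that the clean per-coordinate factor $1+H/\prod_i p_i^{r'_{ij}}$ is legitimate; everything else is the Chinese remainder theorem together with the elementary estimate $\#\{n\in(A,B]:n\equiv a\ (q)\}\ll 1+(B-A)/q$.
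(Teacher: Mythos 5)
Your proposal is correct and follows essentially the same route as the paper: you first use the nonvanishing of the $h_k$-coefficients $\det A^{ik}$ (Lemma \ref{Alemma}) to pin down $h_k$ modulo each $p_i$ with $r_i\geq 1$, producing the factor $D_1(\bm{r})$, and then eliminate $h_{k-1},\dots,h_1$ one coordinate at a time with each fibre zero- or one-dimensional, which is exactly the paper's induction on the number of variables; the refinement via $r_i=k$ forcing all coordinates $\equiv 0\ (p_i)$ and the designated nonzero coordinate is likewise the paper's argument. Your explicit verification that reduction from $\FF_{q_i}$ to $\FF_{p_i}$ preserves codimension and that the forced/free pattern is independent of the previously chosen values (via the constant fibre dimension of linear projections) is a slightly more careful write-up of what the paper leaves implicit.
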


\begin{proof}
If we fix $h_1,\dots,h_{k-1}$, then  for every $i$ such that $r_i \neq 0$ and $p_i \gg 1$ there is at most one $h_k \in \FF_{q_i}$ with $(h_1,\dots,h_k)  \in H_{r_i}(\FF_{q_i})$, since by Lemma \ref{Alemma}  the coefficients $\det A^{ik} \neq 0$ for $p_i\gg 1$. For $p_i \ll 1$ the number of  $h_k \in \FF_{q_i}$ is  trivially $\ll 1$. Thus, the number of $h_k \leq D_1$ is $\ll D_1(\bm{r})$, and the numbers $h_1,\dots,h_{k-1}$ are restricted to a set of type
\[
H'_{\bm{r}'}(D_1) \subseteq O_K^{k-1},
\]
which is a set such that  $\bm{z} \in H'_{\bm{r}'}(D_1)$ if for every $i \leq \ell$
\[
\pi_{p_i}(\bm{z})   \in H'_{r'_i}(\FF_{q_i})
\]
for some $H'_{r'_i} \subseteq \mathbb{A}^{k-1}_K$  which is a hyperplane of dimension $k-r_i'$. That is, in this first step we have simply solved for $h_k$ in terms of $h_1,\dots,h_{k-1}$ in one of the equations and substituted this to the remaining equations to get $r_i'-1$  independent equations in $k-1$ variables.

It then suffices to show that
\[
\sum_{\substack{ h_1,\dots,h_{k-1}\\ |h_i| \leq H}} \mathbf{1}_{\bm{h} \in H'_{\bm{r}'}(D_1)} \ll \prod_{j \leq k}\bigg( 1 + \frac{H}{p_1^{r'_{1j}}\dots p_\ell^{r'_{\ell j}}} \bigg),
\]
which we will do using induction on the number of summation variables $k':=k-1$. The case of $k'=1$ is trivially true, so consider the case of general $k'$. Fixing $h_1,\dots, h_{k'-1}$, for each $i \leq \ell$ there are two possibilities, the set of $h_{k'} \in \FF_{q_i}$  such that 
\[
(h_1,\dots,h_{k'}) \in H'_{r'_i}(\FF_{q_i})
\]
(that is, the fibre) is either zero dimensional (a point) or one dimensional (a line) as a subvariety of $\mathbb{A}^{1}_{\FF_{q_i}}$. Denoting this dimension by $d_{ik'},$ we let $r'_{ik'}=1-d_{ik'}$, and then the number of $|h_{k'}| \leq H$  with $(h_1,\dots,h_{k'}) \in H'_{r'_i}(\FF_{q_i})$ is
\[
\ll 1 + \frac{H}{p_1^{r'_{1k'}}\dots p_\ell^{r'_{\ell k'}}},
\]
and then $(h_1,\dots,h_{k'-1})$ live in a  hyperplane 
\[
H''_{\bm{r}''}(D_1) \subseteq O_K^{k'-1},
\]
where $\bm{r}'' = (r_1'-r'_{1k'},\dots r_\ell'-r'_{\ell k'})$, and we can apply the induction hypothesis. Clearly we get (\ref{r'condition}), since if $k' \leq r'_i$ we must have $d_{ik'}=0$ as we have $r_i'$ independent equations.

To get the second bound we note that $r_i=k$ means that $h'_j \equiv 0 \, (p_i)$ for all $j \leq k$, which implies that  $h_j \equiv 0 \, (p_i)$ for all $j \leq k$. We sum over $(h_1,\dots,h_{k-1}) \neq \bm{0}$, so suppose by symmetry that $h_{k-1} \neq 0$. Then $h_{k-1} \equiv 0 \, ( \prod_{i\in I} p_i)$ is non-trivial and there are no solutions if $H <  \prod_{i \in I} p_i$. Hence, the number of $h_{k-1}$ for fixed $h_1,\dots,h_{k-2}$ is
\[
\ll  \mathbf{1}_{H \geq \prod_{i \in I} p_i}  \bigg( 1 + \frac{H}{p_1^{r'_{1j}}\dots p_\ell^{r'_{\ell j}}} \bigg).
\]
\end{proof}
\begin{remark}
It is important for our applications that we get full savings in the longest sum $h_k \leq D_1$, so that for large $r_i$ we can cancel the losses from  expanding the condition $b_k =0$ in Lemma \ref{deligneboundDlemma}. 
\end{remark}
\begin{lemma} \label{productboundlemma}
Let $D_1=p_1\cdots p_\ell$ and  $\beta:= \log H/ \log D_1$. With the same notations as above, we have
    \begin{align*}
        \prod_{j \leq k-1}\bigg( 1 + \frac{H}{p_1^{r'_{1j}}\dots p_\ell^{r'_{\ell j}}} \bigg) \ll \tau(D_1)^{O(1)} H^{k-1} \prod_{i \leq  \ell} p_i^{-\min\{1,\beta\} r_i' }.
    \end{align*}
\end{lemma}
\begin{proof}
    Let $\alpha_i = \log p_i / \log D_1$, so that
\[
\alpha_1 + \cdots + \alpha_\ell = 1.
\]
We  have  (by $\sum_{j \leq k-1} r_{ij}' = r_i'$)
\[
\begin{split}
&\prod_{j \leq k-1}\bigg( 1 + \frac{H}{p_1^{r'_{1j}}\dots p_\ell^{r'_{\ell j}}} \bigg) =\prod_{j \leq k-1}\bigg( 1 + \frac{H^{\alpha_1 + \cdots + \alpha_\ell }}{p_1^{r'_{1j}}\dots p_\ell^{r'_{\ell j}}} \bigg) \leq \prod_{j \leq k-1}\prod_{i \leq \ell}\bigg( 1 + \frac{H^{\alpha_i}}{p_i^{r'_{ij}}} \bigg) \\ 
& = \prod_{i \leq \ell} \prod_{j \leq k-1}\bigg( 1 + \frac{H^{\alpha_i}}{p_i^{r'_{ij}}} \bigg)  = \prod_{i \leq \ell} \bigg( 1+ \frac{H^{\alpha_i}}{p}\bigg)^{r_i'}\bigg( 1+ H^{\alpha_i}\bigg)^{k-1-r_i'}
\\
&\ll \tau(D_1)^{O(1)} \prod_{i \leq \ell} \bigg( 1 + \frac{H^{\alpha_i r_i'} }{p_i^{r'_{i}}} \bigg) H^{\alpha_i(k-1-r_i')} =  \tau(D_1)^{O(1)} \prod_{i \leq \ell} \bigg( H^{\alpha_i(k-1-r_i')} + \frac{H^{\alpha_i (k-1)} }{p_i^{r'_{i}}} \bigg)  \\
&=\tau(D_1)^{O(1)}\prod_{i \leq \ell} \bigg( p_i^{\beta (k-1-r_i')} + p_i^{\beta(k-1) - r'_{i}} \bigg)  \ll \tau(D_1)^{O(1)} H^{k-1} \prod_{i \leq \ell} p_i^{- \min \{1,\beta\}r_i'}.
\end{split}
\]
\end{proof}

Combining Lemmas \ref{deligneboundDlemma}, \ref{h1boundlemma}, and \ref{productboundlemma} we finally get our key bound, using the fact that if $\bm{h} \neq \bm{0}$ then $(h_1,\dots,h_{k-1}) \neq \bm{0}$ or $(h_{k+1},\dots,h_{2k-1}) \neq \bm{0}$.
\begin{lemma}\label{sumoverhlemma}
Let $\beta :=\log H / \log D_1$. We have
\[
\begin{split}
&\sum_{\substack{|h_j| \ll H , \, j \in \{1,\dots,k-1,k+1,\dots,2k-1\}\\ \bm{h} \neq \bm{0}}} \bigg| \sum_{\substack{b_1,c_1,\dots,b_{k-1},c_{k-1} \, (D) \\ N(b_1,\dots,b_{k-1}) \equiv aN(c_1,\cdots,c_{k-1}) \, (D)}} e_{D} ( \bm{h} \cdot (\bm{b},\bm{c}))\bigg| \\
\ll &  (a,D_1)^{1/2}\tau(D_1)^{O(1)} \bigg(\frac{D}{D_1}\bigg)^{2k-2}  H^{2k-2} D_1^{k-1}    \\
& \times (D_1 + D_1^{k(1/2-\beta)+\beta-1} + D_1^{(2k-1)(1/2-\beta) +2\beta- 2} H^{1/2-\beta} \\
& \hspace{50pt} + D_1 H^{2k(1/2-\beta)+2\beta-3}  + D_1^{k(1/2-\beta)+\beta-1} H^{k(1/2-\beta) + \beta-1} )  .
\end{split}
\]
\end{lemma}
\begin{proof}
  By applying Lemma \ref{deligneboundDlemma} it suffices to show that
 \begin{align*}
     &\sum_{\substack{|h_j| \ll H , \, j \in \{1,\dots,k-1,k+1,\dots,2k-1\}\\ \bm{h} \neq \bm{0}}}  \sum_{0 \leq j_1,\dots,j_\ell \leq 2k} p_1^{j_1^+ /2} \cdots p_\ell^{j_\ell^+ / 2} \frac{1}{D_1^2}\sum_{\substack{h_{k},h_{2k} \, (D_1) \\ \bm{h}_{2k} \in X_{(j_1,\dots,j_\ell)}(D_1)}} 1 \\
     \ll &\tau(D_1)^{O(1)} H^{2k-2} H^{\max\{1,k(1/2-\beta)+\beta-1,2k(1/2-\beta) +2\beta-2\}} (D_1/H)^{\max\{1,k(1/2-\beta)+\beta-1,(2k-1)(1/2-\beta) +2\beta- 2\}}.
 \end{align*}
The left-hand side is bounded by
\begin{align*}
\max_{\substack{0 \leq r_1,\dots,r_\ell,s_1,\dots,s_\ell \leq k \\ H_{r_i},H_{s_i} }} \frac{\tau(D_1)^{O(1)}}{D_1^2} \prod_{i \leq \ell} p_i^{(r_i+s_i)^+/2} \sum_{\substack{ h_1,\dots,h_{2k}\\ |h_i| \leq H, \, i \neq k,2k \\ h_k,h_{2k} \leq D_1 \\ (h_1,\dots,h_{k-1},h_{k+1},\dots,h_{2k-1}) \neq \bm{0}}} \mathbf{1}_{\bm{h}_{2k,1} \in H_{(r_1,\dots,r_\ell)}(D_1)}  \mathbf{1}_{\bm{h}_{2k,2} \in H_{(s_1,\dots,s_\ell)}(D_1)} 
\end{align*}
where the maximum runs over hyperplanes $H_{r_i},H_{s_i}$ which define $Z_{r_i} \times Z_{s_i}$ as in \eqref{linearequations} when applying Lemma \ref{deligneboundlemma} with $p=p_i$. By symmetry we may assume that $(h_1,\dots,h_{k-1}) \neq \bm{0}$. By applying Lemma \ref{h1boundlemma} this is then bounded by
\begin{align*}
     \max_{\substack{0 \leq r_1,\dots,r_\ell,s_1,\dots,s_\ell \leq k  }} \mathbf{1}_{H \geq \prod_{i\in I} p_i }\frac{\tau(D_1)^{O(1)}}{D_1^2} \prod_{i \leq \ell} p_i^{(r_i+s_i)^+/2} D_1(\bm{r}) D_1(\bm{s}) \prod_{j \leq k-1}\bigg( 1 + \frac{H}{p_1^{r'_{1j}}\dots p_\ell^{r'_{\ell j}}} \bigg) \\
     \prod_{j \leq k-1}\bigg( 1 + \frac{H}{p_1^{s'_{1j}}\dots p_\ell^{s'_{\ell j}}} \bigg) ,
\end{align*}
where $I$ denotes the set of indices $i \leq \ell$ such that $r_i=k$. Let 
\[
\hat{r}_i = \mathbf{1}_{r_i=0}, \quad \hat{s}_i :=\mathbf{1}_{s_i=0}, 
\]
so that 
\begin{align*}
D_1(\bm{r}) D_1(\bm{s})  = \prod_{i \leq \ell} p_i^{\hat{r}_i+\hat{s}_i}
\end{align*}
By Lemma \ref{productboundlemma} we reduce to bounding
\begin{align*}
      \max_{\substack{0 \leq r_1,\dots,r_\ell,s_1,\dots,s_\ell \leq k  }}\tau(D_1)^{O(1)} H^{2k-2} \mathbf{1}_{H \geq \prod_{i\in I} p_i }  \prod_{i \leq \ell} p_i^{(r_i+s_i)^+/2+ \hat{r}_i+\hat{s}_i - (r_i'+s_i') \min\{1,\beta\} -2}. 
\end{align*}
For $\beta > 1$ the exponent of $p_i$ is 
\begin{align*}
(r_i+s_i)^+/2+ \hat{r}_i+\hat{s}_i - (r_i'+s_i')  -2 \leq 1   
\end{align*}
 and we get
\[
 \ll \tau(D_1)^{O(1)} H^{2k-2} D_1
\]
which is sufficient. Thus, let $\beta \leq 1$. For $r_i+s_i \leq 2$ we have
\begin{align*}
    (r_i+s_i)^+/2+ \hat{r}_i+\hat{s}_i - (r_i'+s_i') \beta -2 \leq 1
\end{align*}
For $r_i+s_i > 2$  and $r_i=0$ or $s_i=0$ we get
\[
 (r_i+s_i)^+/2+ \hat{r}_i+\hat{s}_i - (r_i'+s_i') \beta -2 \leq \max\{r_i(1/2-\beta) +\beta- 1,s_i(1/2-\beta) +\beta- 1\}.
\]
For $r_i+s_i > 2$  and  $r_i \neq 0 \neq s_i$ we get
\[
 (r_i+s_i)^+/2+ \hat{r}_i+\hat{s}_i - (r_i'+s_i') \beta -2 = (r_i+s_i)(1/2-\beta)  +2\beta - 2.
\]
Thus, we get
\begin{align*}
 \ll    \max_{\substack{0 \leq r_1,\dots,r_\ell,s_1,\dots,s_\ell \leq k  }}\tau(D_1)^{O(1)} H^{2k-2} \mathbf{1}_{H \geq \prod_{i\in I} p_i }  \prod_{i \leq \ell} p_i^{\max\{1,r_i(1/2-\beta)+\beta - 1,s_i(1/2-\beta) +\beta- 1,(r_i+s_i)(1/2-\beta) +2\beta- 2\}}. 
\end{align*}
If $\beta \in [1/2,1],$ then the exponent is $\leq 1$ so we get
\begin{align*}
   \tau(D_1)^{O(1)} H^{2k-2} D_1.
     \end{align*}
For $\beta < 1/2$ we may assume that $s_i \geq r_i$ (since $H \geq \prod_{i\in I} p_i$ with $I=\{i:r_i=k\}$ ) and we get
     \begin{align*}
       \max_{\substack{0 \leq r_1,\dots,r_\ell,s_1,\dots,s_\ell \leq k  }}\tau(D_1)^{O(1)} H^{2k-2} \mathbf{1}_{H \geq \prod_{i\in I} p_i }  \prod_{i \leq \ell} p_i^{\max\{1,s_i(1/2-\beta) +\beta- 1,(r_i+s_i)(1/2-\beta)+2\beta - 2\}}    \\
       \leq   \max_{\substack{k-1 \leq r_1,\dots,r_\ell \leq k  }}\tau(D_1)^{O(1)} H^{2k-2}  \mathbf{1}_{H \geq \prod_{i\in I} p_i }  \prod_{i \leq \ell} p_i^{\max\{1,k(1/2-\beta)+\beta-1,(r_i+k)(1/2-\beta)+2\beta - 2\}},
     \end{align*}
     since for $\beta< 1/2$ the first bound is increasing in $s_i$. Recalling that $i \in I$ iff $r_i=k$, the above is maximized if we choose $I_0$ so that $\prod_{i\in I_0} p_i$ is as close as possible to $H$ without exceeding it and for $i \not \in I_0$ we have $r_i=k-1$, so that we get
\begin{align*}
    &\ll \tau(D_1)^{O(1)} H^{2k-2} \prod_{i \in I_0}p_i^{\max\{1,k(1/2-\beta)+\beta-1,2k(1/2-\beta)+2\beta - 2\}}\prod_{i  \not \in I_0}p_i^{\max\{1,k(1/2-\beta)+\beta-1,(2k-1)(1/2-\beta) +2\beta- 2\}} \\
  &  \ll  \tau(D_1)^{O(1)} H^{2k-2} H^{\max\{1,k(1/2-\beta)+\beta-1,2k(1/2-\beta) +2\beta-2\}} (D_1/H)^{\max\{1,k(1/2-\beta)+\beta-1,(2k-1)(1/2-\beta) +2\beta- 2\}}.
\end{align*}
\end{proof}

We also require the following Lang-Weil type bound for the number of points on the variety. 
\begin{lemma} \label{langweillemma}
For $a \not \equiv 0 \, (p)$ We have
\[
\sum_{\substack{b_1,c_1,\dots,b_{k-1},c_{k-1} \, (p) \\ N(b_1,\dots,b_{k-1}) \equiv aN(c_1,\cdots,c_{k-1}) \, (p)}} 1 = p^{2k-3} (1+ O(p^{-1/2})).
\]
\end{lemma}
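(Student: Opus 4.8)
The plan is to reduce the point count to a Lang--Weil estimate for the level sets of the norm form. For $t\in\FF_p$ put $M(t):=|\{\bm b\in\FF_p^{k-1}:N(b_1,\dots,b_{k-1})\equiv t\,(p)\}|$, where $\bm b=(b_1,\dots,b_{k-1})$. Grouping the pairs $(\bm b,\bm c)$ according to the common value $t=N(\bm c)$ and using $a\not\equiv 0$, the left-hand side equals $\sum_{t\in\FF_p}M(t)M(a^{-1}t)$, the inverse taken mod $p$. The term $t=0$ contributes $M(0)^2$, and since $N-0$ is a nonzero polynomial of degree $k$ in $k-1$ variables, the Schwarz--Zippel bound gives $M(0)\le kp^{k-2}$, so this term is $O_k(p^{2k-4})$, which is admissible. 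Thus everything comes down to a good estimate for $M(t)$ with $t\neq 0$.

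For $t\neq 0$ I would apply the Lang--Weil bound to the affine hypersurface $\{N-t=0\}\subset\mathbb{A}^{k-1}$, which has degree $k$. The one geometric input needed is that $N-t$ is absolutely irreducible, uniformly in $t\neq 0$, provided $p$ is large in terms of $K$ and the $\omega_j$. To prove this, work over $\overline{\FF}_p$, where the norm form factors $N(b_1,\dots,b_{k-1})=\ell_1(\bm b)\cdots\ell_k(\bm b)$ into the $k$ linear forms $\ell_i(\bm b)=\sigma_i(b_1\omega_1+\cdots+b_{k-1}\omega_{k-1})$ reduced modulo a prime above $p$ (the coefficients of $\ell_i$ are the entries $a_{i1},\dots,a_{i,k-1}$ of the matrix $A$ from Lemma \ref{Alemma}). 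By Lemma \ref{Alemma} one has $\det A^{ik}\neq 0$, so any $k-1$ of the $\ell_i$ are linearly independent for $p$ large; hence for $k\ge 3$ they are pairwise non-proportional and $N$ is square-free. Suppose $N-t=gh$ with $1\le\deg g=e\le k-1$. Comparing leading forms and using unique factorisation in $\overline{\FF}_p[b_1,\dots,b_{k-1}]$, the top-degree part $g_e$ is a product of a subset of the $\ell_i$ and $h_{k-e}$ the complementary product, so $\gcd(g_e,h_{k-e})=1$. Comparing the homogeneous components of $N-t=gh$ of degrees $k-1,k-2,\dots$ in turn, and repeatedly using this coprimality together with degree considerations, one gets $g_{e-j}=h_{k-e-j}=0$ for every $j\ge 1$; hence $N-t=g_eh_{k-e}$ is homogeneous of degree $k$, contradicting $t\neq 0$. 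Therefore Lang--Weil, in its explicit form with constants depending only on the number of variables $k-1$ and the degree $k$, yields $M(t)=p^{k-2}+O_k(p^{k-5/2})$ uniformly for $t\neq 0$.

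To conclude, substitute $M(t)=p^{k-2}+E(t)$ with $|E(t)|\le c_kp^{k-5/2}$ into $\sum_{t\neq 0}M(t)M(a^{-1}t)$ and expand: the main term is $(p-1)p^{2k-4}=p^{2k-3}+O(p^{2k-4})$, the two cross terms are each $\le (p-1)p^{k-2}c_kp^{k-5/2}=O_k(p^{2k-3-1/2})$, and $\sum_{t\neq 0}E(t)E(a^{-1}t)=O_k(p^{2k-4})$; adding back $M(0)^2=O_k(p^{2k-4})$ gives the asserted $p^{2k-3}(1+O_k(p^{-1/2}))$. For the finitely many $p$ excluded above (or if $p$ is bounded in terms of $K$ and the $\omega_j$) the estimate is trivial, since $M(t)\le kp^{k-2}$ for every $t$ forces the left-hand side to be $O_k(p^{2k-3})$, so the bound holds after enlarging the implied constant. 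The only real obstacle is the absolute irreducibility of $N-t$ for $t\neq 0$; the remaining steps are bookkeeping, although one should check that the explicit Lang--Weil estimate cited depends only on the degree and number of variables, so that it is genuinely uniform in $t$, and that passing to $\overline{\FF}_p$ together with the linear factorisation of $N$ only excludes the finitely many primes dividing $\prod_i N_{K/\QQ}(\det A^{ik})$.
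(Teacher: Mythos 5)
Your argument is correct, but it follows a genuinely different route from the paper. You fibre the count over the value $t=N(\bm{c})$, reduce to estimating $M(t)=|\{\bm{b}:N(\bm{b})=t\}|$ for $t\neq 0$ via absolute irreducibility of $N-t$ plus the Lang--Weil theorem for affine hypersurfaces, and dispose of $t=0$ by Schwarz--Zippel. Your irreducibility argument is sound: the coprimality of the top-degree parts $g_e,h_{k-e}$ does rest on the $\ell_i$ being pairwise non-proportional, which for $k\geq 3$ follows from the linear independence of any $k-1$ of them (Lemma \ref{Alemma}, valid for $p$ large), and the descent through homogeneous components then forces $g,h$ to be homogeneous, contradicting $t\neq 0$. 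The paper instead reuses the rationality device (Lemma \ref{rationalitylemma}) to reduce to counting over all extensions $\FF_{q^n}$, splits $N$ into linear factors over $\FF_q$, makes the linear change of variables to reach $b_1'\cdots b_{k-1}'L(\bm{b}')=ac_1'\cdots c_{k-1}'L(\bm{c}')$, and then fixes all variables but $(b_1',c_1')$ so that each fibre is a plane affine curve handled by the Weil bound, with a separate count of the $\ll (q^n)^{2k-5}$ singular fibres. What your approach buys is a shorter, more conceptual proof that avoids the fibre-by-fibre singularity analysis, at the cost of importing Lang--Weil (in an explicit form uniform in the degree and number of variables, e.g.\ Cafure--Matera) as a black box; the paper's approach is more self-contained, relying only on the Weil bound for curves together with machinery already set up for Lemma \ref{deligneboundlemma}. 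Your handling of small primes by enlarging the implied constant is consistent with the paper's conventions, since all constants in that section may depend on $K$ and the $\omega_j$.
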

\begin{proof}
By Lemma \ref{rationalitylemma} it suffices to show that for $q=p^f$ we have for all $n \geq 1$
\[
\sum_{\substack{b_1,c_1,\dots,b_{k-1},c_{k-1} \in \FF_{q^n} \\ N(b_1,\dots,b_{k-1}) = aN(c_1,\cdots,c_{k-1})}} 1 = (q^n)^{2k-3} (1+ O((q^n)^{-1/2}))
\]
Th form $N(b_{1},\dots,b_k)$ splits into linear factors of $\FF_q$, so that after a linear change of variables we are counting solutions to
\begin{equation} \label{curvekeq}
b_1' \cdots b'_{k-1} L(b_1',\dots,b'_{k-1}) = a c_1' \cdots c'_{k-1} L(c_1',\dots,c'_{k-1})
\end{equation}
for
\[
L(b_1',\dots,b_{k-1}') = \alpha_1 b_1' + \cdots \alpha_{k-1} b'_{k-1}
\]
with some $\alpha_j \neq  0.$ The number of points $(b_1',\dots,b_{k-1}',c_1',\dots,c_{k-1}')$ where $b_2' \cdots b'_{k-1} = 0$ or $c_2' \cdots c'_{k-1} = 0$ is clearly $\ll (q^n)^{2k-4}$, so we may assume that $b_2' \cdots b'_{k-1} \neq 0$ and $c_2' \cdots c'_{k-1} \neq 0$. Fix $b_2',\dots,b_{k-1}'$ and $c_2',\dots,c_{k-1}'$ consider the affine curve defined by the equation (\ref{curvekeq}) in variables $b_1',c_1'$. If the curve is non-singular, then by the Weil bound the number of points $(b_1',c_1')$ is $q^{n}+O((q^n)^{-1/2})$. Thus, it suffices to show that the curve is non-singular for for all but $\ll (q^{n})^{2k-4-1/2}$ of the variables $b_2',\dots,b_{k-1}',c_2',\dots,c_{k-1}'$. By the product rule we see that a point is singular only if
\[
b_2' \cdots b'_{k-1}(\alpha_1 b_1' + L(b_1',\dots,b'_{k-1})) =  c_2' \cdots c'_{k-1}(\alpha_1 c_1' + L(c_1',\dots,c'_{k-1})) =0.
\]
Since $b_2' \cdots b'_{k-1} \neq 0$ and $c_2' \cdots c'_{k-1} \neq 0$, a point is singular only if
\[
\alpha_1 b_1' + L(b_1',\dots,b'_{k-1}) =  \alpha_1 c_1' + L(c_1',\dots,c'_{k-1}) = 0,
\]
so that $b'_1 = L_1(b_2',\dots,b'_{k-1})$ and $c'_1 = L_1(c_2',\dots,c'_{k-1})$ for some linear form $L_1.$ Substituting these into (\ref{curvekeq}) we get
\[
L(b_2',\dots,b'_{k-1})^2 b_2' \cdots b'_{k-1} = a L(c_2',\dots,c'_{k-1})^2 c_2' \cdots c'_{k-1}, 
\]
so that the number of such $b_2',\dots,b_{k-1}',c_2',\dots,c_{k-1}'$ is $\ll (q^{n})^{2k-5}$.
\end{proof}
Similarly as in Section \ref{weilsection}, using Lemma \ref{langweillemma} with the Chinese remainder theorem we get the following.
\begin{lemma} \label{langweilDlemma}
Assume that $D/D_1 \leq Y$ and $(a,D) \leq Y$. Then we have for some $\eps^{(k)}_d(a) \ll Y d \tau(d)^{O(1)}$
\[
\sum_{\substack{b_1,c_1,\dots,b_{k-1},c_{k-1} \, (D) \\ N(b_1,\dots,b_{k-1}) \equiv aN(c_1,\cdots,c_{k-1}) \, (D)}} 1 = \sum_{\substack{d|D \\ d <Y^8 }} \frac{D^{2k-3}}{d} \eps^{(k)}_d(a) + O\bigg(\frac{D^{2k-3}\tau(D)^{O(1)}}{ Y} \bigg).
\]
\end{lemma}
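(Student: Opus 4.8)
The plan is to follow the pattern of Lemmas \ref{points1Dlemma} and \ref{points2Dlemma} essentially verbatim, with Lemma \ref{langweillemma} playing the role of the Hasse--Weil input and Lemma \ref{crudeboundklemma} the role of the crude point count modulo prime powers. Write $N^{(k)}(a;D)$ for the cardinality on the left-hand side, and for a prime power $p^m$ set
\[
\eps^{(k)}_{p^m}(a) := \frac{N^{(k)}(a;p^m) - p^{m(2k-3)}}{p^{m(2k-4)}}, \qquad \eps^{(k)}_d(a) := \prod_{p^m || d} \eps^{(k)}_{p^m}(a),
\]
the latter product running over the exact prime-power divisors of $d$. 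By the Chinese remainder theorem $N^{(k)}(a;D) = \prod_{p^m || D} N^{(k)}(a;p^m)$, and writing each factor as $p^{m(2k-3)} + p^{m(2k-4)}\eps^{(k)}_{p^m}(a)$ and multiplying out yields the exact identity $N^{(k)}(a;D) = \sum_{d || D} \frac{D^{2k-3}}{d}\eps^{(k)}_d(a)$, where $d$ runs over the unitary divisors of $D$ (which is all that $\sum_{d \mid D}$ denotes here, as $\eps^{(k)}_d$ is supported there).

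First I would record the two pointwise estimates. Lemma \ref{langweillemma} gives $N^{(k)}(a;p) = p^{2k-3}(1 + O(p^{-1/2}))$ for $p \nmid a$, hence $\eps^{(k)}_p(a) \ll p^{1/2}$ for such $p$. For the general bound, Lemma \ref{crudeboundklemma} applied to the non-zero homogeneous form $N(b_1,\dots,b_{k-1}) - aN(c_1,\dots,c_{k-1})$ of degree $k$ in the $2k-2$ variables $(\bm{b},\bm{c})$ gives $N^{(k)}(a;p^m) \ll p^{m(2k-3)}$ for every $a$, $p$, $m$, with an implied constant depending only on $k$, $K$ and the $\omega_j$ (which is admissible throughout this part of the paper, and also takes care of the finitely many small primes). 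Hence $\eps^{(k)}_{p^m}(a) \ll p^m$ unconditionally, so $\eps^{(k)}_d(a) \ll d\,\tau(d)^{O(1)}$ as claimed, and moreover $\eps^{(k)}_d(a) \ll d^{1/2}\tau(d)^{O(1)}$ whenever $d$ is square-free and coprime to $a$.

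Next I would split according to the indicators in the error term. If $P_Y(D) = 1$ or $(a,D) > Y$, then both $N^{(k)}(a;D)$ and the partial main term $\sum_{d || D,\, d < Y^4} \frac{D^{2k-3}}{d}\eps^{(k)}_d(a)$ are $\ll D^{2k-3}\tau(D)^{O(1)}$ (the former by the CRT factorisation and Lemma \ref{crudeboundklemma}, the latter by $\eps^{(k)}_d(a) \ll d\,\tau(d)^{O(1)}$), so their difference is absorbed into $D^{2k-3}\tau(D)^{O(1)}O(P_Y(D) + \bm{1}_{(a,D)>Y})$ and there is nothing more to do. So assume $P_Y(D) = 0$ and $(a,D) \le Y$; I keep $\sum_{d || D,\, d < Y^4} \frac{D^{2k-3}}{d}\eps^{(k)}_d(a)$ as the main term and bound the tail $\sum_{d || D,\, d \ge Y^4} \frac{D^{2k-3}}{d}\eps^{(k)}_d(a)$. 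Factor such a $d$ as $d = d_1 d_2$, where $d_1$ collects the powerful part of $d$ together with the primes of $d$ dividing $a$, and $d_2$ is the complementary square-free factor, coprime to $a$. Since $P_Y(D) = 0$ forces the powerful part of $d$ to be $\le Y$, while $(a,D) \le Y$ bounds the remaining primes of $d_1$, we get $d_1 \le Y^2$, hence $d_2 = d/d_1 \ge Y^2$. By the two estimates above $|\eps^{(k)}_d(a)| \ll \tau(d)^{O(1)} d_1 d_2^{1/2}$, so $|\eps^{(k)}_d(a)|/d \ll \tau(d)^{O(1)} d_2^{-1/2} \le \tau(d)^{O(1)} Y^{-1}$, and summing over $d \mid D$ bounds the tail by $\ll D^{2k-3}\tau(D)^{O(1)} Y^{-1}$, which is the asserted estimate.

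I do not anticipate any real obstacle: the only genuine arithmetic inputs, Lemmas \ref{langweillemma} and \ref{crudeboundklemma}, are already available, and everything else is the same Chinese-remainder-plus-divisor-sum bookkeeping as in Lemmas \ref{points1Dlemma} and \ref{points2Dlemma}. The points needing a little care are the normalisation of $\eps^{(k)}_{p^m}$ (the factor $p^{m(2k-4)}$) that makes the main term come out with the stated weight $D^{2k-3}/d$, and checking that the uniformity in $p$ of Lemma \ref{crudeboundklemma} suffices for the finitely many small primes, which it does since implied constants may depend on $K$ and the $\omega_j$ in this part of the paper.
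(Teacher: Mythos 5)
Your proposal is correct and is exactly the argument the paper intends: the paper gives no separate proof of this lemma, merely pointing to the Chinese remainder theorem together with Lemmas \ref{crudeboundklemma} and \ref{langweillemma} "as in Section \ref{weilsection}", and your write-up reproduces the proof of Lemma \ref{points1Dlemma} with the correct normalisation $N^{(k)}(a;p^m)=p^{m(2k-3)}+p^{m(2k-4)}\eps^{(k)}_{p^m}(a)$ and the same splitting of the unitary divisor $d$ into a small part (powerful part and primes dividing $a$) and a large square-free part where the square-root cancellation applies. The observations about unitary divisors and about where the dependence on $K$ and the $\omega_j$ enters are accurate and consistent with the paper's conventions.
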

\begin{proof}
    By the Chinese remainder theorem we have
   \begin{align*}  \sum_{\substack{b_1,c_1,\dots,b_{k-1},c_{k-1} \, (D) \\ N(b_1,\dots,b_{k-1}) \equiv aN(c_1,\cdots,c_{k-1}) \, (D)}} 1  = \prod_{p^{m} || D}\sum_{\substack{b_1,c_1,\dots,b_{k-1},c_{k-1} \, (p^m) \\ N(b_1,\dots,b_{k-1}) \equiv aN(c_1,\cdots,c_{k-1}) \, (p^m)}} 1 .
   \end{align*}
   The claim follows by using Lemma \ref{langweillemma} for $m= 1$ and the trivial bound $ \ll (p^m)^{2k-2}$ for $m \geq 2$, noting that
   \begin{align*}
       (D/D_1)^{2k-2} \leq Y (D/D_1)^{2k-3} .
   \end{align*}.
\end{proof}
\section{The arithmetic information for Theorem \ref{maintheorem3}} \label{ai3section}
 \subsection{Type I sums}
 We have the following Type I information, similar to Section \ref{typeisection}.
 \begin{prop}\emph{(Type I estimate).} \label{typeikprop}  Let $k \geq 3$ and let $\eta,\eta' > 0$ and suppose that $\eta$ is sufficiently small in terms of $\eta'$. Let $D \leq X^{1-1/(2k)-\eta'}$.  Then for any bounded coefficients $\alpha(d)$ we have
\[
\sum_{\substack{ d \sim D \\ n \sim X/d}} \alpha(d) (a^{(k)}_{dn} - b_{dn}) \ll_\eta X^{1-\eta}.
\]
\end{prop}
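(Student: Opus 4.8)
\emph{Strategy and reduction.} The argument is a transcription of the proof of Proposition \ref{typeiprop}, with the two‑cube count replaced by the incomplete norm form $N(b_1,\dots,b_{k-1})$, whose representation function is $\ll_\eps X^\eps$ by Lemma \ref{normreplemma}. All implied constants may depend on $\eta$, $K$, and the $\omega_j$. Writing $n=a^2+b^2$ with $(a,b)=1$ and recalling the definitions of $a^{(k)}_n$ and $b_n$, the statement is equivalent to
\[
\sum_{d\sim D}\alpha_d\sum_{\substack{a^2+b^2\sim X\\ a^2+b^2\equiv 0\,(d)\\ (a,b)=1}}\Delta(b)\ll X^{1-\eta},\qquad
\Delta(b):=\frac1\delta\int_{\frac12\le B'\le\frac{2\sqrt X}{C_k}}\psi_{B'}(b)\frac{dB'}{B'}-\kappa_k\!\!\sum_{\substack{b=N(b_1,\dots,b_{k-1})\\ 0\le b_i\le X^{1/(2k)}}}\!\!\!\Omega(b_1,\dots,b_{k-1}).
\]
First I would remove, by trivial bounds using $\Omega\ll X^{1/(2k)}$ and Lemma \ref{normreplemma}, the contribution of tuples with $\min_i b_i\le X^{1/(2k)-\eps}$ (as in Section \ref{typeicubesection}), and then pass to finer‑than‑dyadic smooth weights $\psi_A(a)\psi_B(b)$ following Section \ref{smoothweightsection}; the ranges with $\min\{A,B\}$ small and the near‑diagonal cases $A^2+B^2=\ell X+O(X^{1-\eps})$, $\ell\in\{1,2\}$, are disposed of trivially, using $\tau(a^2+b^2)\ll X^\eps$ and the bound $\sum_b|\Delta(b)|\psi_B(b)\ll\delta B$ coming from the normalization built into $\Omega$, in place of a pointwise estimate on $\Delta$.

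\emph{The main term.} For the remaining smoothed sum with $A,B\in[X^{1/2-\eta},10X^{1/2}]$ I would write $a\equiv\nu b\,(d)$ with $\nu^2+1\equiv 0\,(d)$, expand $(a,b)=1$ by M\"obius into a divisor $c\mid b$ (so that $a$ runs over a progression modulo $cd$), discard $c>X^\eps$ trivially, and apply the truncated Poisson summation formula (Lemma \ref{poissonlemma}) to the $a$‑sum with $H:=X^{\eps'}cD/A$; this splits the sum into a zero‑frequency main term and a non‑zero frequency contribution. In the main term one additionally applies Poisson summation (equivalently, replaces sums by integrals up to negligible error) in $b_1,\dots,b_{k-1}$, which extracts the local densities $\rho_1(d)/d$ and $\rho_k(c)/c^{k-1}$; the normalization of $\Omega$ then makes $\kappa_k\sum_{\bm{b}}\Omega(\bm{b})\psi_B(N(\bm{b}))$ match the smooth‑box count attached to $b_n$, and summing over $c\le X^\eps$ the two main terms cancel up to $O(X^{-\eps})$ because the resulting arithmetic sum is $\sum_c\mu(c)c^{-2}\big(1-\kappa_k\rho_k(c)c^{2-k}\big)$, which vanishes by the definition of $\kappa_k$ --- this is precisely the mechanism of Remark \ref{defexplanationremark}.

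\emph{The non‑zero frequencies and the level.} After a Mellin transform separating $h$ from the smooth weight, Cauchy--Schwarz together with the large sieve for roots of quadratic congruences (Lemma \ref{largesievelemma}) bound the non‑zero frequency contribution by $\ll D^{1/2}(D+HB)^{1/2}\big(\sum_{n\ll N}|\gamma_n|^2\big)^{1/2}X^{O(\eps)}$, with $N=HB/c$ and $\gamma_n=\sum_{hb=n}c_h\,\Delta_2(bc)\psi_B(bc)\bm{1}_{h\le H}$, where $\Delta_2$ is the part of $\Delta$ coming from $a^{(k)}_n$. The key input, replacing (\ref{divisorreduction}), is
\[
\sum_{n\ll N}|\gamma_n|^2\ll X^{O(\eps)}X^{1/(2k)}\sum_{n\ll N}\tau(n)|\gamma_n|
\ll X^{O(\eps)}X^{1/(2k)}\sum_{h\le H}\tau(h)\sum_{0\le b_i\le X^{1/(2k)}}\tau\big(N(\bm{b})\big)\,\Omega(\bm{b})
\ll X^{O(\eps)}\,H\,X^{1/2+1/(2k)},
\]
which uses $\Omega\ll X^{1/(2k)}$, Lemma \ref{normreplemma}, and the fact that there are $\ll X^{(k-1)/(2k)}$ admissible tuples $(b_1,\dots,b_{k-1})$. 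Since $A\asymp B\asymp X^{1/2}$ forces $HB\ll DX^{O(\eps)}$ and $H\ll DX^{-1/2+O(\eps)}$, this gives a total bound $\ll D^{3/2}X^{1/(4k)+O(\eps)}$, which is $\ll DX^{1/2-O(\eps)-\eta}$ exactly when $D\le X^{1-1/(2k)-\eta'}$ with $\eps,\eta$ small in terms of $\eta'$; for $k=3$ the resulting exponent $5/6$ coincides with the Type I level of Proposition \ref{typeiprop} for $a^2+(c^3+d^3)^2$.

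\emph{Main obstacle.} Apart from the main term, everything is a mechanical adaptation of Section \ref{typeisection} once Lemma \ref{normreplemma} is in hand; the point requiring real care is checking that the integral normalization in the definition of $\Omega$ indeed turns $\kappa_k\sum_{\bm{b}}\Omega(\bm{b})\psi_B(N(\bm{b}))$, after imposing $N(\bm{b})\equiv 0\,(c)$ and summing over the M\"obius parameter, into the smooth‑box main term of $b_n$ with $\rho_k(c)$ appearing to the correct power of $c$, so that the definition of $\kappa_k$ forces the cancellation. This in turn relies on ``sum equals integral'' for the norm form, which is legitimate because each $b_i$ ranges over an interval of length $\gg X^{1/(2k)-\eps}\gg c$ and, after the finer‑than‑dyadic decomposition, the relevant weights are smooth on scale $\gg 1$.
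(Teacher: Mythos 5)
Your proposal is correct and follows essentially the same route as the paper: the paper's proof of Proposition \ref{typeikprop} is precisely "run the argument of Proposition \ref{typeiprop} as in Section \ref{typeicubesection}, with Lemma \ref{normreplemma} supplying the representation bound in the large-sieve step, and verify the main-term cancellation via the normalization of $\Omega$ and the definition of $\kappa_k$," which is exactly what you do, and your exponent count $D^{3/2}X^{1/(4k)}\ll DX^{1/2-\eta}$ for $D\le X^{1-1/(2k)-\eta'}$ matches the paper's. The main-term matching you single out as the delicate point is indeed the only step the paper spells out in detail, and your account of it (the arithmetic sum $\sum_c\mu(c)c^{-2}-\kappa_k\sum_c\mu(c)\rho_k(c)c^{-k}$ vanishing by construction) agrees with the paper's computation.
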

\begin{prop}\emph{(Fundamental lemma of the sieve).} \label{flkprop} Let $\eta,\eta' > 0$ and suppose that $\eta$ is sufficiently small in terms of $\eta'$. Let $W:=X^{1/(\log\log X)^2}$. Let $D \leq X^{1-1/(2k)-\eta'}$. Then for any bounded coefficients $\alpha(d)$ we have for any $C > 0$
\[
\sum_{\substack{ d \sim D \\ n \sim X/d}} \alpha(d)\bm{1}_{(n,P(W))=1} (a^{(k)}_{dn} - b_{dn}) \ll_{\eta,C} X \log^{-C} X.
\]
where $\eta > 0$ depends on $\eta' >0.$ 
\end{prop}
\begin{proof}
The proofs of these Propositions are analogous to the proofs of Propositions \ref{typeiprop} and \ref{flprop}, using Lemma \ref{normreplemma} to bound the number of representations as $N(b_1,\dots,b_{k-1})$ at the end of the large sieve bound, similarly as in Section \ref{typeicubesection}. The only thing we have to check is that the main terms match for $h=0$ after applying Poisson summation to sum over $a$, which follows by the construction of the weight $\Omega(b_1,\cdots,b_{k-1})$. To see this, we expand the definition of $\Omega(b_1,\cdots,b_{k-1})$ to get a finer-than-dyadic split for $N(b_1,\cdots,b_{k-1})$, so that the main term corresponding to Section \ref{typeimaintermsection} looks like
\[
 \kappa_k\int_{1/2 \leq B  \leq \frac{2\sqrt{X}}{C_k}} \sum_{\substack{b_1,\cdots, b_{k-1}  \in [0,X^{1/(2k)}]\\ N(b_1,\cdots,b_{k-1}) \equiv 0 \, (c)}}\psi_B(N(b_1,\cdots,b_{k-1})) \frac{\int \psi_{B}(u) du}{\int_{\mathbf{u} \in [0,X^{1/(2k)}]^{k-1}}  \psi_B(N(u_1,\dots,u_{k-1})) d \mathbf{u}} \frac{dB}{B}.
\]
The conditions $b_i \in [0,X^{1/(2k)}]$ are handled by using a finer-than-dyadic decomposition (Section \ref{smoothweightsection}) to each of the variables $b_i$, so that by Poisson summation (Lemma \ref{poissonlemma}) the above sum is up to a negligible error term
\[
\begin{split}
&\kappa_k \frac{\rho_k(c)}{c^{k-1}}\int_{1/2 \leq B  \leq \frac{2\sqrt{X}}{C_k}} \int_{\mathbf{u} \in [0,X^{1/(2k)}]^{k-1}}\psi_B(N(u_1,\dots,u_{k-1}))  d \mathbf{u} \\
& \hspace{180pt} \frac{\int \psi_{B}(u) du}{\int_{\mathbf{u} \in [0,X^{1/(2k)}]^{k-1}}  \psi_B(N(u_1,\dots,u_{k-1})) d \mathbf{u}} \frac{dB}{B} \\
&\hspace{100pt}=\kappa_k \frac{\rho_k(c)}{c^{k-1}}\int_{1/2 \leq B  \leq \frac{2\sqrt{X}}{C_k}}  \int \psi_{B}(u) du  \frac{dB}{B},
 \end{split}
\]
which matches the term coming from the comparison sequence $b_n$ after summing over $c$ by the definition of the arithmetic factor $\kappa_k$.
\end{proof}
\subsection{Type II sums}
Our type II information is given by the following.
\begin{prop}\emph{(Type II estimate).}\label{typeiikprop}
Let $M N\asymp X$ and for any small $\eta >0$ let
\[
X^{1/(2k)+\eta} \ll N \ll X^{1/k-2/(k(k+1))-\eta}
\]
Let $\alpha(m)$ and $\beta(n)$ be bounded coefficients.  Assume that $\beta(n)$ is supported on square-free numbers with $(n,P(W))=1$ and satisfies the Siegel-Walfisz property (\ref{swproperty}) and assume that $\alpha(m)$ is supported on $(m,P(W))=1$. Then for any $C > 0$
\[
\sum_{\substack{ m \sim M \\ n \sim N}} \alpha(m) \beta(n) (a^{(k)}_{mn} - b_{mn}) \ll_{\eta,C} X \log^{-C} X.
\]
\end{prop}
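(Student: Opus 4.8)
\textbf{Proof plan for Proposition \ref{typeiikprop}.} The strategy is to follow the template established for Propositions \ref{typeiiprop} and \ref{typeiiproper} in Sections \ref{typeiisection} and \ref{typeiiproofsection}, replacing the use of the Weil bound (Lemmas \ref{points1Dlemma}--\ref{expsums1Dlemma} and Lemmas \ref{points2Dlemma}--\ref{expsums2Dlemma}) by the Deligne-type bounds proved in the preceding section, namely Lemma \ref{sumoverhlemma} for the non-zero frequencies and Lemma \ref{langweilDlemma} for the main term. First I would invoke Proposition \ref{flkprop} together with the Heath-Brown device used after Proposition \ref{typeiiprop}: writing $\beta = \beta_N + (\beta - \beta_N)$, the part $\beta_N$ is handled by the fundamental lemma, so it suffices to treat coefficients $\beta(n)$ supported on square-free $n$ with $(n,P(W))=1$ and satisfying the Siegel--Walfisz property with main term $0$, that is, (\ref{swmain0}), and to show separately that $\sum_{m \sim M, n \sim N} \alpha(m)\beta(n) a^{(k)}_{mn} \ll_C X \log^{-C} X$ and the corresponding bound for $b_{mn}$.

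The arithmetic core then proceeds exactly as in Section \ref{typeiiproofsection}. Since $mn$ has no prime factor $\equiv 3 \,(4)$, we write $m = |w|^2$, $n = |z|^2$, apply Cauchy--Schwarz in $z$ as in (\ref{dispersion}) to reduce to a bilinear sum $U^{(k)}$ over pairs $z_1, z_2$, and split off the diagonal terms $\arg z_1 = \arg z_2$ (bounded using Lemma \ref{normreplemma} in place of Lemma \ref{cubereplemma}, needing $N \gg X^{1/(2k)+\eta}$) and $(z_1,z_2) > 1$ (bounded using the square factor $\geq W$). For the off-diagonal, setting $\Delta = \Im(\overline{z_1}z_2)$ and $a = z_2/z_1 \bmod |\Delta|$, the relation $i\Delta w = z_2 N(\mathbf{b}) - z_1 N(\mathbf{c})$ fixes $w$ and forces the congruence $N(c_1,\dots,c_{k-1}) \equiv a N(b_1,\dots,b_{k-1}) \,(|\Delta|)$; after introducing finer-than-dyadic smooth weights (Section \ref{smoothweightsection}) for all $2(k-1)$ variables $b_i, c_i$ as well as for $|z_j|^2$ and $\arg z_j$, and after removing the weight $F_M$ and the near-diagonal part $\arg z_1 = \arg z_2 + O(\sqrt\delta) \pmod \pi$ just as in Sections \ref{ccremovalsubsection} and \ref{smalldeltasection}, we apply Poisson summation $2(k-1)$ times with $H_j = X^\eps |\Delta|/B_j$. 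The main term ($\mathbf{h} = \mathbf{0}$) is controlled by $N_{\text{pts}}(a;\Delta)$, which Lemma \ref{langweilDlemma} expands as $\sum_{d \leq Y^4} (D^{2k-3}/d)\eps^{(k)}_d(a)$ plus a sparse error handled by Lemma \ref{nongenericlemma}; the main sum is then dispatched by the Siegel--Walfisz property in polar boxes (Lemma \ref{polarswlemma}), since the moduli $d$ are bounded by a power of $\log X$. The error term from non-zero frequencies is bounded by Lemma \ref{sumoverhlemma} combined with Lemma \ref{sqrtboundlemma}: one gets, morally, a bound of shape
\[
B_1^2 \cdots B_{k-1}^2 \, X^\eps \Big( N^{k-2} \cdot N^{\,k} \cdot N^{-(k-1)} + N^{k-2}\cdot N^{k-2}\cdot N^{(2k-3)/2}\cdot N^{-(2k-3)} \Big) \sum_{z_1,z_2} \frac{1}{|\Delta|},
\]
and using $B_j \ll X^{1/(2k)}$ together with $\sum 1/|\Delta| \ll N \log^{O(1)} N$ this is $\ll X^{1-\eta}N$ precisely in the range $N \ll X^{1/k - 2/(k(k+1)) - \eta}$, which is where the exponent in the proposition comes from.

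The main obstacle is the non-zero frequency estimate, and specifically arranging the bookkeeping so that Lemma \ref{sumoverhlemma} genuinely gives the claimed range. Two features make this delicate. First, unlike the one- and two-variable cases where the loss over the square-root bound was a clean $|\Delta|^{1/2}$, here the Deligne-type input in Lemma \ref{deligneboundlemma} degrades on a stratification of bad hyperplanes $X_j$, and the gain comes only after averaging over the frequencies $\mathbf{h}$ via the hyperplane-counting Lemmas \ref{h1boundlemma} and \ref{simpleboundlemma}; one must check that the terms $D_1^{k-2}(D_1^{1/2}/H)^{2k-3}$ in Lemma \ref{sumoverhlemma}, when summed against $\sum_{z_1,z_2} 1/|\Delta|$ with $H_j \asymp |\Delta|/B_j$ and $B_j \asymp X^{1/(2k)}$, do not dominate. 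Second, one must verify that the cost $\delta^{-2(k-1)}\log^{O(1)}X$ of introducing all the smooth weights is absorbed, and that the error terms $U_{1112}$-type (cross-condition removal) and $U_{112}$-type (near-diagonal) bounds from Sections \ref{ccremovalsubsection}--\ref{smalldeltasection} go through with the same power of $N$; these are routine given the previous sections but need the analogue of the trivial point-count bound $N_{\text{pts}}(a;\Delta) \ll \tau(\Delta)^{O(1)}|\Delta|^{2k-3}$, which is immediate from Lemma \ref{crudeboundklemma} and the Chinese remainder theorem. Everything else — the reduction to (\ref{swmain0}), the diagonal contributions, the $b_{mn}$ comparison sum — is a direct transcription of the corresponding arguments for $j \in \{1,2\}$.
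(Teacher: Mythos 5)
Your plan matches the paper's proof essentially step for step: reduce via Proposition \ref{flkprop} and the Heath--Brown device to coefficients satisfying (\ref{swmain0}), apply Cauchy--Schwarz, control the diagonal with Lemma \ref{normreplemma}, and handle the off-diagonal by Poisson summation with the main term dispatched by Lemmas \ref{langweilDlemma} and \ref{polarswlemma} and the non-zero frequencies by Lemma \ref{sumoverhlemma}, the first term of which ($X^{1/k}N^{2}\cdot N^{k}$ in the paper's normalization) gives exactly the constraint $N \ll X^{1/k-2/(k(k+1))-\eta}$. The only caveat is that the exponents in your ``moral'' display for the non-zero frequencies do not literally reproduce the paper's bound $X^{1/k}N^{2}\bigl(N^{k}+N^{k-2}(X^{1/(2k)}/N^{1/2})^{2k-3}\bigr)$, but your identification of the binding term and of the resulting range is correct.
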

Similarly as in Section \ref{typeiisection}, we can use Proposition \ref{flkprop} to reduce this to the following.
\begin{prop} \label{typeiikproper}
Let $W=X^{1/(\log \log X)^2}$. Let $M N\asymp X$ and for any small $\eta >0$ let
\[
X^{1/(2k)+\eta} \ll N \ll X^{1/k-2/(k(k+1))-\eta}
\]
Let $\alpha(m)$ and $\beta(n)$ be bounded coefficients. Assume that $\beta(n)$  is supported on square-free numbers with $(n,P(W))=1$ and satisfies the Siegel-Walfisz property with main term equal to 0, that is, (\ref{swmain0}).  Assume that $\alpha(m)$ is supported on $(m,P(W))=1$.  Then for any $C > 0$
\[
\sum_{\substack{ m \sim M \\ n \sim N}} \alpha(m) \beta(n) a^{(k)}_{mn}  \ll_{\eta,C} X \log^{-C} X
\]
and
\[
\sum_{\substack{ m \sim M \\ n \sim N}} \alpha(m) \beta(n) b_{mn}  \ll_C X \log^{-C} X.
\]
\end{prop}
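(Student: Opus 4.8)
The argument transcribes the proof of Proposition~\ref{typeiiproper} in Section~\ref{typeiiproofsection}, with the polynomial $b^2+1$ replaced throughout by the incomplete norm form $N(b_1,\dots,b_{k-1})$ and the Weil-type inputs of Section~\ref{weilsection} replaced by Lemma~\ref{langweilDlemma} (point counts), Lemma~\ref{sumoverhlemma} (averaged completed exponential sums) and Lemma~\ref{normreplemma} (in place of the divisor bound). The estimate for $b_{mn}$ is the easy case: writing $m=|w|^2$, $n=|z|^2$ (possible since $mn$ has no prime factor $\equiv 3\pmod{4}$) and applying Cauchy--Schwarz in $w$ as in~(\ref{dispersion}), the diagonal terms ($\arg z_1=\arg z_2$ or $(z_1,z_2)>1$) are disposed of as in Sections~\ref{diag1section}--\ref{diag2section}, while on the off-diagonal the relevant congruence is the linear one $b_2\equiv a\,b_1\pmod{|\Delta|}$ with $b_1,b_2$ of length $\asymp\sqrt X\gg|\Delta|X^\eta$ (since $|\Delta|\ll N\ll X^{1/2-\eta}$); hence Poisson summation (Lemma~\ref{poissonlemma}) leaves only the zero frequency, the point count modulo $\Delta$ is trivially $|\Delta|$, and Lemma~\ref{polarswlemma} finishes the bound, exactly as in the final subsection of Section~\ref{typeiiproofsection}.

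For $a^{(k)}_{mn}$ I would write $m=|w|^2$, $n=|z|^2$, drop the coprimality conditions at negligible cost (using $(mn,P(W))=1$), and apply Cauchy--Schwarz in $w$, reaching a dispersion sum $U$ over pairs $z_1,z_2$ weighted by $\Omega(\mathbf b^{(1)})\Omega(\mathbf b^{(2)})\ll X^{1/k}$. With $\Delta=\Im(\overline{z_1}z_2)$ one has $i\Delta w=z_2\,N(\mathbf b^{(1)})-z_1\,N(\mathbf b^{(2)})$, so $w$ is determined by $z_1,z_2,\mathbf b^{(1)},\mathbf b^{(2)}$ and $N(\mathbf b^{(2)})\equiv a\,N(\mathbf b^{(1)})\pmod{|\Delta|}$ with $a=z_2/z_1\pmod{|\Delta|}$. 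On the diagonal $\arg z_1=\arg z_2$ one gets $a_1 N(\mathbf b^{(2)})=a_2 N(\mathbf b^{(1)})$ with $a_i=\Im(\overline w z_i)$; writing the common value as $\ell$ and counting the solutions of $\ell=a'\,N(\mathbf b)$ with $|b_i|\le X^{1/(2k)}$ by Lemma~\ref{normreplemma} and the divisor bound yields a contribution $\ll X^{1-\eta'}N$ for some $\eta'>0$, and it is precisely this step that forces $N\gg X^{1/(2k)+\eta}$. The terms with $(z_1,z_2)>1$ produce a square factor $\ge W$ and are negligible, as in Section~\ref{diag2section}.

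On the genuine off-diagonal ($(z_1,z_2)=1$, $1\le|\Delta|\ll N$) I would follow Section~\ref{offdiagsection}: remove the smooth majorant $F$ of $m\sim M$ and the pseudo-diagonal range $\arg z_1=\arg z_2+O(\sqrt\delta)\pmod{\pi}$ with $\delta=\log^{-C_1}X$ (handled as in Section~\ref{smalldeltasection}, where $F$ confines the $\mathbf b^{(j)}$ to a thin region), introduce finer-than-dyadic weights for $z_1,z_2,\arg z_1,\arg z_2$ and for the $2(k-1)$ coordinates $b^{(j)}_i$, and apply Poisson summation to the $b$-variables. This gives a main term governed by $N^{(k)}(a;\Delta):=|\{(\mathbf b,\mathbf c)\,(\Delta):N(\mathbf b)\equiv aN(\mathbf c)\,(|\Delta|)\}|$ and an error term governed by an average over nonzero $\mathbf h$ of $\sum_{(\mathbf b,\mathbf c)\,(\Delta),\,N(\mathbf b)\equiv aN(\mathbf c)}e_{|\Delta|}(\mathbf h\cdot(\mathbf b,\mathbf c))$. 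For the main term I would apply Lemma~\ref{langweilDlemma} with $Y$ a large power of $\log X$, treating its error terms ($P_Y(\Delta)$, $\mathbf 1_{(a,\Delta)>Y}$) by Lemma~\ref{nongenericlemma} and the truncated sum $\sum_{d\mid\Delta,\,d<Y^4}\eps^{(k)}_d(a)/d$ by rewriting $d\mid\Delta$ as $z_2\equiv az_1\,(d)$ with $(a,d)=1$ and invoking Lemma~\ref{polarswlemma}, exactly as in Section~\ref{maintermsection} (the nonsingularity furnished by Lemma~\ref{langweillemma} making the tail $d\ge Y^4$ negligible). For the nonzero frequencies I would feed Lemma~\ref{sumoverhlemma} into the expansion and argue as in Section~\ref{nonzerosection}. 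Morally, the prefactor is $\Omega(\mathbf b^{(1)})\Omega(\mathbf b^{(2)})\prod_{i,j}B^{(j)}_i\asymp X^{1/k}X^{1-1/k}=X$, so the main term is $\asymp X\sum_{z_1,z_2}\beta_{z_1}\overline{\beta_{z_2}}/|\Delta|$, killed by Siegel--Walfisz, and (using that $(z_1,z_2)=1$ makes $\Delta$ essentially square-free, and bounding the $z$-sum by Lemma~\ref{sqrtboundlemma}) the error term is
\[
\ll X^{1+O(\eps)}\big(X^{-(1-1/k)}N^{k+2}+X^{-1/(2k)}N^{3/2}\big)\log^{O(1)}X,
\]
which is $\ll XN\log^{-C}X$ precisely when $N^{k+1}\ll X^{1-1/k-O(\eps)}$, i.e. $N\ll X^{\frac1k-\frac2{k(k+1)}-O(\eps)}$; combined with the factor $M^{1/2}$ from Cauchy--Schwarz and $MN\asymp X$ this gives $M^{1/2}U^{1/2}\ll X\log^{-C}X$.

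The main obstacle is the nonzero-frequency estimate: it requires combining the stratified Deligne bound of Lemma~\ref{deligneboundlemma} (packaged via Lemmas~\ref{deligneboundDlemma}, \ref{h1boundlemma} and~\ref{simpleboundlemma} into Lemma~\ref{sumoverhlemma}) with careful bookkeeping of the $2(k-1)$ length parameters $B^{(j)}_i$ and of the bad hyperplanes the frequency vectors can fall into, and the fact that we only have the bound $\ll D_1^k$ (rather than the square-root bound $D_1^{k-3/2}$) for generic frequencies is exactly what pins down the upper endpoint $X^{1/k-2/(k(k+1))}$ of the Type II range. A secondary nuisance, handled as in Section~\ref{smalldeltasection}, is the removal of the $F$-weight on the pseudo-diagonal, where the $\mathbf b^{(j)}$ are confined to a thin set and an extra finer-than-dyadic decomposition is needed before Poisson summation.
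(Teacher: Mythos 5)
Your proposal is correct and follows essentially the same route as the paper: Cauchy--Schwarz in $w$, disposal of the diagonal via Lemma \ref{normreplemma} (which is exactly what forces $N\gg X^{1/(2k)+\eta}$), Poisson summation with the main term handled by Lemma \ref{langweilDlemma} and Siegel--Walfisz, and the nonzero frequencies by Lemma \ref{sumoverhlemma}, with your final error term $X^{1/k}N^{2}\bigl(N^{k}+N^{k-2}(X^{1/(2k)}/N^{1/2})^{2k-3}\bigr)$ and the resulting endpoint $N\ll X^{1/k-2/(k(k+1))}$ agreeing with the paper's computation. The only small detail the paper adds is the preliminary restriction to $B_j\gg X^{1/(2k)-\eta'}$ (by a trivial estimate before Cauchy--Schwarz) so that all the dual lengths $H_j$ are within $X^{O(\eta')}$ of a single $H$, which streamlines the bookkeeping you flag as the main nuisance.
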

 \begin{proof}
 The arguments are essentially the same as in Section \ref{typeiiproofsection}.
 Denoting
 \[
 S(w,z) := \sum_{\Re(\overline{w}z)= N(b_1,\dots,b_{k-1})} \Omega(b_1,\dots,b_{k-1})
 \]
 and
 \begin{align*}
     U^{(k)}_1:= \sum_{|z_1|^2,|z_2|^2 \sim N} \beta_{z_1}\beta_{z_2} \sum_{w} F_{M}(|w|^2)  S (w,z_1) S (w,z_2),
 \end{align*}
 we need to show
\begin{align*}
    U_1^{(k)} \ll_C  X N \log^{-C} X.
\end{align*}
The contribution from the diagonal $\Delta=0$ is handled using $N \gg X^{1/(2k) + \eta}$ by the same argument as in Section \ref{diag1section},
using Lemma \ref{normreplemma} to bound the number of representations of any give integer by the form $N(b_1,\cdots,b_{k-1})$.
The contribution from the diagonal $(z_1,z_2) > 1$ is bounded by essentially the same argument as in Section \ref{diag2section}.

Thus, we are reduced to bounding the off-diagonal contributions
\begin{align*}
 U_{11}^{(k)} :=   \sum_{\substack{|z_1|^2,|z_2|^2  \sim N \\ \Delta \neq 0 \\ (z_1,z_2)=1}} \beta_{z_1}\beta_{z_2} \sum_{\substack{b_1,\dots,b_{k-1}, b_{k+1},\dots, b_{2k-1}\\  N(b_{k+1},\dots,b_{2k-1}) \equiv a N(b_1,\dots,b_{k-1})\, (\Delta)}} \Omega(b_1,\dots,b_{k-1})\Omega(b_{k+1},\dots,b_{2k-1})  \\
 F_{M} \bigg( \bigg| \frac{z_2 N(b_1,\dots,b_{k-1})-z_1N(b_{k+1},\dots,b_{2k-1}) }{\Delta} \bigg|^2 \bigg). 
\end{align*}
Let $Y=\log^{C'} X$ and let $\Delta_2$ denote the powerful part of $\Delta$. We separate the contribution from $\Delta_2 > Y$ to get
\[
 U_{11}^{(k)} =  U_{11 \leq}^{(k)} +  U_{11 >}^{(k)}.
\]
By similar arguments as in Section \ref{boundinglargeysection} for $V^{(1)}_{11 >}$, in place of \eqref{v11final} we get
\[
U_{11 >}^{(k)} \ll  X^{1-1/(2k)+2 \eps} N^{3/2} + \frac{XN \log^{O(1)}  X }{Y},
\]
which is sufficient since $N \ll X^{1/k - 10\eps}$.
 
For the main contribution  $U_{11 \leq}^{(k)}$, similarly as in Section \ref{offdiagsection}, we introduce smooth weights $\psi_{B_j}$ for the variables $b_j$ using Section \ref{smoothweightsection} with $X^{1/(2k)-\eta'} \ll  B_j \ll X^{1/(2k)}$ and denote
\[
H_j := X^\eps |\Delta| B_j^{-1}, \quad j \in  \{1,\dots,k-1,\dots,k+1,\dots,2k-1\}.
\] 
We may restrict to $ B_j \gg X^{1/(2k)-\eta'}$, for some $\eta'$ small compared to $\eta$, by estimating trivially the contribution from any $b_j \ll X^{1/(2k)-\eta'}$ before the application of Cauchy-Schwarz.  This ensures that $H_j$ are all within $X^{\eta'}$ of each other and we set 
\[
H:= \max_{j \in \{1,\dots,k-1,\dots,k+1,\dots,2k-1\}} H_j.
\]
Similarly in Section \ref{offdiagsection} we introduce a smooth partition for $|z_j|$ and $\arg z_j$ and we restrict to  $|\theta_1-\theta_2 \pmod{\pi}| > \sqrt{\delta}$ (by bounding the complement as in Section \ref{smalldeltasection}), which implies $|\Delta| > \sqrt{\delta} N$ and therefore $H=X^{O(\eta'+\eps)} N X^{-1/(2k)}$. Note that by $\Delta_2 \leq Y$ we then have $H < |\Delta_1|$ always. After this the smooth cross-condition $F_M$ may be removed with a negligible error term (by similar arguments as in Section  \ref{ccremovalsubsection}).  Note that
\[
\Omega(B_1,\dots,B_{k-1})\Omega(B_{k+1},\dots,B_{2k-1}) \ll X^{1/k}.
\]
Thus, denoting
\[
U_1^{(1)}(\bm{B},\bm{N},\bm{\theta}) :=  X^{1/k} \sum_{\substack{z_1,z_2 \\ \Delta \neq 0 \\ (z_1,z_2)=1 \\ \Delta_2 \leq Y}} \beta^{(1)}_{z_1}\beta^{(2)}_{z_2} \sum_{\substack{b_1,\dots,b_{k-1}, b_{k+1},\dots, b_{2k-1}\\  N(b_{k+1},\dots,b_{2k-1}) \equiv a N(b_1,\dots,b_{k-1})\, (\Delta)}} \psi_{B_1}(b_1) \cdots \psi_{B_{2k-1}}(b_{2k-1})
\]
we need to show that
\[
U_1^{(1)}(\bm{B},\bm{N},\bm{\theta}) \ll_C  XN \log^{-C} X.
\]
By Poisson summation (Lemma \ref{poissonlemma}) we obtain
\begin{align*}
  U_1^{(1)}(\bm{B},\bm{N},\bm{\theta}) = X^{1/k} \widehat{\psi}(0)^{2k-2} \prod_{j = 1}^{k-1} B_j B_{j+k}  \sum_{\substack{z_1,z_2 \\ \Delta \neq 0 \\ (z_1,z_2)=1 \\ \Delta_2 \leq Y}} \beta^{(1)}_{z_1}\beta^{(2)}_{z_2} \frac{N_k(a;\Delta)}{|\Delta|^{2k-2}} \\
  + O(X^{O(\eps+\eta')}  \widehat{U}_1^{(1)}(\bm{B},\bm{N},\bm{\theta})) + O_\eps(X^{-100}),
\end{align*}
where
\[
\widehat{U}_1^{(1)}(\bm{B},\bm{N},\bm{\theta}) := X^{1/k} \sum_{\substack{z_1,z_2 \\ \Delta \neq 0 \\ (z_1,z_2)=1 \\ \Delta_2 \leq Y}}   \frac{1}{H^{2k-2}} \sum_{\substack{|h_j| \ll H \\ \bm{h} \neq \bm{0}}} | S_k(a,\bm{h};\Delta)|
 \]
 with
 \begin{align*} 
 N_k(a;\Delta) &:=\sum_{\substack{b_1,\dots,b_{k-1}, b_{k+1},\cdots,b_{2k-1} \, (\Delta) \\ N(b_1,\dots,b_{k-1}) \equiv aN(b_{k+1},\cdots,b_{2k-1}) \, (\Delta)}} 1,
 \\
  S_k(a,\bm{h};\Delta) &:= \sum_{\substack{b_1,\dots,b_{k-1}, b_{k+1},\cdots,b_{2k-1} \, (\Delta) \\ N(b_1,\dots,b_{k-1}) \equiv aN(b_{k+1},\cdots,b_{2k-1}) \, (\Delta)}} e_{\Delta} ( \bm{h} \cdot \bm{b}).    
 \end{align*}

The main term from the Poisson summation is bounded similarly as in Section \ref{maintermsection}, using Lemma \ref{langweilDlemma} to evaluate $N_k(a,\Delta)$ and using the Siegel-Walfisz property of $\beta_z$. For the error term by using Lemma \ref{sumoverhlemma} with $\Delta/\Delta_1=\Delta_2 \leq Y$, noting that for $N=X^\alpha$ with $1/(2k) +\eta \leq \alpha \leq 1/k-2/(k(k+1))-\eta$ we have
\[
\beta = \frac{\alpha-1/(2k)}{\alpha} + O(\eta'+\eps)
\]
 and we get
 \[\begin{split}
\widehat{U}_1^{(1)}(\bm{B},\bm{N},\bm{\theta}) &\ll_\eps X^{O(\eps+\eta')} X^{1/k} N^{2+k-1}\bigg (N (1+ H^{2k(1/2-\beta)+2\beta-3})\\
&\hspace{50pt}+ N^{k(1/2-\beta)+\beta-1}(1+H^{k(1/2-\beta) + \beta-1})  + N^{(2k-1)(1/2-\beta) +2\beta- 2} H^{1/2-\beta} \bigg) 
\end{split}
\]
By $1 \leq H \leq N$  we have
\begin{align*}
N (1+ H^{2k(1/2-\beta)+2\beta-3}) & \ll  N +  N^{(2k-1)(1/2-\beta) +2\beta- 2} H^{1/2-\beta}, \\
  N^{k(1/2-\beta)+\beta-1}(1+H^{k(1/2-\beta) + \beta-1})  &\ll  N^{k(1/2-\beta)+\beta-1}+  N^{(2k-1)(1/2-\beta) +2\beta- 2} H^{1/2-\beta}
\end{align*}
and we get (using $N^{\beta}= N X^{-1/(2k)+ O(\eta'+\eps)}$ )
 \[\begin{split}
\widehat{U}_1^{(1)}(\bm{B},\bm{N},\bm{\theta})  &\ll_\eps X^{O(\eps+\eta')} X^{1/k} N^{2+k-1}\bigg (N +N^{k(1/2-\beta)+\beta-1}  + N^{(2k-1)(1/2-\beta) +2\beta- 2} H^{1/2-\beta} \bigg) \\
&\ll_\eps X^{O(\eps+\eta')} X^{1/k} \bigg(N^{2+k}    + N^{3k/2  - (k-1)\beta} + N^{2k-3/2 - (2k-1)\beta}H^{1/2-\beta}\bigg)  \\
&\ll_\eps X^{O(\eps+\eta')} \bigg(X^{1/k} N^{2+k}  + X^{1/k+(k-1)/(2k)}N^{k/2 -1} + X^{1/k+(2k-1)/(2k))}N^{-1/2} H^{1/2-\beta}\bigg)  \\
&\ll_\eps X^{O(\eps+\eta')} \bigg(X^{1/k} N^{2+k}+ X^{1/2+1/(2k)}N^{k/2 -1} + X^{1+1/(2k)}N^{-1/2}H^{1/2-\beta}\bigg), 
\end{split}
 \]
 which is $\ll X^{1-\eta} N$ since  $1/(2k) +\eta \leq \alpha \leq 1/k-2/(k(k+1))-\eta$ and since $\eta'$ is small compared to $\eta$.

 \end{proof}
\section{The sieve argument for Theorem \ref{maintheorem3}} \label{sieve3section}
Theorem \ref{maintheorem3} is a corollary of the following quantitative version, which approaches to an asymptotic formula for large $k$. We could apply Vaughan's identity if we had Type II information with the range $X^{1/(2k)} < N < X^{1/k}$, so that by Proposition \ref{typeiikprop} we are only missing a width of $2/(k(k+1))$ from this. 
\begin{theorem} \label{quantktheorem}
For some small $\eta > 0$ we have
\[
S(\A^{(k)},2\sqrt{X}) = (1+ O((\log k) ^{-\eta \log k}) ) S(\B,2\sqrt{X}).
\]
\end{theorem}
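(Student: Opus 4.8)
The plan is to run Harman's sieve in parallel for $\A^{(k)}$ and $\B$, exactly along the lines of Section~\ref{sievesection} but keeping careful track of the dependence of every parameter on $k$; the genuinely new ingredient is a quantitative bound for the total mass of the discarded Buchstab terms. Write $\gamma_k:=\tfrac1k-\tfrac2{k(k+1)}-\tfrac1{2k}=\tfrac1{2k}-\tfrac2{k(k+1)}$ for (essentially) the width of the Type~II range of Proposition~\ref{typeiikprop} and $D_k:=X^{1-1/(2k)-\eta'}$ for the level of distribution of Proposition~\ref{typeikprop}. First I would record the $k$-analogue of Proposition~\ref{funprop}: for bounded $\alpha(m)$ supported on $(m,P(W))=1$ and any $U\le D_k$,
\[
\sum_{m\sim U}\alpha(m)\,S(\A^{(k)}_m,X^{\gamma_k})=\sum_{m\sim U}\alpha(m)\,S(\B_m,X^{\gamma_k})+O_{\eta,C}\big(X\log^{-C}X\big),
\]
proved verbatim as Proposition~\ref{funprop}: one expands the inner sieve by the M\"obius function, splits according to whether the combined modulus is $\le D_k$ (apply the Type~I estimate, Proposition~\ref{flkprop}) or larger (in which case some sub-product of the prime factors of the modulus lies in the Type~II range $[X^{1/(2k)+\eta},X^{1/k-2/(k(k+1))-\eta}]$, so Proposition~\ref{typeiikprop} applies after removal of cross-conditions as in Section~\ref{smoothweightsection}), reduces the coefficients to bounded ones by the $\tau$-trick~\eqref{tautrick}, and verifies the Siegel--Walfisz property of the resulting sum over Gaussian primes as in \cite[Section 16]{FI}.

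Next I would iterate Buchstab's identity, peeling off the least prime factor and re-sifting each head down to $X^{\gamma_k}$, in parallel for $\CC=\A^{(k)}$ and $\CC=\B$, for $R_k$ rounds with $R_k$ tending to infinity slightly faster than $\log k$ (say $R_k:=\lceil(\log\log k)\log k\rceil$, which is permissible since the configurations reached have at most $O(k)$ prime factors). At each stage a sum $\sum_{\mathbf p\in\mathcal U}S(\CC_{p_1\cdots p_r},p_r)$ is \emph{evaluated}---its $\A^{(k)}$- and $\B$-versions agreeing up to $O(X\log^{-C}X)$---as soon as either the modulus $p_1\cdots p_r$ is $\le D_k$ (one more Buchstab step together with the lemma just stated) or some sub-product $\prod_{i\in I}p_i$ lies in the Type~II range (Proposition~\ref{typeiikprop}); it is \emph{continued} by one further Buchstab step on the part of $\mathcal U$ where the next modulus stays $\le D_k$; and it is \emph{discarded}, using $S(\A^{(k)}_{p_1\cdots p_r},p_r)\ge 0$, otherwise, and also wholesale after round $R_k$. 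By Lemma~\ref{buchstablemma}, the $\B$-mass of each discarded piece is $S(\B,2\sqrt X)$ times an integral of the Buchstab function over a region in $\{\gamma_k<\beta_r<\cdots<\beta_1<\tfrac12\}$ cut out by the modulus condition $\sum_i\beta_i>1-\tfrac1{2k}-\eta'$, the non-saturation of the earlier moduli, and the requirement that no partial sum $\sum_{i\in I}\beta_i$ lie in the Type~II exponent interval $[\tfrac1{2k},\tfrac1k-\tfrac2{k(k+1)}]$. Since this interval is almost the whole Vaughan range $[\tfrac1{2k},\tfrac1k]$---missing only the sliver of relative width $\tfrac2{k+1}$ at the top---the last condition forces every $\beta_i$ to lie either in the thin sliver $[\gamma_k,\tfrac1{2k})$ of width $\tfrac2{k(k+1)}$ or above $\tfrac1k-\tfrac2{k(k+1)}$, with further constraints on how the slivers may combine.

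The crux is to show that this total discarded $\B$-mass is $O\big((\log k)^{-\eta\log k}\big)\,S(\B,2\sqrt X)$. For the round-$R_k$ remainder I would use that an undischarged configuration after $R_k$ rounds is a product of $\gg(\log\log k)\log k$ primes each of exponent $\gtrsim\tfrac1k$, and since $\sum_{X^{1/k}<p<X}p^{-1}\asymp\log k$ by Mertens' estimate the Buchstab-measure of such configurations is $\ll(\log k)^{-\log k}=o\big((\log k)^{-\eta\log k}\big)$; for the discards inside the first $R_k$ rounds I would check---as one verifies directly at round~$2$, where the part with modulus exceeding $D_k$ is empty because the sifted variable is forced to equal $1$ while the primes involved are too small for their product to reach $X$---that the genuine losses are confined to configurations with several prime factors clustering in the sliver or above $\tfrac1k$, and estimate their Buchstab-measure by bounding $\omega\le1$, the weight by $k^{r-1}\beta_r^{-2}$, and the volume of the constrained simplex-slab (of thickness $O(1/k)$ in the modulus direction, with a factor $1/r!$ from the ordering and the sliver/gap dichotomy squeezing the admissible set), summing over $r\le R_k$. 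Granting this, $S(\A^{(k)},2\sqrt X)\ge\big(1-O((\log k)^{-\eta\log k})\big)S(\B,2\sqrt X)$, and the matching upper bound follows because each discarded $\A^{(k)}$-piece $\sum_{\mathbf p\in\mathcal U}S(\A^{(k)}_{p_1\cdots p_r},p_r)$ is $\le\sum_{\mathbf p\in\mathcal U}S(\A^{(k)}_{p_1\cdots p_{r-1}},X^{\gamma_k})$ (lower the sifting level and drop $p_r$ from the modulus via $m\mapsto p_r m$), whose modulus is $\le D_k$ by the non-saturation condition, so the fundamental-lemma estimate replaces it by the corresponding $\B$-sum, again of Buchstab-measure $O((\log k)^{-\eta\log k})S(\B,2\sqrt X)$. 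Finally Theorem~\ref{maintheorem3} follows from Theorem~\ref{quantktheorem} by unweighting the factors $\Omega$ and $\kappa_k$ and summing over $p\sim 2^j$, exactly as Theorems~\ref{maintheorem1} and~\ref{maintheorem2} were deduced from Theorem~\ref{quanttheorem}. The step I expect to be the main obstacle is the volume bookkeeping of the discarded regions, delicate precisely because of the sliver contributions.
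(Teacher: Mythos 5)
Your decomposition is genuinely different from the paper's and it has a gap that I do not think can be repaired without a new idea: the total Buchstab measure of the pieces you discard inside the first $R_k$ rounds is of order $(\log k)/k$, not $(\log k)^{-\eta\log k}$. Note first that whenever a term $S(\CC_{q_r},p_r)$ is discarded because $q_r>D_k$, the constraints $q_{r-1}p_r^2\le 4X$ and $q_r>D_k=X^{1-1/(2k)-\eta'}$ force only the \emph{smallest} exponent into the sliver, $\beta_r\le 1-\sum_i\beta_i<\tfrac1{2k}+\eta'$; the other exponents merely have to avoid the Type~II window $J\approx[\tfrac1{2k},\tfrac1k]$ together with all their subset sums, and ``above $\tfrac1k$'' is essentially the whole range $[\gamma_k,\tfrac12]$, carrying the full Mertens mass $\asymp\log k$ per prime. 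Concretely, take $r\approx\log_2k$ primes with $\beta_r$ in the sliver and $\beta_1>\cdots>\beta_{r-1}>\tfrac1k$ with $\sum_i\beta_i\in(1-\tfrac1{2k}-\eta',1-\beta_r]$: every subset sum is either $\beta_r$ alone (below $J$) or at least $\beta_{r-1}>\sup J$, so no Type~II subproduct exists, the modulus exceeds $D_k$, and all intermediate moduli are $\le D_k$ so the configuration is reached. Its measure is
\[
\asymp\ \sum_{r}\ \underbrace{\int_{\text{sliver}}\frac{d\beta_r}{\beta_r^2}}_{\asymp 1}\ \cdot\ \underbrace{\frac{r}{k^2}}_{\beta_1\ \text{in an interval of length}\ \asymp k^{-2}}\ \cdot\ \underbrace{\frac{(\log k+O(1))^{r-2}}{(r-2)!}}_{\beta_2,\dots,\beta_{r-1}\ \text{free}}\ \asymp\ \frac{\log k}{k},
\]
since the sum over $r$ of the last factor is $\asymp k$. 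This is $k^{-1+o(1)}$, vastly larger than $(\log k)^{-\eta\log k}=k^{-\eta\log\log k}$; so your assertion that the losses are ``confined to configurations with several prime factors clustering in the sliver'' is not correct, and no amount of careful volume bookkeeping will recover the stated rate from this decomposition. (As a side remark, the crude bounds you propose --- $\prod_i\beta_i^{-1}\le k^{r-1}$ times the volume of the slab with a $1/r!$ --- give $\sum_r(k/2)^{r-1}/r!$, which is useless; one must keep the logarithmic measure $d\beta_i/\beta_i$. Your wholesale round-$R_k$ discard and the upper-bound direction are fine but are not the dominant terms. Completed carefully, your route would still give Theorem \ref{maintheorem3} with an error $O((\log k)^{O(1)}/k)$ in place of the rate in Theorem \ref{quantktheorem}.)

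The idea you are missing is the \emph{reversal of roles}, which is how the paper avoids ever confronting these configurations. The paper applies plain Buchstab only $2J=2\eta'\log k$ times; the heads $S(\CC_{q_j},X^{\gamma})$ for $j\le 2J-1$ are all evaluated by the analogue of Proposition \ref{funprop} because the conditions $q_{i-1}p_i^2\le 4X$ force $q_j\le 4X^{1-2^{-j}}\le D$, with no reference to where the individual primes lie. In the single remaining term $S_{2J}$, every prime $p_i$ exceeding $V:=2X/D=2X^{1/(2k)+\eta'}$ is itself decomposed by Buchstab (inclusion--exclusion on its smallest ``prime factor'' viewed multiplicatively), producing new variables $n_i$; whenever some $n_i>V$ the complementary modulus is $\le 2X/V=D$ and Type~I applies, so the only discarded terms are those in which \emph{all} of the $\approx 2J$ constrained variables lie in the narrow window $[X^{\gamma},V]$ of logarithmic width $\asymp 1/k$. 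It is this confinement of $2J-1$ variables to a window of measure $\asymp 10/(k+1)$ each, divided by $(2J)!$, that produces $(10/(k+1))^{2J-1}k^{O(J)}/(2J)!\ll(\log k)^{-\eta\log k}$. Without breaking the large primes this way, the large primes in your discarded configurations remain unconstrained and the saving collapses to a single power of $k$.
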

\begin{proof}
 Using the same argument as in Proposition \ref{funprop}, we get the following.
 \begin{prop} \label{funkprop}
Let $D := X^{1-1/(2k)-\eta}$, 
\[
\gamma := \frac{1}{2k} -\frac{2}{k(k+1)} - 2\eta,
\]
 and let $U \leq D $. Let $W={X^{1/(\log\log x)^2}}$. Then for any bounded coefficients $\alpha(m)$ supported on $(m,P(W))=1$ we have
\[
\sum_{m \sim U} \alpha(m) S(\A^{(k)}_m,X^{\gamma}) = \sum_{m \sim U} \alpha(m) S(\B_m,X^{\gamma}) + O_{\eta,C}( X/\log^C X ).
\]
\end{prop}

Denote $Z=X^\gamma$. When we iterate Buchstab's identity, we can insert the conditions
\[
p_1 \cdots p_{j-1} p_j^2 \leq 4 X
\]
with negligible error terms $\ll X/\log^2 X$ similarly as in Section \ref{sievesection}. Thus, after $2J$ applications of Buchstab's identity, we get
\[
\begin{split}
S(\CC,2 X^{1/2}) = \sum_{0 \leq j \leq 2J-1} (-1)^j \sum_{\substack{Z \leq p_j < \cdots < p_1 < 2 \sqrt{X} \\ p_1 \cdots p_{i-1} p_i^2 \leq 4 X,\, i \leq j}} S(\CC_{q_j},Z)  +  \sum_{\substack{Z \leq p_{2J} < \cdots < p_1 < 2 \sqrt{X} \\ p_1 \cdots p_{i-1} p_i^2 \leq 4 X,\, i \leq 2J}} S(\CC_{q_{2J}},p_{2J}) \\
+ O(X/\log^2 X),
\end{split}
\]
where we have denoted $q_j :=  p_1\cdots p_j$. By induction we see that the conditions  $p_1 \cdots p_{i-1} p_i^2 \leq 4 X$ imply that
\[
p_1 \cdots  p_i \leq 4 X^{1-2 ^{-i}}.
\]
Indeed, this is true for $i=1$ and by the induction hypothesis we have
\[
p_1 \cdots p_i = (p_1^2\cdots p_i^2)^{1/2} = (p_1\cdots p_{i-1})^{1/2} (p_1\cdots p_{i-1}p_i^2)^{1/2}\leq (4X^{1-2^{-i-1}})^{1/2} (4X)^{1/2} \leq 4 X^{1-2^{-i}}.
\]
Hence, we can give an asymptotic formula for the terms $j \leq 2J-1$ by Proposition \ref{funkprop} if for some small $\eta'>0$ we take
\[
J : = \eta' \log k,
\]
and our task is reduced to  estimating
\[
S_{2J}(\CC):=\sum_{\substack{Z \leq p_{2J} < \cdots < p_1 < 2 \sqrt{X} \\ p_1 \cdots p_{i-1} p_i^2 \leq 4 X,\, i \leq 2J}} S(\CC_{q_{2J}},p_{2J}).
\]
Here we could iterate Buchstab's identity provided that at each stage $p_1 \cdots p_{j-1} p_j^2 \leq D$, but this is not necessary for us. Note also that this sum is too large to be dropped completely, since by Lemma \ref{buchasymp} and Stirling's approximation
\begin{align*}
    \frac{S_{2J}(\B)}{S(\B,2\sqrt{X})} = & (1+o(1)) \int \omega(\bm{\beta}) \frac{d\beta_1 \cdots d \beta_{2J}}{\beta_1\cdots \beta_{2J-1} \beta_{2J}^2} \\
    \gg & \int_{ \substack{\gamma < \beta_{2J} < \cdots  < \beta_1 < 1/2 \\ \beta_1 + \cdots + \beta_{i-1} + 2 \beta_{i} < 1, \, i \leq 2J} } \frac{d\beta_1 \cdots d \beta_{2J}}{\beta_1\cdots \beta_{2J-1} \beta_{2J}^2} \\
    \gg & \frac{1}{(2J)!}\int_{ \substack{\gamma < \beta_{2J}, \dots, \beta_1 < 1/{2J+1} } } \frac{d\beta_1 \cdots d \beta_{2J}}{\beta_1\cdots \beta_{2J}} \\
    = & \frac{1}{(2J)!} \log \bigg( \frac{1}{(2J+1) \gamma} \bigg)^{2J} \\
     \gg & \frac{1}{(2J)!} (1-\eps)^{-2J} (\log  k)^{2J} \\
     \gg & \frac{1}{\sqrt{2J}} \bigg(  \frac{e \log k}{ (1-\eps)2 J}\bigg)^{2J}  \\
     \gg &  \frac{1}{\sqrt{2J}}\bigg( \frac{e \log 2}{(1-\eps)}\bigg)^{2J}\to \infty \quad \text{as} \quad k \to\infty,
\end{align*}
since the largest $J$  we can take   is $J= \lfloor  \frac{\log k}{2\log 2} \rfloor+1$. The problem is that the primes $p_i$ range over too long intervals and thus we seek to replace them by variables with shorter ranges.  To do so we will use the so-called reversal of roles to "break" any prime variable
\[ 
p_i \geq V:=2X/D= 2 X^{1/(2k)+\eta}=: X^{\beta}.
\]
By breaking a prime variable we mean that we will be able to replace the sum by a similar sum where the prime variable is replaced by a product of variables each smaller than $V$. To see how the argument goes, suppose we want to estimate a sum of the form
\[
\sum_{m \sim M} \alpha(m) \sum_{p_1,p_2 \geq V} S(\CC_{m p_1 p_2},z).
\]
for some function $z= z(mp_1p_2)$ (we will have $z(n)= P^{-}(n)$, the smallest prime factor). We apply inclusion-exclusion (Buchstab's identity) twice, first to the prime $p_1$ and  then to the prime $p_2$ to get
\[
\begin{split}
\sum_{m \sim M} \alpha(m) \sum_{\substack{n_1,p_2 \geq V  \\ (n_1,P(Z))=1} } &S(\CC_{m n_1 p_2},z) - \sum_{m \sim M} \alpha(m) \sum_{\substack{p_1'n_1,n_2 \geq V  \\ Z \leq p_1' < 2 \sqrt{n_1}  \\(n_1,P(Z))=1 \\(n_2,P(Z))=1 } } S(\CC_{m p'_1n_1 n_2},z)  \\
&+  \sum_{m \sim M} \alpha(m) \sum_{\substack{p_1'n_1,p_2'n_2 \geq V  \\ \\ Z \leq p_1' < 2 \sqrt{n_1} \\ Z \leq p_2' < 2 \sqrt{n_2}  \\(n,P(p_1'))=1 \\(n_2,P(p_2'))=1 } } S(\CC_{m p'_1n_1 p'_2 n_2},z).
\end{split}
\]
In the first sum we have an asymptotic formula by Proposition \ref{funkprop} after re-arranging the variables, since $n_1 > V$ implies that the rest of the variables are $\leq 2X/V = D$. Similarly, in the second sum we have an asymptotic formula by Proposition \ref{funkprop}, since $n_2 > V$. The third sum is similar to the original sum but we have replaced the large primes $p_1,p_2 > V$ by products of smaller variables and we can iterate the process for the new variables $n_i$ and any unbroken $p_i$ until all but possibly one of the variables $n_i$ satisfy $n_i \leq V$. We will then show that these remaining sums may be discarded and the error terms are sufficiently small by Lemma \ref{buchstablemma}.

To make the above sketch rigorous, we begin by writing
\[
S_{2J}(\CC) = \sum_{0 \leq j \leq 2J } R_j(\CC),
\]
where
\[
\begin{split}
R_j(\CC) &:= \sum_{\substack{Z \leq p_{2J} < \cdots < p_1 < 2 \sqrt{X} \\ p_1 \cdots p_{i-1} p_i^2 \leq 4 X,\, i \leq 2J \\ p_{j+1} \leq V < p_j}} S(\CC_{q_{2J}},p_{2J})  \\
&= \frac{1}{j!(2J-j)!} \sum_{\substack{Z \leq p_{2J}, \dots, p_1 < 2 \sqrt{X} \\ p_1 \cdots p_{i-1} p_i^2 \leq 4 X,\, i \leq 2J \\ p_{j+1} \leq V < p_j}} S(\CC_{q_{2J}},p_{\min}) + O(X/\log^{10} X),
\end{split}
\]
by using crude estimates and Type I information to bound the part where $p_i=p_j$ for some $i \neq  j$. For $j \in \{0,1\}$ we cannot apply reversal of roles, so we leave these as is. By Lemma \ref{buchstablemma} the contribution from $j=1$ is
\[
\begin{split}
& \ll \frac{1}{(2J-1)!}\int_{[\gamma,1] \times [\gamma,\beta]^{2J-1}}  \frac{d\beta_1 \cdots d\beta_k}{\beta_1\cdots\beta_{2J} \beta_{\min}} \\
& \ll \frac{k^2 }{(2J-1)!} \bigg(\frac{\beta-\gamma}{\gamma} \bigg)^{2J-1} \ll \frac{k^2 }{(2J-1)!} \bigg(\frac{10}{k+1} \bigg)^{2J-1}
\end{split}
\]
for $k$ sufficiently large. The contribution from $j=0$  is bounded similarly. Here and below the implied constants do not depend on $k$.

For $j \geq 2$ we iterate the reversal of roles to get terms with asymptotic formula and the remaining sums 
\[
T_j(\CC) := \frac{1}{j!(J-j)!}\sum_{\substack{k_1,\dots,k_j \geq 0  \\ 2 | (k_1+\cdots + k_j)}} \sum_{Z \leq p_{2J}, \dots, p_j \leq V} \sideset{}{'}\sum_{\substack{ V < p'_{i1} \cdots p'_{ik_i} n_i \leq 2 \sqrt{X} \\  Z \leq p'_{ik_i} < \cdots <  p'_{i1} < 2\sqrt{n_i p'_{ik_i} \cdots p'_{i2}} \\p'_{ik_i} n_i  > V \\ (n_i,P(p'_{ik_i} ))=1}} S(\CC_{q},p_{\min}),
\]
where $\Sigma'$ means that all but possibly one of the variables $n_i$ are $\leq V$.  Note that since $V < Z^2$ and $(n_i,P(p'_{ik_i} ))=1$, the $n_i$ must be primes. Denoting  $K= k_1+\dots+k_j$ and using Lemma \ref{buchstablemma}, we get that the deficiency from $T_j(\CC)$ is
\[
\ll \Delta_j := \frac{1}{j!(J-j)!} \sum_{\substack{ k_1,\dots,k_j  \geq 0 \\ 2 | (k_1+\cdots + k_j)  }} \frac{1}{k_1!\cdots k_j!}  \int_{\U_{2J+K}}  \frac{d\bm{\beta}}{\beta_1\cdots\beta_{2J+K} \beta_{\min}},
\]
where
\[
 \U_{2J+K} = [\gamma,\beta]^{2J-j}\times [\gamma,\beta]^{j-1} \times [\gamma,1]^{K+1} .
\]
We obtain
\[
\begin{split}
 \Delta_j &\ll  \frac{1}{j!(J-j)!} \bigg(\frac{10}{k+1} \bigg)^{2J-1} k^2 \sum_{\substack{k_1,\dots,k_j  \geq 0  }} \frac{(\log k)^{k_1+\cdots +k_j} }{k_1!\cdots k_j!}  \\
 & \ll \frac{1}{j!(J-j)!} \bigg(\frac{10}{k+1} \bigg)^{2J-1}  k^{j+2} 
 \end{split}
\]
Summing over $j$ we get a total deficiency
\[
\begin{split}
\sum_{j \leq 2J} \Delta_j &\ll \frac{k^2}{(2J)!}\bigg(\frac{10}{k+1} \bigg)^{2J-1} \sum_{j \leq 2J} \frac{J!}{j!(J-j)!}   k^{j}  \\
& \ll   \frac{k^2}{(2J)!}\bigg(\frac{10}{k+1} \bigg)^{2J-1} (1+k)^{2J} \ll (\log k)^{-\eta \log k}
\end{split}
\]
for some $\eta>0$, by using $J =   \eta' \log k $. This completes the proof of Theorem \ref{quantktheorem}.

Note that we could get a stronger bound by iterating reversal of roles to the primes $p'_{ij} > V$ as well, which would improve the deficiency to $\ll k^{-\eta \log k}$.
\end{proof}
 \bibliography{polyprimesbib}

\begin{thebibliography}{10}

\bibitem{bombieri}
E.~Bombieri.
\newblock On exponential sums in finite fields.
\newblock {\em Amer. J. Math.}, 88:71--105, 1966.

\bibitem{deligne}
P.~Deligne.
\newblock {\em Cohomologie \'{e}tale}, volume 569 of {\em Lecture Notes in
  Mathematics}.
\newblock Springer-Verlag, Berlin, 1977.
\newblock S\'{e}minaire de g\'{e}om\'{e}trie alg\'{e}brique du Bois-Marie SGA
  $4\frac{1}{2}$.

\bibitem{fouvryi}
E.~Fouvry and H.~Iwaniec.
\newblock Gaussian primes.
\newblock {\em Acta Arith.}, 79(3):249--287, 1997.

\bibitem{fouvrykatz}
E.~Fouvry and N.~Katz.
\newblock A general stratification theorem for exponential sums, and
  applications.
\newblock {\em J. Reine Angew. Math.}, 540:115--166, 2001.

\bibitem{fisieve}
J.~Friedlander and H.~Iwaniec.
\newblock Asymptotic sieve for primes.
\newblock {\em Ann. of Math. (2)}, 148(3):1041--1065, 1998.

\bibitem{FI}
J.~Friedlander and H.~Iwaniec.
\newblock The polynomial {$X^2+Y^4$} captures its primes.
\newblock {\em Ann. of Math. (2)}, 148(3):945--1040, 1998.

\bibitem{harman}
G.~Harman.
\newblock {\em Prime-detecting sieves}, volume~33 of {\em London Mathematical
  Society Monographs Series}.
\newblock Princeton University Press, Princeton, NJ, 2007.

\bibitem{hartshorne}
R.~Hartshorne.
\newblock {\em {Algebraic Geometry}}, volume~52 of {\em Graduate Texts in
  Mathematics}.
\newblock Springer, 1977.

\bibitem{hb}
D.~R. Heath-Brown.
\newblock Primes represented by {$x^3+2y^3$}.
\newblock {\em Acta Math.}, 186(1):1--84, 2001.

\bibitem{hbjia}
D.~R. Heath-Brown and C.~Jia.
\newblock The largest prime factor of the integers in an interval. {II}.
\newblock {\em J. Reine Angew. Math.}, 498:35--59, 1998.

\bibitem{hbli}
D.~R. Heath-Brown and X.~Li.
\newblock Prime values of {$a^2 + p^4$}.
\newblock {\em Invent. Math.}, 208(2):441--499, 2017.

\bibitem{hbm}
D.~R. Heath-Brown and B.~Z. Moroz.
\newblock Primes represented by binary cubic forms.
\newblock {\em Proc. London Math. Soc. (3)}, 84(2):257--288, 2002.

\bibitem{kowalskinotes}
E.~Kowalski.
\newblock Exponential sums over finite fields: elementary methods,
  https://people.math.ethz.ch/~kowalski/exponential-sums-elementary.pdf.

\bibitem{lang}
S.~Lang.
\newblock {\em Algebraic Number Theory}.
\newblock Graduate Texts in Mathematics. Springer, 1994.

\bibitem{li}
X.~Li.
\newblock Prime values of a sparse polynomial sequence.
\newblock {\em to appear in Duke Math. J.}, 2021.

\bibitem{maynard}
J.~Maynard.
\newblock Primes represented by incomplete norm forms.
\newblock {\em Forum Math. Pi}, 8:e3, 2020.

\bibitem{polymath}
D.~H.~J. Polymath.
\newblock New equidistribution estimates of {Z}hang type.
\newblock {\em Algebra Number Theory}, 8(9):2067--2199, 2014.

\bibitem{pratt}
K.~Pratt.
\newblock Primes from sums of two squares and missing digits.
\newblock {\em Proc. Lond. Math. Soc. (3)}, 120(6):770--830, 2020.

\bibitem{Ten}
G.~Tenenbaum.
\newblock {\em Introduction to analytic and probabilistic number theory},
  volume 163 of {\em Graduate Studies in Mathematics}.
\newblock American Mathematical Society, Providence, RI, third edition, 2015.
\newblock Translated from the 2008 French edition by Patrick D. F. Ion.

\bibitem{weil2}
A.~Weil.
\newblock On some exponential sums.
\newblock {\em Proc. Nat. Acad. Sci. U.S.A.}, 34:204--207, 1948.

\bibitem{weil}
A.~Weil.
\newblock {\em Sur les courbes alg\'{e}briques et les vari\'{e}t\'{e}s qui s'en
  d\'{e}duisent}, volume~7 of {\em Publ. Inst. Math. Univ. Strasbourg}.
\newblock Hermann et Cie., Paris, 1948.

\end{thebibliography}
\bibliographystyle{abbrv}
\end{document}